\documentclass[fleqn,11pt]{article}
\usepackage[active]{srcltx}
\usepackage{latexsym}
\usepackage{graphicx}
\usepackage{amsmath}
\usepackage{amssymb}
\usepackage{amsfonts}
\usepackage{epsfig}
\usepackage{bm}

\oddsidemargin=-.0cm \evensidemargin=-.5cm \topmargin=-1.0cm
\textwidth 17cm \textheight 220mm

\newtheorem{definition}{Definition}[section]

\newtheorem{theorem}{Theorem}[section]

\newtheorem{lemma}[theorem]{Lemma}
\newtheorem{observation}[theorem]{Observation}
\newtheorem{proposition}[theorem]{Proposition}

\newenvironment{proof}{{\em Proof.}}
       {\hspace*{\fill}$\Box$\par\vspace{4mm}}

\newcommand{\qed}{\hfill$\Box$\par\smallskip\noindent}

\newcommand{\comment}[1]{{}}

\linespread{2}

%%%%%%%%%%%%%%%%%%%%%%%%%%%
%    FINE IMPOSTAZIONI    %
%%%%%%%%%%%%%%%%%%%%%%%%%%%

\begin{document}
\bibliographystyle{unsrt}

\title{On the Sequential Multiknapsack polytope}
\author{
Paolo Detti \thanks{Dipartimento di Ingegneria dell'Informazione,
Universit\`{a} di Siena, Via Roma 56, Italy, e-mail: detti@dii.unisi.it, tel. +39 0577-234850 (1022), fax +39 0577-233602.}}

\date{}
\maketitle

\begin{abstract}
The Sequential Multiple Knapsack Problem is a special case of Multiple knapsack problem in which the items sizes are divisible. A characterization of the optimal solutions of the problem and a description of the convex hull of all the integer solutions  are presented. More precisely, it is shown that a new formulation of the problem allows to generate a decomposition approach for enumerating all optimal solutions of the problem. Such a decomposition approach is used for finding  the inequalities (defined by an inductive scheme) describing the Sequential Multiple Knapsack polytope.

\noindent {\bf Keywords}: integer programming, sequential multiple knapsack problem, optimal solutions, polytope description. 
\end{abstract}

\section{Introduction}\label{sec:intro}
The Sequential Multiple Bounded Knapsack Problem 
(SMKP) can be stated as follows. There are a set $N$ of $n$ item types, $N=\{1, \ldots,n\}$, and a set $M$ of knapsacks, $M=\{1,
\ldots,m\}$. Each item of type $j$ has a size $s_j \in
\mathbb{Z}^+$, a value $v_j \in \mathbb{R}$ and an upper bound
$b_j \in \mathbb{Z}^+$. Item sizes are divisible, i.e., 
$s_{j+1}/s_j \in \mathbb{N}$, for all $j=1,\ldots, n-1$. 
Each knapsack $i$ has a
capacity $c_i \in \mathbb{Z}^+$. The problem is to find
the number $x_{i,j}$ of items of type $j$, for $j=1,\ldots, n$, to be assigned to each knapsack $i$, such that:
(1) The total value
$\sum_{i=1}^{m}\sum_{j=1}^{n}v_jx_{i,j}$ of the
assigned items is maximum;
(2) $\sum_{j=1}^{n}s_jx_{i,j}\leq c_i$ for $i=1,\ldots, m$
(i.e., the total size of items
assigned to a knapsack does not exceed the capacity of the knapsack);
(3) $\sum_{i=1}^{m}x_{i,j}\leq b_j$ for $j=1,\ldots, n$
(i.e., the total number of the assigned  items of type $j$ does not
exceed  the upper bound).
Without loss of generality we will assume $s_1=1$. The   sequential {\em single} knapsack problem (SKP) has been addressed in the literature
by several authors. For the unbounded case (i.e., $b_j = \infty$,
for all $j$), Marcotte \cite{Marcotte} presents  a linear time algorithm for SKP and
Pochet and Wolsey \cite{WolPoc} give an explicit description of the polytope.
 %The bounded knapsack problem (where $b_j < \infty$, for any $j$)
 %is a generalization of both the 0-1 knapsack problem and the unbounded knapsack problem. 
For the bounded case (where $b_j < \infty$, for all $j$), in \cite{Pochet98}, a description of the bounded SKP polytope is provided, and in   \cite{ver_aarts}   an $O(n^2\log n)$ algorithm
  is proposed.    
%They also show \cite{HartOlm} that an
%$\{0,1\}$-integer problem with $h\geq 4$ constraints of the form
%$\sum\limits_{j=1}^{n}w_{i,j}x_{j}\leq c_i$, for $i=1,\ldots, h$,
%where coefficients $w_{i,j}$ are either 0 or power of two is
%NP-hard.
In \cite{Detti}, the sequential {\em multiple} knapsack problem is addressed, and a polynomial $O(n^2 + nm)$ algorithm is presented.

In this paper, a new formulation for SMKP and a characterization of the optimal solutions of the  new formulation is provided, leading to a description of the convex hull of all the integer solutions of SMKP. 
More precisely, first new formulations are proposed for SMKP.
Some new formulations  restrict   the set of feasible solutions of the problem, but guarantee the  existence of at least an optimal solution in the restricted feasible set.
Then, a problem transformation from a new formulation to another problem is presented, such that SMKP and the transformed problem are equivalent in terms of optimization. This last result has been obtained  generalizing the approach proposed in \cite{Pochet98}, and is used for finding an decomposition approach that allows the enumeration of all the optimal solutions of the transformed problem. The decomposition approach is then used, as in  in \cite{Pochet98}, for finding a description of the SMKP polytope related to the new formulation.   

Summarizing, the main contributions of this paper are: the presentation of new ILP formulations for SMKP; the proposal of an approach for decomposing and enumerating the  optimal solutions of a transformed problem equivalent to SMKP in terms of optimization; the definition of inductive inequalities for describing the convex hull of all integer solutions of the new formulations proposed for SMKP. \\
In Section \ref{sec:newform}, basic properties of feasible solutions and  new formulations for SMKP are presented. In Section \ref{sec:trasnform}, a problem transformation is presented and in Section \ref{sec:enum} a decomposition scheme is given for the optimal solutions of the transformed problem. In Section \ref{sec:poly}, a description of the convex hull of the feasible solutions of SMKP is presented.

\section{A new formulation for SMKP}\label{sec:newform}
The two following proposition hold since the sizes of the items are divisible.

\begin{proposition}\label{prop:div}
Given a set $A \subseteq N$, let $s$ be the biggest size of the items in $A$. Let $c$ be an integer such that: $(1)$ $s$ divides $c$; $(2)$ $\sum_{j \in A}s_j \ge c$. Then a set $B \subseteq A$ exists such that $\sum_{j \in B}s_j = c$.
\end{proposition}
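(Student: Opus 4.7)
The plan is to prove the proposition by induction on $|A|$, exploiting the divisibility chain $s_1 \mid s_2 \mid \cdots \mid s_n$ of item sizes. Intuitively, the greedy idea is: when $c>0$, ``peel off'' one item of the maximum size $s$, reducing the target to $c-s$, and recurse on the smaller item set.

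More concretely, if $c=0$ then $B=\emptyset$ suffices, so I may assume $c>0$; together with $s \mid c$, this forces $c\ge s$. I pick any $j^* \in A$ with $s_{j^*}=s$ and set $A' = A \setminus \{j^*\}$, $c' = c-s$. The goal is to include $j^*$ in $B$ and produce the remaining items as a subset $B' \subseteq A'$ with $\sum_{j\in B'} s_j = c'$, via the inductive hypothesis on $(A',c')$.

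To invoke the induction I need to verify its three hypotheses: (i) $c' \ge 0$, which is just $c \ge s$; (ii) the largest size $s'$ in $A'$ divides $c'$, which follows from $s' \mid s$ (by the divisibility chain, since $s' \le s$) together with $s \mid c$ and $s \mid s$, yielding $s' \mid (c-s) = c'$; (iii) $\sum_{j \in A'} s_j = \sum_{j \in A} s_j - s \ge c-s = c'$. The induction then supplies $B'$, and $B = \{j^*\} \cup B'$ has total size $s + c' = c$, as required.

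The only delicate spot is the degenerate case $A' = \emptyset$, in which the inductive hypothesis cannot be formally applied. But then $\sum_{j \in A} s_j = s$, while $\sum_{j \in A} s_j \ge c \ge s$, forcing $c = s$ and $c' = 0$; taking $B' = \emptyset$ completes the construction. I do not expect a genuine obstacle here: the whole argument hinges on the single observation $s' \mid s \mid (c-s)$, and this is precisely where the assumption that the item sizes form a divisibility chain enters.
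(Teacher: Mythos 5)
Your proof is correct. The paper itself gives no argument for this proposition --- it is simply asserted to ``hold since the sizes of the items are divisible'' --- so there is nothing to compare against; your greedy induction is a complete and valid justification of exactly the kind the author presumably had in mind. The one step that carries all the weight is the verification that the new target $c'=c-s$ remains divisible by the new maximum size $s'$, and you identify it correctly: the divisibility chain gives $s'\mid s$ and $s\mid c$, hence $s'\mid(c-s)$, which is precisely where the hypothesis on divisible item sizes enters (the claim is false for general sizes, e.g.\ $A=\{3,3\}$, $s=3$, $c=3$ works, but $A=\{2,3\}$ with $c=4$ does not). Your handling of the degenerate case $A'=\emptyset$ is also fine, though it could be absorbed into the base case $c=0$ of the induction. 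The only cosmetic caveat is that the proposition implicitly assumes $c\ge 0$; you assume this too, which is clearly the intended reading.
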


The following proposition directly follows from Proposition \ref{prop:div}. 

\begin{proposition}\label{prop:chunks}
Given a set $A \subseteq N$, let $s$ be the biggest size of the items in $A$, and let $b\ge s$ be an integer such that $s$ divides $b$. Then the minimum number of subsets, each of total size at most $b$, in which  $A$ can be partitioned is $\lceil \sum_{j \in A}s_j/b\rceil $. Moreover, a partition exists in which  $\lceil  \sum_{j \in A}s_j /b\rceil -1$ subsets have total size  $b$.
\end{proposition}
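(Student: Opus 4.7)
The plan is to separate the claim into two parts: the lower bound (no partition can use fewer than $\lceil \sum_{j\in A}s_j/b\rceil$ subsets) and the constructive upper bound (which will simultaneously give the ``moreover'' statement).

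The lower bound is immediate from a counting argument: if $A$ is partitioned into $k$ subsets of total size at most $b$ each, then $\sum_{j\in A}s_j\le kb$, forcing $k\ge \lceil \sum_{j\in A}s_j/b\rceil$. So I would dispatch this in one line.

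For the upper bound I would argue by iterated application of Proposition \ref{prop:div}. Set $S=\sum_{j\in A}s_j$ and $k=\lceil S/b\rceil$. While the remaining (not yet assigned) subset $A'\subseteq A$ satisfies $\sum_{j\in A'}s_j\ge b$, invoke Proposition \ref{prop:div} with $c=b$ to extract a subset of total size exactly $b$ from $A'$, and remove it. The key point to check, and the only subtle one, is that the divisibility hypothesis of Proposition \ref{prop:div} remains valid at each step: the biggest size $s'$ of the items left in $A'$ satisfies $s'\mid s$ (since item sizes are divisible), and $s\mid b$ by assumption, so $s'\mid b$. Hence each iteration is legal. The loop terminates after exactly $\lfloor S/b\rfloor$ extractions, leaving a (possibly empty) residual set of total size $S-\lfloor S/b\rfloor b<b$, which we take as the last block of the partition.

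Counting the resulting blocks gives $\lfloor S/b\rfloor$ blocks of size exactly $b$ plus the residual block; this is $\lceil S/b\rceil$ blocks in total (including when $S$ is a multiple of $b$, in which case the residual is empty and we have $S/b$ blocks all of size $b$). In every case, at least $\lceil S/b\rceil-1$ of the blocks have total size exactly $b$, which proves both the minimality and the ``moreover'' assertion. I do not expect any real obstacle here; the only point that deserves explicit mention is the stability of the divisibility condition under successive removals, which the divisibility of item sizes handles cleanly.
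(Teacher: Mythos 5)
Your proof is correct and is precisely the argument the paper has in mind: the paper states only that the proposition ``directly follows from Proposition~\ref{prop:div}'', and your iterated extraction of subsets of total size exactly $b$ via Proposition~\ref{prop:div}, combined with the one-line counting lower bound, is the natural (and complete) elaboration of that claim. Your explicit check that the divisibility hypothesis survives each removal (the new maximum size $s'$ divides $s$, which divides $b$) is exactly the point that needed verification, so nothing is missing.
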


The following considerations  allow  to define a  new formulation for SMKP.
 Let $l$ be
the number of different item sizes in $N$, i.e.,
$l = |\{s_j | j = 1,\ldots ,n\}|$.
 Let these sizes be denoted by $d_1<d_2<\ldots <d_l$, and
let $n_1, n_2, \ldots, n_l$ be the number of {\em item types} with
sizes $d_1, d_2, \ldots, d_l$, respectively. Hence,
$n_k=|\{j | s_j=d_k\; \; j = 1,\ldots ,n\}|.$
Given a size $d_k$, we re-index the item types of size $d_k$, say
$\{j_1, j_2,\ldots, j_{n_k}\}$, in non-increasing order of values
$v$, i.e., in such a way that
%\begin{equation}\label{eq:values}
$v_{j_1}\geq v_{j_2}\geq \ldots \geq v_{j_{n_k}},$
%\end{equation}
for all $k = 1,\ldots ,l$.\\
%Let $t_1, t_2, \ldots, t_l$ be the number of items types of sizes
%$d_1, d_2, \ldots, d_l$, respectively. Hence,
%
%
%$$t_k = |\{j | s_j=d_k \; \; j = 1,\ldots ,n\}|$$
Let us consider the items with the
smallest size $d_1=1$,
%(in this case, w.l.o.g, assume $d_1>1$), 
and let $r_i^1 = (c_i \bmod
d_{2})$. Note that the effective capacity of knapsack $i$ that can
be used for all items of size larger than $d_1$ is $c_i-r_i^1$, for
$i=1,\ldots,m$. Hence, the capacities $r_i^1$, for $i=1,\ldots, m$,
of each knapsack can be {\em only} used  to assign items of size $d_1$. (Observe that
the capacity of knapsack $i$ that can be used  to assign only  items of size $d_1=1$ is $\lfloor r_i^1/ d_1\rfloor d_1=r_i^1$.
Let us consider the items with the size $d_2$. If $d_2<d_l$, let $r_i^2 = ((c_i-r_i^1) \bmod d_{3})$, otherwise let $r_i^2 = (c_i-r_i^1)$. Since, by definition, $d_2$ divides $c_i-r_i^1$ and $d_3$, it follows that $d_2$ divides $r_i^2$.
Note that, the capacities $r_i^2$, for $i=1,\ldots, m$, can be used only to assign items of sizes $d_1$ and $d_2$.  
The above argument can be repeated for items of sizes $d_3,\ldots,d_{l-1}$, by defining the capacities  $r_i^h$, for $h=3,\ldots,l-1$ and $i=1,\ldots,m$, and then setting $r_i^l=c_i -\sum\limits_{h=1}^{l-1}r_i^h$. By the above discussion, each knapsack $i$ can be partitioned into $l$ {\em parts} of capacities $r_i^1,\ldots,r_i^l$, such that only items not bigger than $d_h$ can be assigned to a part $h$, with $1\le h \le l$.
Given $j \in N$, we denote by $g(j)$, $1 \le g(j) \le l$, the index such that $d_{g(j)}=s_j$. Let $x_{i,j}^h$ be the number of items of type $j$ assigned to the part of knapsack $i$ (of capacity $r_i^h$), with $h\ge g(j)$. By the above discussion, it follows that SMKP can be also formulated as:
{\small
\begin{equation}\label{newform}
\begin{array}{rclr}
\max\sum\limits_{i=1}^{m}\sum\limits_{j=1}^{n}\sum\limits_{h=g(j)}^{l} v_j x_{i,j}^h&&\\
 \sum\limits_{j\in N: g(j) \le h} s_j x_{i,j}^h &\leq& r_i^h \text{ for } h=1,\ldots, l \text{ and } i=1,\ldots, m\\
 \sum\limits_{i=1}^{m} \sum\limits_{h: h\ge g(j)}x_{i,j}^h &\leq& b_j \text{ for } j=1,\ldots, n  \\
x_{i,j}^h& \in& \mathbb{Z}^+\cup\{0\} \;\text{ for } i =1,\ldots,m,\; j =1,\ldots, n \text{ and }  h
=g(j),\ldots, l.
\end{array}
\end{equation}
}
The objective function accounts for the maximization of the value of the assigned items. 
The first set of constraints states that the total size of the items $j$ such that $s_j \le d_h$ (or, equivalently $g(j) \le h$) assigned to the part $h$ of knapsack $i$ does not exceed the capacity $r_i^h$. The second set of constraints states that the total number of assigned items of type $j$ cannot exceed the upper bound  $b_j$.
From now on, we consider the SMKP formulation \eqref{newform}.

Given an instance of SMKP and an integer $h$, $1 \le h \le l$, we denote by SMKP($h$) the ``restricted" SMKP instance, in which the item set is $\{j\in N: s_j \le d_h\}$ and each knapsack $i$ has the reduced capacity $c_i=\sum\limits_{q=1}^{h} r_i^q$, for $i=1,\ldots,m$. A formulation for SMKP($h$) reads as follows
{\small
%:
\begin{equation}\label{newformh}
\begin{array}{rclr}
\max\sum\limits_{i=1}^{m}\sum\limits_{j=1}^{n}\sum\limits_{q=g(j)}^{h} v_j x_{i,j}^q&&\\
 \sum\limits_{j\in N: g(j) \le q} s_j x_{i,j}^q &\leq& r_i^q \text{ for } q=1,\ldots, h \text{ and } i=1,\ldots, m\\
 \sum\limits_{i=1}^{m} \sum\limits_{q=g(j)}^h x_{i,j}^q &\leq& b_j \text{ for } j\in \{ N: s_j \le d_h\}   \\
x_{i,j}^q& \in& \mathbb{Z}^+\cup\{0\} \; \text{ for } i =1,\ldots, m, \;  j\in \{ N: s_j \le d_h\} \text{ and }  q
=g(j),\ldots, h.
\end{array}
\end{equation}
}

In \cite{Detti}, a polynomial algorithm, called A-OPT, for finding an optimal solution of SMKP is proposed. A-OPT is a recursive algorithm based on Formulation \eqref{newform}. The basic idea of the algorithm is reported in the following.
Let $\alpha_1$ be the maximum number of items of size $d_1$ that can be
assigned to the $m$ knapsacks, when each knapsack has a restricted
capacity $r_i^1$, $i=1,\ldots,m$. Since the total number of
items of size $d_1$ is $t_1=\sum\limits_{j=1: s_j=d_1}^{n}b_j$, we have
$\alpha_1=\min\{t_1,\sum\limits_{i=1}^{m} \lfloor {r_i \over
d_1}\rfloor\}$. Lemma \ref{lem:strip} holds  \cite{Detti}.
\begin{lemma}\label{lem:strip}
An optimal solution for SMKP exists in which the first $\alpha_1$ (i.e., with the biggest values)
items of size $d_1$ are assigned to the knapsacks using at most a
capacity $r_i^1$, for  $i=1,\ldots, m$.
\end{lemma}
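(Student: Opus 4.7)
The plan is an exchange argument. Fix any optimal solution $x^*$, let $T$ denote the multiset of the top $\alpha_1$ copies of size-$d_1$ items (well-defined by the re-indexing $v_{j_1}\ge v_{j_2}\ge\cdots$), and let $A(x)$ be the multiset of size-$d_1$ items that $x$ assigns to the reserved parts $r_1^1,\ldots,r_m^1$. Because these parts accept only size-$d_1$ items and have total capacity $\sum_i r_i^1$, we have $|A(x)|\le \sum_i r_i^1$ and $|T|=\alpha_1\le \sum_i r_i^1$. The goal is to transform $x^*$, without decreasing the objective, into an optimal solution with $T\subseteq A(x^*)$.

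I would apply two local moves as long as $T\not\subseteq A(x^*)$. Move I (filling): if a copy of some top item $j^*\in T$ currently sits in $r_{i'}^{h'}$ with $h'\ge 2$ while some $r_i^1$ has free capacity, transfer that copy to $r_i^1$; since both parts accept items of size $d_1$, capacities and the bound constraints remain satisfied and the objective is unchanged. An analogous move applies if $j^*$ is unassigned and some $r_i^1$ is empty: drop $j^*$ directly into that slot. Move II (swapping): if every $r_i^1$ is full, then $|A(x^*)|=\sum_i r_i^1\ge\alpha_1$ and some $j^*\in T$ is missing from $A(x^*)$, so at least one non-top copy $j'\notin T$ (with $v_{j'}\le v_{j^*}$) occupies a slot in some $r_i^1$; interchange them, placing $j^*$ into that slot and returning $j'$ to the part $r_{i'}^{h'}$ vacated by $j^*$ (or simply deleting $j'$ if $j^*$ was unassigned). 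Because the two copies share the same size $d_1$, capacities and bounds stay satisfied, and the objective changes by $v_{j^*}-v_{j'}\ge 0$, so the new solution is still optimal.

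Each application of Move I or Move II strictly decreases the cardinality $|T\setminus A(x^*)|$, so the procedure terminates after at most $\alpha_1$ iterations with $T\subseteq A(x^*)$; combined with $|A(x^*)|\le\sum_i r_i^1$ and $|T|=\alpha_1$, this gives exactly the claimed configuration and proves the lemma.

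The main subtlety is ruling out a strict decrease of the objective in the ``adding'' subcases, where an unassigned top item is simply inserted into an empty reserved slot and the change is $+v_{j^*}$, which a priori could be negative. This is handled by the standard reduction for knapsack-type problems: any type $j$ with $v_j<0$ satisfies $x^*_{i,j}=0$ in every optimal solution and may be removed from the instance at the outset, after which all remaining $v_{j_k}\ge 0$ and every insertion is value-preserving or value-improving. With this reduction in place, the two exchange moves above yield the optimal solution promised by the lemma.
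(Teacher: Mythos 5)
Your exchange argument is correct, and it is worth noting that the paper itself offers no proof of this lemma to compare against: it simply imports the statement from the reference \cite{Detti} (``Lemma \ref{lem:strip} holds \cite{Detti}''). Your two moves are sound in Formulation \eqref{newform}: part $1$ of knapsack $i$ accepts only size-$d_1$ items and has capacity $r_i^1$, so $|A(x)|\le\sum_i r_i^1$ and $|T|=\alpha_1\le\sum_i r_i^1$; in Move II the existence of a non-top copy $j'$ in a reserved slot follows from $|A(x^*)|=\sum_i r_i^1\ge |T|$ together with $T\not\subseteq A(x^*)$, and $v_{j'}\le v_{j^*}$ holds because every copy outside $T$ has value at most the $\alpha_1$-th largest while every copy in $T$ has value at least that threshold. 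The bound constraints are also respected, since a copy of a type being unassigned forces that type to sit strictly below its bound $b_j$. The one point you should state more carefully is the negative-value reduction: deleting types with $v_j<0$ changes $t_1=\sum_{j:s_j=d_1}b_j$ and hence $\alpha_1$ itself, so what you actually prove is the lemma for the reduced instance with its recomputed $\alpha_1$ (indeed, as stated, the lemma is false for the original $\alpha_1$ whenever the top $\alpha_1$ copies include a negative-value item, since no optimal solution assigns such an item). This is really a gap in the lemma's statement rather than in your argument --- the paper only addresses $v_j<0$ much later, in the proof of Theorem \ref{thm:final} --- but a sentence acknowledging that $\alpha_1$ refers to the reduced instance would close it. With that caveat your proof is complete and self-contained, which is more than the paper provides.
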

According to Lemma \ref{lem:strip}, A-OPT recursively works as follows. First of all, the first $\alpha_1$ items of size $d_1$ are assigned to the knapsacks using at most a
capacity $r_i^1$ for each knapsack $i=1,\ldots, m$. The not assigned items of size $d_1$ are grouped  by a procedure, called {\em grouping procedure}, described in the following. The not assigned items of size $d_1$ are lined up  in non-increasing value order. Then,  groups of $d_2/d_1$ items are replaced by items with
size $d_2$. To each new item of size $d_2$, a value $v$ is
assigned, given by the sum of the values of the grouped items. (The
last new item built so far may not contain $d_2/d_1$ items of size
$d_1$, but we assign to it a size of $d_2$ and a value equal to
the sum of the values of the grouped items.) 
After the  grouping procedure, we get an instance with $l - 1$
different item sizes, where $d_1=d_2, d_2=d_3,\ldots,d_{l-1}=d_l$,
and $m$ knapsacks of capacity $c_i-r_i^1$, for $i=1,\ldots,m$. 
A-OPT can be recursively called on this instance. 
%A sketch of A-OPT is reported in Figure \ref{fig:algo} (see also \cite{Detti}).
 %given in Figure \ref{fig:algo}.
  
Notation that will be used in the following is now introduced. For $a \in  \mathbb{R}$, we denote $a^+=\max\{0, a\}$.
Given a feasible solution $x$ of SMKP and two integers $1 \le h_1\le h_2\le l$,
let $x^{h_1,h_2}$ be the "partial" solution of $x$ related to the  parts of the knapsacks $\{h_1,\ldots,h_2\}$, i.e.,  $x^{h_1,h_2}$ is the vector containing  the components $x^{q}_{i,j}$, for $q=h_1,\ldots,h_2$, $i=1,\ldots,m$ and $j\in \{N: d_j \le d_{h_2}\}$. If $h_1>h_2$ then $y^{h_1,h_2}$ does not assign any item. To simplify the notation, if
$h_1=h_2$, $x^{h_1}$ is used instead of $x^{h_1,h_1}$. Let $S(x)$ and $S(x^{h_1,h_2})$ be the set of items assigned in $x$ and $x^{h_1,h_2}$ respectively. Given an item type $j$,  let $x_j^{h_1,h_2}$ be the number of
items of type $j$ assigned in $x$ to the parts $\{h_1,\ldots,h_2\}$ of knapsacks, and let  $S(x_j^{h_1,h_2})$ be the corresponding set of items. Moreover, given a subset of items $A$, let $f(A)$  and $v(A)$ be the total size and value, respectively,  of the items in $A$, i.e.,
$f(A)=\sum\limits_{j \in A} s_j |A_j|$ and $v(A)=\sum\limits_{j \in A} v_j |A_j|$,  where $|A_j|$ denotes the number of items of type $j$ in $A$. For the sake of simplicity, $f(x^{h_1,h_2})$ ($v(x^{h_1,h_2})$) can be used instead of $f(S(x^{h_1,h_2}))$ ($v(S(x^{h_1,h_2}))$) to denote the total size (the total value) of the items assigned by $x$ to the knapsacks parts $\{h_1,\ldots,h_2\}$. 
%A notation similar to that introduced above is also used for $SP$(F,k) and  a related feasible solution $z$.

In the remaining part of this section, new formulations of SMKP are proposed based on restrictions of  the set of feasible solutions of Problem \eqref{newform}. Such restrictions guarantee the  existence of at least an optimal solution of the SMKP in the restricted sets.

\subsection{Definition of OPT solution}\label{sec:OPT}
In what follows, the definition of {\em OPT solution} is given.
\begin{definition}\label{def:OPTprop}
Let $x$ be  a feasible solution for SMKP. The solution $x$ has the {\em OPT property} (or, equivalently, $x$ is an {\em OPT solution}), 
%if  objective function coefficients $\nu_j>0$ exist for all $j \in N$, such that 
if for each feasible solution $\bar x$ that assigns the same items of $x$, i.e.,   $S(x)=S(\bar x)$, we have $v(x^{1,h}) \ge v(\bar x^{1,h})$, 
 for $h=1,\ldots,l$. 
 \end{definition}
% Given an OPT solution $x$, in the following, we also say that  $x$ has the  OPT property with respect to coefficients $\nu_j$, $j=1,\ldots,n$.
 
%Observe that the items assigned in a not OPT solution $\bar x$ can be always reallocated to the knapsack parts to get an OPT solution.  
%VEDI ESEMPIO SU QUADERNONE.\\

The following observation holds since algorithm A-OPT assigns to the  $h$-th part of the knapsack, at the $h$-th iteration, as many (grouped) items of size $d_h$ as possible (according to their order).
\begin{observation}\label{A-OPT}
A-OPT produces optimal OPT solutions. 
%with respect to objective function coefficients $v_j$, $j=1,\ldots,n$.
\end{observation}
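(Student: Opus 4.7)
The observation combines two assertions: optimality of A-OPT's output, already established in \cite{Detti}, and the OPT property from Definition \ref{def:OPTprop}. I would focus on the OPT property and argue by induction on $l$, the number of distinct item sizes. The base case $l=1$ is immediate: there is only one knapsack part, so $v(x^{1})=v(\bar x^{1})$ whenever $S(\bar x)=S(x)$.

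For the inductive step, let $x$ be an A-OPT output on an $l$-size instance and $\bar x$ any feasible solution with $S(\bar x)=S(x)$. I would proceed in two phases. In the first phase I would \emph{align part $1$}: modify $\bar x$ into an intermediate $\bar x''$ with $\bar x''^{1}=x^{1}$ without decreasing $v(\bar x^{1,h})$ for any $h$. The key facts are that $x^{1}$ consists of the $\alpha_{1}$ size-$d_{1}$ items of highest value in $N$ (hence in $S(x)$), that $\bar x^{1}$ contains at most $\alpha_{1}$ size-$d_{1}$ items from $S(x)$, and that any two size-$d_{1}$ items can be exchanged between part $1$ and a higher part without breaking feasibility. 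Whenever some $b\in x^{1}\setminus\bar x^{1}$ lies in part $q\ge 2$ of $\bar x$ and some $a\in\bar x^{1}\setminus x^{1}$ sits in part $1$ (or part $1$ has a vacancy), I swap them; because $v(b)\ge v(a)$, the value $v(\bar x^{1,h})$ is unchanged for $h\ge q$ and increases by $v(b)-v(a)\ge 0$ for $h<q$, so iterating produces $\bar x''$ with the required property.

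In the second phase I would \emph{invoke induction on the A-OPT subinstance}. After iteration $1$, A-OPT recurses on a subinstance with $l-1$ sizes and returns $x'$, which is OPT by the inductive hypothesis. Since $\bar x''^{1}=x^{1}$ and $S(\bar x'')=S(x)$, the items placed by $\bar x''$ in parts $2,\ldots,l$ coincide as a multiset with $S(x^{2,l})=S(x')$. I would then construct a feasible subinstance solution $\bar x'$ with $S(\bar x')=S(x')$ and $v(\bar x'^{1,h-1})\ge v(\bar x''^{2,h})$ for $h\ge 2$, essentially by placing each super-item $\sigma$ produced by A-OPT's grouping in the lowest-numbered part in which some constituent of $\sigma$ appears in $\bar x''$. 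Applying the inductive OPT property gives $v(x^{2,h})=v(x'^{1,h-1})\ge v(\bar x'^{1,h-1})\ge v(\bar x''^{2,h})$, and combining with $v(x^{1})=v(\bar x''^{1})$ yields
\[
v(x^{1,h})=v(x^{1})+v(x^{2,h})\ge v(\bar x''^{1,h})\ge v(\bar x^{1,h}),
\]
closing the induction.

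The main obstacle will be the construction of $\bar x'$ in the second phase: the scattered placements of size-$d_{1}$ items across the parts and knapsacks of $\bar x''$ have to be reconciled with A-OPT's fixed partition of those items into super-items of size $d_{2}$. Verifying that the resulting placement of super-items remains feasible within the reduced knapsack capacities $c_{i}-r_{i}^{1}$ and still satisfies the required cumulative-value inequality will rely crucially on the divisibility properties of Propositions \ref{prop:div} and \ref{prop:chunks}.
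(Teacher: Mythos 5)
The paper offers no real proof of Observation \ref{A-OPT}: it is justified only by the one-line remark that A-OPT, at iteration $h$, packs into part $h$ as many (grouped) items of size $d_h$ as possible in value order. Your attempt at a genuine inductive proof therefore goes well beyond what the text provides. Phase 1 of your argument (aligning part $1$ by value-nondecreasing swaps of size-$d_1$ items) is sound and is essentially the content of Lemma \ref{lem:strip}. The problem is phase 2.

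The concrete gap is the construction of the subinstance solution $\bar x'$. The rule ``place each super-item $\sigma$ in the lowest-numbered part in which some constituent of $\sigma$ appears in $\bar x''$'' is not feasible in general. Take $d_1=1$, $d_2=2$ and an adversary solution $\bar x''$ in which the parts $2$ are filled to their total capacity $R_2=\sum_i r_i^2$ entirely by leftover size-$1$ items, one constituent from each of $R_2$ distinct super-items whose partner constituents sit in parts $3$; your rule then assigns all $R_2$ super-items, of total size $2R_2$, to part $2$, violating capacity. The difficulty is structural: A-OPT's grouping fixes the partition into super-items before $\bar x''$ is seen, so a super-item can straddle a part boundary, and no per-super-item placement rule can simultaneously respect the per-part capacities and the cumulative value inequality $v(\bar x'^{1,h-1})\ge v(\bar x''^{2,h})$ without an additional regrouping or majorization argument --- for instance, counting for each $h$ the number $n_h$ of leftover unit items that $\bar x''$ places in parts $2,\dots,h$ and showing that the $\lfloor n_h d_1/d_2\rfloor$ most valuable super-items already have at least the corresponding value, which in turn uses the value-ordering of the grouping procedure together with Propositions \ref{prop:div} and \ref{prop:chunks}. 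You flag this step as ``the main obstacle,'' but it is precisely where the proof currently breaks, so the argument as written is incomplete.
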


In view of Definition \ref{def:OPTprop} and Observation \ref{A-OPT}, it follows that for each feasible solution $x$, the solution $\bar x$, obtained by applying  A-OPT on the restricted item set $S(x)$, is an OPT solution. Hence, given an SMKP instance, we can restrict to consider only  OPT solutions and reformulate SMKP as follows.  Let $P^{OPT}$ be the convex hull of the  OPT solutions
for SMKP, then SMKP can be also formulated as 
$\max \{ v(x): x \in P^{OPT}\}$.\\
%\begin{eqnarray}
%&\max v(x)\\
%&x \in P^{OPT}
%\end{eqnarray}
Given an SMKP instance, let $O^{OPT}$ be the set of the  OPT solutions that are optimal. 
%CAMBIARE PER IPCO: In the rest of the paper, a decomposition scheme for determining all the solutions in  $O^{OPT}$ is proposed.
%:In this paper, a description  of polytope $P^{OPT}$ is given.
Given an optimal OPT solution $x$ for SMKP, in the following (Lemma \ref{lem:opt}) we show that  $x^{1,h}$ is optimal for SMKP($h$), for $h=1,\ldots,l$. At this aim Lemma \ref{lem:opt_aux} is useful.
%lemma ausiliario per \label{lem:opt}
\begin{lemma}\label{lem:opt_aux}
Given an SMKP instance, let $x$ be  an optimal solution for SMKP.
Let $\bar x$ be the solution produced by A-OPT when applied to the set $S(x)$. Let $x_{O}$ be the optimal solution found by A-OPT when applied to the whole item set $N$. Then, if during A-OPT  the (grouped) items with the same size and value are suitably ordered,  $S(x_{O}^{1,h})=S(\bar x^{1,h})$, for $h=1,\ldots,l$.
\end{lemma}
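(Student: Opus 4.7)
The plan is to prove the lemma by induction on $h$, establishing at each iteration that the two A-OPT runs (on $N$ and on $S(x)$) commit, modulo tie-breaking, to the same set of underlying real items for part $h$.

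The workhorse is a single exchange inequality derived from the optimality of $x$: for every $j\in N\setminus S(x)$ and every $j'\in S(x)$ with $s_j\le s_{j'}$, we must have $v_j\le v_{j'}$. Indeed, removing one unit of $j'$ from its part in $x$ frees $s_{j'}\ge s_j$ units of capacity, and a unit of $j$ (which may legally sit in any part of index $\ge g(j)$) can be inserted there without violating any capacity or upper-bound constraint; if $v_j>v_{j'}$ this would strictly improve $v(x)$, contradicting optimality. A complementary preliminary is that, by Observation~\ref{A-OPT}, $\bar x$ is itself an optimal OPT solution with $S(\bar x)=S(x)$, so $v(\bar x)=v(x_O)$.

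For the base case $h=1$, both runs sort the available size-$d_1$ items by value and take a top prefix to fill part $1$. The exchange inequality implies that every size-$d_1$ item of $N\setminus S(x)$ is no better in value than any size-$d_1$ item of $S(x)$; fixing the tie-breaking rule in A-OPT on $N$ to always prefer items of $S(x)$ when values coincide then forces $S(x_O^1)=S(\bar x^1)$. For the inductive step, the hypothesis $S(x_O^{1,h-1})=S(\bar x^{1,h-1})$ means the two pools of unassigned items at the start of iteration $h$ differ only by elements of $N\setminus S(x)$. One then applies the exchange inequality at the level of pseudo-$d_h$ items (each a group of real items summing to size $d_h$): any pseudo-item in A-OPT on $N$ that incorporates a real item of $N\setminus S(x)$ can be replaced, without loss of value, by a pseudo-item built from items already in $S(x)$. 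Extending the tie-breaking rule to pseudo-items accordingly yields $S(x_O^h)=S(\bar x^h)$, and hence $S(x_O^{1,h})=S(\bar x^{1,h})$.

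The main obstacle is precisely the inductive step, namely the handling of the grouping procedure: A-OPT on $N$ may assemble pseudo-$d_h$ items from a strictly larger pool than A-OPT on $S(x)$, so one must show that in every such grouping the ``excess'' material from $N\setminus S(x)$ is interchangeable in value with material from $S(x)$. This requires a careful accounting of how the values of grouped items relate to items unused by $x$, pushing the exchange argument recursively through the levels of grouping, and committing to a consistent tie-breaking convention that is preserved from one iteration to the next.
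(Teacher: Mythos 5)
Your overall skeleton (a first-point-of-divergence induction, an exchange argument driven by the optimality of $x$, and a tie-breaking convention for equal-value grouped items) matches the paper's proof in spirit, and your individual-item exchange inequality is correct as far as it goes. But the proposal has a genuine gap: the inductive step is announced rather than proved. You yourself flag the grouping procedure as ``the main obstacle'' and then describe only what \emph{would} need to be done (``careful accounting\ldots pushing the exchange argument recursively through the levels of grouping''), which is precisely the content of the lemma. Worse, the tool you designate as the workhorse is insufficient for that step: the inequality $v_j\le v_{j'}$ for an \emph{unassigned} $j$ versus an \emph{assigned} $j'$ says nothing in the case where the first divergent pseudo-item $j_O$ of the run on $N$ is composed entirely of items of $S(x)$ that are merely grouped differently (the two runs group from different pools, so the partition into pseudo-items shifts even on the common items). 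In that case there is no unassigned item to exchange against, yet one still has to rule out $v(j_O)>v(j)$.

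The paper closes exactly this hole with a repacking argument you do not have. Writing $j,t^1,\dots,t^g$ for the grouped items of the run on $S(x)$ that are actually assigned from the tail of the list, and $A=(S(j)\cup S(t^1)\cup\dots\cup S(t^g))\cap S(j_O)$ for the overlap with the constituents of $j_O$, it uses Proposition \ref{prop:chunks} (divisibility of sizes) to rearrange the underlying real items into exactly $g+1$ chunks of size at most $d_q$ with $A$ confined to a single chunk; that chunk, of value $<v(j_O)$, can then be swapped for $j_O$ without violating any capacity, contradicting the optimality of $x$. This yields $v(j)=v(j_O)$, after which the tie-breaking convention finishes the argument as you intend. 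To complete your proof you would need to supply this rearrangement step (or an equivalent matching argument between the constituents of the two pseudo-items); without it the claim ``any pseudo-item incorporating material from $N\setminus S(x)$ can be replaced without loss of value'' is an assertion of the conclusion, not a derivation of it.
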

\begin{proof}
The proof is reported in the Appendix.
\end{proof}

\begin{lemma}\label{lem:opt}
Let $x$ be  an optimal  solution for SMKP satisfying the OPT property.
% property with respect coefficients $v_j$, for all $j \in N$. 
Then $ x^{1,h}$ is optimal for SMKP($h$), for $h=1,\ldots,l$.
\end{lemma}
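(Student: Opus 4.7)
The plan is to link $x$ with two canonical A-OPT solutions and then invoke the correctness of A-OPT on the reduced instance SMKP($h$). Let $\bar x$ denote the solution produced by A-OPT applied to the item multiset $S(x)$, and let $x_O$ denote the solution produced by A-OPT applied to the whole item set $N$, both with tie-breaking chosen so that Lemma~\ref{lem:opt_aux} applies.

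The first step is to establish $v(x^{1,h}) = v(x_O^{1,h})$ for every $h$. By Observation~\ref{A-OPT} the solution $\bar x$ is itself an OPT solution, and by construction $S(\bar x) = S(x)$. Definition~\ref{def:OPTprop} applied to $x$ (with alternative $\bar x$) and to $\bar x$ (with alternative $x$) gives $v(x^{1,h}) \ge v(\bar x^{1,h})$ and $v(\bar x^{1,h}) \ge v(x^{1,h})$ respectively, so $v(x^{1,h}) = v(\bar x^{1,h})$. Lemma~\ref{lem:opt_aux} then yields $S(x_O^{1,h}) = S(\bar x^{1,h})$, whence $v(x_O^{1,h}) = v(\bar x^{1,h}) = v(x^{1,h})$.

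The second step is to verify that $x_O^{1,h}$ is optimal for SMKP($h$). The divisibility-based derivation of the capacities $r_i^q$ made before (\ref{newform}) remains valid for SMKP($h$) with knapsack capacity $\sum_{q=1}^{h}r_i^q$, yielding exactly the same values $r_i^1,\ldots,r_i^h$. Since the item set of SMKP($h$) consists of precisely the items of size at most $d_h$ in $N$, an inductive comparison of the recursive calls shows that the first $h$ iterations of A-OPT on SMKP compute the same quantities $\alpha_q$, make the same assignment to parts of capacity $r_i^q$, and produce the same grouped pseudo-items of size $d_{q+1}$ as A-OPT on SMKP($h$). Consequently $x_O^{1,h}$ coincides with the output of A-OPT on SMKP($h$), which is optimal by Observation~\ref{A-OPT}.

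Finally, $x^{1,h}$ is itself feasible for SMKP($h$): the capacity constraints for parts $1,\ldots,h$ are inherited from $x$, and for each $j$ with $s_j \le d_h$ one has $\sum_i\sum_{q=g(j)}^h x_{i,j}^q \le \sum_i \sum_{q=g(j)}^l x_{i,j}^q \le b_j$. Combined with $v(x^{1,h}) = v(x_O^{1,h})$ and the optimality of $x_O^{1,h}$, this shows that $x^{1,h}$ is optimal for SMKP($h$). The main obstacle is the inductive comparison in the second step, where the tie-breaking implicit in A-OPT has to be controlled so that the multiset of items assigned in the first $h$ recursive calls is identical on the two instances; this is precisely the bookkeeping that Lemma~\ref{lem:opt_aux} was designed to handle, which is why transferring through $\bar x$ and $x_O$ is useful rather than attempting a direct exchange argument between $x$ and an arbitrary solution of SMKP($h$).
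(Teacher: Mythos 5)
Your proof is correct and follows exactly the route the paper intends: its own proof of this lemma is the one-line remark that the claim ``directly follows from Observation~\ref{A-OPT} and Lemma~\ref{lem:opt_aux}'', and your argument is a faithful unpacking of that, transferring from $x$ to $\bar x$ via the OPT property, then to $x_O$ via Lemma~\ref{lem:opt_aux}, and finally identifying $x_O^{1,h}$ with the A-OPT output on SMKP($h$). The extra bookkeeping you supply (matching the capacities $r_i^q$ and the recursive calls, and checking feasibility of $x^{1,h}$) is exactly the detail the paper leaves implicit.
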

\begin{proof}
The thesis directly follows from Obervation \ref{A-OPT} and Lemma \ref{lem:opt_aux}.
\end{proof}

%By Lemma \ref{lem:opt}, we can restrict to consider only optimal solutions satisfying the OPT property. Let $P(OPT)$ be the convex hull of the optimal solutions of SMKP satisfying the OPT property. Then, SMKP can be formulated as the problem of finding an optimal solution in $P(OPT)$.

\subsection{Definition of ordered solution}\label{sec:ordered}
In this section, the definition of {\em ordered solution} is given. 
%and a new formulation for SMKP is provided. 

\begin{definition}\label{def:ordered}
Given a feasible solution $x$ for SMKP,  $x$ is called ordered if, for each knapsack part $h=1,\ldots,l$, a set of items $\Gamma$  assigned to the part $h$ in $x$ with size  $s(\Gamma)\le d_h$ exists only if an item of size $s(\Gamma)$ and value $v(\Gamma)$ either assigned to a part bigger than $h$ or not assigned in $x$ does not exist.
\end{definition}

%In other words, in an ordered solution $x$, when ties occurs with respect to size and value, priority  is given to the items  with the biggest size.
Given a feasible solution $x$  for SMKP, it is always possible to build an ordered solution $\bar x$ such that $v(x^{1,h})=v(\bar x^{1,h})$ and $s(x^{1,h})=s(\bar x^{1,h})$,  for $h=1,\ldots,l$. In fact, let us suppose that a set $\Gamma$, with  $|\Gamma|>1$ and $f(\Gamma)\le d_h$, is assigned to a part $h$ in $x$ and that an item $j$ exists, with  $s_j=s(\Gamma)$ and value $v_j=v(\Gamma)$, either assigned to a part bigger than $h$ or not assigned in $x$. In the first case, let $\bar x$ be  the solution obtained by swapping $\Gamma$ and $j$ in $x$,  while in the second case let $\bar x$ be  the solution obtained by assigning $j$ in place of the items in $\Gamma$. If $\bar x$ contains a new set $\Gamma$ as defined above, then the above argument can repeated, otherwise, $\bar x$ is an ordered solution.

\subsection{A new formulation for SMKP}\label{sec:newnewfor}
By the discussions presented in Sections \ref{sec:OPT} and \ref{sec:ordered}, it follows that we can restrict to consider only OPT and ordered solutions. 
Let  $P^{OO}$ be the convex hull  of  ordered and OPT solutions  for SMKP (formulated as in \eqref{newform}), and let $O^{OO}$  be the set of
 optimal ordered and OPT solutions.  Then, SMKP can be formulated as the problem of finding a solution in  $O^{OO}$, or equivalently, as
%$$\max \{ v(x): x \in P^{OPT}\}$$
{\small
\begin{equation}\label{formOO}
\max \{ v(x): x \in P^{OO}\}
%\begin{array}{rclr}
%&\max \sum\limits_{i=1}^{m}\sum\limits_{j=1}^{n}\sum\limits_{h=g(j)}^{l} v_j x_{i,j}^h\\
%&x \in P^{OO}
%\end{array}
\end{equation}
In the rest of the paper we will show how to find a description of polytope $P^{OO}$. It is important to observe that, by definition of OPT and ordered solution, $P^{OO}$ directly depends from the values of the objective coefficients $v_1, \ldots, v_n$.

\section{A problem transformation}\label{sec:trasnform}

In this section, a transformation is presented from SMKP, formulated as in \eqref{newform}, to a new
problem, called Modified-$SMKP$ (M-$SP$). This transformation
extends that presented in \cite{Pochet98} (for the sequential
knapsack problem) and uses the notion of {\em block} introduced in
\cite{Pochet98}.
\begin{definition}\label{def:block}
Given an MSKP  instance, let $B_w=\{w_1,\ldots, w_k\}$, $w_1<\ldots < w_k$, be a subset of
item types. $B_w$ is called a block if, for every $j\in \{2,\ldots, k\}$, $s_{w_j}\leq s_{w_1} + \sum\limits_{v=1}^{j-1}b_{w_v}s_{w_v}$. The number
$\tilde b_w=\sum\limits_{v=1}^{k}b_{w_v}s_{w_v}/s_{w_1}$ is called the
{\em multiplicity} of block $B_w$.
\end{definition}
The following property holds.
\begin{proposition}\label{prop:block}
Given a block $B_w=\{w_1,\ldots, w_k\}$,
let\\ $\delta^h_q \in\{0,s_{w_1}, 2s_{w_1},
\ldots,b_{w_1}s_{w_1},(b_{w_1}+1)s_{w_1}, \ldots,b_{w_1}s_{w_1}+b_{w_2}s_{w_2}, \ldots,\sum\limits_{v=1}^{k}b_{w_v}s_{w_v}\},$ 
 for $q=1,\ldots, l$ and $h=q,\ldots,l$, such that: 
\begin{equation}\label{eq:block1bis}
\sum\limits_{h=q}^{l}
 \lceil \delta^h_q/d_q\rceil  \leq \sum\limits_{w \in B_w: g(w)=q}  b_{w}\;\;\;\; \;\; \text{for } \;\;  q=1,\ldots, l\text{ and } h=q,\ldots,l.\end{equation}

Then  a subset $R
\subseteq B_w$ and not negative integers $\lambda^h_{j}$, for 
$j \in R$ and $h=g(j),\ldots, l$, exist such that: 
\begin{equation}\label{eq:block1}
0\le
\sum\limits_{h=g(j)}^{l}\lambda^h_{j} \leq  b_{j}, \text{ for all }j \in
R;
\end{equation}
\begin{equation}\label{eq:block2}
\sum\limits_{j
\in R:\\ g(j)=q}\lambda^h_{j} = \lceil \delta^h_q/d_q\rceil, \text{ for } q=1,\ldots, l \text{ and } h=q,\ldots,l.
\end{equation}
\end{proposition}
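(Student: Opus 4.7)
The plan is to observe that the system (\ref{eq:block1})--(\ref{eq:block2}) decouples into independent subproblems indexed by the size class $q$, each of which is a small transportation-type feasibility problem. Fixing $q \in \{1,\ldots,l\}$, set $J_q = \{w \in B_w : g(w) = q\}$ and $T_q^h = \lceil \delta_q^h/d_q\rceil$ for $h = q,\ldots,l$. Both the equalities (\ref{eq:block2}) with this index $q$ and the bounds (\ref{eq:block1}) restricted to $j \in J_q$ involve only the variables $\{\lambda_j^h : j \in J_q,\ h = q, \ldots, l\}$, so the overall problem splits into $l$ disjoint subproblems, each asking for non-negative integers $\lambda_j^h$ with $\sum_{j \in J_q} \lambda_j^h = T_q^h$ for every $h$ and $\sum_{h=q}^{l} \lambda_j^h \leq b_j$ for every $j \in J_q$. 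Hypothesis (\ref{eq:block1bis}) is precisely $\sum_{h=q}^{l} T_q^h \leq \sum_{j \in J_q} b_j$, the total-demand-versus-total-capacity condition for feasibility of such a transportation instance.

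The next step is an explicit greedy construction for each subproblem. Enumerate $J_q = \{j_1,\ldots,j_p\}$ in any order and process the levels $h = q, q+1, \ldots, l$ in turn, maintaining a current index $t$ and a running residual capacity for $j_t$ (initially $b_{j_1}$): at each level supply the integer demand $T_q^h$ by drawing units from $j_t$, incrementing $t$ whenever $j_t$ is exhausted, and recording each block of consumption as the corresponding value of $\lambda_{j_t}^h$. The total units drawn after processing level $h$ equals $\sum_{h'\leq h} T_q^{h'}$, which is bounded above by $\sum_{t'\leq p} b_{j_{t'}}$ by (\ref{eq:block1bis}), so $t$ never advances past $p$ and the resulting $\lambda_j^h$ are non-negative integers respecting both constraint families. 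Finally I would set $R = \bigcup_q \{j \in J_q : \sum_{h} \lambda_j^h > 0\}$, which gives the required subset.

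The only way this plan could fail would be if there were hidden coupling between distinct size classes. In the present setup, however, $\delta_q^h$ and $\delta_{q'}^h$ for $q \neq q'$ address disjoint variable sets, so the question reduces cleanly to $l$ copies of a single-class problem, and (\ref{eq:block1bis}) is exactly the inequality that makes each copy feasible. I therefore do not expect any real obstacle beyond the bookkeeping in the greedy sweep, and the values of the $\delta_q^h$ being multiples of $s_{w_1}$ play no role in this argument once the ceilings have been taken.
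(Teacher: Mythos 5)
Your proof is correct, and it takes a genuinely different---and more economical---route than the paper's. You observe that the system \eqref{eq:block1}--\eqref{eq:block2} is block-diagonal in the size-class index $q$: each equality in \eqref{eq:block2} involves only the variables $\lambda^h_j$ with $g(j)=q$, and each bound in \eqref{eq:block1} involves a single item type $j$, hence a single class. Each block is then a transportation feasibility problem whose only obstruction is total demand versus total capacity, which is precisely \eqref{eq:block1bis}, and your greedy sweep produces an integral solution; taking $R$ to be the support of the $\lambda$'s finishes the argument. The paper instead proceeds by induction on the number of distinct sizes $l$: it applies the inductive hypothesis to the restricted family $\{\delta^h_q\}_{q,h\le a-1}$ and then satisfies the remaining level-$a$ demands $\lceil \delta^a_q/d_q\rceil$ by choosing, for each $q$, a set $R^q$ of items of size $d_q$ fitting within the residual capacities $b_j-\sum_{h=q}^{a-1}\lambda^h_j$, whose existence is guaranteed by a rearrangement of the hypothesis. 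The two decompositions are orthogonal (yours is by the class index $q$, the paper's is essentially by the level index $h$), but yours is exact---there is literally no coupling left to track---so it avoids the residual-capacity bookkeeping entirely, and, as you note, it makes visible that the divisibility structure and the restriction of $\delta^h_q$ to multiples of $s_{w_1}$ play no role in this particular statement once the ceilings are taken.
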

\begin{proof}
The proof is by induction on the number of different item sizes $l$. Suppose that $l=1$. Then, by Definition \ref{def:block}, all the items have size $d_1=s_{w_1}=1$ and belong to the same block $B_w$. 
%The proposition reads as follows. Let $\delta^1_1$ be an integer in$\{0,s_{w_1}, 2s_{w_1},\ldots,b_{w_1}s_{w_1}\}=\{0,s_{w_1}, 2s_{w_1},\ldots,\sum_{j \in N}b_{j}d_1\}$
%such that$ \lceil \delta^1_1/d_1\rceil \leq \sum\limits_{w \in B_w}  b_{w}.$\\
%Then a subset $R
%\subseteq B_w$ and integers $\lambda^1_{j}$, for 
%$j \in R$, exist such that: 1) $0\le \lambda^1_{j} \leq  b_{j}$, for all $j \in R$; 2) $\sum\limits_{j\in R}\lambda^1_{j} = \lceil \delta^1_1/d_1\rceil $.\\
Let $j'$ be the smallest index such that $\sum\limits_{j\in B_w : j \le j'} b_{j} \ge
 \lceil \delta^1_1/d_1\rceil=\delta^1_1$. Then, since $\delta^1_1 \in\{0,s_{w_1}, 2s_{w_1},\ldots,b_{w_1}s_{w_1}\}$ and $d_1=s_{w_1}=1$,
the thesis follows by setting $R=\{w_1,\ldots,j'\}$, $\lambda^1_j=b_j$ if $j<j'$ and $\lambda^1_{j'}=   \delta^1_1 - \sum\limits_{j\in R: j< j'}\lambda^1_j$.\\ 
Let us suppose that the thesis holds up to $l=a-1\ge 1$ and show it for $l=a$.\\
Given a block $B_w=\{w_1,\ldots, w_k\}$, let $\delta^h_q \in\{0,s_{w_1}, 2s_{w_1},
\ldots,b_{w_1}s_{w_1}, \ldots,\sum\limits_{v=1}^{k}b_{w_v}s_{w_v}\},$
 for $q=1,\ldots, a$ and $h=q,\ldots,a$, such that: 
\begin{equation}\label{eqn:propblock}
\sum\limits_{h=q}^{a-1}
 \lceil \delta^h_q/d_q\rceil +  \lceil \delta^a_q/d_q\rceil \leq \sum\limits_{w \in B_w: g(w)=q}  b_{w}\;\;\;\; \;\; \text{for } \;\;\;\;\;\;  q=1,\ldots, a.
\end{equation}
Observe that, by \eqref{eqn:propblock}, we also have
\begin{equation}\label{eq:block1bbis}
\sum\limits_{h=q}^{a-1}
 \lceil \delta^h_q/d_q\rceil  \leq \sum\limits_{w \in B_w: g(w)=q}  b_{w} \;\;\;\; \;\; \text{for } \;\;\;\;\;\;  q=1,\ldots, a-1.\end{equation}
 Let $A=\sum\limits_{w \in B_w: g(w)<a}b_{w}s_w$. By \eqref{eq:block1bbis}, we have $\sum\limits_{h=q}^{a-1}
 \lceil \delta^h_q/d_q\rceil d_q \leq \sum\limits_{w \in B_w: g(w)=q}  b_{w}d_q \le A \text{    for }  q=1,\ldots, a-1.$
Hence,  $\delta^h_q \le A$,  i.e.,
%\begin{equation}\label{eq:block2bbis}
$\delta^h_q \in\{0,s_{w_1}, 2s_{w_1},
\ldots,b_{w_1}s_{w_1}, \ldots,\sum\limits_{v: g(v)<a }b_{w_v}s_{w_v}\}$, for  $q=1,\ldots, a-1$ and $h=q,\ldots, a-1$.
%\end{equation}
Hence, the values $\delta^h_q$,  for   $q=1,\ldots, a-1$, satisfy the hypothesis of the proposition, and by induction, a subset $\bar R\subseteq B_w$, such that $s_j <d_a$ for all $j \in \bar R$, and integers $ \lambda^h_{j}$, for $j \in \bar R$ and $h=g(j),\ldots, a-1$, exist such that   $0\le \sum\limits_{h=g(j)}^{a-1} \lambda^h_{j} \leq  b_{j}$, for all $j \in
\bar R$, and $\sum\limits_{j \in \bar R: g(j)=q}\lambda^h_{j} = \lceil \delta^h_q/d_q\rceil $, for $q=1,\ldots, a-1$ and $h=q,\ldots, a-1$.
Hence, Inequality \eqref{eqn:propblock} reads as
$$\sum\limits_{j\in \bar R: g(j)=q} \sum\limits_{h=q}^{a-1}
\lambda^h_j +  \lceil \delta^a_q/d_q\rceil \leq \sum\limits_{j \in \bar R: g(j)=q}  b_{j}+ \sum\limits_{w \in B_w\setminus \bar R: g(w)=q}  b_{w}\;\;\;\; \;\; \text{for } \;\;\;\;\;\;  q=1,\ldots, a,$$
that can be rewritten as
\begin{equation}\label{eq:indBlock}
  \lceil \delta^a_q/d_q\rceil \leq \sum\limits_{j \in \bar R: g(j)=q}  \left( b_{j}- \sum\limits_{h=q}^{a-1}
\lambda^h_j\right)+ \sum\limits_{w \in B_w\setminus \bar R: g(w)=q}  b_{w}\;\;\;\; \;\; \text{for } \;\;\;\;\;\;  q=1,\ldots, a.\end{equation}
%Given a subset $C$ of $B_w$, in what follows we denote by $C_j$  the set of items of type $j$ in $C$. 
For each $q=1,\ldots,a$, let $R^q$ be a subset of items in $B_w$ of size $d_q$ with minimum total size such that:\\ $(i)$  
$ |R^q_{j}| \le  b_{j}- \sum\limits_{h=q}^{a-1} \lambda^h_j$, for all $j \in \bar R\cap R^q$, and $ |R^q_{j}| \le  b_{j}$, for all $j \in R^q \setminus \bar R$ (recall that  $R^q_j$ is the set of items of type $j$ in $R^q$);\\ 
$(ii)$ $\sum\limits_{j\in R^q}   |R^q_{j}| =  \lceil \delta^a_q/d_q\rceil.$\\
%RIVEDI DA QUI 10-2-14%(Recall that, $|R^q_{j}|$ denotes the number of items $j$ in $R^q$.)
Observe that, by \eqref{eq:indBlock}, the sets $R^1,\ldots,R^a$ exist and they are disjoints by definition. Then, the thesis follows by setting $R=\bar R\cup \{R^1\cup \ldots \cup R^a\}$, and
$\lambda^a_j = |R^{g(j)}_{j}|$ 
%if $j\in R \cap R^{g(j)}$
for all $j \in  \{R^1\cup \ldots \cup R^a\}$.
%, and $\lambda^a_j = |R^{g(j)}_{j}|$ if $j\in R^{g(j)} \setminus R$.
\end{proof}
%Hence, it turns out that in any feasible solution of problem
%\eqref{prob:P1} the number of items of the set $S_q'$ assigned to
%the parts $q, q+1,\ldots,l$ of the knapsack correctly is not
%bigger than $\bar b_{jq}$, for $q=1, \ldots, l$.

%, and recalling that condition $\sum\limits_{r=1}^{h}\lambda^r_{j}
%\leq \sum\limits_{q=1}^{h} \bar b_{jq}$ implies that.... DA
%DIMOSTRARE??

Given an SMKP instance, let $B_1,\ldots, B_t$ be a partition of $N$
into blocks. Under the block partition $B_1,\ldots, B_t$, an
instance of the new problem M-$SP$ is obtained from an instance of SMKP by modifying
the item and the knapsack sets as explained in the following. All the item types belonging to a block
$B_w=\{w_1,\ldots, w_k\}$ are replaced by items of size $s_{w_1}$
and profit $v_{w_1}$, for $w=1,\ldots, t$. 
More precisely, an item
 $w_v \in B_w$, of size $s_{w_v}$, upper bound $b_{w_v}$,
value $v_{w_v}$ is replaced by
$b_{w_v}s_{w_v}/s_{w_1}$ items of type $w$ of size $f_w=s_{w_1}$ and value
$p_w=v_{w_1}$. Such new items are denoted as items of type $w$.
Let $N'$ be the set of all the new items produced by applying the above
replacing procedure to all the blocks. Note that, $N'$ can be partitioned into sets $T_1,\ldots, T_l$, where $T_q$, $1\le q \le l$, contains all the new items obtained by items in $N$ of size $d_q$. 
Items in $N'$ of different sizes may belong to the same
set $T_q$, since they may belong to different blocks.
In what follows, let  $\tilde b_{w,q}$ be the total number of items of type $w$ belonging to $T_q$, in M-SP, and let $\tilde b_{w}=\sum\limits_{q=1}^{l}\tilde b_{w,q}$.
By definition, the following observation holds.
\begin{observation}\label{obs:T_q} 
The 
number of items of type $w$ belonging to a set $T_q$,  $\tilde b_{w,q}$, is multiple of   $\frac{d_q}{f_w}$ and is equal to
\begin{equation}\label{eq:tilde bb}
\tilde b_{w,q}= \sum\limits_{w \in B_w: f(w)=d_q}  \frac{b_{w} d_q}{f_w}.
\end{equation}
\end{observation}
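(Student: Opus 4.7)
The plan is to read off both claims directly from the replacement rule that defines M-$SP$, so this is essentially a bookkeeping argument rather than one that requires a separate combinatorial argument. The identity in \eqref{eq:tilde bb} would come from grouping contributions to $T_q$ by the original item they descended from; the divisibility statement would then be immediate from the form of each summand.

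First I would fix a block $B_w=\{w_1,\ldots,w_k\}$ with $f_w=s_{w_1}$ and recall that by divisibility of the sizes in $N$, for every $d_q\ge f_w$ the quotient $d_q/f_w$ is a positive integer. Next I would apply the replacement rule: an original item $j\in B_w$ with $s_j=d_q$ is replaced by exactly $b_j s_j/s_{w_1}=b_j d_q/f_w$ new items of type $w$, all of size $f_w$. By the definition of $T_q$ as the collection of new items descended from originals of size $d_q$, these replacements are precisely the items of type $w$ that lie in $T_q$. Summing over all originals of size $d_q$ inside the block gives
\[
\tilde b_{w,q}=\sum_{j\in B_w:\, s_j=d_q}\frac{b_j d_q}{f_w},
\]
which is exactly the claimed formula (with the sum index $w$ in the statement understood as ranging over original items $j\in B_w$ of size $d_q$, distinct from the block index $w$).

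For the multiplicity part, each summand is $b_j\cdot(d_q/f_w)$, i.e.\ an integer multiple of $d_q/f_w$, so the sum $\tilde b_{w,q}$ is itself a multiple of $d_q/f_w$. I do not anticipate any genuine obstacle: the observation records an arithmetic consequence of the construction, and the only subtlety is handling the overloaded notation so that the sum is read as running over original items of a fixed size inside a fixed block. Everything else follows by substituting the definitions of $f_w$, $T_q$, and the replacement count $b_j s_j/s_{w_1}$.
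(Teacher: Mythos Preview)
Your proposal is correct and matches the paper's approach: the paper simply states that the observation holds ``by definition'' without giving an explicit argument, and what you have written is precisely the unpacking of that definition. Your remark about the overloaded use of $w$ as both the block index and the summation index is a fair observation on the paper's notation.
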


% that have been "produced" by items in $B_w$ of size $d_q$. Hence, $\tilde b_w=\sum\limits_{q=1}^{l}\tilde b_{w,q}$.

%Recalling that, in M-SP, the items of type $w$ have all the same size $s_{w_1}$ and profit $v_{w_1}$ and since the items in $T_q$ are "produced" by items of size $s_q$, 

Furthermore, in M-$SP$, the $m$ knapsacks are replaced by a {\em single} knapsack composed of $l$ parts. To each part $h$  a capacity $\bar c_h=\sum\limits_{i=1}^m r_i^h$ is associated, for $h=1,\ldots,l$. Items of the set $T_q$ can be only assigned to the parts $q,q+1,\ldots,l$.\\
%Given an SMKP instance, let $B_1,\ldots, B_t$ be a partition of $N$ into blocks, and consider the related M-$SP$ instance. %In M-SP, let denote by $f_w$ and $p_w$ the size and the profit, respectively,  of the items obtained from block $B_w$, for $w=1,\ldots,t$. 
M-$SP$ is the problem of finding an assignment of the items in $N'$ that maximizes the total profit of the assigned items and that satisfies given conditions, described in the following. 
Let $y_{w,q}^h$ be an integer
variable denoting the number of items of type $w$, belonging to the set $T_q$ and assigned to the part $h$ of the knapsack, for $w=1,\ldots,t$, $q=1,\ldots,l$ and $h=1,\ldots,l$.
%By definition of $T_q$, if $B_w$  does not contain an item of size $d_q$ then $y_{w,q}^h=0$.
A solution $y$ is feasible for M-$SP$ if the following constraints are satisfied:
%\begin{equation}\label{prob:M-SP-cond}
\begin{eqnarray}
%\max\sum\limits_{h=1}^{l}\sum\limits_{q=1}^{h}\sum\limits_{w=1}^{t}p_w y_{w,q}^h&&\\
\sum\limits_{w=1}^{t}\sum\limits_{q=1}^{h} \left\lceil f_w y_{w,q}^h/d_q\right\rceil d_q &\leq& \bar c_h \text{ for } h=1,\ldots, l \label{prob:M-SP-cond1}\\
\sum\limits_{h=q}^{l}\left\lceil f_w y_{w,q}^h/d_q \right\rceil&\leq&  \sum\limits_{w \in B_w: f(w)=d_q}  b_{w} = f_w \tilde b_{w,q}/d_q \;\;\;\;\;\;\text{ for } w
=1,\ldots, t \text{ and } q =1,\ldots, l \label{prob:M-SP-cond2}
%\\y_{w,q}^h& \in& \mathbb{Z} \text{ for } w =1,\ldots, t, \; h =1,\ldots, l \text{ and } q =1,\ldots, l
\end{eqnarray}
%\end{equation}
%14-10-2013 ATTENZIONE: L'EQ.  SOPRA NON E'?????!!!!!!!!!!!:
%\begin{equation}\sum\limits_{h=q}^{l} y_{w,q}^h \leq   \tilde b_{w,q} \text{ for } w =1,\ldots, t \text{ and } q =1,\ldots, l%\end{equation}
where the last equality  in \eqref{prob:M-SP-cond2} follows from \eqref{eq:tilde bb}.

In the above constraints, $\left\lceil f_w y_{w,q}^h/d_q \right\rceil d_q$ represents  the ``occupancy" of the  items of type $w$ in $ T_q$ assigned to the part $h$ of the knapsack, in terms of chunks of size $d_q$. 
While $\left\lceil f_w y_{w,q}^h/d_q\right\rceil$ is the minimum number of chunks of size at most $d_q$ that can be obtained using 
the  items of type $w$ in $ T_q$, that are assigned to the part $h$ of the knapsack.
Hence, the first set of constraints state that the total ``occupancy" in the part $h$ cannot exceed the capacity $\bar c_h$. Observe that, only  items in $T_q$ with $q \le h$ can be assigned to the part $h$.
constraints \eqref{prob:M-SP-cond2} limit the total number of chunks of size at most $d_q$  that can be obtained using 
the items of type $w$ in $T_q$.

A formulation for M-$SP$ is reported in the following.
%\begin{equation}
{\small
\begin{eqnarray*}
\max\sum\limits_{h=1}^{l}\sum\limits_{q=1}^{h}\sum\limits_{w=1}^{t}p_w y_{w,q}^h&& \label{prob:M-SP}\\
\text{subject to}&&\\
\text{constraints } \eqref{prob:M-SP-cond1}  \text{ and } \eqref{prob:M-SP-cond2}  \; 
%\sum\limits_{w=1}^{t}\sum\limits_{q=1}^{h} \left\lceil \frac{f_w y_{w,q}^h}{d_q}\right\rceil d_q &\leq& \bar c_h \text{ for } h=1,\ldots, l  \label{prob:M-SP2} \\
%\sum\limits_{h=q}^{l}\left\lceil \frac{f_w y_{w,q}^h}{d_q}\right\rceil&\leq&  \sum\limits_{w \in B_w: s_w=d_q} b_{w} \text{ for } w =1,\ldots, t \text{ and } q =1,\ldots, l \label{prob:M-SP3}\\
y_{w,q}^h& \in&\mathbb{Z}^+\cup\{0\} \text{ for } w =1,\ldots, t, \; q =1,\ldots, l \text{ and } h =q,\ldots, l. \label{prob:M-SP4}
\end{eqnarray*}
}
%\end{equation}
Note that, in M-SP, the item sizes are divisible, too.% and that $t$ item types exist, one for each block.
Pochet and Weismantel \cite{Pochet98} introduced a special partition of $N$ into blocks, called {\em maximal block partition} and showed its uniqueness. In this partition, a block  contains only items $j\in N$ with the same {\em gain per unit} $\frac{v_j}{s_j}$ and has the following property:  given two blocks $B_1$ and $B_2$ containing items with the same gain per unit, the set $B_1\cup B_2$ is not a block. 
In the rest of the paper, we defines  M-$SP$ on the maximal block partition and assume the items  in M-$SP$ ordered
in such a way that: $f_1 \leq \ldots \leq f_t$ and that $p_w \ge
p_{w+1}$, if $f_w = f_{w+1}$.
By the procedure for the construction of the maximal blocks, the following relation holds \cite{Pochet98}:
\begin{equation}\label{proper:maxblock}
f_w > \sum\limits_{u \in \{1,\ldots,w-1\}: \frac{p_w}{f_w}=\frac{p_u}{f_u}} f_u \sum\limits_{q=1}^{l}\tilde b_{u,q}=
\sum\limits_{u \in \{1,\ldots,w-1\}: \frac{p_w}{f_w}=\frac{p_u}{f_u}} f_u \tilde b_{u} \;\;\; \;{\text  for  }  \;w=1,\ldots, t.
\end{equation}
A notation similar to that introduced for SMKP is now introduced for M-SP.
Given a feasible solution $y$ of M-$SP$ and two integers $1 \le h_1\le h_2\le l$,
let $y^{h_1,h_2}$ be the "partial" solution of $y$ related to the  parts of the knapsack $\{h_1,\ldots,h_2\}$, i.e.,  $y^{h_1,h_2}$ is the vector containing  the components $y^{h}_{w,q}$, for $w=1, \ldots, t$, $h=h_1,\ldots,h_2$ and$q=1,\ldots,h_2$. If $h_1>h_2$, then $y^{h_1,h_2}$ does not assign any item. To simplify the notation, if
$h_1=h_2$, $y^{h_1}$ is used instead of $y^{h_1,h_1}$. Let $S(y)$ and $S(y^{h_1,h_2})$ be the set of items assigned in $y$ and $y^{h_1,h_2}$ respectively. Given a block $w$ and a set $T_q$,  let $y_{w,q}^{h_1,h_2}$ be the number of
items of type $w$ belonging to the set $T_q$ assigned, in $y$, to the parts $\{h_1,\ldots,h_2\}$ of the knapsack, and let  $S(y_{w,q}^{h_1,h_2})$ be the corresponding set of items. Moreover, given a set of items $A\subseteq N'$, let $f(A)$  and $p(A)$ be the total size (called in the following ``total size", too) and value, respectively,  of the items in $A$, i.e.,
$f(A)=\sum\limits_{w \in A} f_w |A_w|$ and $p(A)=\sum\limits_{w \in A} p_w |A_w|$,  where $|A_w|$ denotes the number of items of type $w$ in $A$. For the sake of simplicity, $f(y^{h_1,h_2})$ ($p(y^{h_1,h_2})$) is used instead of $f(S(x^{h_1,h_2}))$ ($p(S(x^{h_1,h_2}))$) to denote the total size (the total profit) of the items assigned by $y$ to the knapsacks parts $\{h_1,\ldots,h_2\}$. 

\subsection{Correspondance between feasible solutions of SMKP and M-SP}\label{sec:corr}
Given an instance of M-$SP$ and an integer $h$, $1 \le h \le l$, we denote by M-SP($h$) the ``restricted" M-$SP$ instance, obtained from the instance SMKP($h$), 
as defined in \eqref{newformh}.
%i.e.,  in which the item set is $\{j\in N: d_j \le d_h\}$ and each knapsack $i$ has a reduced capacity $c_i=\sum\limits_{q=1}^{h} r_i^q$, for $i=1,\ldots,m$. 
Observe that, by Definition \ref{def:block}, if a set $B \subseteq N$ is block in SMKP, then the set $ \{j \in B: s_j \le d_h\}$ is a block in SMKP($h$), too.
We show now that there is a one to many correspondence between feasible solutions of M-$SP$ (M-SP($h$)) and 
feasible solutions of SMKP (SMKP($h$)). 

Given a M-$SP$ instance, corresponding to the maximal block partition of $N$, let $y \in \mathbb{Z}^{t
\times l\times l}$ be a feasible solution. 
For each block $B_w$, let $\delta_{wq}^h=f_w y_{w,q}^h$, for $q=1 , \ldots, l$ and $h=q , \ldots, l$.
By Proposition
\ref{prop:block}, a subset $R_w \subseteq B_w$ and not negative
integers $\lambda^h_{j}$, for $j \in R_w$, exist such that 
conditions \eqref{eq:block1} and \eqref{eq:block2} are satisfied.\\
A feasible solution of SMKP is built as follows. For each block $B_w$, let $\sum\limits_{i=1}^m x^h_{i,j}=\lambda^h_{j}$, if $j \in R_w$, for $h=1,\ldots,l$. By condition \eqref{eq:block1}, it follows that $  \sum\limits_{h= g(j)}^l \sum\limits_{i=1}^m x^h_{i,j}=\sum\limits_{h=g(j)}^{l}\lambda^h_{j} \le b_j$, for $j \in B_w$ and $w=1,\ldots,t$ (i.e., $x$ satisfies the second set of constraints in \eqref{newform}). 
%By Proposition \ref{prop:block},  sets $R_w \subseteq B_w$, for $w=1,\ldots,t$, exist, satisfying constraints  \eqref{eq:block1} and  \eqref{eq:block2}}. 
Let $R=\bigcup\limits_{w=1}^t R_w$. Since $\delta_{wq}^h=f_w y_{w,q}^h$, and by conditions \eqref{eq:block2} and \eqref{prob:M-SP-cond1}, we have:\\
$\sum\limits_{w=1}^{t}\sum\limits_{q=1}^{h} \left\lceil \frac{f_w y_{w,q}^h}{d_q}\right\rceil d_q =
\sum\limits_{w=1}^{t}\sum\limits_{q=1}^{h} \left\lceil \frac{\delta_{wq}^h}{d_q}\right\rceil d_q
=\sum\limits_{q=1}^{h} \sum\limits_{j
\in R:\\ g(j)=q}\lambda^h_{j}  d_q =
\sum\limits_{j
\in R:\\ g(j)\le h} \lambda^h_{j}  s_j
\leq \bar c_h= \sum\limits_{i=1}^m r_i^h\;\;  \text{ for } h=1,\ldots, l.$
Hence, since $\sum\limits_{i=1}^m x^h_{i,j}=\lambda^h_{j}$, the following inequality holds
\begin{equation}\label{eq:ytox}
\sum\limits_{j
\in R:\\ g(j)\le h}\sum\limits_{i=1}^m x^h_{i,j}  s_j
\leq \bar c_h= \sum\limits_{i=1}^m r_i^h\;\;  \text{ for } h=1,\ldots, l.
\end{equation}
Recall that the  items in $R$ that can be assigned to a part $h$ must have sizes not greater than $d_h$. By \eqref{eq:ytox}, since the item sizes are divisible and since $d_h$ divides $r_{i}^h$, for $i=1,\dots,m$, it is always possible to partition the set $\{j \in R: g(j) \le h\}$ 
into $m$ subsets, say $\Omega_1^h, \ldots, \Omega_m^h$,
such that $ \sum\limits_{j \in \Omega_i^h}    s_j x^h_{i,j} \le  r_{i}^h$, for $i=1,\dots,m$ (i.e., $x$ satisfies the first set of constraints in \eqref{newform}). Note that, several of such partitions may exist. Hence, $x$ is feasible for SMKP. We show now that $v(x) \ge p(y)$. In fact, since  $\sum\limits_{j
\in R_w:\\ g(j)=q}\lambda^h_{j}= \lceil \delta_{w,q}^h /d_q \rceil  = \lceil f_w y_{w,q}^h /d_q \rceil $ (condition \eqref{eq:block2}), then $\sum\limits_{j
\in R_w:\\ g(j)=q}\lambda^h_{j} \ge  f_w y_{w,q}^h/d_q$, and 
$\sum\limits_{j \in R_w:\\ g(j)\le h}\lambda^h_{j} =\sum\limits_{q=1}^h\sum\limits_{j \in R_w:\\ g(j)= q}\lambda^h_{j} \ge   \sum\limits_{q=1}^h f_w y_{w,q}^h/d_q$.
 By the partition into maximal blocks, for each $j \in B_w$ we have $v_j/s_j= v_j/d_{g(j)} = p_w/ f_w$, i.e., $v_j= p_w d_{g(j)} / f_w$. Hence, recalling that $R=\bigcup\limits_{w=1}^t R_w$, we have
$  
v(x)=\sum\limits_{h=1}^l \sum\limits_{j \in R:\\ g(j)\le h} v_j \sum\limits_{i=1}^m x^h_{i,j} =  \sum\limits_{h=1}^l \sum\limits_{j
\in R:\\ g(j)\le h} v_j \lambda^h_{j} \ge \sum\limits_{w=1}^t \sum\limits_{h=1}^l \sum\limits_{q=1}^h  v_j f_w y_{w,q}^h/d_q =p (y)$.
%p(y)=\sum\limits_{w=1}^t \sum\limits_{h=1}^l \sum\limits_{q=1}^h  p_w y_{w,q}^h \le     \sum\limits_{w=1}^t \sum\limits_{h=1}^l \sum\limits_{j \in R_w:\\ g(j)\le h}  p_w \lambda^h_{j}   \le  \sum\limits_{h=1}^l \sum\limits_{j
%\in R:\\ g(j)\le h} v_j \lambda^h_{j} $$=    =v(x).$

\medskip
On the other hand, given a feasible solution of SMKP, $x \in
\mathbb{Z}^{n \times m\times l}$ and a partition into maximal blocks, we build a corresponding feasible solution of
M-$SP$ by setting 
$y_{w,q}^h = \sum\limits_{j \in B_w: g(j)=q}
(d_q/f_w) \sum\limits_{i=1}^m x_{i,j}^h$. First, observe that, since $f_w y_{w,q}^h/d_q = \sum\limits_{j \in B_w: g(j)=q}
 \sum\limits_{i=1}^m x_{i,j}^h$, then $f_w y_{w,q}^h/d_q$ is integer.\\
The sum of the first set of constraints of \eqref{newform} with respect to index $i$ gives\\
$\sum\limits_{j \in N: s_j \le d_h} \sum\limits_{i=1}^{m}s_jx_{i,j}^h=\sum\limits_{w=1}^{t}\sum\limits_{q=1}^{h}f_w
y_{w,q}^h= \sum\limits_{w=1}^{t}\sum\limits_{q=1}^{h} \left\lceil \frac{f_w y_{w,q}^h}{d_q}\right \rceil d_q=\le \sum\limits_{i=1}^{m} r_i^h=\bar c_h$ (i.e., $y$ satisfies constraints   \eqref{prob:M-SP-cond1}).

The sum of the second set of constraints of \eqref{newform} with respect to the items $j$ in a given block $B_w$  gives\\
$\sum\limits_{j \in B_w:\\ g(j)=q} \sum\limits_{h: h\ge q} \sum\limits_{i=1}^m x_{i,j}^h= \sum\limits_{h: h\ge q} f_w y_{w,q}^h/d_q= \sum\limits_{h: h\ge q} \left\lceil \frac{f_w y_{w,q}^h}{d_q}\right \rceil \le \sum\limits_{j \in B_w: g(j)=q} b_j$, for $q=1,\ldots,l$ and $w=1,\ldots,t$ (i.e., $y$ satisfies constraints   \eqref{prob:M-SP-cond2}).\\ 
Finally, recalling that by the partition into maximal blocks it follows that $v_j= p_w d_{g(j)} / f_w$,
%Since in the above solution $d_q$ divides $f_w y_{w,q}^h$ and by the partition into maximal blocks, 
then $p_w y_{w,q}^h = \sum\limits_{j \in B_w: g(j)=q}\sum\limits_{i=1}^m  v_j x_{i,j}^h$, i.e., $p(y)=v(x)$  
 
 It is straightforward to observe that, all the above arguments can be also used to state the correspondence between  feasible solutions of SMKP($h$) and M-SP($h$), for $h=1,\ldots,l$. 
  The two following propositions hold.
 \begin{proposition}\label{prop:corr}
For a given knapsack's part $h$, a  many to one correspondence exists  between feasible solutions $x^{1,h}$ of SMKP($h$) and 
feasible solutions $y^{1,h}$ of M-SP($h$), and it holds that $v(x^{1,h}) \ge p(y^{1,h})$.
\end{proposition}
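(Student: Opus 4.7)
The plan is to formalize the two constructions already outlined informally in the discussion preceding the proposition, restricted to indices $1,\ldots,h$. The single-valued direction I take as the map from SMKP($h$) to M-SP($h$); its non-injectivity (hence the ``many-to-one'' character) is witnessed by the reverse construction, which from a single feasible $y^{1,h}$ produces many feasible $x^{1,h}$.

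For the forward map, given a feasible $x^{1,h}$ for SMKP($h$) I set
\[
y_{w,q}^{h'} \;=\; \sum_{j \in B_w:\, g(j)=q}\frac{d_q}{f_w}\sum_{i=1}^m x_{i,j}^{h'}, \qquad 1\le q\le h'\le h,\ 1\le w\le t.
\]
I first check that $f_w y_{w,q}^{h'}/d_q$ is integer, so each ceiling in \eqref{prob:M-SP-cond1}--\eqref{prob:M-SP-cond2} simplifies. Summing the first family of SMKP($h$) constraints over $i$ yields \eqref{prob:M-SP-cond1} (with $l$ replaced by $h$), and summing the second family over the items of a single block $B_w$ of size $d_q$ yields \eqref{prob:M-SP-cond2}. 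The identity $v_j=p_w d_{g(j)}/f_w$ on a maximal block then gives $p(y^{1,h})=v(x^{1,h})$, in particular $v(x^{1,h})\ge p(y^{1,h})$.

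For the many-to-one character I fix $y^{1,h}$ feasible for M-SP($h$) and verify that the hypothesis \eqref{eq:block1bis} of Proposition~\ref{prop:block} holds on each block $B_w$ with $\delta_{w,q}^{h'}=f_w y_{w,q}^{h'}$; constraint \eqref{prob:M-SP-cond2} of M-SP($h$) provides precisely what is needed. Proposition~\ref{prop:block} then produces $R_w\subseteq B_w$ and multiplicities $\lambda_j^{h'}$ satisfying \eqref{eq:block1}--\eqref{eq:block2}. Setting $\sum_i x_{i,j}^{h'}=\lambda_j^{h'}$ and using the divisibility of the $r_i^{h'}$ together with inequality \eqref{eq:ytox}, I can partition $\{j\in R:g(j)\le h'\}$ into sets $\Omega_1^{h'},\ldots,\Omega_m^{h'}$ that fit the individual capacities. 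Both the choice of $R_w$ allowed by Proposition~\ref{prop:block} and the choice of partition into the $\Omega_i^{h'}$ may be non-unique, so many distinct $x^{1,h}$ map forward to the same $y^{1,h}$. The inequality $v(x^{1,h})\ge p(y^{1,h})$ for these reverse-constructed solutions follows because \eqref{eq:block2} forces $\sum_{j\in R_w:g(j)=q}\lambda_j^{h'} = \lceil f_w y_{w,q}^{h'}/d_q\rceil \ge f_w y_{w,q}^{h'}/d_q$, and the common gain per unit $p_w/f_w$ on the maximal block $B_w$ preserves this inequality when converted to values.

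The main obstacle is the bookkeeping in the reverse direction: verifying that \eqref{prob:M-SP-cond2} is exactly the hypothesis \eqref{eq:block1bis} of Proposition~\ref{prop:block}, and that the knapsack-partitioning step into the $\Omega_i^{h'}$ always succeeds. The latter is handled by a layer-by-layer application of Proposition~\ref{prop:div}, starting from the largest size $d_{h'}$ involved in level $h'$ and peeling off chunks that exactly fit into the capacity $r_i^{h'}$ of each knapsack part; every other step is a routine rewriting of sums.
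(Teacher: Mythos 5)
Your proposal is correct and takes essentially the same route as the paper: the paper proves this proposition implicitly via the two constructions in Section~\ref{sec:corr} (the map $y_{w,q}^h=\sum_{j\in B_w:g(j)=q}(d_q/f_w)\sum_i x_{i,j}^h$ giving $p(y)=v(x)$, and the reverse construction via Proposition~\ref{prop:block} giving $v(x)\ge p(y)$ with non-unique choices of $R_w$ and of the partition into the $\Omega_i^h$), restricted to the parts $1,\ldots,h$. Your identification of constraint \eqref{prob:M-SP-cond2} with hypothesis \eqref{eq:block1bis} and your use of the ceiling inequality to get $v(x^{1,h})\ge p(y^{1,h})$ match the paper's argument exactly.
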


\begin{proposition}\label{prop:corr2}
Given a feasible solution $x^{1,h}$ of SMKP($h$), let $y^{1,h}$ be the corresponding  feasible solution of M-SP($h$). Then $v(x^{1,h}) = p(y^{1,h})$.
\end{proposition}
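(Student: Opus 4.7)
The plan is to use the explicit $x \to y$ construction given earlier in the section (restricted to the first $h$ parts) and to check that each term in $v(x^{1,h})$ lines up exactly with the corresponding term in $p(y^{1,h})$, with the key ingredient being the gain-per-unit equality $v_j/s_j = p_w/f_w$ that holds on every maximal block.

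More precisely, I would start by recalling the construction: given $x^{1,h}$ feasible for SMKP($h$), define, for each maximal block $B_w$, each $q \le h$, and each $r = 1,\dots,h$,
\[
y_{w,q}^{r} \;=\; \frac{d_q}{f_w}\sum_{j \in B_w:\,g(j)=q}\sum_{i=1}^{m} x_{i,j}^{r}.
\]
The earlier argument (just before the proposition) already checks that this $y^{1,h}$ is feasible for M-SP($h$); here I only need the value computation. Because we use the maximal block partition, every $j \in B_w$ satisfies $v_j/s_j = p_w/f_w$, hence $v_j = p_w\, d_{g(j)}/f_w$. In particular, for any $j \in B_w$ with $g(j)=q$, we have $v_j = p_w d_q/f_w$, which is the factor multiplying the inner double sum in the definition of $y_{w,q}^{r}$.

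Multiplying the definition by $p_w$ therefore yields, termwise,
\[
p_w\, y_{w,q}^{r} \;=\; \frac{p_w d_q}{f_w}\sum_{j\in B_w:\,g(j)=q}\sum_{i=1}^{m} x_{i,j}^{r} \;=\; \sum_{j\in B_w:\,g(j)=q}\sum_{i=1}^{m} v_j\, x_{i,j}^{r}.
\]
Summing this identity over $r=1,\dots,h$, over $q=1,\dots,r$, and over $w=1,\dots,t$, and using that the blocks $B_1,\dots,B_t$ partition the item set of SMKP($h$), the right-hand side collects precisely $\sum_{r=1}^{h}\sum_{j: g(j)\le r}\sum_{i=1}^{m} v_j x_{i,j}^{r} = v(x^{1,h})$, while the left-hand side is $\sum_{r=1}^{h}\sum_{q=1}^{r}\sum_{w=1}^{t} p_w y_{w,q}^{r} = p(y^{1,h})$.

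I do not expect any serious obstacle here: the one subtlety is that, unlike the reverse ($y \to x$) direction used in Proposition \ref{prop:corr}, no ceiling function is involved in the definition of $y_{w,q}^{r}$, so the inequality $v(x^{1,h})\ge p(y^{1,h})$ from Proposition \ref{prop:corr} tightens to equality for this particular choice of $y^{1,h}$. The only check worth being explicit about is that $y_{w,q}^{r}$ is integer-valued (which is exactly the observation the author makes right after the construction, using $f_w y_{w,q}^{r}/d_q \in \mathbb{Z}$), so that $y^{1,h}$ really is the corresponding M-SP($h$) solution.
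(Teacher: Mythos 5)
Your proposal is correct and follows essentially the same route as the paper: the paper's argument (given in the text of Section 3.1 just before the proposition, and then restricted to the first $h$ parts) also defines $y_{w,q}^{r}$ by the same formula, invokes the maximal-block identity $v_j = p_w d_{g(j)}/f_w$ to obtain the termwise equality $p_w\, y_{w,q}^{r} = \sum_{j\in B_w:\,g(j)=q}\sum_{i} v_j x_{i,j}^{r}$, and sums over $w$, $q$, $r$ to conclude $p(y^{1,h})=v(x^{1,h})$. Your remarks on integrality of $f_w y_{w,q}^{r}/d_q$ and on the absence of the ceiling function (which is what makes this direction an equality rather than the inequality of Proposition \ref{prop:corr}) match the paper's own observations.
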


\subsection{Valid inequalities for MSKP and M-SP}
Suppose that the following inequality 
\begin{equation}\label{eq:valid1}
\sum\limits_{q=1}^{l} \sum\limits_{w=1}^{t} \nu_{w,q}   \sum\limits_{h=q}^{l}y^h_{w,q}\le \nu_0.
\end{equation}
is valid for all feasible solutions of  M-SP. 

By Section \ref{sec:corr}, we have that if $x$ is feasible for MSKP (formulated as in \eqref{newform}) then 
the solution $y$ in which $y_{w,q}^h = \sum\limits_{j \in B_w: g(j)=q}(d_q/f_w) \sum\limits_{i=1}^m x_{i,j}^h$, for $w=1,\ldots,t$, $q=1,\ldots,l$ and $h=q,\ldots,l$ is feasible for M-SP. Hence, by setting $\mu_{j,q}=\nu_{w,q}(d_q/f_w)$ if item $j \in B_w$ and $s_j=d_q$, it follows that the inequality is valid for MSKP (formulated as in \eqref{newform}). In fact
{\scriptsize
\begin{equation}\label{eq:valid2}
\sum\limits_{q=1}^{l} \sum\limits_{w=1}^{t} \nu_{w,q}  \sum\limits_{h=q}^{l}
\sum\limits_{j \in B_w: g(j)=q}
(d_q/f_w)  \sum\limits_{i=1}^m x_{i,j}^h=
\sum\limits_{q=1}^{l} \sum\limits_{w=1}^{t} \nu_{w,q} d_q/f_w
\sum\limits_{j \in B_w: g(j)=q}
\sum\limits_{h=q}^{l} \sum\limits_{i=1}^m x_{i,j}^h=\sum\limits_{q=1}^{l} \sum\limits_{j\in N : s_j=d_q} \mu_{j,q}\sum\limits_{h=q}^{l} \sum\limits_{i=1}^m x_{i,j}^h
\le \nu_0.
\end{equation}
}

\subsection{OPT and ordered solutions for M-SP}
As for SMKP, the definition of OPT solution is now introduced for M-SP.

\begin{definition}\label{def:OPTprop2}
Let $y$ be  a feasible solution for M-SP. The solution $y$ has the {\em OPT property}, and $y$ is called OPT solution,
%, if  objective function coefficients $\pi_w>0$ exist for all block indices $w=1,\ldots,t$, such that 
if for each feasible solution $\bar y$ such that $S(y)=S(\bar y)$, it holds that $p(y^{1,h}) \ge p(\bar y^{1,h})$, 
 for $h=1,\ldots,l$. 
 %In this case, we also say that $y$ is an OPT solution with respect to coefficients $\pi_w>0$ $w=1,\ldots,t$.
\end{definition}
Also in this case, observe that there are feasible solutions of  M-$SP$ that do not satisfy the OPT property, and that the items in a not OPT solutions can be always reallocated to the knapsack parts to get an OPT solutions.. 
%VEDI STESSO ESEMPIO DI PRIMA IN CUI CONSIDERO LA SOL Y CORRESP A X.
 Relations between optimal  solutions of SMKP and M-$SP$ are now established. 
By Propositions \ref{prop:corr} and  \ref{prop:corr2}, Proposition \ref{prop:corr3} follows.
\begin{proposition}\label{prop:corr3}
Let $x$ and $y$ be two optimal solutions for SMKP and  M-SP, respectively. Then $v(x) = p(y)$.
\end{proposition}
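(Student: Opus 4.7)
(sketch).
The plan is to instantiate Propositions \ref{prop:corr} and \ref{prop:corr2} at the top level $h=l$, where SMKP($l$) and M-SP($l$) coincide with SMKP and M-SP respectively, and to combine the two resulting inequalities to pinch $v(x)$ and $p(y)$ together.

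First I would use Proposition \ref{prop:corr} with $h=l$: applied to the given optimal $y$, it produces a feasible solution $\tilde x$ of SMKP satisfying $v(\tilde x)\ge p(y)$. Since $x$ is optimal for SMKP, this gives $v(x)\ge v(\tilde x)\ge p(y)$. Next I would use Proposition \ref{prop:corr2} with $h=l$: applied to the given optimal $x$, it produces a feasible solution $\tilde y$ of M-SP satisfying $p(\tilde y)=v(x)$. Since $y$ is optimal for M-SP, this gives $p(y)\ge p(\tilde y)=v(x)$. The two chains together yield $v(x)\ge p(y)\ge v(x)$, hence $v(x)=p(y)$.

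There is essentially no obstacle: all the real work is hidden in the construction of the two correspondences in Section \ref{sec:corr} (which rely on Proposition \ref{prop:block} and on the maximal block partition). The only care needed is to make sure the two propositions are being applied to the same index $h=l$ and that, in this case, the ``restricted'' problems SMKP($l$) and M-SP($l$) indeed coincide with SMKP and M-SP, which is immediate from the definitions of $r_i^h$ and $\bar c_h$ since $\sum_{q=1}^{l} r_i^q = c_i$ and $\sum_{i=1}^{m} c_i = \sum_{h=1}^{l} \bar c_h$.
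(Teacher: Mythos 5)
Your argument is correct and is essentially the paper's own: the paper simply asserts that Proposition \ref{prop:corr3} follows from Propositions \ref{prop:corr} and \ref{prop:corr2}, and your two-sided pinching via $v(x)\ge v(\tilde x)\ge p(y)$ and $p(y)\ge p(\tilde y)=v(x)$ is exactly the intended combination. Your extra check that SMKP($l$) and M-SP($l$) reduce to the full problems is a harmless clarification the paper leaves implicit.
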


\begin{proposition}\label{prop:M-SP-1}
Let $y$ be an optimal solution for M-$SP$ that satisfies the  OPT property.
% with respect to objective coefficients $p_w$, $w =1,\ldots,t$, 
Then, for any objective function coefficients $f_w$, with  $w =1,\ldots,t$, $d_q$ divides  $f_w y^h_{w,q}$, for  $q=1,\ldots,l$, $w =1,\ldots,t$ and $h=q,\ldots,l$.
\end{proposition}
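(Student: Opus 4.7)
The plan is to argue by contradiction: I will assume that for some triple $(w,q,h)$ with $q \le h \le l$ one has $d_q \nmid f_w y^h_{w,q}$, and exhibit a feasible $\bar y$ with $p(\bar y) > p(y)$, which contradicts the optimality of $y$. The construction is simply to bump one coordinate, $\bar y^h_{w,q} := y^h_{w,q} + 1$, leaving everything else unchanged.

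The arithmetic fact underpinning the argument is that $f_w \mid d_q$. Indeed, $f_w = s_{w_1}$ is the smallest size in block $B_w$, and the items counted by $\tilde b_{w,q}$ arise from items of $B_w$ whose original size is $d_q$; since SMKP sizes are pairwise divisible, $f_w \mid d_q$. Consequently both $f_w y^h_{w,q}$ and $\lceil f_w y^h_{w,q}/d_q\rceil d_q$ are multiples of $f_w$, so the ceiling slack
\[
W := \left\lceil f_w y^h_{w,q}/d_q\right\rceil d_q - f_w y^h_{w,q}
\]
is a \emph{positive} multiple of $f_w$ (positive by the non-divisibility hypothesis), and in particular $W \ge f_w$.

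The bound $W \ge f_w$ yields $f_w(y^h_{w,q}+1) \le \lceil f_w y^h_{w,q}/d_q\rceil d_q$, so $\lceil f_w \bar y^h_{w,q}/d_q\rceil = \lceil f_w y^h_{w,q}/d_q\rceil$. This single identity does the bulk of the work: the $(w,q)$-summand in the part-$h$ instance of \eqref{prob:M-SP-cond1} is unchanged, as is the $h$-summand in the $(w,q)$-instance of \eqref{prob:M-SP-cond2}; every other constraint term is untouched, so $\bar y$ is feasible. To verify that an unassigned type-$w$ item from $T_q$ is actually available, I would rewrite \eqref{prob:M-SP-cond2} as $\sum_{h''=q}^l \lceil f_w y^{h''}_{w,q}/d_q\rceil d_q \le f_w \tilde b_{w,q}$; the non-divisibility at $h'' = h$ makes the left-hand side strictly larger than $\sum_{h''=q}^l f_w y^{h''}_{w,q}$, whence $\sum_{h''=q}^l y^{h''}_{w,q} < \tilde b_{w,q}$ and an unused item exists.

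It then follows that $p(\bar y) = p(y) + p_w > p(y)$ (assuming the standard $p_w > 0$; items with $p_w \le 0$ can be discarded from any optimal solution and so carry $y^h_{w,q}=0$, which is trivially divisible). This contradicts optimality of $y$, proving the claim. The only real obstacle is the careful ceiling-function bookkeeping; conceptually the argument just says that non-divisibility at $(w,q,h)$ leaves at least $f_w$ units of hidden room inside the already-charged $d_q$-chunks of part $h$, room that can absorb one more type-$w$ item at zero extra resource cost. It is worth noting that the OPT hypothesis is not strictly used here; plain optimality of $y$ suffices.
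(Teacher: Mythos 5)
Your proposal is correct and follows essentially the same route as the paper: both argue by contradiction that a non-divisible $f_w y^h_{w,q}$ leaves a positive ceiling slack (a multiple of $f_w$, since $f_w$ divides $d_q$) inside the already-charged $d_q$-chunks of part $h$, so extra type-$w$ items from $T_q$ can be added without altering any constraint term, strictly increasing the objective. The only cosmetic differences are that the paper fills the entire slack at the first offending part $h$ rather than adding a single item, and phrases the contradiction against the OPT/optimality hypothesis on the prefix $y^{1,h}$; your observation that plain optimality suffices is accurate.
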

\begin{proof}
By contradiction, let $h$ be the first part, in $y$, such that $(f_w y^h_{w,q} \mod d_q ) \ne 0$ for given $w$ and $q$, with $q\le h$. 
By definition of $h$, there exists a subset $A$ of items of type $w$ in $T_q$, not assigned to parts $q,\ldots,h-1$ in $y$, such that 
 $f(A)= \left \lceil  \frac{f_w y^h_{w,q} }{  d_q} \right \rceil -f_w y^h_{w,q}>0$. 
 Since $y$ is feasible, by \eqref{prob:M-SP-cond1}, 
it follows that it is feasible to assign to the part $h$ in $y$ the items in $A$.
 Then, the new solution $\bar y^{1,h}$ in which  $\bar y^{1,h-1}=y^{1,h-1}$,   
$\bar y^h_{e,u}= y^h_{e,u}$ for $e \ne w$ or $u \ne q$ and $y^h_{w,q}=y^h_{w,q}+f(A)/f_w$ is feasible, and we have  $p(\bar y^{1,h})>p( y^{1,h})$, contradicting the hypothesis.
\end{proof}

\begin{lemma}\label{lem:M-SP-OPT}
An optimal solution $x$ for SMKP satisfies the OPT property,  
%with respect to objective coefficients $v_j$, $j =1,\ldots,n$, 
if and only if the corresponding solution  $y$ of M-$SP$ is optimal and satisfies the OPT property.%  with respect to objective coefficients $p_w$, $w =1,\ldots,t$.
\end{lemma}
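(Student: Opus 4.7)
The plan is to reduce both directions of the equivalence to the partial-value identities of Propositions \ref{prop:corr}, \ref{prop:corr2} and \ref{prop:corr3}, together with the partial optimality of Lemma \ref{lem:opt}. Throughout I keep $x$ optimal for SMKP and let $y$ be its (uniquely determined) image in M-$SP$ under the map $y_{w,q}^h=(d_q/f_w)\sum_{j\in B_w:\,g(j)=q}\sum_{i=1}^m x_{i,j}^h$ from Section \ref{sec:corr}.

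For $(\Rightarrow)$, I assume $x$ has the OPT property. Optimality of $y$ is immediate from Proposition \ref{prop:corr2}, which gives $p(y)=v(x)$, combined with Proposition \ref{prop:corr3}, which identifies this common value with the optimum of M-$SP$. For the OPT property of $y$, I argue by contradiction: suppose $\bar y$ is M-$SP$-feasible with $S(\bar y)=S(y)$ and $p(\bar y^{1,h})>p(y^{1,h})$ for some $h$ (necessarily $h<l$, since $S(\bar y)=S(y)$ forces $p(\bar y)=p(y)$). The restriction $\bar y^{1,h}$ is feasible for M-$SP$($h$), so Proposition \ref{prop:corr} produces some $\bar x^{1,h}$ feasible for SMKP($h$) with $v(\bar x^{1,h})\ge p(\bar y^{1,h})$. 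Applying Proposition \ref{prop:corr2} to the pair $(x,y)$ at level $h$ gives $v(\bar x^{1,h})>p(y^{1,h})=v(x^{1,h})$, contradicting the fact that $x^{1,h}$ is optimal for SMKP($h$) by Lemma \ref{lem:opt}.

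For $(\Leftarrow)$, I assume $y$ is optimal and OPT in M-$SP$ and suppose toward a contradiction that $x$ fails the OPT property: there is some feasible $\bar x$ for SMKP with $S(\bar x)=S(x)$ and $v(\bar x^{1,h})>v(x^{1,h})$ for some $h$. Let $\bar y$ be the M-$SP$ image of $\bar x$. Summing the image formula over $h$ collapses it to $\sum_h y_{w,q}^h=(d_q/f_w)\sum_{j\in B_w:\,g(j)=q}|S(x_j)|$, which depends only on $S(x)$; hence $S(\bar x)=S(x)$ forces $\sum_h \bar y_{w,q}^h=\sum_h y_{w,q}^h$, i.e.\ $S(\bar y)=S(y)$. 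Proposition \ref{prop:corr2} applied to both pairs then yields $p(\bar y^{1,h})=v(\bar x^{1,h})>v(x^{1,h})=p(y^{1,h})$, contradicting the OPT property of $y$.

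The only slightly delicate verification is precisely this preservation of item-multiset between SMKP and M-$SP$ under the $x\to y$ map. The per-part values $y_{w,q}^h$ themselves depend on how items are distributed across parts, but the aggregate $\sum_h y_{w,q}^h$ records only per-block per-size-class multiplicities, and so is invariant under any redistribution of the same items among parts. Once this observation is in place, the rest of both directions is routine bookkeeping on top of the partial-value correspondences already proved.
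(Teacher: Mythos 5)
Your proof is correct and follows essentially the same route as the paper: both directions reduce to the value-preserving correspondence of Propositions \ref{prop:corr}--\ref{prop:corr3} together with the partial-optimality statements (Lemma \ref{lem:opt} for the forward direction, the OPT property of $y$ for the converse). The only organizational difference is in the converse: the paper reconstructs $x$ from $y$ via Proposition \ref{prop:block} and shows each $x^{1,h}$ is optimal for SMKP($h$), whereas you keep the forward map $x\mapsto y$ and use the (correct, and worth making explicit as you do) observation that $S(\bar x)=S(x)$ forces $S(\bar y)=S(y)$ to contradict Definition \ref{def:OPTprop2} directly; both variants rest on the same machinery.
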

\begin{proof}
Let $x$ be an optimal solution  for SMKP satisfying  the OPT property and let $y$ be the correspondent M-$SP$ solution. By construction, we have that $v(x^{1,h})=p(y^{1,h})$, for $h=1,\ldots,l$. 
Since $x$ satisfies the OPT property 
%with respect to objective coefficients $v_j$, $j =1,\ldots,n$, 
and by Proposition \ref{prop:corr}, it follows that $y$ satisfies the OPT property, too.
% with respect to objective coefficients $p_w$, $w =1,\ldots,t$.
\\ On the other hand, let   $y$ be an optimal solution for M-$SP$ that satisfies the OPT property.
% with respect to objective coefficients $p_w$, $w =1,\ldots,t$.
By Proposition \ref{prop:block}, 
 let $x$ be a solution of SMKP corresponding to $y$, in which, for each block $B_w$, $\sum\limits_{i=1}^m x^h_{i,j}=\lambda^h_{j}=f_w y_{w,q}^h$,
for $h=1,\ldots,l$ and $j \in R_w$. Recall that, by Proposition \ref{prop:M-SP-1},  $d_q$ divides $f_w y_{w,q}^h$, and, by the partition into maximal blocks  $p_w/f_w=v_j/s_j$ for all $j \in B_w$. Then by definition of $x$, $\sum\limits_{j
\in R_w:\\ g(j)=q}\sum\limits_{i=1}^m x^h_{i,j}= \sum\limits_{j
\in R_w:\\ g(j)=q}\lambda^h_{j} =  f_w y_{w,q}^h /d_q$, i.e.,
$y_{w,q}^h=\sum\limits_{j \in R_w:\\ g(j)=q}\sum\limits_{i=1}^m  x^h_{i,j} s_j/f_w$. Since, $p_w=f_w v_j/s_j$, then
$p_w y_{w,q}^h= \sum\limits_{j \in R_w:\\ g(j)=q}\sum\limits_{i=1}^m v_jx^h_{i,j}$, i.e., $p(y^{1,h})=v(x^{1,h})$ for all $h$. Observe that, $x^{1,h}$ is optimal for SMKP($h$) for all $h$. Otherwise, if a solution $\bar x^{1,h}$ such that  $v(x^{1,h})<v(\bar x^{1,h})$ exists, letting $\bar y^{1,h}$ be the solution corresponding to $\bar x^{1,h}$, by Proposition \ref{prop:corr2}, we have $p(y^{1,h})=v(x^{1,h})<v(\bar x^{1,h})= p(\bar y^{1,h})$. Contradicting the hypothesis on $y$.
% Let us suppose now that $x$ does not satisfy the OPT property with respect to objective coefficients $v_j$, $j =1,\ldots,n$. Hence, a new assignment $\bar x$ for the items in $S(x)$ exists such that for an index $h\le l$ we have  $v(\bar x^{1,h})> v(x^{1,h})$. Let $\bar y^{1,h}$ be the M-$SP$ solution corresponding to $\bar x^{1,h}$. Then, by definition, $p(\bar y^{1,h})=v(\bar x^{1,h})> v(x^{1,h})=p(y^{1,h})$. Contradicting the fact that $y$ satisfies the OPT property with respect to objective coefficients $\pi_w$, $w =1,\ldots,t$.
\end{proof}
%\begin{proof}
%See Section \ref{proof:lem:M-SP-OPT} in the Appendix.\end{proof}
By Lemmas \ref{lem:opt} and \ref{lem:M-SP-OPT},  Lemma \ref{lem:opt2} follows. %directly follows.
\begin{lemma}\label{lem:opt2}
Let $y$ be  an optimal  solution for M-$SP$ satisfying the OPT property.
% with respect coefficients $p_w$, $w=1,\ldots,t$. 
Then $y^{1,h}$ is optimal for M-SP($h$), for $h=1,\ldots,l$.
\end{lemma}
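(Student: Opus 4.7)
The plan is to reduce Lemma \ref{lem:opt2} to the SMKP analogue (Lemma \ref{lem:opt}) by transporting everything through the one-to-many/many-to-one correspondence established in Section \ref{sec:corr} and already exploited in Lemma \ref{lem:M-SP-OPT}. Since the statement concerns the partial solutions $y^{1,h}$ for every $h$, I will work at the level of the restricted problems SMKP$(h)$ and M-SP$(h)$, for which Propositions \ref{prop:corr} and \ref{prop:corr2} give exactly the tool we need: values are preserved under the SMKP$\to$M-SP direction, and values can only grow under the M-SP$\to$SMKP direction.

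First, starting from an optimal OPT solution $y$ for M-SP, I would invoke Lemma \ref{lem:M-SP-OPT} (the ``only if'' direction) to obtain a corresponding solution $x$ for SMKP which is itself optimal and satisfies the OPT property, and moreover satisfies $v(x^{1,h})=p(y^{1,h})$ for all $h=1,\ldots,l$ (this equality is made explicit in the proof of Lemma \ref{lem:M-SP-OPT}). Then Lemma \ref{lem:opt} applies to $x$ and yields that $x^{1,h}$ is optimal for SMKP$(h)$ for every $h$.

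Next I would argue optimality of $y^{1,h}$ for M-SP$(h)$ by contradiction. Suppose for some $h$ there exists a feasible $\bar y^{1,h}$ of M-SP$(h)$ with $p(\bar y^{1,h}) > p(y^{1,h})$. By Proposition \ref{prop:corr} applied to M-SP$(h)$, $\bar y^{1,h}$ corresponds to a feasible $\bar x^{1,h}$ of SMKP$(h)$ with $v(\bar x^{1,h}) \ge p(\bar y^{1,h})$. Combining,
\[
v(\bar x^{1,h}) \;\ge\; p(\bar y^{1,h}) \;>\; p(y^{1,h}) \;=\; v(x^{1,h}),
\]
which contradicts the optimality of $x^{1,h}$ for SMKP$(h)$ just obtained from Lemma \ref{lem:opt}. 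Hence no such $\bar y^{1,h}$ exists, and $y^{1,h}$ is optimal for M-SP$(h)$.

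The main delicate point I expect is not the chain of implications itself, but making sure the correspondence of Section \ref{sec:corr} is consistently applied at the \emph{restricted} level: namely that feasibility of $y^{1,h}$ for M-SP$(h)$ (respectively of $x^{1,h}$ for SMKP$(h)$) really is preserved by the construction, and that one may indeed freely pass between the hypothetical better $\bar y^{1,h}$ and its SMKP$(h)$ counterpart. Once this is clear, the proof reduces to citing Lemma \ref{lem:M-SP-OPT} to get the OPT-optimal companion in SMKP, Lemma \ref{lem:opt} to push optimality down to each SMKP$(h)$, and Propositions \ref{prop:corr}--\ref{prop:corr2} for the value transfer, so the final argument is essentially a two-line contradiction as above.
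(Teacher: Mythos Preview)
Your argument is correct and is essentially the approach the paper intends: it cites Lemmas \ref{lem:opt} and \ref{lem:M-SP-OPT} and leaves implicit exactly the chain you spell out, namely passing from an optimal OPT $y$ to an SMKP companion $x$ with $v(x^{1,h})=p(y^{1,h})$ (this equality is established inside the proof of Lemma \ref{lem:M-SP-OPT}), applying Lemma \ref{lem:opt}, and closing with the value-transfer contradiction via Proposition \ref{prop:corr}. One cosmetic remark: the direction of Lemma \ref{lem:M-SP-OPT} you invoke (from $y$ to $x$) is the ``if'' part, not the ``only if'' part.
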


% and for finding the description of polytope  $P^{OO}$.

In the following, as for SMKP, the definition of ordered solution for M-$SP$ is introduced.

\begin{definition}\label{def:orderedy}
Given a feasible solution $y$ for M-SP,  $y$ is called ordered solution if, for a given knapsack part $h=1,\ldots,l$ and a positive index $q\le h$, a set of items $\Gamma \subseteq 
\{T_1\cup \ldots \cup T_{q-1}\}$  assigned to the part $h$ in $y$ with  size  $f(\Gamma)=d_q$ exists, only if a set of items $\Gamma'$ belonging to the set 
$T_q$,  with total size  $f(\Gamma')=f(\Gamma)=d_q$ and total value $p(\Gamma')=p(\Gamma)$,
either assigned to a part bigger than $h$ or not assigned in $y$ does not exist.
%either assigned to a part bigger than $h$ or not assigned in $y$ does not exist, for each $h=1,\ldots,l$ and $0\le q\le h$.
%Given a feasible solution $x$ for SMKP,  $x$ is called ordered if, for each knapsack part $h=1,\ldots,l$, a set of items $\Gamma$  assigned to the part $h$ in $x$ with size  $s(\Gamma)\le d_h$ exists only if an item of size $s(\Gamma)$ and value $v(\Gamma)$ either assigned to a part bigger than $h$ or not assigned in $x$ does not exist.
\end{definition}

%In other words, in an ordered solution $y$, when ties occurs with respect to size and value, priority  is given to the items  belonging to the sets $T_q$ with biggest index $q$.

Given a  feasible solution $y$ for M-SP, it is always possible to build an ordered solution $\bar y$ such that $p(y^{1,h})=p(\bar y^{1,h})$ and $f(y^{1,h})=f(\bar y^{1,h})$,  for $h=1,\ldots,l$, as showed in the following.
 In fact, let us suppose that a set $\Gamma \subseteq 
\{T_1\cup \ldots \cup T_{q-1}\}$, with total size  $f(\Gamma)=d_q$ and total value $p(\Gamma)$, is assigned to a part $h$ in $y$, and that a set of items $\Gamma' \subseteq T_q$,  with total size  $f(\Gamma')=f(\Gamma)$ and total value $p(\Gamma')=p(\Gamma)$, exists, either $(i)$ assigned to a part bigger than $h$ or $(ii)$ not assigned in $y$. In case $(i)$, let $\bar y$ be  the feasible solution obtained by swapping $\Gamma$ and $\Gamma'$ in $y$,  while in case $(ii)$ let $\bar y$ be  the  feasible solution obtained by assigning $\Gamma'$ in place of $\Gamma$. 
 If $\bar x$ contains a new set $\Gamma$ as defined above, then the above argument can repeated, otherwise, $\bar x$ is an ordered solution.
%By repeating the above argument until no move can be executed, we obtain an ordered solution.

Proposition \ref{prop:optimal-ord-opt}  directly follows by Proposition \ref{prop:M-SP-1} and by the correspondance between feasible solutions of SMKP and M-SP (stated in Section \ref{sec:corr}).
\begin{proposition}\label{prop:optimal-ord-opt}
If $x$ is an optimal ordered OPT solution for  SMKP then the corresponding solution $y$ of M-$SP$   is an optimal ordered OPT solution, too, and vice versa. 
\end{proposition}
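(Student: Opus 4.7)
The plan is to reduce the statement to Lemma \ref{lem:M-SP-OPT} and to a translation of the ordered property through the correspondence of Section \ref{sec:corr}. Lemma \ref{lem:M-SP-OPT} already yields that $x$ is optimal and OPT for SMKP iff the corresponding $y$ is optimal and OPT for M-$SP$, so only the ``ordered'' half of the equivalence remains to be transferred, in both directions.

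The technical backbone is the bundle picture given by Proposition \ref{prop:M-SP-1}: since $y$ is optimal OPT, $d_q$ divides $f_w y^h_{w,q}$, so the M-SP items of type $w$ lying in $T_q$ at part $h$ group into ``bundles'' of $d_q/f_w$ items each, each of total M-SP size $d_q$, and each bundle is the image under the $x\leftrightarrow y$ map of exactly one SMKP item of size $d_q$ from block $B_w$ in $x$. The maximal-block identity $p_w/f_w=v_j/s_j$ makes both size and value preserved by this bundle identification, so Definitions \ref{def:ordered} and \ref{def:orderedy} describe the same combinatorial phenomenon: a chunk of total size $d_q$ at part $h$ formed from strictly smaller material, versus a ``unit'' of size $d_q$ (a single SMKP item, equivalently one M-SP bundle in $T_q$) of matching value sitting at a later part or unassigned.

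The two implications then proceed symmetrically under this dictionary. Suppose $y$ is not ordered, witnessed by $(\Gamma,\Gamma')$ at part $h$ in Definition \ref{def:orderedy}: the bundle $\Gamma'\subseteq T_q$ pulls back to a single SMKP item $j^*$ of size $d_q$ and value $p(\Gamma')$ placed at a part bigger than $h$ or unassigned in $x$, and from the SMKP preimages at part $h$ of the M-SP items in $T_1\cup\cdots\cup T_{q-1}$ I will extract a subset $\tilde\Gamma$ at part $h$ of total size exactly $d_q$ and value matching $v_{j^*}$, yielding an ordered violation of $x$. Conversely, if $x$ is not ordered with witness $(\Gamma,j^*)$ and $s(\Gamma)=d_q$, the M-SP bundle of $j^*$ plays the role of $\Gamma'\subseteq T_q$ and the bundles of the items in $\Gamma$ play the role of $\Gamma\subseteq T_1\cup\cdots\cup T_{q-1}$, provided all items of $\Gamma$ have size strictly less than $d_q$; the degenerate case $\Gamma=\{j\}$ with $s_j=d_q$ reduces to this one by swapping $j$ with $j^*$ in $x$, an operation that leaves $y$ unchanged since both items have the same M-SP type.

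The step I expect to be the main obstacle is the extraction of $\tilde\Gamma$ in the forward direction: an arbitrary $\Gamma$ with $f(\Gamma)=d_q$ need not be a union of whole bundles, so I plan to rely on the divisibility of sizes (Proposition \ref{prop:div}) to assemble, from the SMKP items sitting at part $h$ with sizes $<d_q$, a subset of total size exactly $d_q$, and on the maximal-block structure together with the freedom in choosing $\Gamma$ within $y^h$ to match its value to $v_{j^*}$.
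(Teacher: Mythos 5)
Your overall route coincides with the one the paper intends: the paper gives no argument for Proposition~\ref{prop:optimal-ord-opt} beyond asserting that it ``directly follows'' from Proposition~\ref{prop:M-SP-1} and the correspondence of Section~\ref{sec:corr}, and those are exactly the ingredients you use (Lemma~\ref{lem:M-SP-OPT}, which you invoke for the optimality/OPT half, is itself derived from that correspondence). So you are not taking a different path; you are filling in detail the paper omits. The OPT/optimality half and the reverse (``$x$ not ordered $\Rightarrow$ $y$ not ordered'') direction of your argument are sound, including your handling of the degenerate case $s_j=d_q$ by an $x$-swap that leaves $y$ fixed.

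The genuine gap is in the forward transfer of the ordered property, and it is slightly deeper than the value-matching obstacle you flag. Definition~\ref{def:orderedy} quantifies over \emph{arbitrary} subsets $\Gamma'\subseteq T_q$ with $f(\Gamma')=d_q$ and $p(\Gamma')=p(\Gamma)$; since a set $T_q$ generally contains items from several blocks $w$ with different sizes $f_w$ and different unit gains $p_w/f_w$, such a $\Gamma'$ need not be a union of whole bundles of $d_q/f_w$ items of a single type, and hence need not pull back to a single SMKP item $j^*$ of size $d_q$ --- for instance, with $d_q=4$ and two blocks contributing items of size $2$ and profits $3$ and $1$ to $T_q$, a mixed $\Gamma'$ has value $4$, which no single SMKP item of size $d_q$ realizes. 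Your dictionary therefore breaks exactly where Definition~\ref{def:orderedy} (sets of matching total value) and Definition~\ref{def:ordered} (a single item of matching value) fail to be parallel. The same issue afflicts the witness $\Gamma\subseteq T_1\cup\dots\cup T_{q-1}$ on the other side of the pair: its SMKP preimage need not consist of whole items, and your proposed repair --- extract via Proposition~\ref{prop:div} a subset $\tilde\Gamma$ of the items at part $h$ of total size $d_q$ and then ``match its value'' to $v_{j^*}$ using the maximal-block structure --- is only asserted; Proposition~\ref{prop:div} controls sizes, not values, and nothing in the block structure guarantees that a size-$d_q$ subset of the right value can be carved out of $S(x^h)$. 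To close the argument you would need either to restrict Definition~\ref{def:orderedy} to bundle-unions (and check that this restricted notion is what the rest of the paper actually uses), or to prove directly that a mixed witness for $y$ forces the existence of a single-item witness for $x$.
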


Let $MP^{OO}$ be the  set containing the OPT and ordered solutions of M-SP, and let $MO^{OO}$ be the set of the optimal solutions of M-$SP$ in $MP^{OO}$. 
%It is important to observe that, by Proposition \ref{prop:M-SP-1}, the convex hull of points in $MO^{OO}$ coincides with $MP^{OO}$, i.e., $conv(MO^{OO})\equiv MP^{OO}$.

%By the results presented in this section, it follows that we can consider only OPT solutions that are. 
%Let  $P^{OO}$ be the convex hull  of  ordered and OPT solutions  for SMKP (formulated as in \eqref{newform}), and let $O^{OO}$  be the set of  ordered and OPT solutions.  
M-$SP$ can be formulated as: $\max \{\sum\limits_{h=1}^{l}\sum\limits_{q=1}^{h}\sum\limits_{w=1}^{t}p_w y_{w,q}^h: 
y \in MP^{OO}\}
$.
%$$\max \{ v(x): x \in P^{OPT}\}$$
%${\small \begin{equation}\label{formOO} \max \{\sum\limits_{h=1}^{l}\sum\limits_{q=1}^{h}\sum\limits_{w=1}^{t}p_w y_{w,q}^h: y \in MP^{OO}\} \end{equation}
%In the rest of the paper we will show how to find a description of polytope $P^{OO}$. Observe that, by definition of OPT and ordered solutions, $P^{OO}$ depends from then objective coefficients $v_1, \ldots, v_n$.

%In  Section \ref{sec:enum}, the solutions in $MO^{OO}$ are characterized.

\section{Computing the optimal solutions of M-$SP$ and MSKP}\label{sec:enum}

%17-10-2013 QUI va MODIFICATO  ${\bf F}$ ASSUMENDO CHE sia un vettore di $l$ elementi (VEDI SEZIONE SUCCESSIVA SUL POLIEDRO)

In this section, we describe an inductive procedure for decomposing and
enumerating  the optimal solutions in $MP^{OO}$, i.e., the solutions in $MO^{OO}$. % IN REALTA' SEMBRA CHE NON E' NECESSARIO ENUMERARE TUTTE LE SOLUZIONI.....
 At this aim, the following notation is introduced.

Given a positive integer $b\le l$ and an item type $k$, with $1\le k \le t$, let $N(k)$ be the set of items of types $1,\ldots, k$ belonging to the sets $T_1,\ldots,T_b$.
%ATTENZIONE: BASTA CHE db divida Fh perchŽ gli item pi grandi sono della parte b
Let $\delta_h =\min\{d_h,d_b\}$, for $h =1,\ldots, l$, and let $F_h \le \bar c_h$, for $h=1,\ldots,l$, be positive integers such that 
$\delta_h$  divides $F_h$.
%and let $\tilde b_{wq}$ be integers such that $\tilde b_{wq}\le \tilde b_{wq}$ and $d_q$ divides  $f_w \tilde b_{wq}$, with  $1\le w \le k$ and $1\le q \le b$. Let $\mathbf {\bm \alpha}$ be the matrix $k \times b$, with entries $\tilde b_{wq}$, for $1\le w \le k$ and $1\le q \le b$.
 We denote by $MP(k,b,{\bf F})$ the  set 
 %polyhedron defined as the convex hull 
 of  points $y$
satisfying the  following conditions:
\begin{eqnarray}
%\max\sum\limits_{h=1}^{l}\sum\limits_{q=1}^{h}\sum\limits_{w=1}^{t}p_w y_{wq}^h&&\\
 \sum\limits_{w=1}^{k}\sum\limits_{q=1}^{\min\{ h,b\}} \left\lceil \frac{f_w y_{wq}^h}{d_q}\right\rceil d_q &\leq& F_h \text{ for } h=1,\ldots, l \label{prob:M-SP-cond1R}\\
\sum\limits_{h=q}^{l}\left\lceil \frac{f_w y_{wq}^h}{d_q}\right\rceil&\leq&  \frac{f_w \tilde b_{wq}}{d_q} \;\;\;\;\text{ for } w
=1,\ldots, k \text{ and } q =1,\ldots, b \label{prob:M-SP-cond2R}\\
y_{wq}^h& \in&\mathbb{Z}^+\cup\{0\}\;\;\; \text{ for } w =1,\ldots, k, \; q =1,\ldots, b \text{ and } h =q,\ldots, l.
\end{eqnarray}
%\right}
%\end{equation}
Furthermore, we denote by $MP^{OO}(k,b,{\bf F})$ the
% convex hull of
 set of the points in $MP(k,b,{\bf F})$ that are OPT and ordered solutions. Let $MO^{OO}(k,b,{\bf F})$ be the set of the optimal solutions in $MP^{OO}(k,b,{\bf F})$.

We denote by 
MS-P$(k,b,{\bf F})$ the integer program
\begin{equation}
\max \{ \sum\limits_{h=1}^{l}\sum\limits_{q=1}^{\min\{ h,b\}}\sum\limits_{w=1}^{t}p_w y_{wq}^h: \;\;\;\; y \in MP^{OO}(k,b,{\bf F})\}
\end{equation}

%\subsection{Optimal solutions in $MP^{OO}(k,1,{\bf F})$}\label{subsec:optMO(k,1,F)}

Let  $T'_1,\ldots,T'_b$ be the sets containing the items in $N(k)$ belonging to the sets $T_1,\ldots,T_b$, respectively, and let $T'_{b+1}=T'_{b+2}=\ldots=T'_l=\emptyset$. W.l.o.g., in the rest of the paper we assume that $N(k)$ contains at least an item of type $k$. A consequence of this fact is that $f_k$ divides $d_b$.

In the following lemma we show that, if $b=1$,  $MO^{OO}(k,b,{\bf F})$ contains a single point.
\begin{lemma}\label{lem:solk=1}
 $MO^{OO}(k,1,{\bf F})$ contains a single  solution, $\hat y$, in which
 \begin{equation}\label{eq:j,1,Flemma}
\sum\limits_{h=1}^l \hat y^h_{j,1}= \min\{ \sum\limits_{h=1}^l F_h - \sum\limits_{w=1}^{j-1}\sum\limits_{h=1}^l \hat y^h_{w,1} ; \tilde b_{j,1} \} \;\; \text{for } j=1,\ldots,k,
\end{equation}
and 
 \begin{equation}\label{eq:j,1,single-sol}
 \hat y^h_{j,1}= \min\{  F_h - \sum\limits_{w=1}^{j-1} \hat y^h_{w,1}; \tilde b_{j,1} -\sum\limits_{p=1}^{h-1}\hat y^p_{j,1}\}
  \;\; \text{for } j=1,\ldots,k, \;\; \text{and } h=1,\ldots,l.
\end{equation}
\end{lemma}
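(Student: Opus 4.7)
The plan is to verify in turn that $\hat y$ is feasible for $MP(k,1,{\bf F})$, satisfies the OPT and ordered properties, is optimal, and is the unique such solution. I begin by exploiting the simplification forced by $b=1$: since $s_1=1$ we have $d_1=1$, and for every type $w$ with $\tilde b_{w,1}>0$ some original item of size $d_1=1$ must belong to block $B_w$, forcing $f_w = s_{w_1} = 1$. Consequently the ceiling operators in \eqref{prob:M-SP-cond1R}-\eqref{prob:M-SP-cond2R} collapse and the constraints reduce to $\sum_{w=1}^k y^h_{w,1} \le F_h$ and $\sum_{h=1}^l y^h_{w,1} \le \tilde b_{w,1}$. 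Feasibility of $\hat y$ then follows by straightforward induction on $j$ and $h$ directly from \eqref{eq:j,1,Flemma}-\eqref{eq:j,1,single-sol}. The ordered property of Definition \ref{def:orderedy} is vacuous for $b=1$, since the only admissible index is $q=1$ and then $T_1 \cup \ldots \cup T_{q-1} = \emptyset$.

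The structural fact that drives OPT, optimality, and uniqueness is strict monotonicity $p_1 > p_2 > \cdots > p_k$ among the relevant types (those with $f_w=1$). To establish it, I would suppose for contradiction that $u < w$ satisfy $f_u = f_w = 1$ and $p_u/f_u = p_w/f_w$. The maximal block inequality \eqref{proper:maxblock} then yields $f_w > f_u \tilde b_u \ge 1$, contradicting $f_w = 1$; combined with the convention $p_w \ge p_{w+1}$ whenever $f_w = f_{w+1}$, strict monotonicity follows.

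With strict monotonicity available, OPT becomes a short exchange argument: for any feasible $\bar y$ with $S(\bar y) = S(\hat y)$, the greedy rule in \eqref{eq:j,1,single-sol} assigns to parts $1,\ldots,h$ exactly the lowest-indexed (hence, by strict monotonicity, highest-valued) items of $S(\hat y)$, giving $p(\hat y^{1,h}) \ge p(\bar y^{1,h})$. Optimality of $\hat y$ reduces to a unit-size bounded knapsack with strictly decreasing profits, whose unique maximizer is the greedy allocation encoded in \eqref{eq:j,1,Flemma}. Uniqueness within $MO^{OO}(k,1,{\bf F})$ then follows by induction on $h$: for any $y'$ in this set, optimality together with strict monotonicity first pin down $\sum_{h=1}^l y'^h_{j,1} = \sum_{h=1}^l \hat y^h_{j,1}$ for every $j$; applying the OPT property one part at a time and again invoking strict monotonicity forces $y'^h_{j,1} = \hat y^h_{j,1}$, because any deviation would swap in a lower-value higher-indexed item in some part of $1,\ldots,h$ and strictly decrease $p(y'^{1,h})$.

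The main technical obstacle is the derivation of strict monotonicity of the $p_w$'s from the maximal block relation \eqref{proper:maxblock}; once that is in hand, every remaining step is a direct greedy-exchange argument.
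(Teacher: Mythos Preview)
Your proposal is correct and follows essentially the same approach as the paper: reduce to a unit-size problem where all relevant types have $f_w=1$, use strict monotonicity $p_1>p_2>\cdots>p_k$, and conclude via greedy allocation. The paper simply asserts strict monotonicity from the ordering convention, whereas you take the extra (correct) step of deriving it from the maximal block relation \eqref{proper:maxblock}; you also spell out feasibility, the vacuity of the ordered condition, and uniqueness more carefully than the paper does, but the underlying argument is the same.
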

\begin{proof}
Observe that, in M-SP($k,1,{\bf F}$), all items have the same size $\delta_1=d_1=f_1=1$, since they belong to the set $T_1'$, and can be assigned to all the parts of the knapsack. Hence, the $l$ knapsack parts can  be replaced by a single part of capacity $\sum\limits_{h=1}^l F_h$. 
By the ordering of the item types proposed in Section \ref{sec:trasnform}, since all items have size 1, we have $p_k< p_{k-1}<\ldots<p_1$.
Hence, an optimal solution $\hat y$ must assign the following number items of type 1:
\begin{equation}\label{eq:1,1,F}
\sum\limits_{h=1}^l \hat y^h_{1,1}= \min\{ \sum\limits_{h=1}^l F_h ; \tilde b_{1,1} \}.
\end{equation}
Moreover since $\hat y$ satisfies the OPT property, it must be $ \hat y^1_{1,1}= \min\{  F_1 ; \tilde b_{1,1} \}$, 
$ \hat y^2_{1,1}= \min\{  F_2 ; \tilde b_{1,1}-\hat y^1_{1,1} \}$, $\ldots$, and finally $ \hat y^l_{1,1}= \min\{  F_l ; \tilde b_{1,1} - \sum\limits_{h=1}^{l-1}\hat y^h_{1,1}\}$.\\
Let us consider now the items of type 2. Applying similar arguments to those used for items of type 1, we have
\begin{equation}\label{eq:2,1,F}
\sum\limits_{h=1}^l \hat y^h_{2,1}= \min\{ \sum\limits_{h=1}^l F_h - \sum\limits_{h=1}^l \hat y^h_{1,1} ; \tilde b_{2,1} \}.
\end{equation}
Also in this case, since $\hat y$ satisfies the OPT property, the unique values attained by $ \hat y^1_{2,1},  \hat y^2_{2,1}, \ldots,  \hat y^l_{2,1}$ can be easily derived, i.e., 
 $ \hat y^1_{2,1}= \min\{  F_1 - \hat y^1_{1,1}; \tilde b_{2,1} \}$, 
$ \hat y^2_{2,1}= \min\{  F_2 - \hat y^2_{1,1}; \tilde b_{2,1}-\hat y^1_{2,1} \}$, $\ldots$,$ \hat y^l_{2,1}= \min\{  F_l \hat y^1_{1,1}; \tilde b_{2,1} - \sum\limits_{h=1}^{l-1}\hat y^h_{2,1}\}$.\\
In general we have
\begin{equation}\label{eq:j,1,F}
\sum\limits_{h=1}^l \hat y^h_{j,1}= \min\{ \sum\limits_{h=1}^l F_h - \sum\limits_{w=1}^{j-1}\sum\limits_{h=1}^l \hat y^h_{w,1} ; \tilde b_{j,1} \} \;\; \text{for } j=1,\ldots,k,
\end{equation}
and the unique values attained by $ \hat y^1_{j,1},  \hat y^2_{j,1}, \ldots,  \hat y^l_{j,1}$ are $ \min\{  F_1 - \sum\limits_{w=1}^{j-1} \hat y^1_{w,1}; \tilde b_{j,1} \}, \min\{  F_2 - \sum\limits_{w=1}^{j-1}\hat  y^2_{w,1}; \tilde b_{j,1} -\hat y^1_{j,1}\}, \ldots,  \min\{  F_l - \sum\limits_{w=1}^{j-1} \hat y^l_{w,1}; \tilde b_{j,1} -\sum\limits_{h=1}^{l-1}\hat y^h_{j,1}\}$, respectively.
\end{proof}

%\subsection{Optimal solutions in MS-P$(k,b,{\bf F})$ for $k \ge 1$}

\medskip
In what follows a characterization of the optimal solutions  in  $MP^{OO}(k,b,{\bf F})$ when $b>1$ is given.
%NON PER IPCO: compute the minimum and maximum number of items of type $k$ that are assigned to the part $b$ in any optimal solution $y'$ (i.e., $y' \in O^{OO}_k(F)$), respectively denoted as $u^{b}_k(min)$ and $u^{b}_k(max)$. Hence, $u^{b}_k(min)\le y'^{b}_k \le u^{b}_k(max)$. Moreover, we show that $u^{b}_k(max)-u^{b}_k(min)\le 1$, implying that $y'^{b}_k$ can assume at most two values, for all $y' \in O^{OO}_k(F)$.
Given an item type $j$, let $H_j$ be the set of items in MS-P$(k,b,{\bf F})$ having unit gain value
strictly bigger than $\frac{p_j}{f_j}$, for $j=1,\ldots,k$, i.e., 
\begin{equation}\label{eq:Hk}
H_j=\{t \in N(k): \frac{p_t}{f_t}>\frac{p_j}{f_j}\}.
\end{equation}
Given a  part $g=1, \ldots, l$ and an item type $j=1,\ldots,k$, 
%let $T'_h$ be the  set with the biggest index $h \le g$, containing items of type $j$,and let $p=\min\{h,g\}$. Let 
let $\bar H_j^g$ be the subset of items of $H_j$ defined as follows
\begin{equation}\label{def: bar H}
%\bar H_j^g=\{ H_j \setminus \{ u \in T'_p: u>j\}\} \cap (T'_1\cup T'_2\ldots  \cup T'_{p}).
\bar H_j^g=\{ H_j \setminus \{ u \in T'_g: u>j\}\} \cap (T'_1\cup T'_2\ldots  \cup T'_{g}).
\end{equation}
Note that, since $k$ is the biggest item type in MS-P$(k,b,{\bf F})$, we have
\begin{equation}\label{def: bar Hk}
%\bar H_j^g=\{ H_j \setminus \{ u \in T'_p: u>j\}\} \cap (T'_1\cup T'_2\ldots  \cup T'_{p}).
\bar H_k^g= H_k \cap (T'_1\cup T'_2\ldots  \cup T'_{g}).
\end{equation}

%Observe that, by definition of $T'_g$, $\bar H_k^g=H_k$ if $g>b$.

%FORSE PER COME E' DEFINITA LA RICORSIONE BASTA DEFINIRE:
%\begin{equation}
%\bar H_k^g=\{ H_k \} \cap (T'_1\cup T'_2\ldots  \cup T'_{g})
%\end{equation}

% FORSE BASTA USARE $\delta_h$  invece di $\delta_h$!!!!!!!!.

In what follows, we introduce and (recursively) define the sets ${\cal H}_j^{g}$ for $j=k$ and  $g=1, \ldots, l$. We define ${\cal H}_k^{1}=\bar H_k^1$.
Let 
%\begin{equation}\label{eq:v^{i_{k}}(min)}
$v^{1}=\min\{ \lfloor {f({\bar H}_k^{1}) }/{\delta_1}\rfloor \delta_1; F_{1}\}=\min\{ f({\bar H}_k^{1}) ; F_{1}\}$ 
%\end{equation}
and let  ${\cal H}_k^{1}(min)$ be a subset of  ${\bar H}^{1}_k$ of total size $v^{1}$ (such a set exists, since all items in ${\bar H}^{1}_k $ have size $d_1=\delta_1=1$).
%, and that.
%, since $F_{i_k}$ is multiple of $f_k$,  $v^{i_{k}}$ is multiple of $f_k$.
Moreover, let
%\begin{equation}\label{eq:cal H^2}
${\cal H}_k^{2}=\bar H_k^{2} \setminus {\cal H}_k^{1}(min)$
%\end{equation}
 and let ${\cal H}_k^{2}(min)$ be  a subset of ${\cal H}_k^{2}$ of total size
%\begin{equation}\label{eq:v^{i_{k}}(min)2}
  $v^{2}=\min\{ \lfloor {f({\cal H}_k^{2}) }/{\delta_2}\rfloor \delta_2; F_{2}\}$. 
By Proposition \ref{prop:div}, since $\delta_2$ divides $v^{2}$  and $F_2$, and since ${\cal H}_k^{2}$ contains items with size that are divisible and not bigger than $\delta_2$, the set ${\cal H}_k^{2}(min)$   always exists.\\
In general, let ${\cal H}_k^{g}$ and ${\cal H}_k^{g}$, for $g=1, \ldots, l$, be recursively defined as
\begin{equation}\label{eq:cal H}
{\cal H}_k^{g}=\bar H_k^g \setminus \bigcup\limits_{h=1}^{g-1}{\cal H}_k^{h}(min)
\end{equation}
in which
 ${\cal H}_k^{h}(min)$, for $h=1, \ldots, g-1$, is a subset of ${\cal H}_k^{h}$ of total size $v^{h}$, where
\begin{equation}\label{eq:w^{i_{k}}(min)}
v^{h}=\min\{ \lfloor {f({\cal H}_k^{h}) }/{\delta_h}\rfloor \delta_h; F_{h}\}.
\end{equation}
By Proposition \ref{prop:div}, it easy to see that the sets ${\cal H}_k^{g}(min)$, for $g=3,\ldots,l-1$, always exist, too.
\begin{definition}
Given a solution $\hat y$ in  $MP^{OO}(k,b,{\bf F})$ and a part $g\in \{1,\ldots, b\}$ we call the set of items
in $\bar H_k^g \setminus S(\hat y^{1,g-1})$ as {\em the set of items of $H_k$ available to be assigned to the part $g$ in $\hat y$}.
\end{definition}
%Let $\delta_h=\min\{\delta_h,f_k\}$, for $h=1\ldots,b$.
Two key Lemmas are now introduced useful for characterizing the optimal solutions in  $MP^{OO}(k,b,{\bf F})$.
\begin{lemma}\label{lem:at:least_available}
Given an optimal solution $\hat y$ in  $MP^{OO}(k,b,{\bf F})$, $b>1$, for each $g=1,\ldots, l$, the total size of the items of $H_k$ \emph{available} to be assigned to the part $g$ in $\hat y$ (i.e., $f(\bar H_k^g \setminus S(\hat y^{1,g-1}))$) is at least $\lfloor f({\cal H}_k^{g})/\delta_g\rfloor \delta_g$.
\end{lemma}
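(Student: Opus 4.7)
The plan is to proceed by induction on $g$ from $1$ to $l$. The base case $g=1$ is immediate: since $S(\hat y^{1,0}) = \emptyset$ and, by \eqref{eq:cal H}, ${\cal H}_k^1 = \bar H_k^1$, one has $f(\bar H_k^1 \setminus S(\hat y^{1,0})) = f({\cal H}_k^1) \ge \lfloor f({\cal H}_k^1)/\delta_1\rfloor \delta_1$.

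For the inductive step, assume the claim for $g-1$, and let $U^h$ denote the total size of $H_k$-items that $\hat y$ assigns to part $h$. Because items placed in part $h$ have size at most $d_h$, every $H_k$-item placed there actually belongs to $\bar H_k^h$; hence $f(\bar H_k^g \setminus S(\hat y^{1,g-1})) = f(\bar H_k^g) - \sum_{h=1}^{g-1} U^h$. The recursive construction \eqref{eq:cal H} together with the pairwise disjointness of ${\cal H}_k^1(min),\ldots,{\cal H}_k^{g-1}(min)$ yields $f({\cal H}_k^g) = f(\bar H_k^g) - \sum_{h=1}^{g-1} v^h$, so the target inequality is equivalent to
$$\sum_{h=1}^{g-1} U^h \;\le\; \sum_{h=1}^{g-1} v^h \;+\; \bigl(f({\cal H}_k^g) - \lfloor f({\cal H}_k^g)/\delta_g \rfloor \delta_g\bigr).$$

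I plan to establish this bound by contradiction combined with a local swap. Suppose instead the strict reverse inequality holds: then $\hat y$ overpacks $H_k$-items into parts $1,\ldots,g-1$ by at least one additional $\delta_g$-chunk beyond the combined greedy budget plus the $\delta_g$-rounding residue. By Proposition~\ref{prop:M-SP-1}, in an OPT solution each $f_w y^h_{w,q}$ is a multiple of $d_q$, and since $d_q \mid \delta_g$ for all $q \le g$ (by divisibility and the definition $\delta_g = \min\{d_g, d_b\}$), Proposition~\ref{prop:div} applies to the items in $\bigcup_{h < g}(\bar H_k^h \cap S(\hat y^h))$, allowing the extraction of a subset $\Delta$ of total size a positive multiple of $\delta_g$. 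The items of $\Delta$ are then relocated into part $g$, for which the overshoot and \eqref{prob:M-SP-cond1R} leave at least $f(\Delta)$ units of unused occupancy. Finally, the occupancy freed in the original parts $h < g$ is refilled with items of type at most $k$ that $\hat y$ leaves unassigned; such items must exist, because otherwise the inductive hypothesis at some intermediate index, combined with the assumed overshoot, would force all relevant capacities and upper bounds to bind simultaneously in $\hat y$, a contradiction. Since the relocated $H_k$-items retain their value contribution and every newly inserted item has strictly positive value, the resulting solution $\bar y$ satisfies $p(\bar y) > p(\hat y)$, contradicting optimality of $\hat y$.

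The main obstacle will be the rounding bookkeeping and the joint verification of feasibility. The selection of $\Delta$ must produce a size which is exactly a multiple of $\delta_g$, and this is possible only because the overshoot really provides a full $\delta_g$-chunk rather than merely a fragment smaller than $\delta_g$; this is precisely where the slack term $f({\cal H}_k^g) \bmod \delta_g$ in the target inequality is essential. Moreover, the refilling step must preserve constraints \eqref{prob:M-SP-cond1R} and \eqref{prob:M-SP-cond2R}, together with the OPT and ordered properties of $\bar y$. When a direct refill is unavailable, the argument must trace back through the inductive hypothesis applied to a smaller index $g' \le g-1$ to extract the contradiction at that earlier level, which forces a case analysis on whether each $v^h$ is capacity-limited ($v^h = F_h$) or item-limited ($v^h = \lfloor f({\cal H}_k^h)/\delta_h\rfloor \delta_h$).
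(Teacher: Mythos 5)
Your reformulation of the target inequality and the base case are fine, but the inductive step rests on an exchange argument that does not close, and it is aimed at the wrong target. First, nothing guarantees that part $g$ has $f(\Delta)$ units of spare occupancy to receive the relocated chunk: the hypothesized overshoot concerns only how much of $H_k$ was packed into parts $1,\ldots,g-1$ and says nothing about how full part $g$ is in $\hat y$. Second, the refilling step needs unassigned items of strictly positive value that fit the freed slots, and your justification (``otherwise all relevant capacities and upper bounds would bind simultaneously, a contradiction'') is not an argument --- it is entirely possible that every item of $N(k)$ is already assigned, in which case no improving swap exists and optimality yields no contradiction. Third, the lemma is not an optimality statement at all: the bound holds for every \emph{feasible} $\hat y$, because it only limits how much of $H_k$ can possibly be crammed into parts $1,\ldots,g-1$ under the capacity constraints \eqref{prob:M-SP-cond1R}. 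Reaching for optimality and a swap is the wrong tool here, and it forces you into the two unjustified existence claims above.

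The paper's proof is a pure counting argument in two inductions. It defines a greedy max-packing ${\cal H}_k^{h}(max)$ of size $\bar v^h=\min\{F_h, f({\cal H'}_k^{h})\}$ and shows by induction on $h$ that this greedy procedure dominates any feasible assignment, i.e.\ $f({\cal H'}_k^{g})\le f(\bar H_k^g\setminus S(\hat y^{1,g-1}))$; no exchange is needed because each part can absorb at most $\min\{F_h,\ \text{what is still available}\}$ and the available sets are nested. It then proves, by a second induction with a case split on whether $f({\cal H}_k^{h}(max))$ equals $F_{h}$ (capacity-limited) or $f({\cal H'}_k^{h})$ (item-limited), the rounding identity $\lfloor f({\cal H'}_k^{g})/\delta_g\rfloor\delta_g=\lfloor f({\cal H}_k^{g})/\delta_g\rfloor\delta_g$, which reconciles the rounded min-budgets $v^h$ with the unrounded maxima $\bar v^h$. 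That identity is exactly the piece your remark about the slack term $f({\cal H}_k^g)\bmod\delta_g$ gestures at, but your proposal never supplies it; without it, the inequality $\sum_h U^h\le\sum_h v^h+\bigl(f({\cal H}_k^g)-\lfloor f({\cal H}_k^g)/\delta_g\rfloor\delta_g\bigr)$ is not established even granting the swap machinery.
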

\begin{proof}
%The proof is by induction on the part $g$, and the thesis is first proved for $g=1$. When $g=1$, the set of items in $H_k$ \emph{available} to be assigned to the part $1$ in $\hat y$ is $\bar H_k^1$. Since, by definition,  ${\cal H}_k^{1}=\bar H_k^1$ and $\delta_1$ divides $\bar H_k^1$, the thesis trivially follows.

%By induction, let us suppose that the total size of the items of $H_k$ \emph{available} to be assigned to a part $h \in \{1,\ldots,g-1\}$ in $\hat y$ is at least $\lfloor f({\cal H}_k^{h})/\delta_{h}\rfloor \delta_{h}$. In the following, we show that the same result holds for the part $g$.\\
 The quantity $f(\bar H_k^g \cap S(\hat y_k^{1,g-1}))$ is minimum when the total size of the  items in $H_k$ assigned to the parts $1,\ldots,g-1$ in $\hat y$  is maximum.
The minimum value of $f(\bar H_k^g \cap S(\hat y_k^{1,g-1}))$ can be determined by an iterative procedure described in the following and proved later by induction. Starting from the first part of the knapsack, the procedure assigns  items of $H_k$ with a total size as bigger as possible to each part.
Hence, recalling that all items in ${\bar H}^{1}_k$ have size $\delta_1=1$, the maximum total size of items in  $\bar H_k^1$ that can be assigned to the part 1 is
\begin{equation}\label{eq:v^{1}(max)}
\bar v^1 = \min\{f({\bar H}^{1}_k ) ; F_{1}\}.
\end{equation}
Hence, we must have $f(\bar H^1_k \cap S(\hat y_k^{1}))\le \bar v^1$. Let ${\cal H}_k^{1}(max)$ be a set of items of ${\bar H}^{1}_k$ of total size $\bar v^1$. Note that, since $d_1=1$ such a set exists, and, recalling the definition of ${\cal H}_k^{1}(min)$, we have $f({\cal H}_k^{1}(max))=f({\cal H}_k^{1}(min))$.
%it assigns to the part 1 the set ${\cal H}_k^{1}(max)$. Then, letting
%\begin{equation}\label{eq:H'}
%{\cal H'}_k^{2}= \bar H_k^2 \setminus {\cal H}_k^{1}(max)
%\end{equation}
%the procedure assigns to the part 2 a subset of ${\cal H'}_k^{2}$ of total size
%\begin{equation}\label{eq:v^{2}(max)}
%\bar v^2 = \min\{f({\cal H'}_k^{2} ) ; F_{2}\}
%\end{equation}
%Denote by ${\cal H}_k^{2}(max)$ a subset of ${\cal H'}_k^{2}$ of total size $\bar v^2$. Observe that, since the biggest size of the items in ${\cal H'}_k^{2}$ is $\delta_2$, the item sizes are divisible, and since  $\delta_2$ divides $F_{2}$ such a set always exists. Moreover, observe that, no subset of ${\cal H'}_k^{2}$ with total size bigger than ${\cal H}_k^{2}(max)$ that can be assigned to the part 2 exists.
We define ${\cal H'}_k^{2}= \bar H_k^2 \setminus {\cal H}_k^{1}(max)$. 
In general, for a given knapsack part $h$, we define
\begin{equation}\label{eq:H'_definizione}
{\cal H'}_k^{h}= \bar H_k^h \setminus \bigcup\limits_{t=1}^{h-1}{\cal H}_k^{t}(max)
\end{equation}
where ${\cal H}_k^{h}(max)$ is a subset of ${\cal H'}_k^{h}$ of total size $\bar v^h=\min\{F_h, f({\cal H'}_k^{h})\}$. Observe that, 
 a set of total size $\bar v^h$ always exists by Proposition \ref{prop:div}: such a fact holds, since the biggest size of the items in ${\cal H'}_k^{h}$ is  $\delta_h$, the item sizes are divisible, and $\delta_h$ divides $F_{h}$. Then, the procedure assigns to the part $h$ the set ${\cal H}_k^{h}(max)$, for $h=1,\ldots,g-1$.\\
We now show, by induction on the knapsack parts, that the above  procedure is correct. Namely, that a set of the items of $H_k$ \emph{available} to be assigned to the part $g$ with minimum total size in  $\hat y$ is
\begin{equation}\label{eq:min_vol_set_g}
 {\cal H'}_k^{g}=\bar H_k^g \setminus  \bigcup\limits_{h=1}^{g-1} {\cal H}_k^{h}(max)
\end{equation}
i.e.,
\begin{equation}\label{eq:minimovolset}
f( {\cal H'}_k^{g}) \le f(\bar H_k^g \setminus S(\hat y_k^{1,g-1})).
\end{equation}
When $g=1$, relation \eqref{eq:minimovolset} follows by \eqref{eq:v^{1}(max)} and by definition of ${\cal H}_k^{1}(max)$. By induction, suppose that 
\begin{equation}\label{eq:min_vol_set_g1c}
f( {\cal H'}_k^{g-1}) \le f(\bar H_k^{g-1} \setminus S(\hat y_k^{1,g-2})),
\end{equation}
and let $\tilde y$ be a feasible solution such that  $ f(\bar H_k^{g-1} \setminus S(\tilde y_k^{1,g-2}))=f( {\cal H'}_k^{g-1})$. In $\tilde y$ and $\hat y$, the maximum total size of the items that can be assigned to the part $g-1$ respectively are
$ \min\{F_{g-1};f( {\cal H'}_k^{g-1}) \}=f( {\cal H}_k^{g-1}(max))$
and
$  \min\{F_{g-1}; f(\bar H_k^{g-1} \setminus S(\hat y_k^{1,g-2}))\}.$
Recall that, by definition of $\bar H_k^g$ and since $k$ is the biggest item type, given a feasible solution $y$, it holds that  $\bar H_k^g =\bar H_k^{g-1} \cup (H_k \cap T'^g)$.

Two cases can be distinguished: $a)$ $F_{g-1}\le f( {\cal H'}_k^{g-1})$;  $b)$ $F_{g-1}> f( {\cal H'}_k^{g-1})$. In Case $a)$, 
the maximum total size of the items that can be assigned to the part $g$  in $\tilde y$ and $\hat y$, respectively, are\\
 $ f(\bar H_k^{g-1} \setminus S(\tilde y_k^{1,g-2})) +f(H_k \cap T'^g)-F_{g-1}=f( {\cal H'}_k^{g-1})+f(H_k \cap T'^g)-f( {\cal H}_k^{g-1}(max))=f( {\cal H'}_k^{g})$\\
and
$ f(\bar H_k^{g-1} \setminus S(\hat y_k^{1,g-2})) +f(H_k \cap T'^g)-F_{g-1}$. And the thesis easily follows  by \eqref{eq:min_vol_set_g1c}.
%, the maximum size of items that can be assigned to the part $g-1$ in  $\tilde y$ and $\hat y$ is $F_{g-1}$. %In fact, in this case, what remains to assign to the part $g$ in $\tilde y$ and $\hat y$ respectively are:
In Case $b)$, if $F_{g-1}\ge f(\bar H_k^{g-1} \setminus S(\hat y_k^{1,g-2}))$,  then the minimum total size of the items that remains to assign to the part $g$ in $\tilde y$ and $\hat y$ is $ f(H_k \cap T'^g)= f({\cal H'}_k^{g})$. (Recall, that, by definition, $H_k \cap T'^g=\emptyset$ if $g>b$.) Hence, the  \eqref{eq:minimovolset} follows. Otherwise, if  $ f(\bar H_k^{g-1} \setminus S(\hat y_k^{1,g-2})) - F_{g-1} =\alpha>0 $, we have $f({\cal H'}_k^{g})=f(H_k \cap T'^g)$ and 
$f(\bar H_k^{g} \setminus S(\hat y_k^{1,g-1}))\ge f(H_k \cap T'^g) + \alpha$, and relation \eqref{eq:minimovolset} holds.

By the definition of ${\cal H'}_k^{g}$ (see \eqref{eq:H'_definizione}), by  \eqref{eq:minimovolset} and by definition of  ${\cal H}_k^{g}$ (see  \eqref {eq:cal H} and \eqref{eq:w^{i_{k}}(min)}) we have
\begin{equation}\label{eq:min_vol_set_g1}
f( {\cal H'}_k^{g}) \le f({\cal H}_k^{g}).
\end{equation}
The thesis of the lemma is proved by showing that
\begin{equation}\label{eq:min_vol_set_g2}
f( {\cal H'}_k^{g}) \ge \lfloor f( {\cal H'}_k^{g})/\delta_g\rfloor \delta_g= \lfloor f({\cal H}_k^{g})/\delta_g\rfloor \delta_g,
\end{equation}
implying that, by \eqref{eq:minimovolset}, $ f(\bar H_k^g \setminus S(\hat y_k^{1,g-1}))\ge  \lfloor f({\cal H}_k^{g})/\delta_g\rfloor \delta_g.$ Relation \eqref{eq:min_vol_set_g2} is proved by induction. When $g=1$, recalling the definitions of ${\cal H}_k^{g}$ \eqref{eq:cal H} and ${\cal H'}_k^{g}$ \eqref{eq:H'_definizione},  the thesis trivially holds since
 ${\cal H}_k^{0}(max)= {\cal H}_k^{0}(min)=\emptyset$. Assume that Relation \eqref{eq:min_vol_set_g2} holds for $g-1$ and show it for $g$.
By induction, we have
\begin{equation}\label{eq:min_vol_set_g2bis}
\lfloor f( {\cal H'}_k^{g-1})/\delta_{g-1}\rfloor \delta_{g-1}= \lfloor f({\cal H}_k^{g-1})/\delta_{g-1}\rfloor \delta_{g-1}
\end{equation}
and hence
%\begin{equation}\label{eq:min_vol_set_g4}
 $f( {\cal H'}_k^{g-1})+\delta_{g-1} > f({\cal H}_k^{g-1})$.
%\end{equation}
Recall that, by definition,
$\bar H_k^{g}=\bar H_k^{g-1}\cup (T'_g\cap H_k)$
where, by definition of M-SP, $\delta_g$ divides $f(T'_g\cap H_k)$.
Hence, by definition,
{\small 
\begin{equation}\label{eq:min_set_new}
{\cal H'}_k^{g}=\bar H_k^{g} \setminus  \bigcup\limits_{h=1}^{g-1} {\cal H}_k^{h}(max)=\bar H_k^{g-1}\cup (T'_g\cap H_k)\setminus  \bigcup\limits_{h=1}^{g-1} {\cal H}_k^{h}(max)={\cal H'}_k^{g-1} \cup (T'_g\cap H_k)\setminus {\cal H}_k^{g-1}(max)
\end{equation}}
and
{\small \begin{equation}\label{eq:max_set_new}
{\cal H}_k^{g}=\bar H_k^{g} \setminus  \bigcup\limits_{h=1}^{g-1} {\cal H}_k^{h}(min)=\bar H_k^{g-1}\cup (T'_g\cap H_k)\setminus  \bigcup\limits_{h=1}^{g-1} {\cal H}_k^{h}(min)={\cal H}_k^{g-1} \cup (T'_g\cap H_k)\setminus {\cal H}_k^{g-1}(min)
\end{equation}
}
where the last equality of \eqref{eq:min_set_new} and \eqref{eq:max_set_new} respectively follow since
the sets ${\cal H}_k^{h}(max)$ and ${\cal H}_k^{h}(min)$, for $h=1,\ldots,g-2$, are disjoint subsets of $\bar H_k^{g-1}$, and since,  by definition, ${\cal H'}_k^{g-1}=\bar H_k^{g-1} \setminus  \bigcup\limits_{h=1}^{g-2} {\cal H}_k^{h}(max)$ and  ${\cal H}_k^{g-1}=\bar H_k^{g-1} \setminus  \bigcup\limits_{h=1}^{g-2} {\cal H}_k^{h}(min)$.
From \eqref{eq:min_set_new} and \eqref{eq:max_set_new} we respectively have
\begin{eqnarray}\label{eq:min_set_new1}
f({\cal H'}_k^{g})=
%f(\bar H_k^{g-1}) +f(T'_g\cap H_k)- \sum\limits_{h=1}^{g-2} f({\cal H}_k^{h}(max)) -f({\cal H}_k^{g-1}(max))=\nonumber
f({\cal H'}_k^{g-1})+f(T'_g\cap H_k)-f({\cal H}_k^{g-1}(max))
\end{eqnarray}
and
\begin{eqnarray}\label{eq:max_set_new1}
%f({\cal H}_k^{g})=f(\bar H_k^{g-1}) +f(T'_g\cap H_k)- \sum\limits_{h=1}^{g-2} f({\cal H}_k^{h}(min)) -f({\cal H}_k^{g-1}(min))=\nonumber 
f({\cal H}_k^{g})=f({\cal H}_k^{g-1})+f(T'_g\cap H_k)-f({\cal H}_k^{g-1}(min)).
\end{eqnarray}

Two cases are possible: $(1)$ $f({\cal H}_k^{g-1}(max))=\min\{f({\cal H'}_k^{g-1}), F_{g-1}\}=F_{g-1}$; and $(2)$ $f({\cal H}_k^{g-1}(max))=\min\{f({\cal H'}_k^{g-1}), F_{g-1}\}=f({\cal H'}_k^{g-1})$.

In Case $(1)$, since by induction
$\lfloor f( {\cal H'}_k^{g-1})/\delta_{g-1}\rfloor \delta_{g-1}= \lfloor f({\cal H}_k^{g-1})/\delta_{g-1}\rfloor \delta_{g-1}$ (see \eqref{eq:min_vol_set_g2bis}) and since $\delta_{g-1}$ divides $F_{g-1}$, we have $\min\{ \lfloor f({\cal H}_k^{g-1})/\delta_{g-1}\rfloor \delta_{g-1}, F_{g-1}\}=F_{g-1}$, too, i.e.,  $f({\cal H}_k^{g-1}(min))=f({\cal H}_k^{g-1}(max))=F_{g-1}$. Hence,  since $\delta_{g-1}$ divides 
$f(T'_g\cap H_k)-f({\cal H}_k^{g-1}(min))=f(T'_g\cap H_k)-f({\cal H}_k^{g-1}(max))$ and $\delta_{g}$, and from  \eqref{eq:min_set_new1}, we have:
\begin{eqnarray*}
f({\cal H'}_k^{g})=f({\cal H'}_k^{g-1})+f(T'_g\cap H_k)-f({\cal H}_k^{g-1}(max)) \ge \\  \lfloor f({\cal H'}_k^{g-1})/\delta_{g-1}\rfloor \delta_{g-1}+f(T'_g\cap H_k)-f({\cal H}_k^{g-1}(max)) 
 =\\ \lfloor f({\cal H}_k^{g-1})/\delta_{g-1}\rfloor \delta_{g-1}+f(T'_g\cap H_k)-f({\cal H}_k^{g-1}(min))=\\
  \left\lfloor \frac{f({\cal H}_k^{g-1})+f(T'_g\cap H_k)-f({\cal H}_k^{g-1}(min))}{\delta_{g-1}}\right\rfloor \delta_{g-1} \ge \\
   \left\lfloor \frac{f({\cal H}_k^{g-1})+f(T'_g\cap H_k)-f({\cal H}_k^{g-1}(min))}{\delta_{g}}\right\rfloor \delta_{g}=
   \left\lfloor \frac{f({\cal H}_k^{g})}{\delta_{g}}\right\rfloor \delta_{g}
\end{eqnarray*}
showing the thesis.\\
%RICONTROLLA MA DOVREBBE ANDARE
In Case $(2)$, relation \eqref{eq:min_set_new1}  becomes
%\begin{equation}\label{eq:min_set_new2}
$f({\cal H'}_k^{g})=f(T'_g\cap H_k).$
%\end{equation}
 By definition of ${\cal H}_k^{g-1}(min)$ and by \eqref{eq:min_vol_set_g2bis} we have

$f({\cal H}_k^{g-1}(min))= \min\{ \lfloor \frac{f({\cal H}_k^{g-1}) }{\delta_{g-1}}\rfloor \delta_{g-1}; F_{g-1}\}=\min\{ \lfloor \frac{f({\cal H'}_k^{g-1}) }{\delta_{g-1}}\rfloor \delta_{g-1}; F_{g-1}\}=\lfloor \frac{f({\cal H'}_k^{g-1}) }{\delta_{g-1}}\rfloor \delta_{g-1}.$ Hence,
Equation \eqref{eq:max_set_new1} becomes:
%\begin{equation}\label{eq:max_set_new2}
$f({\cal H}_k^{g})= f({\cal H}_k^{g-1})+f(T'_g\cap H_k)-\lfloor \frac{f({\cal H}_k^{g-1}) }{\delta_{g-1}}\rfloor \delta_{g-1}.$
%\end{equation}
Since $f({\cal H}_k^{g-1})-\lfloor \frac{f({\cal H}_k^{g-1}) }{\delta_{g-1}}\rfloor \delta_{g-1}<\delta_{g-1} $ and $\delta_{g} \ge \delta_{g-1}$, relation \eqref{eq:min_vol_set_g2} holds, i.e., the thesis.

\end{proof}

 In Lemma \ref{lem:minimum2}, an upper bound on the total size of the items of $H_k$ \emph{available} to be assigned to the part $h$ in an optimal solution $\hat y$ is given.

% SERVE???: Observe that, if  $\bar H_k^h=\emptyset$, then a such upper bound is obviously 0. The following definition is used in Lemma \ref{lem:minimum2}.
 %\begin{definition}\label{def: tau}
 %Let   $\hat y$ be an optimal  solution in  $MP^{OO}(k,b,{\bf F})$. Given an item type $k$ and a part $h$, let $\delta_h=\delta_h$ if $\bar H_k^h\ne\emptyset$ and  $\delta_h=0$ if $\bar H_k^h=\emptyset$.\end{definition} 
 
 %Given a  solution $y$ in $MP^{OO}(k,b,{\bf F})$, in the rest of the paper we denote by $y_w^h=\sum_{q=1}^{h}y_{wq}^h$.

%{\bf 26-9-13: rivedere il seguente Lemma}

\begin{lemma}\label{lem:minimum2}
Given an optimal  solution $\hat y$ in  $MP^{OO}(k,b,{\bf F})$, with $b>1$,
the total size of the items of $H_k$,  \emph{available} to be assigned to the part $h$ in $\hat y$ (i.e., $f(\bar H_k^h \setminus S(\hat y^{1,h-1}))$) is smaller than  or equal to $f({\cal H}_k^{h}) + \delta_{h}$, for $h=1,\ldots, l$.
%Moreover, the total size of the items in $H_k$ assigned
%to each part $h$ by $\hat y$, for  $h=1,\ldots, b$, has a total size at least
%$f({\cal H}_k^{h}(min))=\min\{F_h, \lfloor f({\cal H}_k^{h})/\delta_h\rfloor \delta_h\}$ and at most $ \min\{F_h, f({\cal H}_k^{h})\}$
\end{lemma}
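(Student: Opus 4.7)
The plan is to mirror the proof of Lemma~\ref{lem:at:least_available} in the opposite extremal direction: Lemma~\ref{lem:at:least_available} lower-bounds the available $H_k$-mass at part $h$ by considering configurations of $\hat y$ that pack the \emph{maximum} possible $H_k$-mass into parts $1,\dots,h-1$, whereas here I would upper-bound it by considering configurations that pack the \emph{minimum} possible $H_k$-mass into those parts. Using the identity $f({\cal H}_k^h) = f(\bar H_k^h) - \sum_{g=1}^{h-1} v^g$, which follows from~\eqref{eq:cal H} and the fact that the sets ${\cal H}_k^g(min)$ are pairwise disjoint subsets of $\bar H_k^h$, the target bound is equivalent to a lower bound of the form $f(\bar H_k^h \cap S(\hat y^{1,h-1})) \ge \sum_{g=1}^{h-1} v^g - \delta_h$ on the total $H_k$-mass that $\hat y$ actually assigns to the early parts.

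I would argue this by induction on $h$. The base case $h=1$ is immediate since no items precede part~$1$ and $\bar H_k^1 = {\cal H}_k^1$. For the inductive step, the key subclaim is that at every part $g \le h-1$, the total size of $H_k$-items assigned by $\hat y$ to part $g$ is at least $v^g - \delta_g$. I would prove this subclaim by a swap argument exploiting both the optimality and the OPT property of $\hat y$: every item of $H_k$ has unit gain strictly greater than that of any non-$H_k$ item in $N(k)$, so whenever a chunk of size $\delta_g$ at part $g$ in $\hat y$ is filled by a non-$H_k$ item while an $H_k$-item of compatible size remains either unassigned or assigned to a later part, exchanging them would strictly increase $p(\hat y^{1,g})$. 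This contradicts the optimality of $\hat y$ when the swap brings in a previously unassigned item, and it contradicts the OPT property when the swap merely rearranges assigned items between parts. Feasibility of the swap follows from divisibility of item sizes and from $\delta_g \mid F_g$, as in Proposition~\ref{prop:M-SP-1} and throughout the proof of Lemma~\ref{lem:at:least_available}.

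Combining the subclaim at $g = h-1$ with the inductive hypothesis and with the recurrence $f({\cal H}_k^h) = f({\cal H}_k^{h-1}) + f(T'_h \cap H_k) - v^{h-1}$ obtained exactly as in~\eqref{eq:max_set_new1} yields
\[
f(\bar H_k^h \setminus S(\hat y^{1,h-1})) \;\le\; f({\cal H}_k^{h-1}) + \delta_{h-1} + f(T'_h \cap H_k) - (v^{h-1} - \delta_{h-1}) \;=\; f({\cal H}_k^h) + 2\delta_{h-1}.
\]
For $h \le b$, divisibility of the item sizes forces $\delta_h/\delta_{h-1} \ge 2$, so $2\delta_{h-1} \le \delta_h$ and the claim follows. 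For $h > b$ one has $\delta_h = \delta_{h-1} = d_b$ and $T'_h \cap H_k = \emptyset$, so items can be placed freely in any chunk of size $\delta_h$; in that regime the swap subclaim can be sharpened to the slackless lower bound $v^{h-1}$ on usage at part $h-1$, again giving the additive-$\delta_h$ bound.

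The principal obstacle is establishing the swap subclaim rigorously. One has to verify, for every configuration in which $\hat y$ falls short of the greedy-minimum usage by more than $\delta_g$ at some part $g$, that a feasible exchange exists which strictly improves $p(\hat y^{1,g})$. This requires a careful case analysis on chunk alignment, on the upper bounds $\tilde b_{w,q}$ limiting items per block, and on the possibility that the candidate $H_k$-items have sizes larger than the chunk one wants to fill; divisibility of item sizes together with $\delta_q \mid F_q$ will be the tools that produce valid swaps in each subcase.
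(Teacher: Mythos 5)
Your proposal follows essentially the same route as the paper: induction on the part index, the recurrence $A_{h+1}=A_h-f(\bar H_k^h\cap S(\hat y^{h}))+f(H_k\cap T'_{h+1})$, a swap-based lower bound of the form $v^{g}-\delta_g$ on the $H_k$-mass that $\hat y$ must place in each earlier part (the paper proves exactly this, via Lemma~\ref{lem:opt2} and the $\Psi^1,\Psi^2,\Omega^1$ repartition enabled by divisibility of sizes and of $F_g$), and absorption of the resulting $2\delta_{h-1}$ slack using $\delta_h\ge 2\delta_{h-1}$. The one step you leave as a sketch --- the exchange subclaim --- is precisely what the paper's Case~1 and Case~2 analyses carry out with the tools you name, so your plan is conceptually complete and matches the published argument (your separate treatment of the parts $h>b$, where $\delta_h=\delta_{h-1}$, is in fact more careful than the paper's blanket appeal to $\delta_{h+1}\ge 2\delta_h$).
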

\begin{proof}
 The proof is by induction on the knapsack part $h$. If $h=1$ then $S(\hat y^{1,0})=\emptyset$, and 
$\bar H_k^1 \setminus S(\hat y^{1,0})=\bar H_k^1$. Since, by definition, ${\cal H}_k^{1}=\bar H_k^1$, the thesis trivially follows. Assume the thesis holds up to part $h <l$, and let us consider the part $h+1$.
By induction, the total size of the items in the set $\bar H_k^h \setminus S(\hat y^{1,h-1})$, in the following denoted as $A_h$ (i.e., $A_h=f(\bar H_k^h \setminus S(\hat y^{1,h-1}))$), is smaller than  or equal to $f({\cal H}_k^{h}) + \delta_{h}$. Observe that,  $A_{h+1}=f(\bar H_k^{h+1} \setminus S(\hat y^{1,h}))$  is maximum when  $A_h$ is maximum and  $S(\hat y^h) \cap \bar H_k^{h}$ has minimum total size. 
By definition,  $\bar H_k^{h+1} \setminus S(\hat y^{1,h})=\bar H_k^{h} \setminus (S(\hat y^{1,h-1})\cup  S(\hat y^{h}))\cup (H_k \cap T'_{h+1})$. (Recall that, by definition of $T'_h$, $H_k \cap T'_{h}=\emptyset$ if $h>b$.) Observe that, $S(\hat y^{1,h-1})$ and $S(\hat y^{h})$ are disjoint sets, and $\bar H_k^h\setminus (S(\hat y^{1,h-1})\cup  S(\hat y^{h}))$  and $(H_k \cap T'_{h+1})$ are disjoint sets, too. Hence, in general, we have
 \begin{equation}\label{eq:A_h+1}
 f(\bar H_k^{h+1} \setminus S(\hat y^{1,h}))=A_{h+1}= A_h - f(\bar H_k^h \cap S(\hat y^{h})) + f(H_k \cap T'_{h+1}).
 \end{equation}
Observe that, if $\bar H_k^{h}=\emptyset$, then by definition $A_h=0$ $\bar H_k^{h+1}=H_k \cap T'_{h+1}$ and $f({\cal H}_k^{h+1})=f(H_k \cap T'_{h+1})$. Hence, by \eqref{eq:A_h+1} and since $\delta_{h+1}\ne 0$, the Lemma trivially holds.
In the following, let us suppose that $\bar H_k^{h}\ne \emptyset$. %By Definition \ref{def: tau}, it follows that $\delta_h=\delta_h$.

Two cases can be considered: $1)$ $F_h > \lfloor A_h/\delta_h\rfloor \delta_h$; $2)$ $F_h \le \lfloor A_h/\delta_h\rfloor \delta_h$.\\
Case $1$. Since, by definition, $\delta_h$ divides $F_h$, then $F_h \ge  \lfloor A_h/\delta_h\rfloor \delta_h + \delta_h> A_h$. Let $\gamma$ be  the total size of the items of $H_k \setminus S(\hat y^{1,h-1})$ assigned by $\hat y$  to $h$, i.e., $\gamma=f(\hat y^h \cap H_k )=\sum\limits_{r \in H_k}f_r \hat y_r^{h}$.
In the following we show that 
$\gamma > \lfloor A_h /\delta_h\rfloor \delta_h -\delta_h.$
By contradiction, let us suppose that $\gamma\le \lfloor A_h /\delta_h\rfloor \delta_h-\delta_h$, and 
let $\Omega= \bar H_k^h \setminus S(\hat y^{1,h})$. Hence,  $f(\Omega)=A_h-\gamma \ge A_h -  \lfloor A_h /\delta_h\rfloor \delta_h+\delta_h \ge \delta_h$.%, where the first  inequality follows since $\gamma\le \lfloor A_h /\delta_h\rfloor \delta_h-\delta_h$.

Let $\Psi=S(\hat y^{h} ) \setminus H_k$, i.e., the set of items in $ N(k)\setminus H_k $ assigned to part $h$ in $\hat y$, and let $\sigma=f(\Psi)=\sum\limits_{r \in N(k)\setminus H_k}f_r \hat y_r^{h}$, i.e., the total size of the items in $ N(k)\setminus H_k $ assigned to the part $h$ in $\hat y$.
If $\sigma \le F_h -A_h=F_h - \gamma - f(\Omega)$, then it is feasible to assign all items in $\Omega$ to the part $h$ in $\hat y$. Namely, the new solution $\tilde y^{1,h}$ that assigns  all items assigned in $S(\hat y^{1,h})$ to the parts $1,\ldots,h$, and all items in $\bar H_k^h \setminus S(\hat y^{1,h-1})$ to the part $h$ belongs to $MP^{OO}(k,b,{\bf F})$ and has a strictly better objective function than $\hat y^{1,h}$ (since  $S(\hat y^{1,h})$ is a subset strictly contained in  $S(\tilde y^{1,h})$), contradicting Lemma \ref{lem:opt2}.

%{\bf 26-9-13: Questa parte che segue e' da riscrivere meglio}

Suppose now that $f(\Psi)=\sigma>  F_h - A_h$. Since $\delta_h$ is the biggest  size of the items that can be assigned to the part $h$, the item sizes are divisible and $\hat y$ is feasible (i.e., satisfies constraints \eqref{prob:M-SP-cond1}), $\Psi$ 
can be partitioned into two subsets $\Psi^1$ and $\Psi^2$, such that: 
$(i)$ $f(\Psi^1)= F_h - \lfloor A_h /\delta_h\rfloor \delta_h$, $(ii)$ $f(\Psi^2)=\sigma - f(\Psi^1)>0$, and $(iii)$  constraints \eqref{prob:M-SP-cond1} are satisfied, i.e., $\sum\limits_{q=1}^{\min\{h,b\}} (\sum_{w\in \Psi_1\cap T'_q} \left\lceil \frac{f_w \hat y_{wq}^h}{\delta_q}\right \rceil +\sum_{w\in \Psi_2\cap T'_q} \left\lceil \frac{f_w \hat y_{wq}^h}{\delta_q}\right \rceil)=\sigma$.\\
%DA DIRE: QUESTA PARTIZIONE E' FEASIBLE PERCHE' IN M-SP(F,K) IL VOLUME DEGLI ITEM ALLOCATI DEVE RISPETTARE I VINCOLI \eqref{prob:M-SP-cond1}.\\
Observe that $\delta_h$ divides $f(\Psi^1)$ and that
$f(\Psi^2)\le \lfloor A_h /\delta_h\rfloor \delta_h-\gamma$. This last inequality holds since otherwise\\
 $f(\hat y^{h})=\gamma + f(\Psi^1)+f(\Psi^2)> \gamma + F_h - \lfloor A_h /\delta_h\rfloor \delta_h +\lfloor A_h /\delta_h\rfloor \delta_h - \gamma= F_h$, i.e., $\hat y \notin MP^{OO}(k,b,{\bf F})$. Hence, since $\gamma \le \lfloor A_h /\delta_h\rfloor \delta_h-\delta_h$ (by the contradiction hypothesis) then 
 $f(\Psi^2)\le \delta_h$.
 The total size of the items assigned to the part $h$ in $\hat y$ is 
 \begin{equation}\label{eq:psi}
 f(\hat y^{h})=\gamma + f(\Psi^1)+f(\Psi^2)\le  F_h - \lfloor A_h /\delta_h\rfloor \delta_h+ \lfloor A_h /\delta_h\rfloor \delta_h-\delta_h + f(\Psi^2)=F_h-\delta_h + f(\Psi^2).
 \end{equation}
 Recall that $f(\Omega)\ge \delta_h$. Hence, since $\delta_h$ is the biggest  size of the items that can be assigned to the part $h$, the item sizes are divisible, and $\hat y$ is feasible, a subset  $\Omega^1 \subseteq \Omega$ exists such that $f(\Omega^1)=\delta_h$ and $\sum\limits_{q=1}^{\min\{h,b\}}\sum_{w\in \Omega_1 \cap T'_q} \left\lceil \frac{f_w \hat y_{wq}^h}{\delta_q}\right \rceil=\delta_h$.
 By \eqref{eq:psi}, it is feasible to replace $\Psi^2$ by $ \Omega_1$ in $\hat y$.
Hence, the new solution $\tilde y$ with $\tilde y^{1,h-1}= \hat y^{1,h-1}$, $\tilde y^h_r=\hat y^h_r$ for $r \in \Psi^1$, 
$\tilde y^h_r=0$ for $r \in \Psi^2$  
and  $\tilde y^h_w=|\Omega^1_w|$, for $w \in \Omega^1$,  belongs to $MP^{OO}(k,b,{\bf F})$ and has a strictly better objective function than $\hat y^{1,h}$ because  $p_w/f_w >  p_r/f_r$ for all $w \in \Omega^1 \subseteq H_k $ and $r \in  \Psi^1 \subseteq N(k)\setminus H_k$, contradicting Lemma \ref{lem:opt2}. Hence, we have proved that $\gamma \ge \lfloor A_h /\delta_h\rfloor \delta_h$.\\
Recalling that $f(\bar H_k^h \cap S(\hat y^{h}) ) = \gamma $, relation \eqref{eq:A_h+1} reads as
  \begin{equation}\label{eq:A_h+1_case1}
 A_{h+1}=A_h -\gamma + f(H_k \cap T'_{h+1}).
 \end{equation}
By the above discussion we have  $ \gamma > \lfloor A_h /\delta_h\rfloor \delta_h -\delta_h$. Moreover, by Lemma \ref{lem:at:least_available}, $A_h \ge \lfloor f({\cal H}_k^{h})/\delta_h\rfloor \delta_h$ and, hence,  $ \gamma >  \lfloor f({\cal H}_k^{h})/\delta_h\rfloor \delta_h -\delta_h$, too.

As a consequence and since, by induction, $A_h \le f({\cal H}_k^{h}) + \delta_{h}$, relation \eqref{eq:A_h+1_case1} becomes
 $$A_{h+1}<  f({\cal H}_k^{h}) + \delta_{h} -\lfloor f({\cal H}_k^{h})/\delta_h\rfloor \delta_h+ \delta_h + f(H_k \cap T'_{h+1})$$
 that by the hypothesis of Case 1 is equal to $$   f({\cal H}_k^{h}) + 2\delta_{h} -\min\{F_{h}; \lfloor f({\cal H}_k^{h})/\delta_h\rfloor \delta_h\} + f(H_k \cap T'_{h+1}))= f({\cal H}_k^{h+1}) +2\delta_{h}\le f({\cal H}_k^{h+1}) + \delta_{h+1},$$
where the last inequality holds since  $\delta_{h+1} \ge 2\delta_h$. Hence, the thesis follows.

Case 2. 
Again, let  $\gamma=f(\hat y^h \cap H_k)=\sum\limits_{r \in H_k}f_r \hat y_r^{h}$.
If $F_h=0$, then $\gamma=0$ and $ \min\{F_{h};\lfloor f({\cal H}_k^{h})/\delta_h\rfloor \delta_h  \}=0$, too. 
Since by induction, $A_h \le f({\cal H}_k^{h}) + \delta_{h}$, relation \eqref{eq:A_h+1} becomes 
$$A_{h+1}=A_h - \gamma + f(H_k \cap T'_{h+1}) \le  $$$$f({\cal H}_k^{h}) + \delta_h - \min\{F_{h};\lfloor f({\cal H}_k^{h})/\delta_h\rfloor \delta_h  \} + f(H_k \cap T'_{h+1})\le  f({\cal H}_k^{h+1}) + \delta_{h} < f({\cal H}_k^{h+1}) + \delta_{h+1}.$$

%$$=\lceil f({\cal H}_k^{h})/\delta_h\rceil \delta_h - f({\cal H}_k^{h}(min)) + f(H_k \cap T'_{h+1})$$ i.e., the    \eqref{eq:case2} and the thesis follows.\\
Hence,  w.l.o.g., let us assume $F_{h}>0$.  In the following, we show that $\gamma >  F_{h} - \delta_h.$
By contradiction, let us suppose $\gamma \le  F_{h} - \delta_h $. Hence, since $ \lfloor A_h/\delta_h\rfloor \delta_h\ge F_{h}$, we have $A_h= f(\bar H_k^{h} \setminus S(\hat y^{1,h}))\ge \delta_h$. 
Then, by denoting $\rho=\sum\limits_{r \in N(k)\setminus H_k}f_r \hat y_r^{h}$, arguments  similar to those applied to Case 1 can be used to get a contradiction.
%ALLORA RIMANE FUORI UN SET $B$  DI $H_k$ CHE e' maggiore di $\delta_h$ e quindi posso ELIMINARE UN SET $C$ $\not H_k$ TALE CHE ELIMINATO $C$ POSSO ALLOCARE $B$ E QUINDI MIGLIORARE la soluzione ottima...
Hence, $\gamma > F_{h} -\delta_h$ and we have $\gamma>F_{h} -\delta_h \ge \min\{ \lfloor A_h/\delta_h\rfloor \delta_h;F_{h}\}  -\delta_h \ge \min\{ \lfloor  f({\cal H}_k^{h})/\delta_h \rfloor \delta_h;F_{h}\}  -\delta_h,$
 where the second inequality follows by Lemma \ref{lem:at:least_available}.
Hence, by relation \eqref{eq:A_h+1} and since by induction $A_h \le f({\cal H}_k^{h}) + \delta_h $ we have 
$$A_{h+1}=A_h - \gamma + f(H_k \cap T'_{h+1}) <
 f({\cal H}_k^{h}) + \delta_h - min\{F_{h};\lfloor f({\cal H}_k^{h})/\delta_h\rfloor \delta_h  \} + \delta_h + f(H_k \cap T'_{h+1})$$
 that, by the definition of ${\cal H}_k^{h+1}$, is equal to $f({\cal H}_k^{h+1}) + 2\delta_h \le f({\cal H}_k^{h+1}) + \delta_{h+1}, $
where  the last inequality holds since  $\delta_{h+1} \ge 2\delta_h$. Hence, $A_{h+1}<f({\cal H}_k^{h+1}) + \delta_{h+1}$, i.e., the thesis.
\end{proof}

%In the following a characterization of the optimal solutions in  $MP^{OO}(k,b,{\bf F})$ is provided.  Recall that $i_k$ is the first part in which items of type $k$ can be assigned to.

%Recalling that, the items in a set $T'_q$ can be assigned only to the knapsack parts $q, q+1, \ldots, l$,
In the following, %%given a feasible solution $y \in O^{OO}_k(F)$, we let $y_{kq}=\sum\limits_{h=q}^l y^{h}_{kq}$.
given an item type $w$ and a knapsack part $h$, we denote by $T_h^w$ the set of items of type $w$ in $T'_h$.
The following theorem states the maximum and minimum value assumed by $\hat y_{k,b}^b$, for $\hat y$ in  $MO^{OO}(k,b,{\bf F})$.

\begin{theorem}\label{thm:itemb}
Given an optimal solution $\hat y$ in  $MP^{OO}(k,b,{\bf F})$, with $b>1$, the number of items of type $k$ of the set $T'_b$ assigned to the part $b$, $\hat y^{b}_{k,b}$, takes the following values:
\begin{equation}\label{eq:min_bb}
\hat y^{b}_{k,b}\geq \min\{\left(\frac{ F_b -\lceil (f({\cal H}_k^{b}) + \delta_b)/\delta_{b}\rceil \delta_{b}}{\delta_b}\right)^+ \frac{\delta_b}{f_k}, \tilde b_{k,b}  \}
\end{equation}
and
\begin{equation}\label{eq:max_bb}
\hat y^{b}_{k,b} \leq \min\{\left(\frac{ F_b -\lfloor f({\cal H}_k^{b})/\delta_{b}\rfloor \delta_{b}}{\delta_b}\right)^+ \frac{\delta_b}{f_k}, \tilde b_{k,b}  \}.
\end{equation}
%Moreover, it holds that $u^{i_k}_k(max)-u^{i_k}_k(min)\le 1$.
\end{theorem}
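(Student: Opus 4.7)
The plan is to prove the two inequalities separately, each by contradiction, using Lemmas \ref{lem:at:least_available} and \ref{lem:minimum2} to control the mass of $H_k$-items available at part $b$. Throughout, Proposition \ref{prop:M-SP-1} lets me replace each ceiling $\lceil f_w \hat y^h_{w,q}/d_q\rceil d_q$ by the exact size $f_w \hat y^h_{w,q}$, so the capacity constraint on part $b$ simplifies to $\sum_{w,q:\,q\le b} f_w \hat y^b_{w,q}\le F_b$ and the type-$k$ occupancy of part $b$ is exactly $f_k \hat y^b_{k,b}$. Note also that $\delta_b\mid F_b$ and $f_k\mid \delta_b$, so all sizes used in the arguments divide $\delta_b$.

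For the upper bound, assume $\hat y^b_{k,b}$ strictly exceeds the right-hand side. Constraint \eqref{prob:M-SP-cond2R} already gives $\hat y^b_{k,b}\le \tilde b_{k,b}$, so the violation must come from $f_k \hat y^b_{k,b} > (F_b - \lfloor f({\cal H}_k^b)/\delta_b\rfloor \delta_b)^+$, in which case the part-$b$ mass occupied by items other than type $k$ is strictly below $\lfloor f({\cal H}_k^b)/\delta_b\rfloor \delta_b$. By Lemma \ref{lem:at:least_available}, at least $\lfloor f({\cal H}_k^b)/\delta_b\rfloor \delta_b$ of $H_k$-mass is still available for part $b$, so a positive mass of available $H_k$-items is not placed there. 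By divisibility I peel off a packet $\Omega$ of available $H_k$-items of total size exactly $\delta_b$ and swap it against $\delta_b/f_k$ items of type $k$ currently at part $b$: feasibility is preserved and $p(\hat y^{1,b})$ strictly increases since every item of $H_k$ has unit gain strictly larger than $p_k/f_k$, contradicting Lemma \ref{lem:opt2}.

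For the lower bound, assume $\hat y^b_{k,b}$ is strictly smaller than the right-hand side. The minimum must be active, so $\hat y^b_{k,b}<\tilde b_{k,b}$ and $f_k \hat y^b_{k,b} < F_b - \lceil (f({\cal H}_k^b)+\delta_b)/\delta_b\rceil \delta_b$. Lemma \ref{lem:minimum2} bounds the available $H_k$-mass at part $b$ by $f({\cal H}_k^b)+\delta_b$, hence the $H_k$-occupancy of part $b$ is at most $\lceil (f({\cal H}_k^b)+\delta_b)/\delta_b\rceil \delta_b$. Combining with the standing assumption, the total occupancy at part $b$ coming from $H_k\cup\{k\}$ is strictly less than $F_b$, so a packet of total size at least $\delta_b$ in part $b$ is either empty or filled by items of $N(k)\setminus(H_k\cup\{k\})$. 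Since $\hat y^b_{k,b}<\tilde b_{k,b}$, I can replace such a $\delta_b$-packet by $\delta_b/f_k$ additional items of type $k$ (taken from later parts or from unassigned inventory); whenever the replaced unit gain is strictly less than $p_k/f_k$ the objective strictly increases, contradicting optimality.

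The main obstacle lies in the boundary case where the $\delta_b$-packet replaced in the lower-bound argument consists of items of $N(k)\setminus(H_k\cup\{k\})$ whose unit gain equals $p_k/f_k$. The swap then leaves the objective unchanged, so the contradiction must come from structural properties: if the packet intersects $T'_q$ for some $q<b$ I invoke Definition \ref{def:orderedy} applied to $h=b$ and the same-value set $\Gamma'\subseteq T'_b$ made of the $\delta_b/f_k$ unassigned or later-part items of type $k$; if instead the packet is contained in $T'_b$, the maximal-block inequality \eqref{proper:maxblock} rules out its existence, since the total size of items of types $u<k$ with $p_u/f_u=p_k/f_k$ is strictly less than $f_k\le \delta_b$. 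This is the delicate point of the argument and is exactly where the maximal block partition is used.
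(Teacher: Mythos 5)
Your proposal is correct and follows essentially the same route as the paper's proof: the same exchange arguments against Lemma \ref{lem:opt2}, with Lemmas \ref{lem:at:least_available} and \ref{lem:minimum2} bounding the available $H_k$-mass, Proposition \ref{prop:M-SP-1} supplying the divisibility needed to peel off $\delta_b$-packets, and the ordered-solution property together with the maximal-block inequality \eqref{proper:maxblock} disposing of the equal-gain boundary case. The only difference is organizational (you split by upper versus lower bound, the paper by three parameter regimes), and you correctly identify the equal-gain case as the delicate point where the maximal block partition is essential.
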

\begin{proof}
Obviously, if $\tilde b_{k,b}=0$ then $y^{b}_{k,b}=0$ in any optimal solution of $MP^{OO}(k,b,{\bf F})$, and the theorem holds. Hence, suppose that $\tilde b_{k,b}>0$, i.e., items of type $k$ exist in $T'_b$.\\
Observe that, by definition of $b$, $\delta_b=d_b$.
Let $\hat H_k^{b}$ be the set of items of $H_k$ not assigned by $\hat y$ to $1,\ldots, b-1$ and that can be assigned to the part $b$ (i.e., the items contained in $\hat H_k^{b}\equiv H_k \cap \{T'_1\cup \dots T'_{b}\}\setminus S(\hat y^{1,b-1})\equiv \bar H_k^{b}\setminus S(\hat y^{1,b-1})$).
As in \cite{Pochet98}, several cases are considered.\\
Case 1. $F_{b} \le  f({\cal H}_k^{b})$.  Since  $\delta_b$ divides $F_{b}$, it follows that $F_{b} \le   \lfloor f({\cal H}_k^{b})/\delta_b\rfloor \delta_b$. In this case, the Theorem states that $\hat y^{b}_{k,b}$=0.
Observe that, by Lemma \ref{lem:at:least_available}, $f(\hat H_k^{b}) \ge  \lfloor f({\cal H}_k^{b})/\delta_b\rfloor \delta_b  $. Hence, $F_{b} \le  f(\hat H_k^{b})$.\\
By contradiction, suppose that
$\hat y^{b}_{k,b}>0$.
Since $F_{b} \le f(\hat H_k^{b})$, we have $\sum\limits_{q=1}^{b}\sum\limits_{r \in \hat H_k^{b}} f_{r}  \hat y^{b}_{r,q} + f_k  \hat y^{b}_{k,b} \leq F_{b} \leq
f(\hat H_k^{b})= \sum\limits_{r \in \hat H_k^{b}}f_r\sum\limits_{q=1}^{b}( \tilde b_{r,q}-  \hat y^{1,b-1}_{r,q})$,
and, since $\hat y^{b}_{k,b}>0$, $\sum\limits_{r \in \hat H_k^{b}} f_r\sum\limits_{q=1}^{b}(\tilde b_{r,q}-\hat y^{1,b-1}_{r,q} -
 \hat y^{b}_{r,q})\geq f_k  \hat y^{b}_{k,b}>0$. Observe that,
 by Lemma \ref{prop:M-SP-1}, $\delta_b$ divides $f_k\hat y^{b}_{k,b}$. 
 Moreover, observe that: ($i$) $\delta_b$ is the biggest item size; ($ii$)  $\delta_q$ divides  $f_r\tilde b_{r,q}$, for all $q$ and $r$ (by definition); ($iii$)  $\delta_q$ divides  
  $f_r\hat y^{h}_{r,q}$, for all $r=1,\ldots,k$, $q=1,\ldots,b$ and $h=q,\ldots,l$ (by Lemma \ref{prop:M-SP-1}).
Hence, since $\delta_1, \delta_2,\ldots, \delta_b$ are divisible, integers $\lambda_{r,q} \in \{0,\ldots,
\tilde b_{r,q}- \hat y^{q,b}_{r,q} \}$ for all items types  $r$ in $\hat H_k^{b}$ and $q=1,\ldots,b$ exist, such that $\delta_q$ divides $f_r \lambda_{r,q} $ and 
$\sum\limits_{q=1}^b\sum\limits_{r \in  \hat H_k^{b}} f_r \lambda_{r,q} =\sum\limits_{q=1}^b\sum\limits_{r \in  \hat H_k^{b}}\lceil f_r\lambda_{r,q} /\delta_q \rceil \delta_q  = f_k
\hat y^{b}_{k,b}$.\\
Consider now a new solution $\bar y^{1,b}$ such that: $\bar y^{1,b-1}={\hat y}^{1,b-1}$,
$\bar y^{b}_{r,q}=\lambda_{r,q}$ for $r \in \hat H_k^{b}$ and $q=1,\ldots,b$, and
$\bar y^{b}_{k,b}=0$. Note that, by construction and by the above observations, $\bar y^{1,b}$ is feasible and, by
definition of $\hat H_k^{b}$,  has objective function
strictly better than $\hat y^{1,b}$, i.e., $\hat y^{1,b}$ is not an OPT solution, a contradiction.

Case 2. %$u^{b}_k(max)=\tilde b_{k,b}$.
$F_{b}- \lceil (f({\cal H}_k^{b}) + \delta_b)/\delta_b \rceil \delta_b \ge f_k \tilde b_{k,b}$. 
%We have $\hat y_{k,b}^{b} \le \frac{F_b- \lceil f({\cal H}_k^{b}) + \delta_b\rceil}{f_k}$. 
Then
 by \eqref{eq:min_bb} and \eqref{eq:max_bb}, 
 %and since $\hat y$ is an ordered solution, we have
$\hat y_{k,b}^{b}=\tilde b_{k,b}$. Note that, since by Lemma \ref{lem:minimum2} $f({\hat H}_k^{b})\le  f({\cal H}_k^{b}) + \delta_b$, it follows that $F_{b}- f({\hat H}_k^{b}) \ge f_k \tilde b_{k,b}$, too.\\
%Observe that, by Lemma \ref{lem:min_vol} and since $F_{b}$ is multiple of $f_k$, this happens $\lfloor f({\cal H}_k^{b})/f_k \rfloor f_k +f_k(\sum\limits_{v=1}^{b}\tilde
%b_{kv})\leq F_{b}$, and the lemma states that
%$\hat y_{k}^b=\sum\limits_{v=1}^{b}\tilde b_{kv}$.\\
By contradiction, suppose that $\hat y_{k,b}^{b}<\tilde b_{k,b}$. By Lemma \ref{prop:M-SP-1},   $\delta_b$ divides $f_k\hat y_{k,b}^{b}$, and since $\delta_b$ divides $f_k\tilde b_{k,b}$ (by definition), it follows that $f_k\tilde b_{k,b} -f_k\hat y_{k,b}^{b} 
=\alpha \delta_b$,  with $\alpha$ positive integer. Hence,  there exists a set of items of type $k$ that are not assigned in $\hat y^b$, belonging to $T^k_b$ and having total size $\alpha \delta_b$. 
Since $\hat y$ is an ordered solution, 
a set $A$ of items belonging to $T_1^k\cup \ldots\cup T_{b-1}^k$, of total size $f(A)\ge \delta_b$ and assigned to the part $b$ in $\hat y$ does not exist. (Otherwise, in $\hat y$, an  unassigned set of items in  $T_b^k$, with total size $\delta_b$, can be feasibly allocated (by Lemma \ref{lem:at:least_available}) to the part $b$ in place of a subset of $A$ of total size $\delta_b$, i.e., $\hat y$ is not an ordered solution.) Let $\Omega$ be the set of items (of type $k$)  belonging to $T_1^k\cup \ldots\cup T_{b-1}^k$ and assigned to part $b$ in $\hat y$ (i.e., 
$\Omega=\{S(\hat y_{k1}^{b}) \cup  \ldots \cup S(\hat y_{kb-1}^{b})\} )$. By the above discussion, it follows that  $f(\Omega)< \delta_b$.
 Moreover, let $\Gamma= \{T'_1\cup\ldots\cup T'_b\}\setminus \{ T_{b}^k\cup H_k\}$, i.e., the set of items in  $\{T'_1\cup\ldots\cup T'_b\}$ that neither belong to $T_{b}^k$ nor to $H_k$. 
 %(Observe that, $N(k)\cap\{T'_1\cup\ldots\cup T'_b\}\equiv N(k)$.)
Let $\rho=\sum\limits_{q=1}^b\sum\limits_{r \in \Gamma}f_r \hat y_{r,q}^{b}$ (i.e., $\rho$ is the total size of the items in $\Gamma$ assigned to the part $b$ in $\hat y$). 
If $\rho < \alpha \delta_b$, since $F_{b}- f({\hat H}_k^{b}) \ge f_k \tilde b_{k,b}$, the solution $\bar y^{1,b}$ with $\bar y^{1,b-1}=\hat y^{1,b-1}$, $\bar y^{b}_{r,q}=0$ for $r \in
\Gamma$ and $q=1,\ldots,b$, $\bar y^{b}_{r,q}=\hat y^{b}_{r,q}$ for $r \in \hat H_k^{b}$ and $q=1,\ldots,b$,
and $\bar y^{b}_{k,b}=\tilde b_{k,b}$ is feasible
and yields a bigger objective
function value than $\hat y^{1,b}$, since  $\frac{p_k}{f_k}\geq \frac{p_r}{f_r}$ for
all $r \in \Gamma$ (recall that the items in $\Gamma$ do not belong to $H_k$). Hence, $\hat y^{1,b}$ is not an OPT solution. A contradiction. Hence, it must be $\rho \ge \alpha \delta_b$.

Recall that, by Lemma \ref{prop:M-SP-1}, $\delta_q$ divides $ f_r \hat y_{r,q}^b$, for all item types $r$ and $q=1,\ldots,b$.
Observe that, since $\hat y$ is an ordered solution, a subset   $\Theta$ of $\Gamma \cap \{ T'_1 \cup \ldots \cup T'_{b-1}\}$  such that 
$\sum\limits_{q=1}^b\sum\limits_{r \in
\Theta} \lceil f_r \hat y_{r,q}^b/\delta_q \rceil \delta_q= \sum\limits_{q=1}^b\sum\limits_{r \in
\Theta}  f_r \hat y_{r,q}^b=\delta_b$ and $p(\Theta)=p_k\delta_b/f_k$ does not exist. (Otherwise, 
a set of  items belonging to $T^k_b$ and having total size $ \delta_b$  that are not assigned or that are assigned to the parts $b+1,\ldots,l$ in $\hat y$ (such a set exists in this case) can be assigned in place of $\Theta$, i.e., $\hat y$ is not an ordered solution.) 

Let us consider the items in $ r \in \Gamma \cap  \{ T'_1 \cup \ldots \cup T'_{b-1}\}$ such that $\frac{p_r}{f_r}= \frac{p_k}{f_k}$. Hence, since $\Gamma$ does not contain items $j$ such that ${p_j \over f_j}>{p_k \over f_k}$, then 
\begin{equation}\label{eq3<d_b}
\sum\limits_{r\in \Gamma \cap  \{ T'_1 \cup \ldots \cup T'_{b-1}\}: \frac{p_r}{f_r}= \frac{p_k}{f_k}} \lceil f_r \hat y_{r,q}^b/\delta_q \rceil \delta_q <\delta_b
\end{equation}

Let $\Delta$  be the set containing all the items of types $1,\ldots,k-1$ in $ \{ T'_1 \cup \ldots \cup T'_{b}\}$.
By the partition into
maximal blocks (see \eqref{proper:maxblock}) and since $\Delta$ is strictly contained in the set of the items of types $1,\ldots,k-1$ in $N(k)$, we have $\delta_b \ge f_k>\sum\limits_{r\in \Delta: \frac{p_r}{f_r}= \frac{p_k}{f_k}}\sum\limits_{q=1}^{b}\tilde b_{r,q}$. Obviously, since $\Delta \setminus H_k \subseteq \Delta $,  we have $\delta_b \ge f_k>\sum\limits_{r\in  \Delta \setminus H_k: \frac{p_r}{f_r}= \frac{p_k}{f_k}}f_r
\sum\limits_{q=1}^{b}\tilde b_{r,q}$, too. Furthermore, since by definition $\Gamma \cap  \Delta\subseteq \Delta \setminus H_k$, it also holds that 
\begin{equation}\label{eq2<d_b}
\delta_b \ge f_k>\sum\limits_{r\in \Gamma \cap  \Delta: \frac{p_r}{f_r}= \frac{p_k}{f_k}}f_r
\sum\limits_{q=1}^{l}\tilde b_{r,q}.
\end{equation}
Recall that (by Proposition \ref{prop:M-SP-1}) if an item $r \in T'_b$ is assigned to the part $b$ in $\hat y$, then  $f_r \hat y_{rb}^b=\beta \delta_b$, with $\beta$ positive integer. As a consequence and by \eqref{eq2<d_b}, items in $T'_b \cap  \Delta\cap \Gamma$ such that $\frac{p_r}{f_r}= \frac{p_k}{f_k}$ do not exist.
By \eqref{eq3<d_b} and \eqref{eq2<d_b}, it follows that 
 \begin{equation}\label{eq<d_b}
 \sum\limits_{q=1}^b \sum\limits_{r \in
\Gamma : \frac{p_r}{f_r}= \frac{p_k}{f_k}} \lceil f_r \hat y_{r,q}^b/\delta_q \rceil \delta_q= \sum\limits_{q=1}^b \sum\limits_{r \in
\Gamma : \frac{p_r}{f_r}= \frac{p_k}{f_k}} f_r \hat y_{r,q}^b < \delta_b.
\end{equation}

By the divisibility of the item sizes, since $\rho \ge \alpha \delta_b$ and all the items have size not bigger than $\delta_b$, and by Lemma \ref{prop:M-SP-1},  integers
$\lambda_{r,q} \in \{0,\ldots, \hat y_{r,q}^{b}\}$ exist for all $r \in \Gamma$ and $q=1,\ldots,b$, such that $\sum\limits_{q=1}^b\sum\limits_{r \in
\Gamma} f_r \lambda_{r,q}= \sum\limits_{q=1}^b\sum\limits_{r \in
\Gamma}  \lceil f_r \hat y_{r,q}^b/ \delta_q \rceil \delta_q = f_k
(\tilde b_{k,b}-\hat y^{b}_{k,b} )=\alpha \delta_b$ (where  $\alpha \ge1$ and integer). As a consequence and
by \eqref{eq<d_b}, it follows that $\sum\limits_{q=1}^b\sum\limits_{r \in \Gamma} p_r \lambda_{r,q}<p_k(\tilde b_{k,b}-\hat y^{b}_{k,b})$.
Hence, 
the solution $\bar y^{1,b}$ with $\bar y^{1,b-1}=\hat y^{1,b-1}$, $\bar y^{b}_{r,q}=\hat y^{b}_{r,q}- \lambda_{r,q}$ for $r \in \Gamma$ and $q=1,\ldots,b$, $\bar y^{b}_{r,q}=\hat y^{b}_{r,q}$ for $r \in \hat H_k^{b}$ and $q=1,\ldots,b$,
and $\bar y^{b}_{k,b}=\tilde b_{k,b}$ is feasible, and
% Let $Q=\{r \in \Gamma:\lambda_{r,q}>0 \text{ for any } q=1,\ldots,b \}$. We have $\sum\limits_{r \in Q} f_r \sum\limits_{q=1}^{b}(\tilde b_{r,q}- \hat y_{r,q}^{1,b-1})\ge \sum\limits_{q=1}^b \sum\limits_{r \in Q} f_r \lambda_{r,q}  = (\tilde b_{k,b}-\hat y^{b}_{k,b} )=\alpha \delta_b$, with $\alpha \ge1$, integer. Hence, $Q$ identifies a set of items in $\Gamma$ of total size of at least $\delta_b \ge f_k$ (assigned to the part $b$ in $\hat y$). By the inequality \eqref{eq<d_b},  an item type $r\in Q$ exists such that $\frac{p_r}{f_r}< \frac{p_k}{f_k}$.
%As a consequence, and $\bar y^{1,b}$  
yields a bigger objective
function value than $\hat y^{1,b}$, i.e., $\hat y^{1,b}$ is not an OPT solution, a contradiction.\\
Case 3.
%RIVEDI PONENDO $b=b$
If Cases 1 and 2 do not hold, then $F_{b}  > f({\cal H}_k^{b})$
%\begin{equation}\label{eq:c3a}F_{b}  > f({\cal H}_k^{b})\end{equation}
and $F_{b}  -\lceil (f({\cal H}_k^{b}) + \delta_b)/\delta_b\rceil  \delta_b < f_k \tilde b_{k,b}.$
%\begin{equation}\label{eq:c3b}  F_{b}  -\lceil (f({\cal H}_k^{b}) + \delta_b)/\delta_b\rceil  \delta_b < f_k \tildeb_{k,b}.\end{equation}
 Hence, the theorem states that $\hat y^{b}_{k,b} \ge \left(\frac{ F_b -\lceil (f({\cal H}_k^{b}) + \delta_b)/\delta_{b}\rceil \delta_{b}}{\delta_b}\right)^+ \frac{\delta_b}{f_k}$ and $\hat y^{b}_{k,b} \leq \min\{\left(\frac{ F_b -\lfloor f({\cal H}_k^{b})/\delta_{b}\rfloor \delta_{b}}{\delta_b}\right)^+ \frac{\delta_b}{f_k}, \tilde b_{k,b}  \}$. 
Also in
this case, by contradiction, suppose that the theorem does not hold.
Hence, either
$(a)$  $\hat y^{b}_{k,b} < \left(\frac{ F_b -\lceil (f({\cal H}_k^{b}) + \delta_b)/\delta_{b}\rceil \delta_{b}}{\delta_b}\right)^+ \frac{\delta_b}{f_k}$ or $(b)$ $\hat y^{b}_{k,b} >
\min\{\left(\frac{ F_b -\lfloor f({\cal H}_k^{b})/\delta_{b}\rfloor \delta_{b}}{\delta_b}\right)^+ \frac{\delta_b}{f_k}, \tilde b_{k,b}  \}$.\\ 
 In Case $(a)$, since $F_{b}  -\lceil (f({\cal H}_k^{b}) + \delta_b)/\delta_b\rceil  \delta_b < f_k \tilde b_{k,b}$, we have 
 $\lceil (f({\cal H}_k^{b}) + \delta_b)/\delta_{b}\rceil \delta_{b} + \left(\frac{ F_b -\lceil (f({\cal H}_k^{b}) + \delta_b)/\delta_{b}\rceil \delta_{b}}{\delta_b}\right)^+ \delta_b \le F_b$. Hence,   an argument  similar
to that used in Case 2 can be applied  to show that there exists a feasible solution $\bar y$ with $\hat y^{b}_{k,b} = \left(\frac{ F_b -\lceil (f({\cal H}_k^{b}) + \delta_b)/\delta_{b}\rceil \delta_{b}}{\delta_b}\right)^+ \frac{\delta_b}{f_k}$. \\
In Case $(b)$, we obviously have $\hat y^{b}_{k,b} \le \tilde b_{k,b}$ and the theorem is not satisfied only if 
$\left(\frac{ F_b -\lfloor f({\cal H}_k^{b})/\delta_{b}\rfloor \delta_{b}}{\delta_b}\right)^+ \frac{\delta_b}{f_k}< \tilde b_{k,b} $ and 
$\hat y^{b}_{k,b} >\left(\frac{ F_b -\lfloor f({\cal H}_k^{b})/\delta_{b}\rfloor \delta_{b}}{\delta_b}\right)^+ \frac{\delta_b}{f_k}$. 
Since, by Lemma \ref{lem:at:least_available}, $f(\hat H_k^{b}) \ge  \lfloor f({\cal H}_k^{b})/\delta_{b}\rfloor \delta_{b}$, it follows that 
$\hat y^{b}_{k,b} >\left(\frac{ F_b -\lfloor f(\hat H_k^{b})/\delta_{b}\rfloor \delta_{b}}{\delta_b}\right)^+ \frac{\delta_b}{f_k}$, too.
Hence, since $\hat y$ is feasible and by algebra, we have $\sum\limits_{q=1}^{b}\sum\limits_{r \in \hat H_k^{b}} f_{r}  \hat y^{b}_{r,q} + f_k  \hat y^{b}_{k,b}  \leq F_{b}\le f(\hat H_k^{b})+ F_{b}-\lfloor f({\hat H}_k^{b})/\delta_b\rfloor \delta_b$ (the last inequality follows since  $f(\hat H_k^{b})-\lfloor f({\hat H}_k^{b})/\delta_b\rfloor \delta_b\ge 0$). Then a
similar argument to that of Case 1 can be applied  to get a contradiction.
\end{proof}

Recalling   Lemma \ref{prop:M-SP-1} (stating  that $\delta_b$ divides $ f_k \hat y^{b}_{k,b}$), Theorem \ref{thm:itemb} provides the at most three possible values assumed by $ \hat y^{b}_{k,b}$,
for $\hat y$ in  $MO^{OO}(k,b,{\bf F})$.
%Theorem \ref{thm:itemb} can be also applied to determine the possible values assumed by $\hat y^{b+1}_{k,b},\ldots, \hat y^{l}_{k,b}$, as explained in the following. Observe that, since $\hat y$ satisfies the OPT property  
 % FORSE LA PROPRIETA' OPT POTREBBE ESSERE DEFINITA SENZA "WITH RESPECT TO COEFFICIENTS...."? FORSE VA BENE COSI'!!
  % and is an ordered solution, $\hat y^{b+1}_{k,b}$ can be determined only when $\hat y^{b}_{k,b}$ has been fixed. In fact,   items in $T_{b}^{k}$ can be assigned to the part $b+1$ only if their assignment to the part $b$ yields a solution that does not satisfy the OPT property. 
 Recalling that  $T'_{b+1}\cup T'_{b+2} \cup \ldots \cup T'_{l} = \emptyset$, Theorem \ref{thm:itemb} can be also applied to determine the possible values assumed by $\hat y^{b+1}_{k,b},\ldots, \hat y^{l}_{k,b}$, as explained in the following. For each value of $ \hat y^{b}_{k,b}$ specified by Theorem \ref{thm:itemb} (such that $\delta_b$ divides $ f_k \hat y^{b}_{k,b}$)  we can generate a new instance in which we set 
 $\tilde b_{k,b}=\tilde b_{k,b}- \hat y^{b}_{k,b}$ and $F_b=F_b - f_k \hat y^{b}_{k,b}$, $T_b^k=T_b^k \setminus S(\hat y^{b}_{k,b})$, i.e., $N(k)=N(k)\setminus S(\hat y^{b}_{k,b})$.  Observe that, by  Lemma \ref{prop:M-SP-1},  $\delta_b$ still divides $F_b$. Theorem \ref{thm:itemb}  can be now  applied to the new instance to  determine all the possible values assumed by  $\hat y^{b+1}_{k,b}$. Applying recursively the above argument, we can compute all the possible values assumed by $\hat y^{b+2}_{k,b},\ldots,\hat y^{l}_{k,b}$.
 In the following theorem, we show that the values assumed by $\sum\limits_{h=b}^l \hat y^{h}_{k,b}$ are at most three.
 
 \begin{theorem}\label{thm:itembk(hl)}
 Given an optimal solution $\hat y$ in  $MP^{OO}(k,b,{\bf F})$, with $b>1$, and an item type $k$, we have
\begin{equation}\label{eq:min_bbhl}
\sum\limits_{h=b}^l \hat y^{h}_{k,b} \geq \min\{\left(\frac{ (\sum\limits_{h=b}^l F_h) -\lceil (f({\cal H}_k^{b}) + \delta_b)/\delta_{b}\rceil \delta_{b}}{\delta_b}\right)^+ \frac{\delta_b}{f_k}, \tilde b_{k,b}  \}
\end{equation}
and
\begin{equation}\label{eq:max_bbhl}
\sum\limits_{h=b}^l \hat y^{h}_{k,b} \leq \min\{\left(\frac{ (\sum\limits_{h=b}^l F_h)  -\lfloor f({\cal H}_k^{b})/\delta_{b}\rfloor \delta_{b}}{\delta_b}\right)^+ \frac{\delta_b}{f_k}, \tilde b_{k,b}  \}.
\end{equation}
 \end{theorem}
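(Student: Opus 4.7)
The plan is to reduce the statement to Theorem \ref{thm:itemb} by aggregating the parts $b, b+1, \ldots, l$ into a single ``super-part'' of capacity $\tilde F = \sum_{h=b}^{l} F_h$, and then invoking Theorem \ref{thm:itemb} on the aggregated instance. Two facts make this aggregation natural: first, by definition $\delta_h = \min\{d_h, d_b\} = d_b$ for every $h \geq b$, so all of these parts share a common chunk size $\delta_b$; second, since $T'_h = \emptyset$ for $h > b$, only items of size at most $d_b$ can ever be placed into any of the parts $b, \ldots, l$.

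First I would construct an auxiliary instance obtained from $MP^{OO}(k,b,\mathbf{F})$ by fusing the parts $b, \ldots, l$ into one super-part $B$ of capacity $\tilde F$, leaving the parts $h < b$ untouched, and establish a correspondence between feasible solutions of the two instances via $\tilde y^{B}_{w,q} = \sum_{h=b}^{l} \hat y^{h}_{w,q}$. The direction original $\to$ auxiliary is immediate by summation of constraints \eqref{prob:M-SP-cond1R} and \eqref{prob:M-SP-cond2R}. The direction auxiliary $\to$ original relies on Proposition \ref{prop:chunks}: any collection of divisible-sized items of total size at most $\tilde F$ can be split into sub-collections of sizes at most $F_b, F_{b+1}, \ldots, F_l$ respectively, because $\delta_b$ divides each $F_h$ for $h \geq b$ and every item has size dividing $\delta_b$. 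Objective values are preserved (since the weights $p_w$ are destination-independent), and both the OPT and ordered properties of the original instance imply the corresponding properties of the aggregated solution in the auxiliary instance, since the super-part simply plays the role of the ``largest'' part.

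I would then apply Theorem \ref{thm:itemb} to the auxiliary instance. The set ${\cal H}_k^{b}$ used in the bounds is unchanged, because its recursive definition in \eqref{eq:cal H} and \eqref{eq:w^{i_{k}}(min)} depends only on $F_1, \ldots, F_{b-1}$ and on $\bar H_k^{b} \subseteq T'_1 \cup \ldots \cup T'_b$, all of which are untouched by the aggregation. Substituting $\tilde F = \sum_{h=b}^{l} F_h$ for $F_b$ in \eqref{eq:min_bb} and \eqref{eq:max_bb}, and translating $\tilde y^{B}_{k,b} = \sum_{h=b}^{l} \hat y^{h}_{k,b}$, produces \eqref{eq:min_bbhl} and \eqref{eq:max_bbhl}. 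The hard part will be the rigorous verification that the OPT and ordered properties transfer under the aggregation, particularly in the direction auxiliary $\to$ original: Definitions \ref{def:OPTprop2} and \ref{def:orderedy} impose conditions on each individual part rather than on aggregates, so the unpacking provided by Proposition \ref{prop:chunks} must be performed carefully (for instance, by placing higher-gain items into earlier parts among $\{b,\ldots,l\}$) so that the per-part conditions are simultaneously respected.
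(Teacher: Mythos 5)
Your proposal is correct and follows essentially the same route as the paper: the paper's proof likewise observes that all items lie in $T'_1\cup\dots\cup T'_b$, that every item may go into any part $h\in\{b,\dots,l\}$, and that $\delta_b$ divides $F_b,\dots,F_l$, then merges these parts into a single part of capacity $\sum_{h=b}^{l}F_h$ and invokes Theorem \ref{thm:itemb} after noting that Lemmas \ref{lem:at:least_available} and \ref{lem:minimum2} carry over. Your additional care about transferring the OPT and ordered properties in the disaggregation direction is a detail the paper glosses over, but it does not change the argument.
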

\begin{proof}
Since, by  Lemma \ref{prop:M-SP-1},  $\delta_b$ divides $\delta_b \hat y^{h}_{k,b}$, it follows that $\delta_b$ divides $\delta_b \sum\limits_{h=b}^l \hat y^{h}_{k,b}$, too. Hence, if  \eqref{eq:min_bbhl} and \eqref{eq:max_bbhl} holds, we have that $\sum\limits_{h=b}^l \hat y^{h}_{k,b}$ takes at most three values.\\
To prove conditions \eqref{eq:min_bbhl} and \eqref{eq:max_bbhl}, the following observations are in order:\\
1) All items in M-SP$(k,b,{\bf F})$ belong to $T'_1\cup\dots\cup T'_b$ (i.e., $T'_{b+1}=\dots =T'_l=\emptyset$).\\ 
2) Each item in  M-SP$(k,b,{\bf F})$ can be assigned to all the knapsack parts $h$ in $\{b,b+1,\ldots,l\}$.\\
3) $\delta_b$ divides the part capacities $F_b,F_{b+1},\ldots,F_l$.\\
Hence, w.l.o.g, we can replace the knapsack parts $\{b,b+1,\ldots,l\}$ by a single part, say $b$, with capacity $\sum\limits_{h=b}^l F_h$.\\ 
Note that, Lemmas \ref{lem:at:least_available} and \ref{lem:minimum2} still hold providing that the minimum and maximum total size of items in $H_k$ available to be assigned to the (new) part $b$ in $\hat y$ are $\lfloor f({\cal H}_h^b)/\delta_b\rfloor\delta_b$ and $f({\cal H}_h^b)+\delta_b$, respectively. Hence, the thesis follows by applying Theorem \ref{thm:itemb}.
\end{proof}

% At the end of this procedure
 %and then, once $\hat y^{b-1}_{kb-1}$ is fixed,  by $\hat y^{b}_{kb-1}$. 
% Then, recursively applying Theorem \ref{thm:itembk-1} the possible values assumed by $ \hat y^{b-1}_{k-1b-1}, \ldots, \hat y^{b-1}_{1b-1}$ can be found. Let us assume to fix $ \hat y^{b-1}_{kb-1}, \ldots, \hat y^{b-1}_{1b-1}$ (at values suggested by Theorems\ref{thm:itemb} and  \ref{thm:itembk-1}): We get another M-$SP$ instance in which $F_{b-1}=F_{b-1}- \sum\limits_{w=1}^kf_w\hat y^{b-1}_{wb-1}$, $\tilde b_{wb-1}=\tilde b_{wb-1}-\hat y^{b-1}_{wb-1}$ for $w=1,\ldots,k$. Then Theorem \ref{thm:itemb} can be recursively applied to determine the possible values assumed by   $ \hat y^{b}_{kb-1}, \ldots, \hat y^{b}_{1b-1}$. And so on.
%, and, at the same time, allows to recursively enumerate all solutions in  $MP^{OO}(k,b,{\bf F})$ as explained in the following. In fact, let us suppose to fix $ \hat y^{b}_{k,b}$ (such that $\delta_b$ divides $f_k \hat y^{b}_{k,b}$) to a value suggested by  Theorem \ref{thm:itemb}.
Now, we show how the possible values assumed by $ \hat y^{b}_{k-1b}$ (i.e., the number of items of type $k-1$ in $T'_b$ assigned to part $b$ in $\hat y$) can be determined, too. For each value of $\hat y^{h}_{k,b}$, for $h=b,\ldots,l$, specified by Theorem \ref{thm:itemb}, 
we  consider a new M-SP$(k,b,{\bf F})$ instance in which $F_{h}=F_{h}- f_k\hat y^{h}_{k,b}$, for $h=b,\ldots,l$, ($F_{h}=F_{h}$, for $h=1,\ldots,b-1$) and  $N(k)=N(k)\setminus T_{b}^k$.  
Note that, by Lemma  \ref{prop:M-SP-1}, $F_{h}$, for $h=b,\ldots,l$,  is still multiple of $\delta_b$, and since $\delta_b$ is the biggest item size of the instance, $F_{b},F_{b+1},\ldots,F_{l}$ are multiple of $\delta_{b-1}, \delta_{b-1},\ldots,\delta_1$, too. Furthermore, observe that  $\bar H_{k-1}^b$, defined as in \eqref{def: bar H}, with $k-1$ in place of $k$, may contain items of type $k$ (belonging to $T'_1\cup\ldots \cup T'_{b-1}$ only), but this fact does not affect the definition of ${\cal H}_{k-1}^b$  (defined as in \eqref{eq:cal H}, with $k-1$ in place of $k$), since all items in the instance have size not bigger than $\delta_b$. Hence, 
%since by Lemma \ref{prop:M-SP-1} $\delta_b$ divides  $f_k \hat y^{b}_{k-1b}$,  
Lemmas \ref{lem:at:least_available} and \ref{lem:minimum2} still hold, and allow to determine the minimum and maximum total size of items in  
$\bar H_{k-1}^b \setminus S(\hat y^{1,b-1})$. 
%By the above discussion 
Theorem \ref{thm:itembk-1} sets the possible values taken by $ \hat y^{b}_{k-1b}$.

%Ripetere il teorema 4.4. L'unica cosa che cambia e' che ci sono gli item $k$: tali item o sono in $H_{k-1}$ oppure no, ma dato che appartengono a $T'_1\ldots T'_{b-1}$, il Caso 2 del teorema vale ...

%Moreover, since  $\delta_b$ divides  $F_{b}$ and  $f_k \hat y^{b}_{k-1b}$ (by Lemma \ref{prop:M-SP-1}) all the arguments used in the proof of  Theorem \ref{thm:itemb}  holds. 

 \begin{theorem}\label{thm:itembk-1}
Given an optimal solution $\hat y$ in  $MP^{OO}(k,b,{\bf F})$, $b>1$, let $F_{h}=F_{h}- f_k\hat y^{h}_{k,b}$, for $h=b,\ldots,l$, and $N(k)=N(k)\setminus T_{b}^k$. Then, $\hat y^{b}_{k-1b}$  takes the following values.
\begin{equation}\label{eq:min_bb2}
\hat y^{b}_{k-1b}\geq \min\{\left(\frac{ F_b -\lceil (f({\cal H}_{k-1}^{b})+\delta_b)/\delta_{b}\rceil \delta_{b}}{\delta_b}\right)^+ \frac{\delta_b}{f_{k-1}}, \tilde b_{k-1b}  \}
\end{equation}
and
\begin{equation}\label{eq:max_bb2}
\hat y^{b}_{k-1b} \leq \min\{\left(\frac{ F_b -\lfloor f({\cal H}_{k-1}^{b})/\delta_{b}\rfloor \delta_{b}}{\delta_b}\right)^+ \frac{\delta_b}{f_{k-1}}, \tilde b_{k-1b}  \}.
\end{equation}
%Moreover, it holds that $u^{i_k}_k(max)-u^{i_k}_k(min)\le 1$.

\end{theorem}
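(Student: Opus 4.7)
The plan is to reduce the statement to Theorem \ref{thm:itemb}, applied with the symbol $k$ replaced by $k-1$, on the modified M-SP$(k,b,{\bf F})$ instance described in the paragraph preceding the theorem. Given an optimal $\hat y\in MP^{OO}(k,b,{\bf F})$, first freeze the assignments $\hat y^h_{k,b}$ for $h=b,\ldots,l$ at the values determined by Theorems \ref{thm:itemb} and \ref{thm:itembk(hl)}. Setting $F'_h=F_h-f_k\hat y^h_{k,b}$ for $h\geq b$ and $F'_h=F_h$ otherwise, Proposition \ref{prop:M-SP-1} ensures that $\delta_b$ divides every $f_k\hat y^h_{k,b}$, so $\delta_b$ still divides each $F'_h$; since $\delta_b$ is the largest item size and the sizes are divisible, $\delta_q$ divides $F'_h$ for every $q\leq b$ as well. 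Let $\hat y'$ be the restriction of $\hat y$ to $N(k)\setminus T_b^k$. Then $\hat y'$ is feasible in the modified instance, and a standard ``reinsertion'' argument shows that it is also optimal and inherits the OPT and ordered properties from $\hat y$: any strictly better feasible solution in the modified instance, once the frozen type-$k$ block is reintroduced, would contradict the optimality (resp.\ the OPT/ordered properties) of $\hat y$.

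After the removal of $T_b^k$, the largest item type appearing in $T'_b$ of the modified instance is $k-1$, so $\hat y^b_{k-1,b}$ plays the role that $\hat y^b_{k,b}$ played in Theorem \ref{thm:itemb}. I would then re-run the three-case argument of Theorem \ref{thm:itemb} verbatim on $\hat y'$ with the subscript $k$ replaced by $k-1$ throughout. The sets $H_{k-1}$, $\bar H_{k-1}^g$ and ${\cal H}_{k-1}^g$ are those obtained from \eqref{eq:Hk}, \eqref{def: bar H}, \eqref{eq:cal H} with $k-1$ in place of $k$; Lemmas \ref{lem:at:least_available} and \ref{lem:minimum2} transfer under the same substitution, since their proofs rely only on Proposition \ref{prop:M-SP-1}, Lemma \ref{lem:opt2} and the divisibility chain $\delta_1\mid\delta_2\mid\cdots\mid\delta_b$, all of which survive in the modified instance. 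The resulting three-case analysis delivers \eqref{eq:min_bb2}--\eqref{eq:max_bb2}.

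The principal obstacle is the subtlety explicitly flagged in the paragraph preceding the theorem: $\bar H_{k-1}^b$, and potentially the complement set $\Gamma$ appearing in Case 2 of Theorem \ref{thm:itemb}, may contain residual type-$k$ items lying in $T'_1\cup\cdots\cup T'_{b-1}$. Such items have size $f_k\leq d_{b-1}<\delta_b$ (because $f_k$ divides every $d_q$ for which $T_q^k\neq\emptyset$), so the divisibility assumption underpinning the recursive construction of ${\cal H}_{k-1}^g$ and the proofs of Lemmas \ref{lem:at:least_available}--\ref{lem:minimum2} is preserved. The genuinely type-sensitive ingredient is the maximal-block bound \eqref{proper:maxblock}: its instance for index $k-1$, namely $f_{k-1}>\sum_{u<k-1,\,p_u/f_u=p_{k-1}/f_{k-1}}f_u\tilde b_u$, controls the same-gain contribution from types strictly below $k-1$. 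The only new subcase is $p_k/f_k=p_{k-1}/f_{k-1}$, in which residual type-$k$ items enter $\Gamma$; here I would invoke the ordered property of $\hat y$. Under the Case-2 contradiction hypothesis $\hat y^b_{k-1,b}<\tilde b_{k-1,b}$ there are at least $\delta_b/f_{k-1}$ unassigned items in $T_b^{k-1}$, so any same-gain set $\Gamma\subseteq T'_1\cup\cdots\cup T'_{b-1}$ of total size $\delta_b$ placed in part $b$ could be swapped with a same-size, same-value subset of $T_b^{k-1}$, contradicting the orderedness of $\hat y'$. Hence the residual type-$k$ contribution to part $b$ is also strictly less than $\delta_b$, and the Case-2 swap goes through unchanged. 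Cases 1 and 3 depend only on the OPT property and divisibility and need no further verification.
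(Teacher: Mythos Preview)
Your proposal is correct and follows essentially the same approach as the paper. The paper's own proof is very terse: it declares that Cases 1, 2, and 3 of Theorem \ref{thm:itemb} carry over with $k-1$ in place of $k$, and then singles out exactly the one place where extra care is needed---namely, that in Case 2 the set $\Gamma=N(k)\setminus\{T_b^{k-1}\cup H_{k-1}\}$ may now contain residual type-$k$ items in $T'_1\cup\cdots\cup T'_{b-1}$, and that the ordered property of $\hat y$ still rules out a same-gain subset $\Theta\subseteq\Gamma\cap(T'_1\cup\cdots\cup T'_{b-1})$ of total size $\delta_b$ and value $p_{k-1}\delta_b/f_{k-1}$, so that inequality \eqref{eq<d_b} survives. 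Your last paragraph identifies and handles precisely this point, and your reinsertion argument for why $\hat y'$ remains optimal, OPT, and ordered in the modified instance makes explicit what the paper leaves implicit.
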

\begin{proof}
The same cases considered in the proof of Theorem  \ref{thm:itemb} are used. 
Observe that, by Lemma  \ref{prop:M-SP-1},   $\delta_b$ still divides  $F_{b}= F_{b}-f_k \hat y^{b}_{k,b}$.
Let $\hat H_{k-1}^{b}$ be the set of items of $H_{k-1}$ (defined in \eqref{eq:Hk}) not assigned by $\hat y$ to the parts $1,\ldots, b-1$ and that can be assigned to the part $b$ (i.e.,  $\hat H_{k-1}^{b}\equiv H_{k-1} \cap \{T'_1\cup \dots T'_{b}\}\setminus S(\hat y^{1,b-1})\equiv \bar H_{k-1}^{b}\setminus S(\hat y^{1,b-1})$). Obviously, since by the hypothesis $N(k)$ does not contain $T_{b}^k$, by definition, $H_{k-1}$, $\bar H_{k-1}^{b}$ and $\hat H_{k-1}^{b}$ do not contain items in $T_{b}^k$, too.\\
Case 1. $F_{b} \le  f({\cal H}_{k-1}^{b})$. Theorem states that
$\hat y^{b}_{k-1b}$=0. The same argument employed in Case 1 of Theorem  \ref{thm:itemb} can be used to show the thesis.\\
Case 2. %$u^{b}_k(max)=\tilde b_{k,b}$.
$F_{b}- \lceil f({\cal H}_{k-1}^{b})+\delta_b)/\delta_b \rceil \delta_b \ge f_k \tilde b_{k-1b}$. Then
 by \eqref{eq:min_bb} and \eqref{eq:max_bb}, 
 %and since $\hat y$ is an ordered solution, we have
$\hat y_{k-1b}^{b}=\tilde b_{k-1b}$. Also in this case, the same argument employed in Case 2 of Theorem  \ref{thm:itemb} can be used to show the thesis. In particular, note that, the set $\Gamma$, now defined as 
  $\Gamma=N(k)\setminus \{ T_{b}^{k-1}\cup H_{k-1}\}$,  may contain items of type $k$ belonging $\{T'_1\cup\ldots T'_{b-1}\}$. However, since $\hat y$ is an ordered solution, a subset of  $\Theta$ of $\Gamma \cap \{ T'_1 \cup \ldots \cup T'_{b-1}\}$  such that 
$\sum\limits_{q=1}^b\sum\limits_{r \in
\Theta} \lceil f_r \hat y_{r,q}^b/\delta_q \rceil \delta_q= \sum\limits_{q=1}^b\sum\limits_{r \in
\Theta}  f_r \hat y_{r,q}^b=\delta_b$ and $p(\Theta)=p_{k-1}\delta_b/f_{k-1}$ does not exist also in this case.
A consequence of this fact is that Inequality \eqref{eq<d_b} still holds. Hence, the thesis follows by applying the same arguments employed in Case 2 of Theorem  \ref{thm:itemb}.\\
 Case 3. See Case 3 of Theorem  \ref{thm:itemb}.
\end{proof}
 
 Hence, recursively applying Theorems  \ref{thm:itemb} and \ref{thm:itembk-1}, all the possible values assumed by $ \hat y^{h}_{w,b}$ for $h=b,\ldots,l$ and $w=1,\ldots,k$ can be detected. Moreover, by recursively applying Theorem \ref{thm:itembk(hl)}, it follows that the values assumed by $\sum\limits_{h=b}^{l}\hat y^{h}_{w,b}$ are at most three, for $w=1,\ldots,k$.
 
Given an optimal solution  $\hat y$ in $MP^{OO}(k,b-1,{\bf F})$, let ${\bf G}$ be the vector with components $G_{h}=F_{h}- \sum\limits_{w=1}^kf_k\hat y^{h}_{w,b}$, for $h=b,\ldots,l$, $G_{h}=F_{h}$, for $h=1,\ldots,b-1$, and let $N(k)=N(k)\setminus T'_{b}$. Now, Theorems \ref{thm:itemb} and \ref{thm:itembk-1} can be applied on the   instance  
  M-SP($k,b-1,{\bf G}$)  
  for finding the values attained by $\hat y^{h}_{w,b-1}$, for $h=b-1,\ldots,l$ and $w=1,\ldots,k$ (with  $\hat y$ optimal solution in $MP^{OO}(k,b-1,{\bf G})$). And so on.

 {\bf Example}
%6-5-2014: ESEMPIO OTTENUTO DAL PRECEDENTE ELIMINANDO L'ULTIMA PARTE DEL KNAPSACK E L'ULTIMO ITEM
 
 Consider the following instance of MKSP with  $|N|=6$  item types and $|M|=3$  knapsacks.
 \begin{eqnarray*}\label{exp:1}
 \max  \sum\limits_{i=1}^3 (4x_{i,1}+28x_{i,2}+ 15x_{i,3}+ 14 x_{i,4}+ 28 x_{i,5}+32x_{i,6})\\
 x_{1,1}+ 2 x_{1,2}+ 2 x_{1,3 }+ 2 x_{1,4  }+4 x_{1,5 }+ 4 x_{1,6}\le 7\\
 x_{2,1}+ 2 x_{2,2}+ 2 x_{2,3 }+ 2 x_{2,4 }+4 x_{2,5 }+ 4 x_{2,6}\le 2\\
 x_{3,1}+ 2 x_{3,2}+ 2 x_{3,3 }+ 2 x_{3,4  }+4 x_{3,5 }+ 4 x_{3,6} \le 6\\
 \sum\limits_{i=1}^3 x_{i,1}\le 2; \; \sum\limits_{i=1}^3 x_{i,2}\le 4; \; \sum\limits_{i=1}^3 x_{i,3}\le 8; \; \sum\limits_{i=1}^3 x_{i,4}\le 7; \; \sum\limits_{i=1}^3 x_{i,5}\le 2; \;\sum\limits_{i=1}^3 x_{i,6}\le 1\\
 x \in \mathbb{Z}^+\cup\{0\} \;\text{ for } i =1,\ldots,3,\; j =1,\ldots, 6 
 \end{eqnarray*}
In the instance, 
%the item types have the following upper bounds $b_1=2$, $b_2=4$, $b_3=8$, $b_4=7$, $b_5=2$ and  $b_6=1$.    
3 six different sizes exist (i.e., 1, 2, 4), hence $l=3$. To formulate the problem as in  \eqref{newform}, we have to compute the quantities $r_i^h$, for $h=1,2$ and $i=1,2,3$. 
Hence, we have 
$\sum\limits_{i=1}^3 r_i^1=1+0+0$, $\sum\limits_{i=1}^3 r_i^2=2+2+2=6$,  and  $\sum\limits_{i=1}^3 (c_i-\sum\limits_{h=1}^3 r_i^h)=4+0+4=8$.

The corresponding M-$SP$ is defined as follows.
The set of items in $N$ can be partitioned into the 5 maximal blocks $B_1=\{1\}$, $B_2=\{2\}$, $B_3=\{3\}$, $B_4=\{4,5\}$ and $B_5=\{6\}$, with multiplicity $\tilde b_{1,1}=2$, $\tilde b_{2,2}=4$, $\tilde b_{3,2}=8$, $\tilde b_{4,2}=7$ and $\tilde b_{4,3}=4$, and, finally,  $\tilde b_{5,3}=2$, respectively (all other $\tilde b_{p,q}$s are 0). 
Accordingly, the sets $T'_q$ contain the  following item types: $T'_1=\{1\}$ ,
$T'_2=\{2,3,4\}$ and $T'_3=\{4,5\}$.

Hence, in M-$SP$, the  item set is $N'=\{1,2,3,4,5\}$,  the profits $p_1,\ldots,p_5$ are equal to 4, 28, 15, 14 and 32, respectively, and the sizes $f_1,\ldots,f_5$ are equal to 1, 2, 2, 2 and 4, respectively. The ratios $p_i/f_i$, for $i=1,\ldots,5$, are 4, 14, 15/2, 7 and 8 respectively.

The knapsack has 3 parts of capacity $\bar c_1=\sum\limits_{i=1}^3 r_i^1=1$,  $\bar c_2=\sum\limits_{i=1}^3 r_i^2=6$ and $\bar c_3=\sum\limits_{i=1}^3 (c_i-\sum\limits_{h=1}^2 r_i^h)=8$.
%The objective function and the first set of constraints of M-SP reads as
 %\begin{eqnarray}\label{exp:1}
 %\max 4  \sum\limits_{h=1}^4 y_{1,1}^h+28  \sum\limits_{h=2}^4 y_{2,2}^h+ 15 ( \sum\limits_{h=2}^4 y_{3,2}^h )+ 14  ( \sum\limits_{h=2}^4 y_{4,2}^h +  \sum\limits_{h=3}^4 y_{4,3}^h)+ 32 \sum\limits_{h=3}^4 y_{5,3}^h  \nonumber \\
 %y_{1,1}^1\le 1\nonumber\\
 %y_{1,1}^2+ 2 (y_{2,2}^2+ y_{3,2}^2+y_{4,2}^2 )\le 6\nonumber\\
 %y_{1,1}^3+  2 (y_{2,2}^3+ y_{3,2}^3+y_{4,2}^3 )  + \lceil 2 y_{4,3}^3 /4 \rceil 4 + 4 y_{5,3}^3 \le 8\nonumber\\
 %\end{eqnarray}
 In what follows, $MO(k,b,(F_1,F_2,F_3))$ denotes the set $MO(k,b,{\bf F})$, where ${\bf F}$ has components $F_1$, $F_2$, and $F_3$.

 %$MO(5,4,(F_1=1,F_2=6,F_3=8,F_4=16))$:  Since, $T'_4=\{3\}$, we can compute $y_{3,4}^4$. 
 
%Hence, ${\cal H}_3^1=\emptyset$, $f({\cal H}_3^2)=f(\bar H_3^2)=8$, $f({\cal H}_3^2(min))=\min\{f({\cal H}_3^2)/2; F_2\}=6$,  $f({\cal H}_3^3)= f(\bar H_3^3)-f({\cal H}_3^2(min))=10$,  $f({\cal H}_3^3(min))=\min\{\lfloor f({\cal H}_3^3)/4\rfloor 4; F_3\}=8$, $f({\cal H}_3^4)= f(\bar H_3^4)-f({\cal H}_3^2(min))-f({\cal H}_3^2(min))=2$. By Theorem \ref{thm:itemb}, we have $y_{3,4}^4\in \{0,4,8\}$, i.e., $f_3y_{3,4}^4\in \{0,8,16\}$. Since $T'_4$ only contains items of type $3$, the sets
%$MO(5,3,(1,6,8,0))$, $MO(5,3,(1,6,8,8))$ and $MO(5,3,(1,6,8,16))$ have to be  considered. 

%We have $H_3=\{2,5\}$, $\bar H_3^1=\emptyset$, $\bar H_3^2=\{2\}$, $\bar H_3^3=\{2,5\}$, and $f(\bar H_3^1)=0$, $f(\bar H_3^2)=8$, $f(\bar H_3^3)=f(\bar H_3^4)=16$. Hence, ${\cal H}_3^1=\emptyset$, $f({\cal H}_3^2)=f(\bar H_3^2)=8$, $f({\cal H}_3^2(min))=\min\{f({\cal H}_3^2)/2; F_2\}=6$,  $f({\cal H}_3^3)= f(\bar H_3^3)-f({\cal H}_3^2(min))=10$,  $f({\cal H}_3^3(min))=\min\{\lfloor f({\cal H}_3^3)/4\rfloor 4; F_3\}=8$.

$MO(5,3,(1,6,8))$:
Since, $T'_3=\{4,5\}$, by Theorems \ref{thm:itemb}--\ref{thm:itembk-1}, we can compute $y_{5,3}^3$ and $y_{4,3}^3$. At this aim, we need to calculate  $f({\cal H}_5^3)$ and $f({\cal H}_4^3)$. We have $\bar H_5^1=\emptyset$ and $\bar H_5^2=\bar H_5^3=\{2\}$.
Hence, $f({\cal H}_5^1)=0$, $f({\cal H}_5^2)=f(\bar H_5^2)=8$, 
$f({\cal H}_5^2(min))=\min\{\left\lfloor f({\cal H}_5^2)/2 \right\rfloor 2; F_2\}=6$,  $f({\cal H}_5^3)= f(\bar H_5^3)-f({\cal H}_5^2(min))=2$,  
%$f({\cal H}_5^4)= f(\bar H_5^4)-f({\cal H}_5^2(min))-f({\cal H}_5^3(min))=0$., $f({\cal H}_4^4)= f(\bar H_4^4)-f({\cal H}_4^2(min))-f({\cal H}_4^3)(min)=10$.

We have $H_4=\{2,3,5\}$ and $\bar H_4^1=\emptyset$, $\bar H_4^2=\bar H_4^3=\{2,3\}$. Hence, $f({\cal H}_4^1)=0$, $f({\cal H}_4^2)=f(\bar H_4^2)=24$, $f({\cal H}_4^2(min))=\min\{\left\lfloor f({\cal H}_4^2)/2 \right\rfloor 2; F_2\}=6$,  $f({\cal H}_4^3)= f(\bar H_4^3)-f({\cal H}_4^2(min))=18$.

%$MO(5,3,(1,6,8,0))$:
By Theorem \ref{thm:itemb}, since $F_3=8$ and $f({\cal H}_5^3)=2$, we have $y_{5,3}^3\in \{0,1,2\}$.  By Theorem \ref{thm:itembk-1}, since $f({\cal H}_4^3)=18$, it follows that $y_{4,3}^3=0$ in every optimal solution. Since no other item type exists in the set $T'_3$, the new sets to consider are $MO(5,2,(1,6,8))$, $MO(5,2,(1,6,4))$ and $MO(5,2,(1,6,0))$.

$MO(5,2,(1,6,F_3))$: $T'_2=\{2,3,4\}$. 
We have $H_3=\{2,5\}$, $\bar H_3^1=\emptyset$, $\bar H_3^2=\{2\}$ and $f({\cal H}_3^2)=f(\bar H_3^2)=8$. 
Recalling that $f({\cal H}_4^2)=24$ and $F_2=6$, since $f({\cal H}_3^2)=8$, we have that $y_{4,2}^2=y_{3,2}^2=0$ in every optimal solution. Moreover, by Theorem 
\ref{thm:itembk(hl)} and since $f({\cal H}_4^2)=24$ and $F_2+F_3  \le 14$, it follows that $y_{4,2}^2+y_{4,2}^3=0$, i.e., $y_{4,2}^3=0$ too.  
%By Theorem  \ref{thm:itembk(hl)} and since  $f({\cal H}_3^3)=f(\bar H_3^3)-f(\bar H_3^2(min))=8-6=2$, $F_3 \in \{0,4,8\}$ and $\tilde b_{3,2}=8$, we obtain $y_{3,2}^3\in \{(F_3-4)^+/2,(F_3-2)^+/2\}$ and $y_{2,2}^3=\min\{F_3/2,1\}$.
By Theorem \ref{thm:itembk(hl)} since $6\le F_2+F_3\le 14$ (with $F_2=6$),  $f({\cal H}_3^2)=8$ and $\tilde b_{3,2}=8$, we also have $y_{3,2}^3\in \{(F_3-4)^+/2,(F_3-2)^+/2\}$ (recall that $y_{3,2}^2=0$).\\
Since $H_2=\emptyset$ and  $f({\cal H}_2^2)=0$,   $y_{2,2}^2\in \{2,3\}$.\\ 
When $y_{3,2}^3= (F_3-4)^+/2$, then, by Theorem 
\ref{thm:itembk(hl)}, $y_{2,2}^2+y_{2,2}^3=4$. 
This leads to the new set $MO(5,1,(1,F_2,F_3))$, with $F_2+F_3=2$. 
%Observe that, since the solutions satisfy the OPT property, only 
When $y_{3,2}^3= (F_3-2)^+/2$, then, by Theorem 
\ref{thm:itembk(hl)}, $y_{2,2}^2+y_{2,2}^3\in \{3,4\}$. When  $y_{2,2}^2+y_{2,2}^3=3$, we have the new set $MO(5,1,(1,F_2,F_3))$, with $F_2+F_3=2$. When $y_{2,2}^2+y_{2,2}^3=4$ we have to consider the set $MO(5,1,(1,0,0))$. 

%----------------- 10-4-2014: Da RIVEDERE LA PARTE CHE SEGUE OPPURE DA NON METTERE: When $F_3>0$ then $f(\bar H_3^3(min))=\min\{F_3,f({\cal H}_3^3)\}=2$ and $f({\cal H}_3^4)=f(\bar H_3^3)-f(\bar H_3^2(min))-f(\bar H_3^3(min))=0$. Hence,  $y_{3,2}^4\in\{ F_4/2, (F_4-2)^+/2\}$. This leads to the new 4 sets $MO(5,1,(1,0,F_3-2,0))$, $MO(5,1,(1,0,F_3-2,2))$, $MO(5,1,(1,2,F_3-2,0))$, $MO(5,1,(1,2,F_3-2,2))$, with $F_3 \in \{4,8\}$. When $F_3=0$ then $f(\bar H_3^3(min))=0$ and $f({\cal H}_3^4)=2$. Hence,  $y_{3,2}^4\in\{ (F_4-4)^+/2, (F_4-2)^+/2\}$ and, since ${\cal H}_2^2=\emptyset$,  $y_{2,2}^4\in\{(F_4-y_{3,2}^4)^+/2\}$.  Since $F_4 \in \{0,8,12,16\}$,  in $MO(5,2,(1,6,0,F_4))$, we have the new sets $MO(5,1,(1,0,0,0))$, $MO(5,1,(1,0,0,2))$, $MO(5,1,(1,2,0,0))$, $MO(5,1,(1,2,0,2))$. -----------------------------------

$MO(5,1,(1,F_2,F_3))$, with $F_2+F_3=2$; $MO(5,1,(1,0,0))$: Since $T_1'=\{1\}$ and $\tilde b_{1,1}=2$, 
by Lemma \ref{lem:solk=1}, we have  $y_{1,1}^1=1$ and $y_{1,1}^2+y_{1,1}^3=1$ in $MO(5,1,(1,F_2,F_3))$, and $y_{1,1}^1=1$ and $y_{1,1}^2+y_{1,1}^3=0$ in $MO(5,1,(1,0,0))$.

%Since, when $b=2$, $\bar H_1^h=\emptyset$ for $h=1,\ldots,4$, and $\tilde b_1=\tilde b_{1,1}=2$, by Theorem \ref{thm:itemb}, we obtain $y_{1,1}^1\in\{0,1\}$ in every optimal solution. 
%In $MO(5,1,(1,0,0,0))$, we obviously have $y_{1,1}^1+y_{1,1}^2+y_{1,1}^3=0$. In $MO(5,1,(1,F_2,F_3,F_4))$, with $F_2+F_3+F_4=2$, by Theorem \ref{thm:itembk(hl)}, $y_{1,1}^1+y_{1,1}^2+y_{1,1}^3+y_{1,1}^4=2$. Hence, when $y_{1,1}^1=0$, then $y_{1,1}^2+y_{1,1}^3+y_{1,1}^4=2$; when $y_{1,1}^1=1$, then $y_{1,1}^2+y_{1,1}^3+y_{1,1}^4=1$.

%In Moreover, since $F_2=0$,  $y_{1,1}^2=0$. When $F_3=0$ and $F_4>0$, we  have  $y_{1,1}^3=0$ and $y_{1,1}^4=1$. When $F_3>0$ $y_{1,1}^3=1$ and $y_{1,1}^4=0$. Otherwise,  $y_{1,1}^3=y_{1,1}^4=0$.

 %$\{I_k\cup\ldots I_{b}\}\setminus I_{b}^k$. 

\section{Description of the convex hull of the feasible solutions of SMKP}\label{sec:poly}
% RIVEDI APPUNTI DEL QUADERNONE DEL 30-11-2013 PER RIDEFINIRE LE DISEQUAZIONI!!!!!!
  
In this section, a description of the convex hull of the solutions of MSKP that are ordered and that satisfy the OPT property, i.e., $P^{OO}$, is provided.
At this aim, we use the same approach employed in \cite{Pochet98} and first consider the set $MP^{OO}(k,b,{\bf F})$.

Given 
M-SP($k,b,{\bf F}$), let $I(k,b,{\bf F})$ be  a set of inequalities  satisfying the three following conditions:\\
1)  If  $F'_h=F_h$, for $h=1,\ldots, b-1$, and  if $d_b$  divides $F_h$ and  $F_h'$ for $h=b,\ldots, l$, then for each inequality $I$  in $I(k,b,{\bf F})$ there exists one in $I(k,b,{\bf F'})$ with the same left-hand side of $I$.\\
2) The inequalities in $I(k,b,{\bf F})$ are valid for $MP^{OO}(k,b,{\bf F})$.\\
3) Each solution in $MO^{OO}(k,b,{\bf F})$ is contained in (at least) a face induced by an inequality in $I(k,b,{\bf F})$.

We  first show that, when $b=1$,   $I(k,1,{\bf F})$ contains a single inequality.  
%Then,  the inequalities $I(k,b,{\bf F})$ are defined.%, by a double induction on $b$ and on the item types contained in the set $T'_b$.  
By Lemma \ref{lem:solk=1}, the following lemma directly follows. 
%Furthermore, by Theorem \ref{thm:itembk(hl)} we have that
%\begin{equation}\label{eq:k,(1l)}
%\sum\limits_{h=1}^{l}\hat y^h_{k,1}= \min\{ \left( (\sum\limits_{h=1}^{l}F_h)- f(\bar H_k^1)\right)^+ ; \tilde b_{k,1}\},\end{equation}
%and, in general, that
%\begin{equation}\label{eq:single-sol(1l)}
%\sum\limits_{h=1}^{l}\hat y^h_{w,1}= \min\{ \left( (\sum\limits_{h=1}^{l}F_h)- f(\bar H_w^1)- \sum\limits_{j=w+1}^{k} \sum\limits_{h=1}^{l}\hat y^h_{j,1}\right)^+ ; \tilde b_{w,1} \} \; \; \text{for} \;\; w=1,\ldots,k.
%\end{equation}
\begin{lemma}\label{lem:ineqk=1}
The following inequality  satisfies conditions $1)$, $2)$ and $3)$ for $MP^{OO}(k,1,{\bf F})$.
\begin{equation}\label{eq:MP^{OO}(k,1,F)}
\sum\limits_{w=1}^{k} \sum\limits_{h=1}^{l}y^h_{w,1}\le \sum\limits_{w=1}^{k} min\{ \sum\limits_{h=1}^{l} (F_h-\sum\limits_{j=1}^{w-1}\hat y_{j,1}^h); \tilde b_{w,1} \}
%\sum\limits_{w=1}^{k} \sum\limits_{h=1}^{l}y^h_{w,1}\le \sum\limits_{w=1}^{k} \min\left\{ \left( ( \sum\limits_{h=1}^{l}F_h)- f(\bar H_w^1)- \sum\limits_{j=w+1}^{k} \sum\limits_{h=1}^{l}\hat y^h_{j,1}\right)^+ ; \tilde b_{w,1}\right\},
\end{equation}
where $ \sum\limits_{h=1}^{l} \hat y^h_{w,1}$, for $w=1,\ldots,k$, are defined as in \eqref{eq:j,1,F}.
\end{lemma}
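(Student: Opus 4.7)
The plan is to verify the three conditions defining $I(k,1,{\bf F})$ directly from Lemma \ref{lem:solk=1}, which already tells us both that $MO^{OO}(k,1,{\bf F})$ is a singleton $\{\hat y\}$ and exactly how $\hat y$ is constructed. Condition 1) is essentially a free-bee: the left-hand side of \eqref{eq:MP^{OO}(k,1,F)} is $\sum_{w=1}^{k}\sum_{h=1}^{l} y_{w,1}^h$, which is independent of ${\bf F}$, and since $b=1$ the divisibility hypothesis collapses to $d_1=1$ dividing each $F_h$, which is automatic. So the inequality produced by plugging any admissible ${\bf F}'$ into the same template has the same left-hand side.

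For condition 2) (validity), the key observation is that the right-hand side telescopes. By the recursion \eqref{eq:j,1,Flemma}, the summand $\min\{\sum_h F_h - \sum_{j<w}\sum_h \hat y_{j,1}^h;\tilde b_{w,1}\}$ equals $\sum_h \hat y_{w,1}^h$, so the RHS is $\sum_{w=1}^{k}\sum_{h=1}^{l}\hat y_{w,1}^h$. A straightforward induction on $w$ (or just inspection of the greedy rule) shows this total equals $\min\{\sum_h F_h;\;\sum_w \tilde b_{w,1}\}$. On the other hand, since every item in $M\text{-}SP(k,1,{\bf F})$ belongs to $T_1'$ and hence has size $f_w=d_1=1$, summing constraints \eqref{prob:M-SP-cond1R} over $h$ and summing constraints \eqref{prob:M-SP-cond2R} over $w$ gives, for any $y\in MP(k,1,{\bf F})$,
\begin{equation*}
\sum_{w=1}^{k}\sum_{h=1}^{l} y_{w,1}^h \le \sum_{h=1}^{l} F_h,\qquad \sum_{w=1}^{k}\sum_{h=1}^{l} y_{w,1}^h \le \sum_{w=1}^{k} \tilde b_{w,1},
\end{equation*}
so the LHS is bounded by the minimum of the two, which is exactly the RHS.

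Condition 3) is then immediate: $MO^{OO}(k,1,{\bf F})=\{\hat y\}$ by Lemma \ref{lem:solk=1}, and summing \eqref{eq:j,1,Flemma} over $w=1,\ldots,k$ shows that $\hat y$ attains \eqref{eq:MP^{OO}(k,1,F)} with equality, i.e.\ lies on the induced face. The only subtle point worth emphasizing, and the closest thing to an obstacle, is the reduction in condition 2) to a single aggregated knapsack of capacity $\sum_h F_h$, which is legitimate only because all items in $T_1'$ have unit size and may be assigned to any of the $l$ parts; once this reduction is in place, everything follows from the telescoping identity and the greedy description of $\hat y$.
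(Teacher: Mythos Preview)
Your proof is correct and follows the same route as the paper, which simply states that the lemma follows directly from Lemma~\ref{lem:solk=1}. You have in fact supplied all the details the paper leaves implicit: the telescoping identity $\text{RHS}=\sum_{w,h}\hat y_{w,1}^h=\min\{\sum_h F_h,\sum_w\tilde b_{w,1}\}$, and the observation that summing the capacity and bound constraints already yields this bound for every $y\in MP(k,1,{\bf F})$ (hence a fortiori for $MP^{OO}(k,1,{\bf F})$). The only additional remark worth making is that your validity argument is slightly stronger than required, since it applies to all of $MP(k,1,{\bf F})$ rather than just the OPT ordered points; this is harmless and, if anything, cleaner than appealing to the structure of $MP^{OO}$.
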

% \begin{proof}
 %The thesis follows by substituting the single solution \eqref{eq:single-sol(1l)}  of $MP^{OO}(k,b,{\bf F})$ in \eqref{eq:MP^{OO}(k,1,F)}.
 %\end{proof}

% \subsection{Description of $MP^{OO}(k,b,{\bf F})$}\label{subsec:MPOO(k,b, F)}
Now, we the inequalities $I(k,b,{\bf F})$ for $b>1$, by a double induction on $b$ and on the item types contained in the set $T'_b$.  
Namely, we assume by induction that there exists a set of inequalities $I(k,b-1,{\bf F})$ that satisfies conditions $1)$, $2)$ and $3)$, where  ${\bf F}$ is a vector such that $d_h$ divides $F_h$, for $h=1,\ldots, b-2$, and $d_{b-1}$ divides $F_{b-1},\ldots, F_l$. The basic step of the induction is for $b=1$ and is given by Lemma \ref{lem:ineqk=1}. Then, we define the set $I(k,b,{\bf F})$ and prove that it satisfies conditions $1)$, $2)$ and $3)$ by induction  on the item types contained in the set $T'_b$.
Let  the inequalities in $I(k,b-1,{\bf F})$ be defined as 
$$\sum\limits_{q=1}^{b-1} \sum\limits_{w=1}^{k} \frac{a_{w,q} f_w}{d_q}  \sum\limits_{h=q}^{l}y^h_{w,q}\le g_{k,b-1}( \sum\limits_{h=b-1}^{l}F_h).$$

%the inequalities in $I(k,b,{\bf F})$ The single inequality \eqref{eq:ineq1,q,F}, contained in  $I(1,b,{\bf F})$, clearly satisfies conditions $1)$, $2)$ and $3)$. In this section, we introduce for $k>1$ and $b>1$, and show that they describe  $MP^{OO}(k,b,{\bf F})$.

The inequalities in $I(k,b,{\bf F})$ for $b>1$ are 
of the form
$$\sum\limits_{q=1}^{b} \sum\limits_{w=1}^{k} \frac{a_{w,q} f_w}{d_q}  \sum\limits_{h=q}^{l}y^h_{w,q}\le g_{k,b}( \sum\limits_{h=b}^{l}F_h)$$
where for each inequality in  $I(k,b,{\bf F})$ there exists an inequality in $I(k,b-1,{\bf F})$ having the same coefficients $a_{w,q}$, for $w=1,\ldots,k$ and $q=1,\ldots,b-1$.

%ATTENZIONE I COEFFICIENTI $a_{1,b},\ldots,a_{k,b}$ DIPENDONDO vettore ${\bf F}$!!!! perche' dipendono da ${\cal H}_k^b$: ANCHE SE PER DETERMINARE ${\cal H}_k^b$ SERVONO LE CAP $f_1,\ldots,F_{b-1}$ e queste non sono modificate da $\sum\limits_{h=b}^{l}y^h{k,b}$!!!!!! QUINDI DOVREBBE FUNZIONARE ABBASTANZA BENE

In what follows, notation and definitions are introduced in order to define the coefficient $a_{k,b}$. In particular, the coefficient $a_{k,b}$ can assume two values. 
%, for $w=1,\ldots,k$, and the right-hand side $ g_{k,b}( \sum\limits_{h=b}^{l}F_h)$. Given an inequality in  $I(k,b,{\bf F})$, the coefficients $a_{w,q}$, for $w=1,\ldots,k$ and $q=1,\ldots,b$, can assume two values. In what follows, the coefficients $a_{k,b}$ are defined  (
(The coefficients $a_{w,b}$, for $w=1,\ldots,k-1$, and $a_{w,q}$, for $w=1,\ldots,k$ $q=1,\ldots,b-1$ are then recursively defined.) Recall that $\delta_b=d_b$.\\ 
We have 
\begin{equation}\label{formula gamma_k(F)}
a_{k,b}=  \left \{ \begin{array}{ll}
                \alpha_{k,b} \\
               \beta_{k,b}
              \end{array}
       \right.
   \end{equation}
 where, if $k>1$,
\begin{equation}\label{alpha}
\alpha_{k,b}=   g_{k-1,b}(\lfloor f({\cal H}_k^b)/\delta_b\rfloor \delta_b +\delta_b)- g_{k-1,b}(\lfloor f({\cal H}_k^b)/\delta_b\rfloor \delta_b)
\end{equation}
\begin{equation}\label{beta}
\beta_{k,b}=  g_{k-1,b}(\lfloor f({\cal H}_k^b)/\delta_b\rfloor \delta_b+2\delta_b)- g_{k-1,b}^h(\lfloor f({\cal H}_k^b)/\delta_b\rfloor \delta_b+\delta_b).
\end{equation}
 and,  if $k=1$,
\begin{equation}\label{alpha1}
\alpha_{1,b}=  g_{k,b-1}( F_{b-1} + \lfloor f({\cal H}_k^b)/\delta_b\rfloor \delta_b +\delta_b)- g_{k,b-1}( F_{b-1} + \lfloor f({\cal H}_k^b)/\delta_b\rfloor \delta_b)
\end{equation}
\begin{equation}\label{beta1}
\beta_{1,b}=  g_{k,b-1}( F_{b-1} + \lfloor f({\cal H}_k^b)/\delta_b\rfloor \delta_b+2\delta_b)- g_{k,b-1}(F_{b-1} + \lfloor f({\cal H}_k^b)/\delta_b\rfloor \delta_b+\delta_b).
\end{equation}

Function $g_{k,b}(\sum\limits_{h=b}^{l}F_h)$ will be defined later.
In the following, we define the quantities $s_{k,b}$, $\sigma_{k,b}$ and $U$ as:
\begin{equation}\label{skh)}
s_{k,b}=  \left \{ \begin{array}{ll}
                \frac{( \sum\limits_{h=b}^{l}F_h) - \lfloor f({\cal H}_k^b)/\delta_b\rfloor \delta_b}{\delta_b} & \hbox{if } \  a_{k,b}=\alpha_{k,b} \\
               \frac{( \sum\limits_{h=b}^{l}F_h) - (\lfloor f({\cal H}_k^b)/\delta_b\rfloor \delta_b+\delta_b)}{\delta_b}& \hbox{if } \ a_{k,b}=\beta_{k,b}
              \end{array}
       \right.
   \end{equation}

\begin{equation}\label{sigmakb)}
\sigma_{k,b}=  \left \{ \begin{array}{ll}
                 0 & \hbox{if } \ s_{k,b}<0\\
                 s_{k,b} & \hbox{if } \ 0\le s_{k,b} \le \frac{f_k\tilde b_{k,b}}{d_b}\\
                \frac{f_k\tilde b_{k,b}}{d_b} & \hbox{if } \ s_{k,b}>  \frac{f_k\tilde b_{k,b}}{d_b}
             \end{array}
       \right.
   \end{equation}

  \begin{equation}\label{U}
U=  \left \{ \begin{array}{ll}
                \sum\limits_{h=b}^{l}F_h  & \hbox{if } \   \ s_{k,b}<0\\
                \lfloor f({\cal H}_k^b)/\delta_b\rfloor \delta_b& \hbox{if } \  a_{k,b}=\alpha_{k,b}  \hbox{ and } \ 0\le s_{k,b} \le  \frac{f_k\tilde b_{k,b}}{d_b} \\
             \lfloor f({\cal H}_k^b)/\delta_b\rfloor \delta_b+\delta_b & \hbox{if } \ a_{k,b}=\beta_{k,b}  \hbox{ and } \ 0\le s_{k,b} \le  \frac{f_k\tilde b_{k,b}}{d_b}\\
               \sum\limits_{h=b}^{l}F_h  - f_k\tilde b_{k,b} & \hbox{if } \ s_{k,b}> \frac{f_k\tilde b_{k,b}}{d_b}.
              \end{array}
       \right.
   \end{equation}

Then, 
if $k=1$,
$g_{1,b}(\sum\limits_{h=b}^{l}F_h)$ is defined as
\begin{equation}\label{formula g_k(F)}
g_{1,b}( \sum\limits_{h=b}^{l} F_h)=   g_{k,b-1}(  F_{b-1}+U) + a_{1,b}\sigma_{1,b}
   \end{equation}

and, if $k>1$,
\begin{equation}\label{formula g_k(F)k>1}
g_{k,b}( \sum\limits_{h=b}^{l} F_h)=   g_{k-1,b}(U)+ a_{k,b} \sigma_{k,b}.
   \end{equation}
   
  % $a_{wb-1}$ serve ${\cal H}_k^{b-1}$ e questo dipende solo dalle capacita' $F_h$, for $h=1,\ldots, b-1$ che non sono state toccate!!!!!!! questo significa che i coefficienti non cambiano quando si aggiornano le capacita'  $\bar F_b, \ldots, \bar  F_l$ %come segue...

The following two lemmas establish properties  of function $g_{k,b}(\cdot)$. 

%La dimostrazione del lemma che segue va fatta anche qui con la doppia induzione

\begin{lemma}\label{lem:concave}
For any $b=1,\ldots,l$, $k=1,\ldots,t$ and integers $F_b,\ldots, F_l$ multiple of $d_b$,  the function $g_{k,b}(\sum\limits_{h=b}^{l}F_h+s_{k,b}d_b)$ is  a concave nondecreasing function of $s_{k,b}$ (where the function is defined over all integers $s_{k,b}$ such that $\sum\limits_{h=b}^{l}F_h+s_{k,b}d_b\ge 0$.
\end{lemma}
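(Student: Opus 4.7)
The plan is to prove the lemma by induction on the pair $(k,b)$. The base case is $b=1$: by Lemma \ref{lem:solk=1} together with an easy induction on $w$, the RHS of \eqref{eq:MP^{OO}(k,1,F)} collapses to $g_{k,1}(X)=\min\{X,\sum_{w=1}^{k}\tilde b_{w,1}\}$ with $X=\sum_{h=1}^{l}F_h$, which is piecewise linear with slopes $1$ then $0$, hence concave and nondecreasing. For the inductive step I treat separately the recursion on $k$ (subcase $k>1$, formula \eqref{formula g_k(F)k>1}) and the recursion on $b$ (subcase $k=1$, formula \eqref{formula g_k(F)}); for each I treat the two cases $a_{k,b}=\alpha_{k,b}$ and $a_{k,b}=\beta_{k,b}$ separately since they yield different functions.

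For the subcase $k>1$, set $F=\sum_{h=b}^{l}F_h$, $B=f_k\tilde b_{k,b}$, and $L=\lfloor f({\cal H}_k^b)/\delta_b\rfloor\delta_b$ in the $\alpha$ case, $L=\lfloor f({\cal H}_k^b)/\delta_b\rfloor\delta_b+\delta_b$ in the $\beta$ case. Using $\delta_b=d_b$ and unfolding \eqref{formula g_k(F)k>1} via \eqref{skh)}--\eqref{U}, $g_{k,b}(F)$ is piecewise linear with three pieces: $g_{k,b}(F)=g_{k-1,b}(F)$ for $F\le L$; $g_{k,b}(F)=g_{k-1,b}(L)+(a_{k,b}/d_b)(F-L)$ for $L\le F\le L+B$; and $g_{k,b}(F)=g_{k-1,b}(F-B)+(a_{k,b}/d_b)B$ for $F\ge L+B$. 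Nondecreasingness follows because $a_{k,b}$ is a forward difference of the nondecreasing $g_{k-1,b}$ (so $a_{k,b}\ge 0$) and $g_{k-1,b}$ itself is nondecreasing. Concavity on each piece is automatic (linear, or concave by induction), so concavity reduces to verifying nonincreasing slopes at the two joints $F=L$ and $F=L+B$. At $F=L$, the outgoing slope is $a_{k,b}/d_b$; the incoming slope is the backward difference of $g_{k-1,b}$ at $L$ divided by $d_b$, which by the inductive concavity of $g_{k-1,b}$ together with the defining identities \eqref{alpha}--\eqref{beta} is at least $a_{k,b}/d_b$. At $F=L+B$, the incoming slope is $a_{k,b}/d_b$ and the outgoing slope equals the forward difference of $g_{k-1,b}$ at $L$ divided by $d_b$; by \eqref{alpha}--\eqref{beta} and the concavity of $g_{k-1,b}$, this is at most $a_{k,b}/d_b$.

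The subcase $k=1$ is identical except that the base term is $g_{k,b-1}(F_{b-1}+U)$ rather than $g_{k-1,b}(U)$; since $F_{b-1}$ is a fixed additive shift and $g_{k,b-1}$ is concave nondecreasing by the induction on $b$, the same three-piece decomposition and slope-comparison argument transfers verbatim. I expect the main obstacle to be the careful bookkeeping at the two transition points, in particular in the $a_{k,b}=\beta_{k,b}$ case where $L$ is shifted up by $\delta_b$: one must correctly identify which forward or backward difference of $g_{k-1,b}$ (resp.\ $g_{k,b-1}$) serves as the relevant slope and apply the inductive concavity in the correct direction.
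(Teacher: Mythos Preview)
Your proof is correct and follows essentially the same inductive structure as the paper's: double induction on $(b,k)$, unwinding the recursion \eqref{formula g_k(F)}--\eqref{formula g_k(F)k>1} into a three-piece function of the capacity argument, and invoking the inductive concavity of $g_{k-1,b}$ (resp.\ $g_{k,b-1}$). Your base case is in fact cleaner than the paper's---collapsing the nested minima to $g_{k,1}(X)=\min\{X,\sum_{w}\tilde b_{w,1}\}$ avoids the paper's slightly imprecise treatment of the $\hat y$ terms as ``constants''---and your explicit slope comparison at the two joints $F=L$ and $F=L+B$ is exactly the content of the paper's argument that the discrete derivative $G_{k,b}(s)$ is nonincreasing.
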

\begin{proof}
The thesis is proved by induction on $b$. 
When $b=1$, by \eqref{eq:MP^{OO}(k,1,F)}, function $g_{k,1}(\sum\limits_{h=b}^{l}F_h)$ is 
$$g_{k,1}(\sum\limits_{h=b}^{l}F_h)=\sum\limits_{w=1}^{k} min\{ \sum\limits_{h=1}^{l} (F_h-\sum\limits_{j=1}^{w-1}\hat y_{j,1}^h); \tilde b_{w,1} \}=\sum\limits_{w=1}^{k} min\{ \sum\limits_{h=1}^{b-1} F_h+ \sum\limits_{h=b}^{l} F_h  -\sum\limits_{h=1}^{l}\sum\limits_{j=1}^{w-1}\hat y_{j,1}^h; \tilde b_{w,1} \},
$$
where $\hat y_{w,1}^h$, for $w=1,\ldots, k$ and $h=1,\ldots, l$,  are constants whose values are specified by Lemma \ref{lem:solk=1}.
 Observe that, each function $\min\{  \sum\limits_{h=1}^{l} (F_h -\sum\limits_{j=1}^{w-1}\hat y_{j,1}^p)+d_b s_{k,b}; \tilde b_{w,1} \}$ is a concave non decreasing function of $s_{k,b}$, for $w=1,\ldots,k$. Hence, $g_{k,1}(\sum\limits_{h=b}^{l}F_h+d_bs_{k,b})$ is a concave non decreasing function of $s_{k,b}$, too.

Let us assume that the statement is true for $b-1$ and show it for $b$. We prove the thesis by induction on the item types contained in the set $T'_b$. Namely, we first show the thesis when $T'_b$ only contains  items of type 1, then we assume that the thesis holds when $T'_b$  contains  items of types $1,\ldots,k-1$ and show it when $T'_b$  contains  items of types $1,\ldots,k$.
We use the following notation (recall that $\delta_b=d_b$)
\begin{equation}\label{A}
A=  \left \{ \begin{array}{ll}
                \lfloor f({\cal H}_k^b)/\delta_b\rfloor \delta_b& \hbox{if } \  a_{k,b}=\alpha_{k,b} \\
               \lceil f({\cal H}_k^b)/\delta_b\rceil \delta_b& \hbox{if } \ a_{k,b}=\beta_{k,b}
              \end{array}
       \right.
   \end{equation}
and 
$G_{k,b}(s_{k,b})=g_{k,b}(A+(s_{k,b}+1)d_b)-g_{k,b}(A+s_{k,b}d_b).$

Recalling \eqref{skh)}, we prove the thesis by  showing that the function $G_{k,b}(s_{k,b})$  
is a non-increasing function of $s_{k,b}$. 
When $T'_b$ only contains  items of type 1, we have
$G_{1,b}(s_{1,b})=g_{1,b}(A+(s_{1,b}+1)d_b)-g_{1,b}(A+s_{1,b}d_b)$
and by definition of function $g_{1,b}(\cdot)$ (see \eqref{formula g_k(F)}) we obtain 
$$G_{1,b}(s_{1,b})=\left \{ \begin{array}{lll}
                g_{1,b-1}(F_{b-1}+A+(s_{1,b}+1)d_b)-g_{1,b-1}(F_{b-1}+A+s_{1,b}d_b)& \hbox{if } \ s_{1,b}<0\\
                a_{1b}=g_{1,b-1}(F_{b-1}+A+d_b)-g_{1,b-1}(F_{b-1}+A)& \hbox{if } \ 0\le s_{1,b} < \tilde b_{1,b}\\
               g_{1,b-1}(F_{b-1}+A+(s_{1,b}+1-\tilde b_{1,b}/d_b)d_b)-g_{1,b-1}(F_{b-1}+A+(s_{1,b}-\tilde b_{1,b}/d_d)d_b)& \hbox{otherwise} .
              \end{array}
       \right.$$
The thesis follows by induction, since $g_{1,b-1}( \sum\limits_{h=b-1}^{l} P_h+p\, d_b)=g_{1,b-1}(\sum\limits_{h=b-1}^{l} P_h+p ( d_b/d_{b-1})d_{b-1})$  is a concave non decreasing function of $p$, for any integers $P_{b-1},\ldots,P_l$ multiple of $d_{b-1}$.

Assume that the thesis holds when $T'_b$  contains  items of types $1,\ldots,k-1$ and show it when $T'_b$  contains  items of types $1,\ldots,k$.
We have
$G_{k,b}(s_{k,b})=g_{k,b}(A+(s_{k,b}+1)d_b)-g_{k,b}(A+s_{k,b}d_b)$,
and by definition of function $g_{k,b}(\cdot)$ (see \eqref{formula g_k(F)k>1}) we obtain 
$$G_{k,b}(s_{k,b})=\left \{ \begin{array}{lll}
                g_{k-1,b}(A+(s_{k,b}+1)d_b)-g_{k-1,b}(A+s_{k,b}d_b)& \hbox{if } \ s_{k,b}<0\\
                a_{k,b}=g_{k-1,b}(A+d_b)-g_{k-1,b}(A)& \hbox{if } \ 0\le s_{k,b} < \tilde b_{k,b}\\
               g_{k-1,b}(A+(s_{k,b}+1-\tilde b_{k,b}/d_b)d_b)-g_{k-1,b}(A+(s_{k,b}-\tilde b_{k,b}/d_d)d_b)& \hbox{otherwise} .
              \end{array}
       \right.$$
Also in this case the thesis follows by induction.

\end{proof}

In Lemma \ref{lem:two_properties1} two properties of function $g_{k,b}(\cdot)$ are given, directly following from Lemma \ref{lem:concave}.
\begin{lemma}\label{lem:two_properties1}
Let $F$ and $G$ be two natural numbers such that $F \le G$ and $d_b$ divides $F$ and $G$. Then,

$(i)$ $g_{k,b}(G)+ \sigma (g_{k,b}(F+d_b)-g_{k,b}(F))\ge g_{k,b}(G+\sigma d_b)$;

$(ii)$ $g_{k,b}(F-\sigma d_b)+ \sigma (g_{k,b}(G) - g_{k,b}(G-d_b))\le g_{k,b}(F)$;\\  
 for every $\sigma \in  \mathbb{N}$ such that $F-\sigma d_b \ge 0$.
\end{lemma}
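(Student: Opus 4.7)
The plan is to derive both inequalities as direct consequences of the concavity and monotonicity of $g_{k,b}$ along the arithmetic progression of multiples of $d_b$, as established in Lemma \ref{lem:concave}. Since $d_b$ divides both $F$ and $G$, by Lemma \ref{lem:concave} the forward differences
\[
\Delta(x) \;=\; g_{k,b}(x+d_b) - g_{k,b}(x)
\]
are nonnegative and form a non-increasing sequence in $x$ as $x$ ranges over nonnegative multiples of $d_b$.

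For $(i)$, I would use that $F \le G$ implies $\Delta(F) \ge \Delta(F+d_b) \ge \cdots \ge \Delta(G+(\sigma-1)d_b)$, since all of $F, F+d_b,\ldots,G+(\sigma-1)d_b$ are multiples of $d_b$ and the slopes are non-increasing. In particular, for each $i = 0, 1, \ldots, \sigma-1$,
\[
g_{k,b}(F+d_b) - g_{k,b}(F) \;\ge\; g_{k,b}(G+(i+1)d_b) - g_{k,b}(G+i\,d_b).
\]
Summing the $\sigma$ inequalities produces $\sigma\bigl(g_{k,b}(F+d_b)-g_{k,b}(F)\bigr) \ge g_{k,b}(G+\sigma d_b) - g_{k,b}(G)$, which rearranges to $(i)$.

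For $(ii)$, the analogous telescoping argument runs backward from $F$. The hypothesis $F - \sigma d_b \ge 0$ and $F \le G$ ensures that every point $F-id_b$, $i=0,\ldots,\sigma$, and $G-d_b$ is a nonnegative multiple of $d_b$ in the domain where Lemma \ref{lem:concave} applies, and that $F - i\,d_b \le G - d_b$ for $i=1,\ldots,\sigma$. Concavity then gives $\Delta(F-i\,d_b) \ge \Delta(G-d_b)$ for each such $i$, i.e.,
\[
g_{k,b}(F-(i-1)d_b) - g_{k,b}(F-i\,d_b) \;\ge\; g_{k,b}(G) - g_{k,b}(G-d_b), \qquad i=1,\ldots,\sigma.
\]
Summing and telescoping the left-hand side yields $g_{k,b}(F) - g_{k,b}(F-\sigma d_b) \ge \sigma\bigl(g_{k,b}(G) - g_{k,b}(G-d_b)\bigr)$, which is $(ii)$ after rearrangement.

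Neither step is hard; the only subtlety worth checking carefully is that every argument of $g_{k,b}$ that appears in the telescoping chain is a nonnegative integer multiple of $d_b$, so that the concavity statement of Lemma \ref{lem:concave} (which is formulated along a $d_b$-grid starting from a base where $d_b \mid F_b,\ldots,F_l$) actually applies at every intermediate point. This is immediate from $d_b \mid F$, $d_b \mid G$, and the hypothesis $F - \sigma d_b \ge 0$.
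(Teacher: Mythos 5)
Your proof is correct and matches the paper's intent: the paper gives no written proof, stating only that the lemma ``directly follows from Lemma \ref{lem:concave}'', and your telescoping argument with non-increasing forward differences along the $d_b$-grid is exactly the standard derivation being invoked. The domain check you flag (all intermediate arguments being nonnegative multiples of $d_b$) is the right detail to verify and it goes through.
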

%\begin{proof}
%See \cite{Pochet98}
%\end{proof}

%Furthermore, Lemma \ref{lem:concaveL} holds.
%\begin{lemma}\label{lem:concaveL}
%Let ${\bf G}$ be a vector with components $G_k, G_2,\ldots, G_l$ be integers such that $d_h$ divides $G_h$, for $h=1,\ldots,l$, and let ${\bf ds}$ a vector with components $d_h s_h$, for $h=1,\ldots,l$. Then, the function $L(k,b,{\bf G}+{\bf ds})$
%defined over all integers $s_h$ such that $G_h+s_hd_h\ge 0$, for $h=1,\ldots,b$,
%is  a concave nondecreasing function of $s_k,\ldots,s_b$.
%\end{lemma}   
 %\begin{proof}
%The thesis follows noting that, by Lemma  $\gamma(s_k,\ldots,s_b)$ is sum of \label{lem:concave} non decreasing functions.... SI PUO' FAR VEDERE APPLICANDO DIRETTAMENTE LA CONDIZIONE DI CONCANCAVITA'  VEDI IL SITO: %http://math.stackexchange.com/questions/55769/is-the-sum-of-convex-functions-on-different-domains-convex?rq=1
%\end{proof}  
   
%IL LEMMA \ref{lem:two_properties1} dovrebbe essere ripetuto per la funzione $L(k,b,{\bf G}+{\bf ds})$........

To show that the inequalities in  $I(k,b,{\bf F})$ satisfy conditions $1)$, $2)$ and $3)$ we use an inductive argument on the item types contained in the item set $T'_b$. Namely, 
we first prove the thesis (in Theorem \ref{thm:polyMOP}) when $T'_b$ only contains items of type 1 (with size $s_1=d_1=f_1=1$), then we assume the thesis holds  when $T'_b$  contains items of types $1, \ldots,k-1$ and prove it when $T'_b$  contains items of types $1, \ldots,k$ (in Theorem \ref{thm:polyMOPMain}).
\begin{theorem}\label{thm:polyMOP}
Suppose that $T'_b$ only contains items of type 1. If the inequalities in  $I(k,b-1,{\bf F})$ satisfy conditions $1)$, $2)$ and $3)$, for all ${\bf F}$ such that $d_h$ divides $F_h$, for $h=1,\ldots,b-2$, and $d_{b-1}$ divides $F_{b-1},\ldots, F_l$, then the inequalities in $I(k,b,{\bf F})$ satisfy conditions $1)$, $2)$ and $3)$ for all ${\bf F}$ such that $d_h$ divides $F_h$, for $h=1,\ldots,b-1$, and $d_{b}$ divides $F_{b},\ldots, F_l$.
\end{theorem}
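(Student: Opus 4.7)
My plan is an outer induction on $b$: assume conditions 1)--3) hold for $I(k,b-1,\cdot)$ on every admissible $\mathbf{F}$, and deduce them for $I(k,b,\cdot)$ in the special case where $T'_b$ consists solely of type-1 items. This is simultaneously the base case of an inner induction on the item types populating $T'_b$ that the next theorem will carry out. Throughout, the key machinery will be Theorem \ref{thm:itembk(hl)} (with ``$k$'' specialized to the maximum type in $T'_b$, namely 1), Lemmas \ref{lem:concave}--\ref{lem:two_properties1} for the concavity of $g_{k,b-1}$, and Lemma \ref{lem:opt2} to lift optimality of $y$ to optimality of $y^{1,b-1}$.

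For condition 1), I would read off the recursion \eqref{formula g_k(F)}: the coefficients $a_{w,q}$ for $q\le b-1$ are inherited directly from the companion inequality in $I(k,b-1,\mathbf{F})$, so by the inductive hypothesis they depend only on $F_1,\ldots,F_{b-1}$; the new coefficient $a_{1,b}$ in \eqref{alpha1}--\eqref{beta1} depends only on $F_{b-1}$ and on $f(\mathcal{H}_k^b)$, which in turn is a function of the items of $N(k)$ and not of $F_h$ for $h\ge b$. Thus replacing ${\bf F}$ by ${\bf F'}$ with $F'_h=F_h$ for $h<b$ and $d_b\mid F'_h$ for $h\ge b$ leaves the LHS intact.

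For condition 2), I would fix $y\in MP^{OO}(k,b,{\bf F})$, set $\tau_h = y^h_{1,b}$ for $h\ge b$ and $\tau=\sum_{h=b}^{l}\tau_h$, and read the LHS as
\[
L(y)=\sum_{q=1}^{b-1}\sum_{w=1}^{k}\frac{a_{w,q}f_w}{d_q}\sum_{h=q}^{l}y^h_{w,q} \;+\; a_{1,b}\tau.
\]
Since the type-1 items of $T'_b$ enter into none of the $b-1$-part constraints, the partial vector $y^{1,b-1}$ is feasible for an $MP^{OO}(k,b-1,\widetilde{\bf F})$ instance whose capacities absorb the $\tau_h$ unit-sized items: $\widetilde F_h=F_h$ for $h<b-1$ and $\widetilde F_{b-1}=F_{b-1}+\sum_{h=b}^{l}(F_h-\tau_h)$, which is a $d_{b-1}$-multiple since each $F_h$ is. By the inductive hypothesis, $\sum_{q=1}^{b-1}(\cdots)\le g_{k,b-1}(\widetilde F_{b-1})$. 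It then remains to show $g_{k,b-1}(\widetilde F_{b-1}) + a_{1,b}\tau\le g_{1,b}(\sum_{h\ge b}F_h)$. Comparing with \eqref{formula g_k(F)}, this is exactly the concavity statement Lemma \ref{lem:two_properties1}(i) applied to the chord of $g_{k,b-1}(\cdot)$ between $F_{b-1}+U$ and $F_{b-1}+U+\tau d_b$: the slope $a_{1,b}\in\{\alpha_{1,b},\beta_{1,b}\}$ is chosen as the secant on whichever of the two regimes $\tau d_b\lessgtr \lfloor f(\mathcal{H}_k^b)/\delta_b\rfloor\delta_b+\delta_b$ we land in, and the clamp in \eqref{sigmakb)} handles $\tau>\tilde b_{1,b}$.

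For condition 3), I take $y\in MO^{OO}(k,b,{\bf F})$. Theorem \ref{thm:itembk(hl)} forces $\tau$ into one of three discrete values, matching the three branches of \eqref{sigmakb)} exactly, and for each value the residual $y^{1,b-1}$ is optimal in $MO^{OO}(k,b-1,\widetilde{\bf F})$ by Lemma \ref{lem:opt2}. The inductive hypothesis then furnishes an inequality of $I(k,b-1,\widetilde{\bf F})$ tight at $y^{1,b-1}$; by condition 1) propagation, its coefficients extend uniquely to an inequality of $I(k,b,{\bf F})$, whose RHS is achieved by summing the tight RHS with $a_{1,b}\tau$ in the chosen branch. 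The main obstacle I anticipate is the case where $\tau$ lies in the middle regime $0\le s_{1,b}\le \tilde b_{1,b}$: there both $a_{1,b}=\alpha_{1,b}$ and $a_{1,b}=\beta_{1,b}$ give candidate inequalities, and one must verify that the choice forced by Theorem \ref{thm:itembk(hl)} (via the min/max values $\lfloor f(\mathcal{H}_k^b)/\delta_b\rfloor\delta_b$ vs.\ $\lceil (f(\mathcal{H}_k^b)+\delta_b)/\delta_b\rceil\delta_b$) is precisely the one whose secant slope matches $a_{1,b}$ along Lemma \ref{lem:two_properties1}, so that tightness is preserved rather than strict inequality.
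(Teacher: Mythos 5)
Your overall architecture coincides with the paper's: you split into conditions 1)--3), handle 1) by noting that the coefficients depend only on $F_1,\ldots,F_{b-1}$, prove 2) by bounding the first $b-1$ blocks of the left-hand side through the inductive hypothesis applied to a residual instance and then invoking Lemma \ref{lem:two_properties1}, and prove 3) by combining Theorem \ref{thm:itembk(hl)} with a tight inequality from $I(k,b-1,\cdot)$. So this is not a different route, and conditions 1) and 3) are sketched along the paper's own lines.

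There is, however, a concrete misstep in your condition-2 argument. You define the residual capacity as $\widetilde F_{b-1}=F_{b-1}+\sum_{h=b}^{l}(F_h-\tau_h)$ with $\tau_h=y^h_{1,b}$ and assert it is a $d_{b-1}$-multiple ``since each $F_h$ is.'' This fails: a generic point of $MP^{OO}(k,b,{\bf F})$ is OPT and ordered but need not be optimal, so Proposition \ref{prop:M-SP-1} does not apply and $\tau_h$ need not be a multiple of $d_b$ (nor of $d_{b-1}$); moreover, under constraint \eqref{prob:M-SP-cond1R} the capacity of part $h$ consumed by the type-1 items of $T'_b$ is the chunk occupancy $\lceil \tau_h/d_b\rceil d_b$, not $\tau_h$. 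The paper's inequality \eqref{eq:feasM} therefore uses $G=F_{b-1}+\sum_{h=b}^{l}F_h-\sum_{h=b}^{l}\lceil f_1 y^h_{1,b}/d_b\rceil d_b$, which is a $d_{b-1}$-multiple (so the inductive hypothesis is applicable to some decomposition of $G$ into parts $F'_{b-1},\ldots,F'_l$) and is the correct, smaller, residual; the price is the harmless relaxation $\frac{a_{1,b}f_1}{d_b}\sum_{h}y^h_{1,b}\le a_{1,b}\sum_{h}\lceil f_1y^h_{1,b}/d_b\rceil$, valid because $a_{1,b}\ge 0$ by the monotonicity of $g$ (Lemma \ref{lem:concave}). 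With your $\widetilde F_{b-1}$ the inductive hypothesis cannot be invoked at all, and even waiving divisibility, monotonicity of $g_{k,b-1}$ pushes the resulting bound in the wrong direction. Finally, the case analysis you defer --- for 2) the three regimes of $s_{1,b}$ together with the comparison of $\sum_h\lceil f_1y^h_{1,b}/d_b\rceil$ against $s_{1,b}$, and for 3) the subcases $c\le 0$, $c-2\ge f_1\tilde b_{1,b}/d_b$, $c=f_1\tilde b_{1,b}/d_b+1$ (split on whether $\lfloor f({\cal H}_1^b)/d_b\rfloor=\lceil f({\cal H}_1^b)/d_b\rceil$), and $1\le c\le f_1\tilde b_{1,b}/d_b$ --- is where most of the paper's proof actually lives; you correctly identify the tools but do not discharge these steps.
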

\begin{proof}
%LA CONDIZIONE $(I)$ NON E' VERA  E VA TOLTA.
Since $T'_b$ only contains items of type $1$, the set $I(k,b,{\bf F})$ contains inequalities of the type:
\begin{equation}\label{eq:feasPrinc1}
\sum\limits_{q=1}^{b-1} \sum\limits_{w=1}^{k} \frac{a_{w,q} f_w}{d_q}  \sum\limits_{h=q}^{l}y^h_{w,q}+  \frac{a_{1,b} f_1}{d_b}  \sum\limits_{h=b}^{l}y^h_{1,b}\le g_{1,b}( \sum\limits_{h=b}^{l}F_h),
\end{equation}
where $f_1=1$ and the coefficients $a_{w,q}$, for $w=1,\ldots,k$ and $q=1,\ldots,b-1$, and $a_{1,b}$ are defined as in \eqref{formula gamma_k(F)}.

Recall that, by definition, $\delta_b=d_b$. In what follows $d_b$ and $\delta_b$ will be used indifferently. 

$(a)$

Let ${\bf F}$ and ${\bf F'}$ be  two vectors with $l$ components such that  $F_h=F_h'$ for $h=1,\ldots, b-1$ and  $d_b$  divides $F_h$ and  $F_h'$ for $h=b,\ldots, l$. Then condition $1)$ follows by the inductive hypothesis on $I(k,b-1,{\bf F})$ and
by definitions \eqref{alpha}--\eqref{beta1}. In fact, since, by definition, $f({\cal H}_w^q)$ depends by the values of $F_1,\ldots, F_{q-1}$, $a_{wq}$, for $w=1,\ldots, k$ and for $q=1,\ldots, b-1$, attains the same value both for ${\bf F}$ and for ${\bf F'}$.

$(b)$

We show that the inequalities are valid for $MP^{OO}(k,b,{\bf F})$ (i.e., condition $2)$ holds).
Let $y \in MP^{OO}(k,b,{\bf F})$. We show that the following inequality holds in $y$:
\begin{eqnarray}\label{eq:feasM}
\sum\limits_{q=1}^{b-1} \sum\limits_{w=1}^{k} \frac{a_{w,q} f_w}{d_q}  \sum\limits_{h=q}^{l}y^h_{w,q}+  \frac{a_{1,b} f_1}{d_b}  \sum\limits_{h=b}^{l}y^h_{1,b}\le 
g_{k,b-1}( F_{b-1} + \sum\limits_{h=b}^{l}F_h -  \sum\limits_{h=b}^{l}\left\lceil \frac{f_1y^h_{1,b}}{d_b}\right\rceil d_b)+ a_{1,b} \sum\limits_{h=b}^{l}\left\lceil \frac{f_1y^h_{1,b}}{d_b}\right\rceil .\end{eqnarray}
%Let $\tilde y$ be the solution obtained from $y$ by setting $\tilde y^h_{w,q}=\left\lceil \frac{f_w y^h_{w,q}}{d_q}\right\rceil d_q/f_w$, for $w=1,\ldots,k$, $q=1,\ldots,b$ and $h=q,\ldots,b$. Observe that, by conditions \eqref{prob:M-SP-cond1} and \eqref{prob:M-SP-cond2}, $\tilde y \in MP^{OO}(k,b,{\bf F})$, too.
Recall that, by the hypothesis, $d_b$ divides $F_h$ for $h=b,\ldots,l$. Obviously, $\frac{a_{1,b} f_1}{d_b}  \sum\limits_{h=b}^{l}y^h_{1,b}\le a_{1,b} \sum\limits_{h=b}^{l}\left\lceil \frac{f_1y^h_{1,b}}{d_b}\right\rceil$. Hence,  inequality \eqref{eq:feasM} directly holds since, by condition $1)$ and induction, the  inequality:
$$\sum\limits_{q=1}^{b-1} \sum\limits_{w=1}^{k} \frac{a_{w,q} f_w}{d_q}  \sum\limits_{h=q}^{l}y^h_{w,q} \le g_{k,b-1}( G)$$
is valid  for $I(k,b-1,{\bf F'})$, for each vector  ${\bf F'}$  such that: $(i)$ $F'_h=F_h$ for $h=1,\ldots,b-2$;  $(ii)$ $F'_{b-1},\ldots,F'_l$ are  multiple of $d_{b-1}$ such that $\sum\limits_{h=b-1}^{l} F'_h=G$. 

%\begin{equation}\label{eq:feas2}
%\sum\limits_{q=1}^{b-1} \sum\limits_{w=1}^{k} \frac{a_{w,q} f_w}{d_q}  \sum\limits_{h=q}^{l}\tilde y^h_{w,q}+ \frac{a_{1,b} f_1}{d_b}  \sum\limits_{h=b}^{l} \tilde y^h_{1,b}\le g_{k-1,b}(F_{b-1}+ \sum\limits_{h=b}^{l}F_h -  \sum\limits_{h=b}^{l}f_1 \tilde y^h_{1,b})+ \frac{a_{1,b} f_1}{d_b}  \sum\limits_{h=b}^{l}\tilde y^h_{1,b}.\end{equation}
%Hence, since $f_1\tilde y^h_{1,b}=\left\lceil \frac{f_1y^h_{1,b}}{d_b}\right\rceil d_b$ and $y^h_{w,q}\le \tilde y^h_{w,q}$, for $w=1,\ldots,k$, $q=1,\ldots,b$ and $h=q,\ldots,b$, the inequality \eqref{eq:feasM} holds.
%The above inequality holds since by condition $1)$ 

To show that all the inequalities in $I(k,b,{\bf F})$, of the form \eqref{eq:feasPrinc1}, are valid, different cases are considered (corresponding to the different values attained by $s_{1,b}$, as defined in \eqref{skh)}). 

If $\sum\limits_{h=b}^{l}F_h \le \lfloor f({\cal H}_1^b)/d_b \rfloor d_b $, then, by \eqref{skh)} and \eqref{sigmakb)}, 
$s_{1,b}\le0$ and $\sigma_{1,b}=0$, both when $a_{1,b}=\alpha_{1,b}$ and $a_{1,b}=\beta_{1,b}$. If $\sum\limits_{h=b}^{l}y^h_{1,b}=0$. Hence, from definition 
\eqref{formula g_k(F)}, i.e.,  $g_{1,b}(\sum\limits_{h=b}^{l}F_h)=g_{k,b-1}( F_{b-1} + \sum\limits_{h=b}^{l}F_h)$, and by  \eqref{eq:feasM} it follows that the inequalities  \eqref{eq:feasPrinc1} are valid. Suppose that $\sum\limits_{h=b}^{l}y^h_{1,b}>0$. Then if $a_{1,b}=\alpha_{1,b}$ we have
%FINO A QUI 9-1-2-13
{\scriptsize
\begin{eqnarray}
\sum\limits_{q=1}^{b-1} \sum\limits_{w=1}^{k} \frac{a_{w,q} f_w}{d_q}  \sum\limits_{h=q}^{l}y^h_{w,q}+  \frac{\alpha_{1,b} f_1}{d_b}  \sum\limits_{h=b}^{l}y^h_{1,b}\le
g_{k,b-1}( F_{b-1}   +\sum\limits_{h=b}^{l}F_h-  \sum\limits_{h=b}^{l}\left\lceil \frac{f_1y^h_{1,b}}{d_b}\right\rceil d_b)+ \alpha_{1,b}\sum\limits_{h=b}^{l}\left\lceil \frac{f_1y^h_{1,b}}{d_b}\right\rceil=\label{eq:Case11c}\\
%$$ g_{k,b-1}( F_{b-1}  + \sum\limits_{h=b}^{l}F_h-  \sum\limits_{h=b}^{l}\left\lceil \frac{f_1y^h_{1,b}}{d_b}\right\rceil d_b)+ $$
%$$\frac{ f_1}{d_b}\sum\limits_{h=b}^{l}y^h_{1,b} ( g_{k,b-1}( F_{b-1} + \lfloor f({\cal H}_1^{b})/\delta_b\rfloor \delta_b +\delta_b)- g_{k,b-1}( F_{b-1} + \lfloor f({\cal H}_1^{b})/\delta_b\rfloor \delta_b) )\le $$
g_{k,b-1}( F_{b-1}   + \sum\limits_{h=b}^{l}F_h-  \sum\limits_{h=b}^{l}\left\lceil \frac{f_1y^h_{1,b}}{d_b}\right\rceil d_b)+
\sum\limits_{h=b}^{l}\left\lceil \frac{f_1y^h_{1,b}}{d_b}\right\rceil  ( g_{k,b-1}( F_{b-1}  +\lfloor f({\cal H}_1^{b})/\delta_b\rfloor \delta_b +\delta_b)- g_{k,b-1}( F_{b-1}  +\lfloor f({\cal H}_1^{b})/\delta_b\rfloor \delta_b) ) \le\label{eq:Case11}\\
g_{k,b-1}( F_{b-1}   +\sum\limits_{h=b}^{l}F_h)= g_{1,b}(\sum\limits_{h=b}^{l}F_h) \label{eq:Case1}\end{eqnarray}
}
where the equality \eqref{eq:Case11c} follows from the definition of $\alpha_{1,b}$ (see \eqref{alpha1}), the inequality \eqref{eq:Case11}  follows from condition $(ii)$ of Lemma \ref{lem:two_properties1}, and  equality \eqref{eq:Case1} from \eqref{formula g_k(F)}. Hence, the inequality \eqref{eq:feasPrinc1} in which 
 $a_{1,b}= \alpha_{1,b}$ is valid. 
The above argument can be applied by replacing  in \eqref{eq:Case11c}, $\alpha_{1,b}$ with $\beta_{1,b}$, and, in  \eqref{eq:Case11} the expression \eqref{beta1} in  place of \eqref{alpha1}.
Hence, the thesis follows. 

If $\sum\limits_{h=b}^{l}F_h - (\lceil f({\cal H}_1^b)/d_b \rceil d_b +d_b) \ge f_1 \tilde b_{1,b}$, then
$s_{1,b}>\frac{f_1\tilde b_{1,b}}{d_b}$ and $\sigma_{1,b}=\frac{f_1\tilde b_{1,b}}{d_b}$,  both when $a_{1,b}=\alpha_{1,b}$ and $a_{1,b}=\beta_{1,b}$. If $\sum\limits_{h=b}^{l}y^h_{1,b}=\tilde b_{1,b}$, then from \eqref{formula g_k(F)} and by induction we have that the inequalities \eqref{eq:feasPrinc1}  are valid. 
Suppose that $\sum\limits_{h=b}^{l}y^h_{1,b}<\tilde b_{1,b}$. If $a_{1,b}=\beta_{1,b}$, by \eqref{eq:feasM} and the definition of $\beta_{1,b}$ (see \eqref{beta1}) we have 
$$\sum\limits_{q=1}^{b-1} \sum\limits_{w=1}^{k} \frac{a_{w,q} f_w}{d_q}  \sum\limits_{h=q}^{l}y^h_{w,q}+  \frac{\beta_{1,b} f_1}{d_b}  \sum\limits_{h=b}^{l}y^h_{1,b}\le$$$$ g_{k,b-1}( F_{b-1}  + \sum\limits_{h=b}^{l}F_h-  \sum\limits_{h=b}^{l}\left\lceil \frac{f_1y^h_{1,b}}{d_b}\right\rceil d_b)+ \beta_{1,b} \sum\limits_{h=b}^{l}\left\lceil \frac{f_1y^h_{1,b}}{d_b}\right\rceil=$$
$$ g_{k,b-1}( F_{b-1}   +\sum\limits_{h=b}^{l}F_h-  \sum\limits_{h=b}^{l}\left\lceil \frac{f_1y^h_{1,b}}{d_b}\right\rceil d_b)+ $$$$\sum\limits_{h=b}^{l}\left\lceil \frac{f_1y^h_{1,b}}{d_b}\right\rceil ( g_{k,b-1}( F_{b-1}  +\lfloor f({\cal H}_1^{b})/\delta_b\rfloor \delta_b +2\delta_b)- g_{k,b-1}( F_{b-1}  +\lfloor f({\cal H}_1^{b})/\delta_b\rfloor \delta_b +\delta_b ))=Q$$
and by condition $(i)$ of Lemma \ref{lem:two_properties1} we have 
$$Q\le 
g_{k,b-1}( F_{b-1}   +\sum\limits_{h=b}^{l}F_h-f_1 \tilde b_{1,b})+ $$
$$(\frac{ f_1}{d_b}\tilde b_{1,b}-\sum\limits_{h=b}^{l}\left\lceil \frac{f_1y^h_{1,b}}{d_b}\right\rceil)( g_{k,b-1}( F_{b-1}  +\lfloor f({\cal H}_1^{b})/\delta_b\rfloor \delta_b +2\delta_b)- g_{k,b-1}( F_{b-1}  +\lfloor f({\cal H}_1^{b})/\delta_b\rfloor \delta_b+\delta_b))+   $$
$$\sum\limits_{h=b}^{l}\left\lceil \frac{f_1y^h_{1,b}}{d_b}\right\rceil ( g_{k,b-1}( F_{b-1}  +\lfloor f({\cal H}_1^{b})/\delta_b\rfloor \delta_b +2\delta_b)- g_{k,b-1}( F_{b-1}  +\lfloor f({\cal H}_1^{b})/\delta_b\rfloor \delta_b  +\delta_b))=$$$$
g_{k,b-1}( F_{b-1}   +\sum\limits_{h=b}^{l}F_h-f_1 \tilde b_{1,b})+\frac{ f_1}{d_b}\tilde b_{1,b}\beta_{1,b}=g_{1,b}(  \sum\limits_{h=b}^{l}F_h).$$
Hence, the thesis follows. If $a_{1,b}=\alpha_{1,b}$, since $\sum\limits_{h=b}^{l}F_h - (\lceil f({\cal H}_1^b)/d_b \rceil d_b +d_b) \ge f_1 \tilde b_{1,b}$ implies $\sum\limits_{h=b}^{l}F_h - (\lfloor f({\cal H}_1^b)/d_b \rfloor d_b +d_b) \ge f_1 \tilde b_{1,b}$, the above arguments can be repeated and easily adapted to show the thesis.

Finally, consider the case in which $\sum\limits_{h=b}^{l}F_h - (\lceil f({\cal H}_1^b)/d_b \rceil d_b +d_b) < f_1 \tilde b_{1,b}$ and $ \sum\limits_{h=b}^{l}F_h>\lfloor f({\cal H}_1^b)/d_b \rfloor d_b$ (i.e., $ \sum\limits_{h=b}^{l}F_h\ge \lfloor f({\cal H}_1^b)/d_b \rfloor d_b + d_b$, since $d_b$ divides $ \sum\limits_{h=b}^{l}F_h$). Two subcases can be considered: either $\sum\limits_{h=b}^{l}F_h - \lfloor f({\cal H}_1^b)/d_b \rfloor d_b \le f_1 \tilde b_{1,b}$ or $\sum\limits_{h=b}^{l}F_h - \lfloor f({\cal H}_1^b)/d_b \rfloor d_b > f_1 \tilde b_{1,b}$.
In the first subcase, obviously, $\sum\limits_{h=b}^{l}F_h - \lceil f({\cal H}_1^b)/d_b \rceil d_b \le f_1 \tilde b_{1,b}$ holds, too. Hence, we have
$0\le s_{1,b}\le \frac{f_1\tilde b_{1,b}}{d_b}$. Then, by \eqref{skh)}, \eqref{U} and \eqref{formula g_k(F)}, and since $\sigma_{1,b}=s_{1,b}$ (by \eqref{sigmakb)}),
inequality \eqref{eq:feasM} becomes
$$\sum\limits_{q=1}^{b-1} \sum\limits_{w=1}^{k} \frac{a_{w,q} f_w}{d_q}  \sum\limits_{h=q}^{l}y^h_{w,q}+  \frac{a_{1,b} f_1}{d_b}  \sum\limits_{h=b}^{l}y^h_{1,b}\le$$$$ 
g_{k,b-1}( F_{b-1}   +\sum\limits_{h=b}^{l}F_h-  \sum\limits_{h=b}^{l}\left\lceil \frac{f_1y^h_{1,b}}{d_b}\right\rceil d_b)+ a_{1,b}\sum\limits_{h=b}^{l}\left\lceil \frac{f_1y^h_{1,b}}{d_b}\right\rceil =$$
%$$g_{k,b-1}( F_{b-1} + U- d_b(  \sum\limits_{h=b}^{l}\left\lceil \frac{f_1y^h_{1,b}}{d_b}\right\rceil - s_{1,b}))+ a_{1,b}(  \sum\limits_{h=b}^{l}\frac{ f_1y^h_{1,b}}{d_b} - s_{1,b})+a_{1,b}s_{1,b}\le$$
\begin{equation}\label{eq:feas4}
g_{k,b-1}( F_{b-1}  +U- d_b(  \sum\limits_{h=b}^{l}\left\lceil \frac{f_1y^h_{1,b}}{d_b}\right\rceil - s_{1,b}))+ a_{1,b}( \sum\limits_{h=b}^{l}\left\lceil \frac{f_1y^h_{1,b}}{d_b}\right\rceil  - s_{1,b})+a_{1,b}s_{1,b}.
\end{equation}
If $\sum\limits_{h=b}^{l}\left\lceil \frac{f_1y^h_{1,b}}{d_b}\right\rceil=s_{1,b}$, 
by \eqref{eq:feasM} and by recalling that  $g_{1,b}(\sum\limits_{h=b}^{l}F_h)=g_{k,b-1}( F_{b-1}  +U)+ a_{1,b}\sigma_{1,b}$, the inequalities \eqref{eq:feasPrinc1} are satisfied for $a_{1,b}\in \{\alpha_{1,b}; \beta_{1,b}\}$. \\
Observe that since $0\le s_{1,b}\le \frac{f_1\tilde b_{1,b}}{d_b}$ then
\begin{equation}\label{eq:newa1b}
a_{1,b}=g_{k,b-1}( F_{b-1}  +U+ d_b)- g_{k,b-1}( F_{b-1}  +U).
\end{equation}
If $ \sum\limits_{h=b}^{l}\left\lceil \frac{f_1y^h_{1,b}}{d_b}\right\rceil  > s_{1,b}$, then  by condition $(ii)$ of Lemma \ref{lem:two_properties1} and by \eqref{eq:newa1b} we have that
$$g_{k,b-1}( F_{b-1}  +U- d_b(  \sum\limits_{h=b}^{l}\left\lceil \frac{f_1y^h_{1,b}}{d_b}\right\rceil - s_{1,b}))+ a_{1,b}(  \sum\limits_{h=b}^{l}\left\lceil \frac{f_1y^h_{1,b}}{d_b}\right\rceil  - s_{1,b})\le g_{k,b-1}( F_{b-1}  +U).$$
Then,  by \eqref{eq:feas4} and since $g_{1,b}(\sum\limits_{h=b}^{l}F_h)=g_{k,b-1}( F_{b-1}  +U)+ a_{1,b}s_{1,b}$:
$$\sum\limits_{q=1}^{b-1} \sum\limits_{w=1}^{k} \frac{a_{w,q} f_w}{d_q}  \sum\limits_{h=q}^{l}y^h_{w,q}+  \frac{a_{1,b} f_1}{d_b}  \sum\limits_{h=b}^{l}y^h_{1,b}\le g_{1,b}(\sum\limits_{h=b}^{l}F_h)$$
i.e., the thesis. Finally, if $ \sum\limits_{h=b}^{l}\left\lceil \frac{f_1y^h_{1,b}}{d_b}\right\rceil  < s_{1,b}$,  by condition $(i)$ of Lemma \ref{lem:two_properties1} and by \eqref{eq:newa1b} it follows that
$$g_{k,b-1}( F_{b-1}  +U- d_b(  \sum\limits_{h=b}^{l}\left\lceil \frac{f_1y^h_{1,b}}{d_b}\right\rceil - s_{1,b}))+ a_{1,b}(  \sum\limits_{h=b}^{l}\left\lceil \frac{f_1y^h_{1,b}}{d_b}\right\rceil  - s_{1,b})=$$
$$g_{k,b-1}( F_{b-1}  +U+ d_b(  s_{1,b}-\sum\limits_{h=b}^{l}\left\lceil \frac{f_1y^h_{1,b}}{d_b}\right\rceil ))- a_{1,b}(  s_{1,b} - \sum\limits_{h=b}^{l}\left\lceil \frac{f_1y^h_{1,b}}{d_b}\right\rceil )
\le g_{k,b-1}( F_{b-1}  +U).$$
Hence, by \eqref{eq:feas4}  and since $g_{1,b}(\sum\limits_{h=b}^{l}F_h)=g_{k,b-1}( F_{b-1}  +U)+ a_{1,b}s_{1,b}$, it follows that the inequalities \eqref{eq:feasPrinc1} are valid.

Let us consider the second subcase, i.e., $\sum\limits_{h=b}^{l}F_h - \lfloor f({\cal H}_1^b)/d_b \rfloor d_b > f_1 \tilde b_{1,b}$. Since $d_b$ divides $f_1\tilde b_{1,b}$ and $\sum\limits_{h=b}^{l}F_h$, then $ \sum\limits_{h=b}^{l}F_h- \lfloor f({\cal H}_1^b)/d_b \rfloor d_b + d_b \ge f_1 \tilde b_{1,b}$, too. Hence,  we have
$0\le s_{1,b}\le \frac{f_1\tilde b_{1,b}}{d_b}+1$. Hence, if $0\le s_{1,b}\le \frac{f_1\tilde b_{1,b}}{d_b}$ the same arguments of the fisrt subcase can be used to show the thesis. If $ s_{1,b}= \frac{f_1\tilde b_{1,b}}{d_b}+1>\frac{f_1\tilde b_{1,b}}{d_b}$, the same arguments applied in the case $\sum\limits_{h=b}^{l}F_h - (\lceil f({\cal H}_1^b)/d_b \rceil d_b +d_b) \ge f_1 \tilde b_{1,b}$ can be used to prove that   \eqref{eq:feasPrinc1} holds.

\medskip
$(c)$

We show now that condition $3)$ holds, i.e.,  $MO^{OO}(k,b,{\bf F})$ is contained in the faces induced by the inequalities in $I(k,b,{\bf F})$.
According to the Definition of $U$ and $\sigma_{1,b}$, we can write $\sum\limits_{h=b}^{l}F_h=U+\sigma_{1,b}d_b$. 
%Theorem \ref{thm:itembk(hl)} and Proposition \ref{prop:M-SP-1} imply that $\sum\limits_{h=b}^{l}y^h_{1,b} = \min\{s_{1,b}^+d_b/f_1; \tilde b_{1,b}\}$ or $\sum\limits_{h=b}^{l}y^h_{1,b} = \min\{(s_{1,b}-1)^+  d_b/f_1; \tilde b_{1,b}\}$. 
Let   $y \in MO^{OO}(k,b,{\bf F})$ be an optimal solution and   let 
$$c= \frac{\sum\limits_{h=b}^{l}F_h - \lfloor f({\cal H}_1^b)/d_b\rfloor d_b}{d_b}.$$

Since $d_b=\delta_b$, Theorem \ref{thm:itembk(hl)} and Proposition \ref{prop:M-SP-1} imply that 
$\sum\limits_{h=b}^{l}\frac{f_1y^h_{1,b}}{d_b}$ can attain one of the following values: $\min \{c^+; f_1 \tilde b_{1,b}/d_b\}$ or 
$\min \{(c-1)^+; f_1 \tilde b_{1,b}/d_b\}$ or $\min \{(c-2)^+; f_1 \tilde b_{1,b}/d_b\}$.

In the case $c\le 0$ then  $\sum\limits_{h=b}^{l}y^h_{1,b}=0$ in every optimal solution. Hence, by induction,  for each optimal solution  $ y \in MO^{OO}(k,b,{\bf F})$ there exists an inequality in $I(k,b-1,{\bf F})$ such that
$$\sum\limits_{q=1}^{b-1} \sum\limits_{w=1}^{k} \frac{a_{w,q} f_w}{d_q}  \sum\limits_{h=q}^{l}y^h_{w,q} = g_{k,b-1}( F_{b-1}  + \sum\limits_{h=b}^{l}F_h).$$
Since in this case $s_{1,b}\le 0$, then $\sigma_{1,b}= 0$ and $g_{k,b-1}( F_{b-1}  + \sum\limits_{h=b}^{l}F_h)=g_{k,b}( \sum\limits_{h=b}^{l}F_h)$.
Hence, the thesis follows. 

In the case $c-2\ge f_1 \tilde b_{1,b}/d_b $, then Theorem \ref{thm:itembk(hl)} and Proposition \ref{prop:M-SP-1} imply that $\sum\limits_{h=b}^{l}y^h_{1,b}= \tilde b_{1,b}$ in every optimal solution.
Hence, by induction,  for each optimal solution in $ y \in MO^{OO}(k,b,{\bf F})$ there exists an inequality such that
$$\sum\limits_{q=1}^{b-1} \sum\limits_{w=1}^{k} \frac{a_{w,q} f_w}{d_q}  \sum\limits_{h=q}^{l}y^h_{w,q} = g_{k,b-1}( F_{b-1}  + \sum\limits_{h=b}^{l}F_h-  f_1 \tilde b_{1,b}).$$
In this case $s_{1,b}\ge f_1 \tilde b_{1,b}/d_b$, both when $a_{1,b}=\alpha_{1,b}$ and $a_{1,b}=\beta_{1,b}$. Hence, by definition, $\sigma_{1,b}= f_1 \tilde b_{1,b}/d_b$. As  $\sum\limits_{h=b}^{l}y^h_{1,b}= \tilde b_{1,b}$ in every optimal solution,
we obtain 
$$\sum\limits_{q=1}^{b-1} \sum\limits_{w=1}^{k} \frac{a_{w,q} f_w}{d_q}  \sum\limits_{h=q}^{l}y^h_{w,q} 
+ \frac{a_{1,b} f_1}{d_b}  \sum\limits_{h=b}^{l}y^h_{1,b}
= g_{k,b-1}( F_{b-1}  + \sum\limits_{h=b}^{l}F_h-  f_1 \tilde b_{1,b})+a_{1,b}f_1 \tilde b_{1,b}/d_b=g_{k,b}( \sum\limits_{h=b}^{l}F_h).$$ 
And, the thesis holds both if $a_{1,b}=\alpha_{1,b}$ and  $a_{1,b}=\beta_{1,b}$. 

In the case $c= f_1 \tilde b_{1,b}/d_b +1$,  
two subcases can be considered: $(I)$ $ \lfloor f({\cal H}_1^b)/d_b\rfloor = \lceil f({\cal H}_1^b)/d_b\rceil $; $(II)$ $ \lfloor f({\cal H}_1^b)/d_b\rfloor \ne \lceil f({\cal H}_1^b)/d_b\rceil $.
 Theorem \ref{thm:itembk(hl)} and Proposition \ref{prop:M-SP-1} imply that $\sum\limits_{h=b}^{l}f_1 y^h_{1,b}/d_b= c-1=f_1 \tilde b_{1,b}/d_b $ in Case $(I)$, while 
$\sum\limits_{h=b}^{l}f_1 y^h_{1,b}/d_b\in \{c-1; c-2\}$ in  Case $(II)$, in every optimal solution.
(Note that, since w.l.o.g.  $c=f_1 \tilde b_{1,b}/d_b +1\ge  2$, then $c-2\ge0$.)
 In Case $(I)$, by induction,  for each optimal solution $ y \in MO^{OO}(k,b,{\bf F})$ there exists an inequality such that: 
$\sum\limits_{q=1}^{b-1} \sum\limits_{w=1}^{k} \frac{a_{w,q} f_w}{d_q}  \sum\limits_{h=q}^{l}y^h_{w,q} = g_{k,b-1}( F_{b-1}  + \sum\limits_{h=b}^{l}F_h-  (c-1)d_b)= g_{k,b-1}( F_{b-1} + \lfloor f({\cal H}_1^b)/d_b\rfloor+d_b).$\\
Let $a_{1,b}=\alpha_{1,b} $, then we have $s_{1,b}=\sigma_{1,b}=c-1$ and we obtain 
$$\sum\limits_{q=1}^{b-1} \sum\limits_{w=1}^{k} \frac{a_{w,q} f_w}{d_q}  \sum\limits_{h=q}^{l}y^h_{w,q}+  \frac{\alpha_{1,b} f_1}{d_b}  \sum\limits_{h=b}^{l}y^h_{1,b}=
 g_{k,b-1}( F_{b-1}  + \lfloor f({\cal H}_1^b)/d_b\rfloor+d_b)+ (c-1)\alpha_{1,b}= $$
$$g_{k,b-1}( F_{b-1}  + \lfloor f({\cal H}_1^b)/d_b\rfloor)+  (g_{k,b-1}( F_{b-1}  + \lfloor f({\cal H}_1^b)/d_b\rfloor+d_b)-g_{k,b-1}( F_{b-1}  + \lfloor f({\cal H}_1^b)/d_b\rfloor))+ (c-1)\alpha_{1,b}=$$
$$g_{k,b-1}( F_{b-1}  + \lfloor f({\cal H}_1^b)/d_b\rfloor)+ \alpha_{1,b}\sigma_{1,b}=g_{1,b}( \sum\limits_{h=b}^{l}F_h)$$
Note that, since $ \lfloor f({\cal H}_1^b)/d_b\rfloor = \lceil f({\cal H}_1^b)/d_b\rceil $, the above relations hold even if $\alpha_{1,b}$ and $\lfloor f({\cal H}_1^b)/d_b\rfloor$ are replaced by $\beta_{1,b}$ and $\lceil f({\cal H}_1^b)/d_b\rceil$, respectively. Hence, in this case, two inequalities exist that hold with equality at $y$. In Case $(II)$, by induction,  for each optimal solution  $ y \in MO^{OO}(k,b,{\bf F})$, two sub cases hold: $(II.a)$ $\sum\limits_{h=b}^{l}f_1 y^h_{1,b}/d_b= c-1$;  $(II.b)$ $\sum\limits_{h=b}^{l}f_1 y^h_{1,b}/d_b= c-2$.
Then, by induction,  there exist inequalities such that \\ Case $(II.a)$
$$\sum\limits_{q=1}^{b-1} \sum\limits_{w=1}^{k} \frac{a_{w,q} f_w}{d_q}  \sum\limits_{h=q}^{l}y^h_{w,q} = g_{k,b-1}( F_{b-1}  + \sum\limits_{h=b}^{l}F_h-  (c-1)d_b)= g_{k,b-1}( F_{b-1} + \lfloor f({\cal H}_1^b)/d_b\rfloor+d_b)$$
or\\ Case $(II.b)$
$$\sum\limits_{q=1}^{b-1} \sum\limits_{w=1}^{k} \frac{a_{w,q} f_w}{d_q}  \sum\limits_{h=q}^{l}y^h_{w,q} = g_{k,b-1}( F_{b-1}  + \sum\limits_{h=b}^{l}F_h- (c-2)d_b)= g_{k,b-1}( F_{b-1}  + \lfloor f({\cal H}_1^b)/d_b\rfloor+2d_b).$$

Note that, since $ \lfloor f({\cal H}_1^b)/d_b\rfloor \ne \lceil f({\cal H}_1^b)/d_b\rceil $, when $a_{1,b}=\beta_{1,b}$ we have 
$s_{1,b}=\sigma_{1,b}=c-1$.

In Case  $(II.a)$ we obtain 
$$\sum\limits_{q=1}^{b-1} \sum\limits_{w=1}^{k} \frac{a_{w,q} f_w}{d_q}  \sum\limits_{h=q}^{l}y^h_{w,q}+  \frac{\beta_{1,b} f_1}{d_b}  \sum\limits_{h=b}^{l}y^h_{1,b}=g_{k,b-1}( F_{b-1} + \lfloor f({\cal H}_1^b)/d_b\rfloor+d_b)+\beta_{1,b}(c-1)=$$
$$g_{k,b-1}( F_{b-1} + \lfloor f({\cal H}_1^b)/d_b\rfloor+d_b)+\beta_{1,b}\sigma_{1,b}=g_{1,b}( \sum\limits_{h=b}^{l}F_h).$$
In Case  $(II.b)$ we have 
$$\sum\limits_{q=1}^{b-1} \sum\limits_{w=1}^{k} \frac{a_{w,q} f_w}{d_q}  \sum\limits_{h=q}^{l}y^h_{w,q}+  \frac{\beta_{1,b} f_1}{d_b}  \sum\limits_{h=b}^{l}y^h_{1,b}=g_{k,b-1}( F_{b-1} +\lfloor f({\cal H}_1^b)/d_b\rfloor +2d_b)+\beta_{1,b}(c-2)=$$
$$g_{k,b-1}( F_{b-1} +\lfloor f({\cal H}_1^b)/d_b\rfloor+d_b) + (g_{k,b-1}( F_{b-1} + \lfloor f({\cal H}_1^b)/d_b\rfloor +2d_b)-g_{k,b-1}( F_{b-1} + \lfloor f({\cal H}_1^b)/d_b\rfloor +d_b))+\beta_{1,b}(c-2)=$$
$$g_{k,b-1}( F_{b-1} + \lfloor f({\cal H}_1^b)/d_b\rfloor + d_b)+\beta_{1,b}(c-1)=g_{k,b-1}( F_{b-1} +\lfloor f({\cal H}_1^b)/d_b\rfloor+d_b)+\beta_{1,b}\sigma_{1,b}=g_{1,b}( \sum\limits_{h=b}^{l}F_h).$$

Finally, we have the case  $1 \le c\le f_1 \tilde b_{1,b}/d_b $. Two subcases are considered: $2 \le c\le f_1 \tilde b_{1,b}/d_b $ and $c=1\le f_1 \tilde b_{1,b}/d_b $.
In the subcase $2 \le c\le f_1 \tilde b_{1,b}/d_b $, 
Theorem \ref{thm:itembk(hl)} and Proposition \ref{prop:M-SP-1} imply that $\sum\limits_{h=b}^{l}f_1 y^h_{1,b}/d_b\in \{c-2; c-1; c\}$ in every optimal solution. If $\sum\limits_{h=b}^{l}f_1 y^h_{1,b}/d_b=c-2$ or $\sum\limits_{h=b}^{l}f_1 y^h_{1,b}/d_b=c-1$ then the thesis follows by using the same arguments of Cases  $(II.a)$ and $(II.b)$.
 If $\sum\limits_{h=b}^{l}f_1 y^h_{1,b}/d_b=c$,  by the induction hypothesis, for each optimal solution in $ y \in MO^{OO}(k,b,{\bf F})$ there exists an inequality such that: 
\begin{eqnarray}\label{eq:c=1c}
\sum\limits_{q=1}^{b-1} \sum\limits_{w=1}^{k} \frac{a_{w,q} f_w}{d_q}  \sum\limits_{h=q}^{l}y^h_{w,q} = g_{k,b-1}( F_{b-1}  + \sum\limits_{h=b}^{l}F_h-  c d_b)= g_{k,b-1}( F_{b-1} + \lfloor f({\cal H}_1^b)/d_b\rfloor).
\end{eqnarray}
Hence, we obtain 
\begin{eqnarray}\label{eq:c=1c1}
\sum\limits_{q=1}^{b-1} \sum\limits_{w=1}^{k} \frac{a_{w,q} f_w}{d_q}  \sum\limits_{h=q}^{l}y^h_{w,q}+  \frac{\alpha_{1,b} f_1}{d_b}  \sum\limits_{h=b}^{l}y^h_{1,b}= 
 g_{k,b-1}( F_{b-1}  + \lfloor f({\cal H}_1^b)/d_b\rfloor)+  \alpha_{1,b}c=\nonumber\\
 g_{k,b-1}( F_{b-1}  + \lfloor f({\cal H}_1^b)/d_b\rfloor) +\alpha_{1,b}\sigma_{1,b}=g_{1,b}( \sum\limits_{h=b}^{l}F_h),
\end{eqnarray}
and the thesis follows. In the subcase $c=1\le f_1 \tilde b_{1,b}/d_b $, 
Theorem \ref{thm:itembk(hl)} and Proposition \ref{prop:M-SP-1} imply that $\sum\limits_{h=b}^{l}f_1 y^h_{1,b}/d_b\in \{ c-1=0; c\}$ in every optimal solution. 
If $\sum\limits_{h=b}^{l}f_1 y^h_{1,b}/d_b=c-1$ the thesis follows by the above Case $(I)$. If $\sum\limits_{h=b}^{l}f_1 y^h_{1,b}/d_b=c$ then the thesis follows from \eqref{eq:c=1c} and \eqref{eq:c=1c1}. 
\end{proof}

We are now in the position of proving the main theorem.
\begin{theorem}\label{thm:polyMOPMain}
If the inequalities in  $I(k,b-1,{\bf F})$ satisfy conditions $1)$, $2)$ and $3)$, for all ${\bf F}$ such that $d_h$ divides $F_h$, for $h=1,\ldots,b-2$, and $d_{b-1}$ divides $F_{b-1},\ldots, F_l$, then the inequalities in $I(k,b,{\bf F})$ satisfy conditions $1)$, $2)$ and $3)$ for all ${\bf F}$ such that $d_h$ divides $F_h$, for $h=1,\ldots,b-1$, and $d_{b}$ divides $F_{b},\ldots, F_l$.
\end{theorem}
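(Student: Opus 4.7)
The plan is to mirror the proof of Theorem \ref{thm:polyMOP}, using the inner induction on the item types occurring in $T'_b$ that the authors advertise. The base case (only items of type $1$ in $T'_b$) is supplied by Theorem \ref{thm:polyMOP}, so I would assume the conclusion when $T'_b$ contains items of types $1,\ldots,k-1$ and establish it for $T'_b$ containing items of types $1,\ldots,k$. Condition $1)$ is immediate: the coefficients $a_{w,q}$ with $q\le b-1$ depend only on $F_1,\ldots,F_{q-1}$ by the outer inductive hypothesis on $I(k,b-1,{\bf F})$, while the coefficients $a_{w,b}$ for $w\le k$, defined in \eqref{alpha}--\eqref{beta}, are determined by $f({\cal H}_w^b)$ (a function of $F_1,\ldots,F_{b-1}$) and by values of $g_{w-1,b}(\cdot)$ that, by the inner inductive hypothesis, are invariant when $F_b,\ldots,F_l$ vary while $\sum_{h=b}^l F_h$ and all preceding $F_h$ are fixed.

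For condition $2)$ (validity), given $y\in MP^{OO}(k,b,{\bf F})$, I would strip off the contribution of type-$k$ items assigned at part $b$. Letting $G = \sum_{h=b}^l F_h - \sum_{h=b}^l \lceil f_k y^h_{k,b}/d_b\rceil d_b$ and choosing a vector ${\bf F''}$ with $F''_h=F_h$ for $h\le b-1$ and $F''_b,\ldots,F''_l$ multiples of $d_b$ summing to $G$, the inner inductive hypothesis yields
\begin{equation*}
\sum_{q=1}^{b-1}\sum_{w=1}^{k}\frac{a_{w,q}f_w}{d_q}\sum_{h=q}^l y^h_{w,q} + \sum_{w=1}^{k-1}\frac{a_{w,b}f_w}{d_b}\sum_{h=b}^l y^h_{w,b} \le g_{k-1,b}(G).
\end{equation*}
Combining this with $\tfrac{a_{k,b}f_k}{d_b}\sum_{h=b}^l y^h_{k,b} \le a_{k,b}\sum_{h=b}^l \lceil f_k y^h_{k,b}/d_b\rceil$, the target reduces to bounding $g_{k-1,b}(G) + a_{k,b}\sum_{h=b}^l \lceil f_k y^h_{k,b}/d_b\rceil$ by $g_{k,b}(\sum_{h=b}^l F_h)$. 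The same case analysis as in part $(b)$ of Theorem \ref{thm:polyMOP}, split according to the value of $s_{k,b}$ and the choice $a_{k,b}\in\{\alpha_{k,b},\beta_{k,b}\}$, together with Lemma \ref{lem:two_properties1} (concavity and monotonicity of $g_{k-1,b}$) and the definition \eqref{formula g_k(F)k>1}, concludes the argument.

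For condition $3)$ (tight face), given an optimal solution $y\in MO^{OO}(k,b,{\bf F})$, Theorem \ref{thm:itembk(hl)} together with Proposition \ref{prop:M-SP-1} shows that $\sum_{h=b}^l f_k y^h_{k,b}/d_b$ takes one of at most three specific values, governed by $c = (\sum_{h=b}^l F_h - \lfloor f({\cal H}_k^b)/d_b\rfloor d_b)/d_b$. For each such value, removing the items of type $k$ from $T'_b$ assigned at part $b$ gives a residual instance with $T'_b$ now containing only items of types $1,\ldots,k-1$ and capacities $F''_h = F_h - f_k y^h_{k,b}$ for $h\ge b$; the inner inductive hypothesis supplies an inequality of the promised form tight at the truncated solution, with coefficients matching those required in $I(k,b,{\bf F})$. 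Adding the term $\tfrac{a_{k,b}f_k}{d_b}\sum_{h=b}^l y^h_{k,b}$ back and identifying the resulting right-hand side via \eqref{formula g_k(F)k>1} produces the desired tight inequality. The case split mirrors part $(c)$ of Theorem \ref{thm:polyMOP}: $c\le 0$; $c\ge f_k\tilde b_{k,b}/d_b+2$; $c=f_k\tilde b_{k,b}/d_b+1$; $2\le c\le f_k\tilde b_{k,b}/d_b$; and $c=1$.

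The main obstacle is the bookkeeping in condition $3)$, especially the boundary cases $c=1$ and $c=f_k\tilde b_{k,b}/d_b+1$ and the degenerate subcase $\lfloor f({\cal H}_k^b)/d_b\rfloor=\lceil f({\cal H}_k^b)/d_b\rceil$, in which both $\alpha_{k,b}$ and $\beta_{k,b}$ may simultaneously yield tight inequalities; the tight-face argument must then exhibit two distinct inequalities active at $y$. All the computations reduce to the concavity of $g_{k-1,b}$ provided by Lemma \ref{lem:two_properties1} and to the recursive definitions \eqref{alpha}--\eqref{U} and \eqref{formula g_k(F)k>1}, so no essentially new idea beyond the one already used for the base case $T'_b=\{1\}$ is needed.
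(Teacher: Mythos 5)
Your proposal is correct and follows essentially the same route as the paper: the inner induction on the item types present in $T'_b$ with Theorem \ref{thm:polyMOP} as base case, validity via stripping the type-$k$ contribution at part $b$ and invoking the inductive inequality at the reduced capacity $G$ together with Lemma \ref{lem:two_properties1}, and tightness via the three candidate values of $\sum_{h=b}^l f_k y^h_{k,b}/d_b$ from Theorem \ref{thm:itembk(hl)} and Proposition \ref{prop:M-SP-1} with the same case split on $c$, including the degenerate subcase $\lfloor f({\cal H}_k^b)/d_b\rfloor=\lceil f({\cal H}_k^b)/d_b\rceil$. No substantive difference from the paper's argument.
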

\begin{proof}
The thesis is proved by induction on the item types contained in the set $T'_b$ (i.e., the items that can be assigned to the parts $b,b+1,\ldots,l$). If  $T'_b$ only contains items of type 1, the thesis follows by Theorem \ref{thm:polyMOP}. Let us assume that the thesis holds when  $T'_b$ contains items of type $1,\dots,k-1$ and show it when  $T'_b$ contains items of type $1,\dots,k$. Again, observe that $\delta_b=d_b$. Hence,  in what follows we use $d_b$  and $\delta_b$ indifferently.

The set $I(k,b,F)$ contains inequalities of the type (where each coefficient $a_{wq} \in \{\alpha_{wq}, \beta_{wq}\}$):
\begin{equation}\label{eq:feasPrinc}
\sum\limits_{q=1}^{b-1} \sum\limits_{w=1}^{k} \frac{a_{w,q} f_w}{d_q}  \sum\limits_{h=q}^{l}y^h_{w,q}+ \sum\limits_{w=1}^{k-1}\frac{a_{w,b} f_{w}}{d_b}  \sum\limits_{h=b}^{l}y^h_{w,b}  +
\frac{a_{k,b} f_k}{d_b}  \sum\limits_{h=b}^{l}y^h_{k,b}\le g_{k,b}( \sum\limits_{h=b}^{l}F_h).
\end{equation}
%where the coefficients $a_{w,q}$, for $w=1,\ldots,k$ and $q=1,\ldots,b$, are defined as in \eqref{formula gamma_k(F)}.

$(a)$

Let ${\bf F}$ and ${\bf F'}$ be two vectors with $l$ components such that  $F_h=F_h'$ for $h=1,\ldots, b-1$ and  $d_b$  divides $F_h$ and  $F_h'$ for $h=b,\ldots, l$. The condition $1)$ directly follows by induction, by definition of the coefficients $a_{k,b}$ (see \eqref{alpha} and \eqref{beta}), and since $f({\cal H}_k^b)$ only depends by the values of $F_1,\ldots, F_{b-1}$.

$(b)$

We show that the inequalities in $I(k,b,{\bf F})$ are valid for $MP^{OO}(k,b,{\bf F})$ (i.e., condition $2)$ holds).
Let $y \in MP^{OO}(k,b,{\bf F})$. First we prove that the following inequality holds in $y$:
\begin{eqnarray}\label{eq:feasM1}
\sum\limits_{q=1}^{b-1} \sum\limits_{w=1}^{k} \frac{a_{w,q} f_w}{d_q}  \sum\limits_{h=q}^{l}y^h_{w,q}+ \sum\limits_{w=1}^{k-1}\frac{a_{w,b} f_{w}}{d_b}  \sum\limits_{h=b}^{l}y^h_{w,b}  +
\frac{a_{k,b} f_k}{d_b}  \sum\limits_{h=b}^{l}y^h_{k,b}\le \nonumber \\ 
g_{k-1,b}( \sum\limits_{h=b}^{l}F_h-  \sum\limits_{h=b}^{l}\left\lceil \frac{f_ky^h_{k,b}}{d_b}\right\rceil d_b) + a_{k,b}
\sum\limits_{h=b}^{l}\left\lceil \frac{f_ky^h_{k,b}}{d_b}\right\rceil
%\sum\limits_{q=1}^{b-1} \sum\limits_{w=1}^{k} \frac{a_{w,q} f_w}{d_q}  \sum\limits_{h=q}^{l}y^h_{w,q}+ \sum\limits_{w=1}^{k-1}\frac{a_{w,b} f_{w}}{d_b}  \sum\limits_{h=b}^{l}y^h_{w,b} + \frac{a_{k,b} f_k}{d_b}  \sum\limits_{h=b}^{l}y^h_{k,b}\le g_{k-1,b}(F_{b-1}+ \sum\limits_{h=b}^{l}F_h -  \sum\limits_{h=b}^{l}\left\lceil \frac{f_ky^h_{k,b}}{d_b}\right\rceil d_b)+ \frac{a_{k,b} f_k}{d_b}  \sum\limits_{h=b}^{l}y^h_{k,b}.
\end{eqnarray}

%Let $\tilde y$ be the solution obtained from $y$ by setting $\tilde y^h_{w,q}=\left\lceil \frac{f_w y^h_{w,q}}{d_q}\right\rceil d_q/f_w$, for $w=1,\ldots,k$, $q=1,\ldots,b$ and $h=q,\ldots,b$. Observe that, by conditions \eqref{prob:M-SP-cond1} and \eqref{prob:M-SP-cond2}, $\tilde y \in MP^{OO}(k,b,{\bf F})$, too.
In fact, by induction and condition $1)$ we have that
$$\sum\limits_{q=1}^{b-1} \sum\limits_{w=1}^{k} \frac{a_{w,q} f_w}{d_q}  \sum\limits_{h=q}^{l}y^h_{w,q}+ \sum\limits_{w=1}^{k-1}\frac{a_{w,b} f_{w}}{d_b}  \sum\limits_{h=b}^{l}y^h_{w,b}   \le g_{k-1,b}( G)$$
is a valid inequality for $MP(k,b,{\bf F'})$, when $T'_b$ contains items of type $1,\ldots,k-1$, for each vector ${\bf F'}$  such that: $(i)$ $F'_h=F_h$, for $h=1,\ldots,b-1$;  $(ii)$ $F'_{b},\ldots,F'_l$ are integers multiple of $d_b$ such that $\sum\limits_{h=b}^{l} F'_h=G$. As a consequence and since, by the hypothesis, $d_b$ divides $F_h$ for $h=b,\ldots,l$, the inequality \eqref{eq:feasM1} follows.
%\begin{eqnarray}\label{eq:feas2M}
%\sum\limits_{q=1}^{b-1} \sum\limits_{w=1}^{k} \frac{a_{w,q} f_w}{d_q}  \sum\limits_{h=q}^{l} y^h_{w,q} + \sum\limits_{w=1}^{k-1}\frac{a_{w,b} f_{w}}{d_b}  \sum\limits_{h=b}^{l}y^h_{w,b} + \frac{a_{k,b} f_k}{d_b}  \sum\limits_{h=b}^{l} y^h_{k,b}\le \nonumber\\ g_{k-1,b}( \sum\limits_{h=b}^{l}F_h-  \sum\limits_{h=b}^{l}\left\lceil \frac{f_ky^h_{k,b}}{d_b}\right\rceil d_b) + a_{k,b} \sum\limits_{h=b}^{l}\left\lceil \frac{f_ky^h_{k,b}}{d_b}\right\rceil.\end{eqnarray}
%Where the last inequality follows from definition of  $\tilde y$, i.e., the inequality \eqref{eq:feasM} holds.

To show that all the inequalities in $I(k,b,{\bf F})$  are valid, different cases are considered, as in Theorem \ref{thm:polyMOP}. 

If $\sum\limits_{h=b}^{l}F_h \le f({\cal H}_k^b)$, then  $s_{k,b}\le0$ (and  $\sigma_{k,b}=0$) both when $a_{k,b}=\alpha_{k,b}$ and $a_{k,b}=\beta_{k,b}$. If $\sum\limits_{h=b}^{l}y^h_{k,b}=0$, then, by definition \eqref{formula g_k(F)k>1}, $g_{k,b}(\sum\limits_{h=b}^{l}F_h)=g_{k-1,b}(\sum\limits_{h=b}^{l}F_h)$. Hence, by  \eqref{eq:feasM1}, the inequalities  \eqref{eq:feasPrinc} are valid. Suppose that $\sum\limits_{h=b}^{l}y^h_{k,b}>0$. Then, if $a_{k,b}=\alpha_{k,b}$ by \eqref{eq:feasM1} we have

%FINO A QUI 9-1-2-13
$$\sum\limits_{q=1}^{b-1} \sum\limits_{w=1}^{k} \frac{a_{w,q} f_w}{d_q}  \sum\limits_{h=q}^{l}y^h_{w,q}+ \sum\limits_{w=1}^{k-1}\frac{a_{w,b} f_{w}}{d_b}  \sum\limits_{h=b}^{l}y^h_{w,b}  + \frac{\alpha_{k,b} f_k}{d_b}  \sum\limits_{h=b}^{l}y^h_{k,b}\le$$$$
g_{k-1,b}( \sum\limits_{h=b}^{l}F_h-  \sum\limits_{h=b}^{l}\left\lceil \frac{f_ky^h_{k,b}}{d_b}\right\rceil d_b)+ \alpha_{k,b}\sum\limits_{h=b}^{l}\left\lceil \frac{f_ky^h_{k,b}}{d_b}\right\rceil=$$
%$$ g_{k-1,b}( \sum\limits_{h=b}^{l}F_h-  \sum\limits_{h=b}^{l}\left\lceil \frac{f_ky^h_{k,b}}{d_b}\right\rceil d_b)+ $$
%$$\frac{ f_k}{d_b}\sum\limits_{h=b}^{l}y^h_{k,b} ( g_{k-1,b}(\lfloor f({\cal H}_k^{b})/\delta_b\rfloor \delta_b +\delta_b)- g_{k-1,b}(\lfloor f({\cal H}_k^{b})/\delta_b\rfloor \delta_b) )\le $$
$$
g_{k-1,b}( \sum\limits_{h=b}^{l}F_h-  \sum\limits_{h=b}^{l}\left\lceil \frac{f_ky^h_{k,b}}{d_b}\right\rceil d_b)+$$
$$ \sum\limits_{h=b}^{l}\left\lceil \frac{f_ky^h_{k,b}}{d_b}\right\rceil  ( g_{k-1,b}(\lfloor f({\cal H}_k^{b})/\delta_b\rfloor \delta_b +\delta_b)- g_{k-1,b}(\lfloor f({\cal H}_k^{b})/\delta_b\rfloor \delta_b) ) \le $$$$
g_{k-1,b}( \sum\limits_{h=b}^{l}F_h)=g_{k-1,b}( U)= g_{k,b}(\sum\limits_{h=b}^{l}F_h) $$
where the last inequality follows from condition $(ii)$ of Lemma \ref{lem:two_properties1}, and the last two equalities from 
\eqref{U} and \eqref{formula g_k(F)k>1} (and since $\sigma_{k,b}=0$). Hence, the inequalities \eqref{eq:feasPrinc} in which 
 $a_{k,b}= \alpha_{k,b}$ are valid. 
The above argument holds even if   $\alpha_{k,b}$ is replaced with $\beta_{k,b}$, and if $\beta_{k,b}$ is replaced by the expression \eqref{beta}.
Hence, the thesis follows. 

If $\sum\limits_{h=b}^{l}F_h - (\lceil f({\cal H}_k^b)/d_b \rceil d_b +d_b) \ge f_k \tilde b_{k,b}$, then
$s_{k,b}>\frac{f_k\tilde b_{k,b}}{d_b}$ (and  $\sigma_{k,b}=\frac{f_k\tilde b_{k,b}}{d_b}$),  both when $a_{k,b}=\alpha_{k,b}$ and $a_{k,b}=\beta_{k,b}$. If $\sum\limits_{h=b}^{l}y^h_{k,b}=\tilde b_{k,b}$ then from \eqref{formula g_k(F)k>1} and by induction we have that all the inequalities \eqref{eq:feasPrinc}  are valid. 
Suppose that $\sum\limits_{h=b}^{l}y^h_{k,b}<\tilde b_{k,b}$. If $a_{k,b}=\beta_{k,b}$, by  \eqref{eq:feasM1} we have

$$\sum\limits_{q=1}^{b-1} \sum\limits_{w=1}^{k} \frac{a_{w,q} f_w}{d_q}  \sum\limits_{h=q}^{l}y^h_{w,q}+ \sum\limits_{w=1}^{k-1}\frac{a_{w,b} f_{w}}{d_b}  \sum\limits_{h=b}^{l}y^h_{w,b} + \frac{\beta_{k,b} f_k}{d_b}  \sum\limits_{h=b}^{l}y^h_{k,b}\le$$
$$ g_{k-1,b}( \sum\limits_{h=b}^{l}F_h-  \sum\limits_{h=b}^{l}\left\lceil \frac{f_ky^h_{k,b}}{d_b}\right\rceil d_b)+ \beta_{k,b} \sum\limits_{h=b}^{l}\left\lceil \frac{f_ky^h_{k,b}}{d_b}\right\rceil=$$
$$ g_{k-1,b}( \sum\limits_{h=b}^{l}F_h-  \sum\limits_{h=b}^{l}\left\lceil \frac{f_ky^h_{k,b}}{d_b}\right\rceil d_b)+ \sum\limits_{h=b}^{l}\left\lceil \frac{f_ky^h_{k,b}}{d_b}\right\rceil ( g_{k-1,b} (\lfloor f({\cal H}_k^{b})/\delta_b\rfloor \delta_b +2\delta_b)- g_{k-1,b} (\lfloor f({\cal H}_k^{b})/\delta_b\rfloor \delta_b +\delta_b) )=Q,$$
and by condition $(i)$ of Lemma \ref{lem:two_properties1} we have
$$ 
Q\le g_{k-1,b}( \sum\limits_{h=b}^{l}F_h-f_k \tilde b_{k,b})+ (\frac{ f_k}{d_b}\tilde b_{k,b}-\sum\limits_{h=b}^{l}\left\lceil \frac{f_ky^h_{k,b}}{d_b}\right\rceil)( g_{k-1,b} (\lfloor f({\cal H}_k^{b})/\delta_b\rfloor \delta_b +2\delta_b)- g_{k-1,b} (\lfloor f({\cal H}_k^{b})/\delta_b\rfloor \delta_b +\delta_b) )+   $$
$$\sum\limits_{h=b}^{l}\left\lceil \frac{f_ky^h_{k,b}}{d_b}\right\rceil ( g_{k-1,b} (\lfloor f({\cal H}_k^{b})/\delta_b\rfloor \delta_b +2\delta_b)- g_{k-1,b} (\lfloor f({\cal H}_k^{b})/\delta_b\rfloor \delta_b +\delta_b) )=$$$$
g_{k-1,b}( \sum\limits_{h=b}^{l}F_h-f_k \tilde b_{k,b})+\frac{ f_k}{d_b}\tilde b_{k,b}\beta_{k,b}=g_{k-1,b}( U)+\frac{ f_k}{d_b}\tilde b_{k,b}\beta_{k,b} =g_{k,b}(\sum\limits_{h=b}^{l}F_h),$$
where the last two equalities follow from 
\eqref{U} and \eqref{formula g_k(F)k>1}.
If $a_{k,b}=\alpha_{k,b}$, the above arguments can be repeated and easily adapted to show the thesis, too.

Finally, consider the case in which $\sum\limits_{h=b}^{l}F_h - (\lceil f({\cal H}_k^b)/d_b \rceil d_b +d_b) < f_k \tilde b_{k,b}$ and $ \sum\limits_{h=b}^{l}F_h>\lfloor f({\cal H}_k^b)/d_b \rfloor d_b$. Two subcases can be considered: either $\sum\limits_{h=b}^{l}F_h - \lfloor f({\cal H}_k^b)/d_b \rfloor d_b \le f_k \tilde b_{k,b}$ or $\sum\limits_{h=b}^{l}F_h - \lfloor f({\cal H}_k^b)/d_b \rfloor d_b > f_k \tilde b_{k,b}$.
In the first subcase, we have
$0\le s_{k,b}\le \frac{f_k\tilde b_{k,b}}{d_b}$. Then, by  \eqref{skh)}, \eqref{U} and \eqref{formula g_k(F)k>1}, and since $\sigma_{k,b}=s_{k,b}$ (see \eqref{sigmakb)}),
inequality \eqref{eq:feasM1} becomes
$$\sum\limits_{q=1}^{b-1} \sum\limits_{w=1}^{k} \frac{a_{w,q} f_w}{d_q}  \sum\limits_{h=q}^{l}y^h_{w,q}+ \sum\limits_{w=1}^{k-1}\frac{a_{w,b} f_{w}}{d_b}  \sum\limits_{h=b}^{l}y^h_{w,b} + \frac{a_{k,b} f_k}{d_b}  \sum\limits_{h=b}^{l}y^h_{k,b}\le$$$$
g_{k-1,b}( \sum\limits_{h=b}^{l}F_h-  \sum\limits_{h=b}^{l}\left\lceil \frac{f_ky^h_{k,b}}{d_b}\right\rceil d_b)+ a_{k,b}\sum\limits_{h=b}^{l}\left\lceil \frac{f_ky^h_{k,b}}{d_b}\right\rceil=$$
%$$g_{k-1,b}(U- d_b(  \sum\limits_{h=b}^{l}\left\lceil \frac{f_ky^h_{k,b}}{d_b}\right\rceil - s_{kb }))+ a_{k,b}(  \sum\limits_{h=b}^{l}\frac{ f_ky^h_{k,b}}{d_b} - s_{kb })+a_{k,b}s_{k,b}\le$$
\begin{equation}\label{eq:feas4b}
g_{k-1,b}(U- d_b(  \sum\limits_{h=b}^{l}\left\lceil \frac{f_ky^h_{k,b}}{d_b}\right\rceil - s_{kb }))+ a_{k,b}( \sum\limits_{h=b}^{l}\left\lceil \frac{f_ky^h_{k,b}}{d_b}\right\rceil  - s_{kb })+a_{k,b}s_{k,b}.
\end{equation}
If $\sum\limits_{h=b}^{l}\left\lceil \frac{f_ky^h_{k,b}}{d_b}\right\rceil=s_{k,b}$, 
by \eqref{eq:feasM1} and by recalling that  $g_{k,b}(\sum\limits_{h=b}^{l}F_h)=g_{k-1,b}(U)+ a_{k,b}\sigma_{k,b}$, the inequalities \eqref{eq:feasPrinc} are satisfied for $a_{k,b}\in \{\alpha_{k,b}; \beta_{k,b}\}$.
Observe that since $0\le s_{k,b}\le \frac{f_k\tilde b_{k,b}}{d_b}$ then
\begin{equation}\label{eq:newakb}
a_{k,b}=g_{k-1,b}(U+ d_b)- g_{k-1,b}( U).
\end{equation}

If $ \sum\limits_{h=b}^{l}\left\lceil \frac{f_ky^h_{k,b}}{d_b}\right\rceil  > s_{kb }$, then  by condition $(ii)$ of Lemma \ref{lem:two_properties1} and by \eqref{eq:newakb} we have that

$$g_{k-1,b}(U- d_b(  \sum\limits_{h=b}^{l}\left\lceil \frac{f_ky^h_{k,b}}{d_b}\right\rceil - s_{kb }))+ a_{k,b}(  \sum\limits_{h=b}^{l}\left\lceil \frac{f_ky^h_{k,b}}{d_b}\right\rceil  - s_{kb })\le g_{k-1,b}(U).$$
Then by \eqref{eq:feas4b} and since $g_{k,b}(\sum\limits_{h=b}^{l}F_h)=g_{k-1,b}(U)+ a_{k,b}s_{k,b}$, we have that the inequality \eqref{eq:feasPrinc} holds. 
Finally, if $ \sum\limits_{h=b}^{l}\left\lceil \frac{f_ky^h_{k,b}}{d_b}\right\rceil  < s_{kb }$,   by condition $(i)$ of Lemma \ref{lem:two_properties1} and by \eqref{eq:newakb} it follows that
$$g_{k-1,b}(U- d_b(  \sum\limits_{h=b}^{l}\left\lceil \frac{f_ky^h_{k,b}}{d_b}\right\rceil - s_{kb }))+ a_{k,b}(  \sum\limits_{h=b}^{l}\left\lceil \frac{f_ky^h_{k,b}}{d_b}\right\rceil  - s_{kb })=$$
$$g_{k-1,b}(U+ d_b(  s_{kb }-\sum\limits_{h=b}^{l}\left\lceil \frac{f_ky^h_{k,b}}{d_b}\right\rceil ))- a_{k,b}(  s_{kb } - \sum\limits_{h=b}^{l}\left\lceil \frac{f_ky^h_{k,b}}{d_b}\right\rceil )
\le g_{k-1,b}(U).$$
Hence, by \eqref{eq:feas4b}, the inequalities \eqref{eq:feasPrinc} are valid.

In the second subcase, i.e., $\sum\limits_{h=b}^{l}F_h - \lfloor f({\cal H}_k^b)/d_b \rfloor d_b > f_k \tilde b_{k,b}$, we have
$0\le s_{k,b}\le \frac{f_k\tilde b_{k,b}}{d_b}+1$. If $0\le s_{k,b}\le \frac{f_k\tilde b_{k,b}}{d_b}$ the arguments of the fisrt subcase can be used to show the thesis. If $ s_{k,b}= \frac{f_k\tilde b_{k,b}}{d_b}+1>\frac{f_k\tilde b_{k,b}}{d_b}$, the arguments applied in the case $\sum\limits_{h=b}^{l}F_h - (\lceil f({\cal H}_k^b)/d_b \rceil d_b +d_b) \ge f_k \tilde b_{k,b}$ prove that  the inequalities \eqref{eq:feasPrinc} are valid.

$(c)$

We show now that condition $3)$ holds, i.e.,  $MO^{OO}(k,b,{\bf F})$ is contained in the faces induced by the inequalities in $I(k,b,{\bf F})$. Recall that, by induction,  condition $3)$ holds when $T'_b$ does no contain items of type $k$.
According to the Definition of $U$ and $s_{k,b}$, we can write $\sum\limits_{h=b}^{l}F_h=U+\sigma_{k,b}d_b$. 
%Theorem \ref{thm:itembk(hl)} and Proposition \ref{prop:M-SP-1} imply that $\sum\limits_{h=b}^{l}y^h_{k,b} = \min\{s_{k,b}^+d_b/f_k; \tilde b_{k,b}\}$ or $\sum\limits_{h=b}^{l}y^h_{k,b} = \min\{(s_{k,b}-1)^+  d_b/f_k; \tilde b_{k,b}\}$. 
Let   $y \in MO^{OO}(k,b,{\bf F})$ be an optimal solution and   let 
$$c= \frac{\sum\limits_{h=b}^{l}F_h - \lfloor f({\cal H}_k^b)/d_b\rfloor d_b}{d_b}.$$

Recalling that $\delta_b=d_b$, Theorem \ref{thm:itembk(hl)} and Proposition \ref{prop:M-SP-1} imply that 
$\sum\limits_{h=b}^{l}\frac{f_ky^h_{k,b}}{d_b}$ can attain one of the following values: $\min \{c^+; f_k \tilde b_{k,b}/d_b\}$ or 
$\min \{(c-1)^+; f_k \tilde b_{k,b}/d_b\}$ or $\min \{(c-2)^+; f_k \tilde b_{k,b}/d_b\}$.

In the case $c\le 0$ then  $\sum\limits_{h=b}^{l}y^h_{k,b}=0$ in every optimal solution. Hence, by induction,  for each optimal solution in $ y \in MO^{OO}(k,b,{\bf F})$ 
there exists a choice of $a_{w,q}\in \{\alpha_{wq},\beta_{wq}\}$ and $a_{w,b}\in \{\alpha_{w,b},\beta_{w,b}\}$, for $q=1,\ldots,b-1$ and $w=1,\ldots,k$, such that
$$\sum\limits_{q=1}^{b-1} \sum\limits_{w=1}^{k} \frac{a_{w,q} f_w}{d_q}  \sum\limits_{h=q}^{l}y^h_{w,q}+ \sum\limits_{w=1}^{k-1}\frac{a_{w,b} f_{w}}{d_b}  \sum\limits_{h=b}^{l}y^h_{w,b} = g_{k-1,b}(  \sum\limits_{h=b}^{l}F_h).$$
Since in this case $s_{k,b}\le 0$, then $\sigma_{k,b}= 0$ and $g_{k-1,b}(  \sum\limits_{h=b}^{l}F_h)=g_{k,b}( \sum\limits_{h=b}^{l}F_h)$. Hence, the thesis follows. 

In the case $c-2\ge f_k \tilde b_{k,b}/d_b $, then Theorem \ref{thm:itembk(hl)} and Proposition \ref{prop:M-SP-1} imply that $\sum\limits_{h=b}^{l}y^h_{k,b}= \tilde b_{k,b}$ in every optimal solution.
Hence, by induction,  for each optimal solution in $ y \in MO^{OO}(k,b,{\bf F})$ there exists an inequality such that
$$\sum\limits_{q=1}^{b-1} \sum\limits_{w=1}^{k} \frac{a_{w,q} f_w}{d_q}  \sum\limits_{h=q}^{l}y^h_{w,q}+ \sum\limits_{w=1}^{k-1}\frac{a_{w,b} f_{w}}{d_b}  \sum\limits_{h=b}^{l}y^h_{w,b} = g_{k-1,b}(  \sum\limits_{h=b}^{l}F_h-  f_k \tilde b_{k,b}).$$
In this case $s_{k,b}\ge f_k \tilde b_{k,b}/d_b$, both when $a_{k,b}=\alpha_{k,b}$ and $a_{k,b}=\beta_{k,b}$. Hence, by definition, $\sigma_{k,b}= f_k \tilde b_{k,b}/d_b$, as  $\sum\limits_{h=b}^{l}y^h_{k,b}= \tilde b_{k,b}$ in every optimal solution,
we obtain 
$$\sum\limits_{q=1}^{b-1} \sum\limits_{w=1}^{k} \frac{a_{w,q} f_w}{d_q}  \sum\limits_{h=q}^{l}y^h_{w,q}+ \sum\limits_{w=1}^{k-1}\frac{a_{w,b} f_{w}}{d_b}  \sum\limits_{h=b}^{l}y^h_{w,b} 
+ \frac{a_{k,b} f_k}{d_b}  \sum\limits_{h=b}^{l}y^h_{k,b}
= g_{k-1,b}(  \sum\limits_{h=b}^{l}F_h-  f_k \tilde b_{k,b})+a_{k,b}f_k \tilde b_{k,b}/d_b=g_{k,b}( \sum\limits_{h=b}^{l}F_h).$$ 
And, the thesis holds both if $a_{k,b}=\alpha_{k,b}$ and  $a_{k,b}=\beta_{k,b}$. 

In the case $c= f_k \tilde b_{k,b}/d_b +1$,  
two subcases can be considered: $(I)$ $ \lfloor f({\cal H}_k^b)/d_b\rfloor = \lceil f({\cal H}_k^b)/d_b\rceil $; $(II)$ $ \lfloor f({\cal H}_k^b)/d_b\rfloor \ne \lceil f({\cal H}_k^b)/d_b\rceil $.

 Theorem \ref{thm:itembk(hl)} and Proposition \ref{prop:M-SP-1} imply that $\sum\limits_{h=b}^{l}f_k y^h_{k,b}/d_b= c-1=f_k \tilde b_{k,b}/d_b$ in Case $(I)$, while 
$\sum\limits_{h=b}^{l}f_k y^h_{k,b}/d_b\in \{c-1; c-2\}$ in  Case $(II)$ (note that since w.l.o.g. $f_k \tilde b_{k,b}/d_b\ge 0$ then $c-2\ge0$, in this case), in every optimal solution. 

In Case $(I)$, by induction,  for each optimal solution in $ y \in MO^{OO}(k,b,{\bf F})$ there exists an inequality such that
$$\sum\limits_{q=1}^{b-1} \sum\limits_{w=1}^{k} \frac{a_{w,q} f_w}{d_q}  \sum\limits_{h=q}^{l}y^h_{w,q}+ \sum\limits_{w=1}^{k-1}\frac{a_{w,b} f_{w}}{d_b}  \sum\limits_{h=b}^{l}y^h_{w,b} = g_{k-1,b}(  \sum\limits_{h=b}^{l}F_h-  (c-1)d_b)= g_{k-1,b}(  \lfloor f({\cal H}_k^b)/d_b\rfloor+d_b).$$
Hence, setting $a_{k,b}=\alpha_{k,b} $ and consequently $s_{k,b}=\sigma_{k,b}=c-1$ we obtain 
$$\sum\limits_{q=1}^{b-1} \sum\limits_{w=1}^{k} \frac{a_{w,q} f_w}{d_q}  \sum\limits_{h=q}^{l}y^h_{w,q}+ \sum\limits_{w=1}^{k-1}\frac{a_{w,b} f_{w}}{d_b}  \sum\limits_{h=b}^{l}y^h_{w,b} + \frac{\alpha_{k,b} f_k}{d_b}  \sum\limits_{h=b}^{l}y^h_{k,b}=
 g_{k-1,b}(  \lfloor f({\cal H}_k^b)/d_b\rfloor+d_b)+ (c-1)\alpha_{k,b}= $$
$$g_{k-1,b}(  \lfloor f({\cal H}_k^b)/d_b\rfloor)+  (g_{k-1,b}(  \lfloor f({\cal H}_k^b)/d_b\rfloor+d_b)-g_{k-1,b}(  \lfloor f({\cal H}_k^b)/d_b\rfloor))+ (c-1)\alpha_{k,b}=$$
$$g_{k-1,b}(  \lfloor f({\cal H}_k^b)/d_b\rfloor)+ \alpha_{k,b}\sigma_{k,b}=g_{k,b}( \sum\limits_{h=b}^{l}F_h)$$
Note that, since $ \lfloor f({\cal H}_k^b)/d_b\rfloor = \lceil f({\cal H}_k^b)/d_b\rceil $, the above relations hold even if $\alpha_{k,b}$ and $\lfloor f({\cal H}_k^b)/d_b\rfloor$ are replaced by $\beta_{k,b}$ and $\lceil f({\cal H}_k^b)/d_b\rceil$, respectively.

In Case $(II)$, by induction,  for each optimal solution in $ y \in MO^{OO}(k,b,{\bf F})$ there exists an inequality such that:\\ $(II.a)$
$$\sum\limits_{q=1}^{b-1} \sum\limits_{w=1}^{k} \frac{a_{w,q} f_w}{d_q}  \sum\limits_{h=q}^{l}y^h_{w,q}+ \sum\limits_{w=1}^{k-1}\frac{a_{w,b} f_{w}}{d_b}  \sum\limits_{h=b}^{l}y^h_{w,b} = g_{k-1,b}(  \sum\limits_{h=b}^{l}F_h-  (c-1)d_b)= g_{k-1,b}(  \lfloor f({\cal H}_k^b)/d_b\rfloor+d_b)$$
if $\sum\limits_{h=b}^{l}f_k y^h_{k,b}/d_b= c-1$; and\\  $(II.b)$
$$\sum\limits_{q=1}^{b-1} \sum\limits_{w=1}^{k} \frac{a_{w,q} f_w}{d_q}  \sum\limits_{h=q}^{l}y^h_{w,q}+ \sum\limits_{w=1}^{k-1}\frac{a_{w,b} f_{w}}{d_b}  \sum\limits_{h=b}^{l}y^h_{w,b} = g_{k-1,b}(  \sum\limits_{h=b}^{l}F_h- (c-2)d_b)= g_{k-1,b}(  \lfloor f({\cal H}_k^b)/d_b\rfloor+2d_b)$$
if $\sum\limits_{h=b}^{l}f_k y^h_{k,b}/d_b= c-2$.

Note that, since $ \lfloor f({\cal H}_k^b)/d_b\rfloor \ne \lceil f({\cal H}_k^b)/d_b\rceil $, when $a_{k,b}=\beta_{k,b}$ we have 
$s_{k,b}=\sigma_{k,b}=c-1$.

In Case  $(II.a)$ we obtain 
$$\sum\limits_{q=1}^{b-1} \sum\limits_{w=1}^{k} \frac{a_{w,q} f_w}{d_q}  \sum\limits_{h=q}^{l}y^h_{w,q}+ \sum\limits_{w=1}^{k-1}\frac{a_{w,b} f_{w}}{d_b}  \sum\limits_{h=b}^{l}y^h_{w,b} + \frac{\beta_{k,b} f_k}{d_b}  \sum\limits_{h=b}^{l}y^h_{k,b}=g_{k-1,b}(  \lfloor f({\cal H}_k^b)/d_b\rfloor +d_b)+\beta_{k,b}(c-1)=$$
$$g_{k-1,b}(  \lfloor f({\cal H}_k^b)/d_b\rfloor +d_b)+\beta_{k,b}\sigma_{k,b}=g_{k,b}( \sum\limits_{h=b}^{l}F_h).$$
In Case  $(II.b)$ we have 
$$\sum\limits_{q=1}^{b-1} \sum\limits_{w=1}^{k} \frac{a_{w,q} f_w}{d_q}  \sum\limits_{h=q}^{l}y^h_{w,q}+ \sum\limits_{w=1}^{k-1}\frac{a_{w,b} f_{w}}{d_b}  \sum\limits_{h=b}^{l}y^h_{w,b} + \frac{\beta_{k,b} f_k}{d_b}  \sum\limits_{h=b}^{l}y^h_{k,b}=g_{k-1,b}(  \lfloor f({\cal H}_k^b)/d_b\rfloor +2d_b)+\beta_{k,b}(c-2)=$$
$$g_{k-1,b}(  \lfloor f({\cal H}_k^b)/d_b\rfloor +d_b) + (g_{k-1,b}(   \lfloor f({\cal H}_k^b)/d_b\rfloor +2d_b)-g_{k-1,b}(  \lfloor f({\cal H}_k^b)/d_b\rfloor +d_b))+\beta_{k,b}(c-2)=$$
$$g_{k-1,b}(  \lfloor f({\cal H}_k^b)/d_b\rfloor +d_b)+\beta_{k,b}(c-1)=g_{k-1,b}(  \lfloor f({\cal H}_k^b)/d_b\rfloor +d_b)+\beta_{k,b}\sigma_{k,b}=g_{k,b}( \sum\limits_{h=b}^{l}F_h).$$
Finally, we have the case  $1 \le c\le f_k \tilde b_{k,b}/d_b $. Two subcases are considered: $2 \le c\le f_k \tilde b_{k,b}/d_b $ and $c=1\le f_k \tilde b_{k,b}/d_b $. In the subcase $2 \le c\le f_k \tilde b_{k,b}/d_b $, 
Theorem \ref{thm:itembk(hl)} and Proposition \ref{prop:M-SP-1} imply that $\sum\limits_{h=b}^{l}f_k y^h_{k,b}/d_b\in \{c-2; c-1; c\}$ in every optimal solution. If $\sum\limits_{h=b}^{l}f_k y^h_{k,b}/d_b=c-2$ or $\sum\limits_{h=b}^{l}f_k y^h_{k,b}/d_b=c-1$ then the thesis follows by using the same arguments of Cases  $(II.a)$ and $(II.b)$.
 If $\sum\limits_{h=b}^{l}f_k y^h_{k,b}/d_b=c$,  by the induction hypothesis, for each optimal solution in $ y \in MO^{OO}(k,b,{\bf F})$ there exists a choice of $a_{w,q}\in \{\alpha_{wq},\beta_{wq}\}$ and $a_{w,b}\in \{\alpha_{w,b},\beta_{w,b}\}$, for $q=1,\ldots,b-1$ and $w=1,\ldots,k$, such that
\begin{eqnarray}\label{eq:c=1d}
\sum\limits_{q=1}^{b-1} \sum\limits_{w=1}^{k} \frac{a_{w,q} f_w}{d_q}  \sum\limits_{h=q}^{l}y^h_{w,q}+ \sum\limits_{w=1}^{k-1}\frac{a_{w,b} f_{w}}{d_b}  \sum\limits_{h=b}^{l}y^h_{w,b} =g_{k-1,b}(  \sum\limits_{h=b}^{l}F_h-  c d_b)= g_{k-1,b}(  \lfloor f({\cal H}_k^b)/d_b\rfloor).
\end{eqnarray}
Hence, we obtain 
\begin{eqnarray}\label{c=1d}
\sum\limits_{q=1}^{b-1} \sum\limits_{w=1}^{k} \frac{a_{w,q} f_w}{d_q}  \sum\limits_{h=q}^{l}y^h_{w,q}+ \sum\limits_{w=1}^{k-1}\frac{a_{w,b} f_{w}}{d_b}  \sum\limits_{h=b}^{l}y^h_{w,b} + \frac{\alpha_{k,b} f_k}{d_b}  \sum\limits_{h=b}^{l}y^h_{k,b}= \nonumber\\
 g_{k-1,b}(  \lfloor f({\cal H}_k^b)/d_b\rfloor)+  \alpha_{k,b}c=
 g_{k-1,b}(  \lfloor f({\cal H}_k^b)/d_b\rfloor) +\alpha_{k,b}\sigma_{k,b}=g_{k,b}( \sum\limits_{h=b}^{l}F_h),
\end{eqnarray}
and the thesis follows. In the subcase $c=1\le f_k \tilde b_{k,b}/d_b $, 
Theorem \ref{thm:itembk(hl)} and Proposition \ref{prop:M-SP-1} imply that $\sum\limits_{h=b}^{l}f_k y^h_{k,b}/d_b\in \{ c-1=0; c\}$ in every optimal solution. 
If $\sum\limits_{h=b}^{l}f_k y^h_{k,b}/d_b=c-1$ the thesis follows by the above Case $(I)$. If $\sum\limits_{h=b}^{l}f_k y^h_{k,b}/d_b=c$ then the thesis follows from \eqref{eq:c=1d} and \eqref{c=1d}. 

\end{proof}
{\bf Example} (continued)\\
We continue the example reported at the end of the Section \ref{sec:enum}.  
%by the deriving the se the inequalities for describing the sets emerged in the example reported at the end of Section \ref{sec:enum}. 
For the sake of simplicity we derive only the set of inequalities $I(5,2,(1,6,8))$ (also  in this case $I(k,b,(F_1,F_2,F_3))$ denotes the set  $I(k,b,{\bf F})$, where ${\bf F}$ has components $F_1$, $F_2$ and $F_3$) related to the solution set  
 $MO(5,2,(1,6,8))\equiv MO(4,2,(1,6,8))$. At this aim, we first consider  the sets $I(1,1,(1,F_2,F_3))$, 
$I(2,2,(1,6,F_3))$, $I(3,2,(1,6,8))$, and, finally, $I(5,2,(1,6,8))\equiv I(4,2,(1,6,8))$.

$I(1,1,(1,F_2,F_3))\equiv I(5,1,(1,F_2,F_3))$\\  By \eqref{eq:MP^{OO}(k,1,F)}, since $\tilde b_{1,1}=2$, we have that $ I(1,1,(1,F_2,F_3))$ contains the inequality  $\sum\limits_{h=1}^{4}y_{1,1}^h \le 2$ when $F_2+F_3>0$, and $\sum\limits_{h=1}^{4}y_{1,1}^h \le 1$ if $F_2+F_3=0$. Recall that, given $\hat y $ in $MO(1,1,(1,F_2,F_3))$, we have $\hat y_{1,1}^1 =1$ and $\hat y_{1,1}^2 +\hat y_{1,1}^3 =1$ if $F_2+F_3=2$, and $\hat y_{1,1}^1 =1$ and $\hat y_{1,1}^2 +\hat y_{1,1}^3 =0$ if $F_2+F_3=0$. Hence, $I(1,1,(1,F_2,F_3))$ contains valid inequalities and $MO^{OO}(1,1,(1,F_2,F_3))$ is contained in the faces induced by $I(1,1,(1,F_2,F_3))$.

\medskip
$I(2,2,(1,6,F_3))$\\ 
Computation of $\alpha_{2,2}$ and $\beta_{2,2}$: Recall that  $H_2=\emptyset$ and hence $f({\cal H}_2^2)=0$. By definition, we have
%and since $\tilde b_{1,2}=\tilde b_{k,1}=0$, for $k=2,3,4,5$ (i.e., $a_{1,2}=a_{k,1}=0$, for $k=2,3,4,5$),
$\alpha_{2,2}=g_{1,2}(d_2)-g_{1,2}(0)=g_{1,2}(2)-g_{1,2}(0)=1$ and $\beta_{2,2}=g_{1,2}(2d_2)-g_{1,2}(d_2)=g_{1,2}(4)-g_{1,2}(2)=g_{1,1}(1+4)-g_{1,1}(1+2)=0 $. Hence, $I(2,2,(1,6,F_3))$ contains the two inequalities:
$\sum\limits_{h=1}^{3}y_{1,1}^h + \alpha_{2,2} \sum\limits_{h=2}^{3}y_{2,2}^h = \sum\limits_{h=1}^{3}y_{1,1}^h +  \sum\limits_{h=2}^{3}y_{2,2}^h\le g_{2,2}(6+F_3)$ and $\sum\limits_{h=1}^{3}y_{1,1}^h + \beta_{2,2} \sum\limits_{h=2}^{3}y_{2,2}^h =\sum\limits_{h=1}^{3}y_{1,1}^h \le g_{2,2}(6+F_3)$.

\medskip
$I(3,2,(1,6,8))$\\
Computation of $\alpha_{3,2}$ and $\beta_{3,2}$: Since $f({\cal H}_3^2)=8$, we have
   $\alpha_{3,2}=g_{2,2}(8+d_2)-g_{2,2}(8)=g_{2,2}(10)-g_{2,2}(8)$ and 
$\beta_{3,2}=g_{2,2}(8+2d_2)-g_{2,2}(8+d_2)=g_{2,2}(12)-g_{2,2}(10)$.\\
Computation of $\alpha_{3,2}$. Two cases hold: $1)$   $a_{2,2}=\alpha_{2,2}=1$; $2)$   $a_{2,2}=\beta_{2,2}=0$.\\
Case $1)$\\
 $g_{2,2}(10)$ $\rightarrow$ $s_{2,2}=10/2=5$,  $\sigma_{2,2}=4$, then $g_{2,2}(10)=g_{1,2}(2)+ \alpha_{2,2} 4=g_{1,1}(1+2)+4=6$.\\ 
 $g_{2,2}(8)$$\rightarrow$ $s_{2,2}=8/2=4$,  $\sigma_{2,2}=4$, then $g_{2,2}(8)=g_{1,2}(0)+ \alpha_{2,2} 4=g_{1,1}(1)+4=5$. Hence, $\alpha_{3,2}=1$.\\
Case $2)$\\
 $g_{2,2}(10)$ $\rightarrow$ $s_{2,2}=(10-2)/2=4$,  $\sigma_{2,2}=4$, then $g_{2,2}(10)=g_{1,2}(2)+ \beta_{2,2} 4=g_{1,1}(1+2)=2$.\\ 
 $g_{2,2}(8)$$\rightarrow$ $s_{2,2}=(8-2)/2=3$,  $\sigma_{3,2}=3$, then $g_{2,2}(8)=g_{1,2}(2)+ \beta_{2,2} 3=g_{1,1}(1+2)=2$. Hence, $\alpha_{3,2}=0$.\\
 Computation of $\beta_{3,2}$. Two cases hold: $1)$   $a_{2,2}=\alpha_{2,2}=1$; $2)$   $a_{2,2}=\beta_{2,2}=0$.\\
Case $1)$\\
 $g_{2,2}(12)$ $\rightarrow$ $s_{2,2}=12/2=6$,  $\sigma_{2,2}=4$, then $g_{2,2}(12)=g_{1,2}(4)+ \alpha_{2,2} 4=g_{1,1}(1+4)+4=6$.\\ 
 $g_{2,2}(10)$$\rightarrow$ $s_{2,2}=10/2=5$,  $\sigma_{2,2}=4$, then $g_{2,2}(10)=g_{1,2}(2)+ \alpha_{2,2} 4=g_{1,1}(1+2)+4=6$. Hence, $\beta_{3,2}=0$.\\
Case $2)$\\
 $g_{2,2}(12)$ $\rightarrow$ $s_{2,2}=(12-2)/2=5$,  $\sigma_{2,2}=4$, then $g_{2,2}(12)=g_{1,2}(4)+ \beta_{2,2} 4=g_{1,1}(1+4)=2$.\\ 
 $g_{2,2}(10)$$\rightarrow$ $s_{2,2}=(10-2)/2=4$,  $\sigma_{2,2}=4$, then $g_{2,2}(10)=g_{1,2}(2)+ \beta_{2,2} 4=g_{1,1}(1+2)=2$. Hence, $\beta_{3,2}=0$.\\
 The inequalities  in $I(3,2,(1,6,8))$ will be given later.
 
 \medskip
 $I(4,2,(1,6,8))\equiv I(5,2,(1,6,8))$\\
 Computation of $\alpha_{4,2}$ and $\beta_{4,2}$: Since $f({\cal H}_4^2)=24$ we have
   $\alpha_{4,2}=g_{3,2}(24+d_2)-g_{3,2}(24)=g_{3,2}(26)-g_{3,2}(24)$ and 
$\beta_{4,2}=g_{3,2}(24+2d_2)-g_{3,2}(24+d_2)=g_{3,2}(28)-g_{3,2}(26)$.\\
Computation of $\alpha_{4,2}$. Two cases hold: $1)$   $a_{3,2}=\alpha_{3,2}$; $2)$   $a_{3,2}=\beta_{3,2}=0$.\\
Case $1)$\\  
$g_{3,2}(26)$$\rightarrow$  $s_{3,2}=(26-8)/2=9$,  $\sigma_{3,2}=8$, then $g_{3,2}(26)=g_{2,2}(10)+ \alpha_{3,2} 8$. 
By Cases 1 and 2 of  $\alpha_{3,2}$, we have that $g_{2,2}(10)=6$ when  $a_{2,2}=\alpha_{2,2}=1$, and 
 $g_{2,2}(10)=2$ when  $a_{2,2}=\beta_{2,2}=0$. Hence, $g_{3,2}(26)=6+ \alpha_{3,2} 8$ when  $a_{2,2}=\alpha_{2,2}=1$, and $g_{3,2}(26)=2+ \alpha_{3,2} 8$ when  $a_{2,2}=\beta_{2,2}=0$.\\
$g_{3,2}(24)$$\rightarrow$  $s_{3,2}=(24-8)/2=8$,  $\sigma_{3,2}=8$, then $g_{3,2}(24)=g_{2,2}(8)+ \alpha_{3,2} 8$. 
By Cases 1 and 2 of  $\alpha_{3,2}$, we have that $g_{2,2}(8)=5$ when  $a_{2,2}=\alpha_{2,2}=1$, and 
 $g_{2,2}(8)=2$ when  $a_{2,2}=\beta_{2,2}=0$. Hence, $g_{3,2}(24)=5+ \alpha_{3,2} 8$ when  $a_{2,2}=\alpha_{2,2}=1$, and $g_{3,2}(24)=2+ \alpha_{3,2} 8$ when  $a_{2,2}=\beta_{2,2}=0$.\\
Hence, $\alpha_{4,2}=6+ \alpha_{3,2} 8-(5+ \alpha_{3,2} 8)=1$ when  $a_{2,2}=\alpha_{2,2}=1$, and 
$\alpha_{4,2}= 2+ \alpha_{3,2} 8- (2+ \alpha_{3,2} 8)=0$ when  $a_{2,2}=\beta_{2,2}=0$.\\
Case $2)$\\ 
$g_{3,2}(26)$$\rightarrow$  $s_{3,2}=(26-10)/2=8$,  $\sigma_{3,2}=8$, then $g_{3,2}(26)=g_{2,2}(10)+ \beta_{3,2} 8=g_{2,2}(10)$. 
By Cases 1 and 2 of  $\beta_{3,2}$, we have that $g_{2,2}(10)=6$ when  $a_{2,2}=\alpha_{2,2}=1$, and 
 $g_{2,2}(10)=2$ when  $a_{2,2}=\beta_{2,2}=0$. Hence, $g_{3,2}(26)=6$ when  $a_{2,2}=\alpha_{2,2}=1$, and $g_{3,2}(26)=2$ when  $a_{2,2}=\beta_{2,2}=0$.\\
$g_{3,2}(24)$$\rightarrow$  $s_{3,2}=(24-10)/2=7$,  $\sigma_{3,2}=7$, then $g_{3,2}(24)=g_{2,2}(10)+ \beta_{3,2} 7=g_{2,2}(10)$. 
Again, by Cases 1 and 2 of  $\beta_{3,2}$, we have that $g_{2,2}(10)=6$ when  $a_{2,2}=\alpha_{2,2}=1$, and 
 $g_{2,2}(10)=2$ when  $a_{2,2}=\beta_{2,2}=0$. Hence, $g_{3,2}(24)=6$ when  $a_{2,2}=\alpha_{2,2}=1$, and $g_{3,2}(24)=2$ when  $a_{2,2}=\beta_{2,2}=0$.\\
Hence, $\alpha_{4,2}=0$ both when  $a_{2,2}=\alpha_{2,2}=1$, and 
when $a_{2,2}=\beta_{2,2}=0$.

Computation of $\beta_{4,2}$. Two cases hold: $1)$   $a_{3,2}=\alpha_{3,2}$; $2)$   $a_{3,2}=\beta_{3,2}=0$.\\
Case $1)$\\  
$g_{3,2}(28)$$\rightarrow$  $s_{3,2}=(28-8)/2=10$,  $\sigma_{3,2}=8$, then $g_{3,2}(28)=g_{2,2}(12)+ \alpha_{3,2} 8$. 
By Cases 1 and 2 of  $\alpha_{3,2}$, we have that $g_{2,2}(12)=6$ when  $a_{2,2}=\alpha_{2,2}=1$, and 
 $g_{2,2}(12)=2$ when  $a_{2,2}=\beta_{2,2}=0$. Hence, $g_{3,2}(28)=6+ \alpha_{3,2} 8$ when  $a_{2,2}=\alpha_{2,2}=1$, and $g_{3,2}(28)=2+ \alpha_{3,2} 8$ when  $a_{2,2}=\beta_{2,2}=0$.\\
$g_{3,2}(26)$$\rightarrow$  $s_{3,2}=(26-8)/2=9$,  $\sigma_{3,2}=8$, then $g_{3,2}(26)=g_{2,2}(10)+ \alpha_{3,2} 8$. 
By Cases 1 and 2 of  $\alpha_{3,2}$, we have that $g_{2,2}(10)=6$ when  $a_{2,2}=\alpha_{2,2}=1$, and 
 $g_{2,2}(10)=2$ when  $a_{2,2}=\beta_{2,2}=0$. Hence, $g_{3,2}(26)=6+ \alpha_{3,2} 8$ when  $a_{2,2}=\alpha_{2,2}=1$, and $g_{3,2}(26)=2+ \alpha_{3,2} 8$ when  $a_{2,2}=\beta_{2,2}=0$.\\
Hence, $\beta_{4,2}=0$ both when  $a_{2,2}=\alpha_{2,2}=1$, and 
when  $a_{2,2}=\beta_{2,2}=0$.\\
Case $2)$\\  
$g_{3,2}(28)$$\rightarrow$  $s_{3,2}=(28-10)/2=9$,  $\sigma_{3,2}=8$, then $g_{3,2}(28)=g_{2,2}(12)+ \beta_{3,2} 8=g_{2,2}(12)$. 
By Cases 1 and 2 of  $\beta_{3,2}$, we have that $g_{2,2}(12)=6$ when  $a_{2,2}=\alpha_{2,2}=1$, and 
 $g_{2,2}(12)=2$ when  $a_{2,2}=\beta_{2,2}=0$. Hence, $g_{3,2}(28)=6$ when  $a_{2,2}=\alpha_{2,2}=1$, and $g_{3,2}(28)=2$ when  $a_{2,2}=\beta_{2,2}=0$.\\
 $g_{3,2}(26)$$\rightarrow$  $s_{3,2}=(26-10)/2=8$,  $\sigma_{3,2}=8$, then $g_{3,2}(26)=g_{2,2}(10)+ \beta_{3,2} 8=g_{2,2}(10)$. 
By Cases 1 and 2 of  $\beta_{3,2}$, we have that $g_{2,2}(10)=6$ when  $a_{2,2}=\alpha_{2,2}=1$, and 
 $g_{2,2}(10)=2$ when  $a_{2,2}=\beta_{2,2}=0$. Hence, $g_{3,2}(26)=6$ when  $a_{2,2}=\alpha_{2,2}=1$, and $g_{3,2}(26)=2$ when  $a_{2,2}=\beta_{2,2}=0$.\\
Hence, also in this case, $\beta_{4,2}=0$ both when  $a_{2,2}=\alpha_{2,2}=1$, and 
when  $a_{2,2}=\beta_{2,2}=0$.

Now, the inequalities in $I(3,2,(1,6,8))$ and $I(4,2,(1,6,8))$ are derived.
By the above discussion, we have that  $I(3,2,(1,6,8))$ contains the following inequalities:
 \begin{eqnarray}\label{I(3,2,(1,6,8))}
\sum\limits_{h=1}^{3} y^h_{1,1} + \alpha_{2,2}\sum\limits_{h=2}^{3} y^h_{2,2} + \alpha_{3,2}\sum\limits_{h=2}^{3} y^h_{3,2}= \sum\limits_{h=1}^{3} y^h_{1,1} + \sum\limits_{h=2}^{3} y^h_{2,2} + \sum\limits_{h=2}^{3} y^h_{3,2} \le g_{3,2}(14)=8\label{in1}\\
\sum\limits_{h=1}^{3} y^h_{1,1} + \beta_{2,2}\sum\limits_{h=2}^{3} y^h_{2,2} + \alpha_{3,2}\sum\limits_{h=2}^{3} y^h_{3,2}= \sum\limits_{h=1}^{3} y^h_{1,1}  + \sum\limits_{h=2}^{3} y^h_{3,2} \le g_{3,2}(14)=5\label{in2}\\
\sum\limits_{h=1}^{3} y^h_{1,1} + \alpha_{2,2}\sum\limits_{h=2}^{3} y^h_{2,2} + \beta_{3,2}\sum\limits_{h=2}^{3} y^h_{3,2}= \sum\limits_{h=1}^{3} y^h_{1,1} + \sum\limits_{h=2}^{3} y^h_{2,2} \le g_{3,2}(14)=6\label{in3}\\
\sum\limits_{h=1}^{3} y^h_{1,1} + \beta_{2,2}\sum\limits_{h=2}^{3} y^h_{2,2} + \beta_{3,2}\sum\limits_{h=2}^{3} y^h_{3,2}= \sum\limits_{h=1}^{3} y^h_{1,1}   \le g_{3,2}(14)=2\label{in4}
\end{eqnarray}
Observe that, the right-hand side of the inequalities are different, since they are computed by \eqref{formula g_k(F)} and \eqref{formula g_k(F)k>1} and depend on the variable coefficients of the left-hand sides. For instance, in \eqref{in1}, 
 we have $s_{3,2}=(14-8)/2=3$, then $\sigma_{3,2}=3$ and  $g_{3,2}(14)=g_{2,2}(8)+ 3 \alpha_{3,2}=g_{2,2}(8)+ 3$.  
 Then, $s_{2,2}=(8)/2=4$, $\sigma_{2,2}=4$, and, hence, the right-hand side is $g_{3,2}(14) =g_{1,2}(0)+ 4\alpha_{2,2}+ 3 \alpha_{3,2}=g_{1,1}(1)+ 4+3=8$. While, in \eqref{in2}, 
 we have $s_{3,2}=(14-8)/2=3$, then $\sigma_{3,2}=3$ and  $g_{3,2}(14)=g_{2,2}(8)+ 3 \alpha_{3,2}$. Then, $s_{2,2}=(8-2)/2=3$, $\sigma_{2,2}=3$, and $g_{3,2}(14) =g_{1,2}(2)+ 3\beta_{2,2}+ 3 \alpha_{3,2}=g_{1,1}(1)+ 3=5$ (recall that 
 $\beta_{2,2}=0$).

According to the values of $\alpha_{4,2}$ and $\beta_{4,2}$ computed above, $I(4,2,(1,6,8))$ contains the following inequalities:
 \begin{eqnarray}\label{I(3,2,(1,6,8))}
\sum\limits_{h=1}^{3} y^h_{1,1} + \alpha_{2,2}\sum\limits_{h=2}^{3} y^h_{2,2} + \alpha_{3,2}\sum\limits_{h=2}^{3} y^h_{3,2}+ \alpha_{4,2}\sum\limits_{h=2}^{3} y^h_{4,2}&=& \sum\limits_{h=1}^{3} y^h_{1,1} + \sum\limits_{h=2}^{3} y^h_{2,2} + \sum\limits_{h=2}^{3} y^h_{3,2}+ \sum\limits_{h=2}^{3} y^h_{4,2} \le \nonumber \\ g_{4,2}(14)=8\label{in1b}\\
\sum\limits_{h=1}^{3} y^h_{1,1} + \beta_{2,2}\sum\limits_{h=2}^{3} y^h_{2,2} + \alpha_{3,2}\sum\limits_{h=2}^{3} y^h_{3,2}+ \alpha_{4,2}\sum\limits_{h=2}^{3} y^h_{4,2}&=& \sum\limits_{h=1}^{3} y^h_{1,1}  + \sum\limits_{h=2}^{3} y^h_{3,2} \le g_{4,2}(14)=5\label{in2b}\\
\sum\limits_{h=1}^{3} y^h_{1,1} + \alpha_{2,2}\sum\limits_{h=2}^{3} y^h_{2,2} + \beta_{3,2}\sum\limits_{h=2}^{3} y^h_{3,2}+ \alpha_{4,2}\sum\limits_{h=2}^{3} y^h_{4,2}&=& \sum\limits_{h=1}^{3} y^h_{1,1} + \sum\limits_{h=2}^{3} y^h_{2,2} \le g_{4,2}(14)=6\label{in3b}\\
\sum\limits_{h=1}^{3} y^h_{1,1} + \beta_{2,2}\sum\limits_{h=2}^{3} y^h_{2,2} + \beta_{3,2}\sum\limits_{h=2}^{3} y^h_{3,2}+ \alpha_{4,2}\sum\limits_{h=2}^{3} y^h_{4,2}&=& \sum\limits_{h=1}^{3} y^h_{1,1}   \le g_{4,2}(14)=2\label{in4b}\\
\sum\limits_{h=1}^{3} y^h_{1,1} + \alpha_{2,2}\sum\limits_{h=2}^{3} y^h_{2,2} + \alpha_{3,2}\sum\limits_{h=2}^{3} y^h_{3,2}+ \beta_{4,2}\sum\limits_{h=2}^{3} y^h_{4,2}&=& \sum\limits_{h=1}^{3} y^h_{1,1} + \sum\limits_{h=2}^{3} y^h_{2,2} + \sum\limits_{h=2}^{3} y^h_{3,2} \le g_{4,2}(14)=8\label{in5b}\\
\sum\limits_{h=1}^{3} y^h_{1,1} + \beta_{2,2}\sum\limits_{h=2}^{3} y^h_{2,2} + \alpha_{3,2}\sum\limits_{h=2}^{3} y^h_{3,2}+ \beta_{4,2}\sum\limits_{h=2}^{3} y^h_{4,2}&=& \sum\limits_{h=1}^{3} y^h_{1,1}  + \sum\limits_{h=2}^{3} y^h_{3,2} \le g_{4,2}(14)=5\label{in6b}\\
\sum\limits_{h=1}^{3} y^h_{1,1} + \alpha_{2,2}\sum\limits_{h=2}^{3} y^h_{2,2} + \beta_{3,2}\sum\limits_{h=2}^{3} y^h_{3,2}+ \beta_{4,2}\sum\limits_{h=2}^{3} y^h_{4,2}&=& \sum\limits_{h=1}^{3} y^h_{1,1} + \sum\limits_{h=2}^{3} y^h_{2,2} \le g_{4,2}(14)=6\label{in7b}\\
\sum\limits_{h=1}^{3} y^h_{1,1} + \beta_{2,2}\sum\limits_{h=2}^{3} y^h_{2,2} + \beta_{3,2}\sum\limits_{h=2}^{3} y^h_{3,2}+ \beta_{4,2}\sum\limits_{h=2}^{3} y^h_{4,2}&=& \sum\limits_{h=1}^{3} y^h_{1,1}   \le g_{4,2}(14)=2.\label{in8b}
\end{eqnarray}
Note that, the inequalities \eqref{in6b}--\eqref{in8b} are the same of \eqref{in2}--\eqref{in4} and that, since $f({\cal H}_4^2)=24>14$, we have 
$g_{4,2}(14)=g_{3,2}(14)$ in all above inequalities. Recalling that, in  the example reported at the end of Section \ref{sec:enum}, $MO(4,2,(1,6,8))$ contains  points $\hat y$ with the following characteristics:
\begin{enumerate}
\item $\sum\limits_{h=1}^{3} \hat y^h_{1,1}=2$; $\hat y^2_{2,2}+\hat y^3_{2,2}=4$; $\hat y^3_{3,2}=2$; $\hat y^3_{4,2}=0$;
\item $\sum\limits_{h=1}^{3} \hat y^h_{1,1}=2$; $\hat y^2_{2,2}+\hat y^3_{2,2}=3$; $\hat y^3_{3,2}=3$; $\hat y^3_{4,2}=0$;
\item $\sum\limits_{h=1}^{3} \hat y^h_{1,1}=1$; $\hat y^2_{2,2}+\hat y^3_{2,2}=4$; $\hat y^3_{3,2}=3$; $\hat y^3_{4,2}=0$.
\end{enumerate}
It is easy to verify that the inequalities in $I(4,2,(1,6,8))$ are a valid for $MO(4,2,(1,6,8))$, and that each point in $MO(4,2,(1,6,8))$ is contained by (at least) a face induced by the inequalities in $I(4,2,(1,6,8))$.

%FINO A QUI: 6-6- 2014

\subsection{Description of  $P^{OO}$}
In this section, a description of  $P^{OO}$ is provided as a system of inequalities. Similar arguments to those used in \cite{Pochet98} are employed. Let $W \subseteq N$ be a subset  of items, let $B=\{B_1,\ldots, B_t\}$ be the maximal block 
%(VA BENE SE CONSIDERO CHE HO I COEFF DELLA F.OBIETTIVO) 
partition of $W$ into blocks.
%, let $\pi$ a permutation of $\{1,\ldots,t\}$, 
%FORSE LA PERMUTAZIONE $\pi$ NON SERVE PERCH DATO CHE LE NOSTRE FORMULAZIONI CONSIDERANO OPT E ORDERED SOLUTIONS, I COEFF. DELLA FUNZ. OBIETTIVO SONO GIA' DENTRO I VINCOLI DELLA FORMULAZIONE FORSE NON SERVE NEACHE LA PERM $\rho$!!! perchŽ' nelle disequazioni compare la somma rispetto ai knapsacks $i$!!!!!
%and let $\rho$ be a permutation of the $m$ knapsacks $\{1,\ldots,m\}$. 
Let  $r=argmin_{j \in B_q}\{s_j\}$, and let $f'_q=s_r$ and $p_q=p_r$ be the weight and the profit of the block $q$, respectively. Let $\tilde b_q=\sum_{j \in B_q}\frac{s_jb_j}{f'_q}$
be the multiplicity of block $B_q$. Suppose that $f'_1 \le f'_2\le \ldots \le f'_t$ and let $f_q=f'_q/f'_1$, for $q=1,\ldots,t$.
% and FORSE NON SERVE $H_k=\{q \in \{1, \ldots,k-1\}: \pi(q) < \pi(k)\}$, for $k= 1,\ldots,t$.

Let ${\bf F}$ be a vector of $l$ components such that $F_h=\sum\limits_{i=1}^m r_i^h$, for $h=1,\ldots,l$, and let $MP^{OO,W}_t({\bf F}/f_1')$ be the set containing all the OPT and ordered solutions of M-$SP$ defined on the maximal  block partition $B$ of $W$, weights $f_1,\ldots,f_t$, multiplicities $\tilde b_1,\ldots,\tilde b_t$, knapsack part capacities $F_h=\sum\limits_{i=1}^m r_i^h$, for $h=1,\ldots,l$, and objective coefficients $p_1,\ldots,p_t$.
Hence, by \eqref{prob:M-SP-cond1} and \eqref{prob:M-SP-cond2}, $MP^{OO,W}_t({\bf F}/f_1')$ is defined as
{\small
\begin{equation}\label{poly:covM-SP}
MP^{OO,W}_t({\bf F}/f_1')=conv\left(y \in(\mathbb{Z}^+\cup\{0\})^{t\times l \times l}|
\begin{array}{rclr}
\sum\limits_{w=1}^{t}\sum\limits_{q=1}^{h }\left\lceil \frac{f_w y_{wq}^h}{d_q}\right\rceil  d_q &\leq& F_h/f_1' \;\;\;\text{ for } h=1,\ldots, l \\
 \sum\limits_{h=q}^{l} \left\lceil \frac{f_w y_{wq}^h}{d_q}\right\rceil &\leq&  \frac{f_w \tilde b_{wq}}{d_q} \;\;\;\text{ for } w =1,\ldots, t \text{ and } q =1,\ldots, l\\
% \sum\limits_{j\in N: g(j) \le h} s_j x_{ij}^h &\leq& r_i^h \text{ for } h=1,\ldots, l \text{ and } i=1,\ldots, m\\
 %\sum\limits_{i=1}^{m} \sum\limits_{h: h\ge g(j)}x_{ij}^h &\leq& b_j \text{ for } j=1,\ldots, n  \\
 y &&\text{is an OPT and an ordered solution}
 \end{array}
\right)
\end{equation}
}
Observe that, since $MP^{OO,W}_t({\bf F}/f_1')$ contains OPT and ordered solutions, then $MP^{OO,W}_t({\bf F}/f_1')$ depends on the objective function coefficients.
Let $I_W(t,l,{\bf F}/f_1')$ be the set of inequalities, introduced in the previous Section,  satisfying conditions $1)$, $2)$ and $3)$ for $MP^{OO,W}_t({\bf F}/f_1')$, 
written as 
\begin{equation}\label{eq:M-SPx}
 \sum\limits_{w=1}^{t} \sum\limits_{q=1}^{l} 
  \frac{a_{w,q} f_w}{d_q}  \sum\limits_{h=q}^{l}y^h_{w,q}
 %= \sum\limits_{h: h\ge q} f_w y_{wq}^h/d_q
  %\sum\limits_{j \in B_w: s_j=d_q} \frac{a_{w,q} f_w}{d_q}  \sum\limits_{h=q}^{l}y^h_{w,q} 
  \le g_{t,l}(F_l/f'_1).
\end{equation}
and let ${\cal I}(W)$ be the set containing  inequalities of the form 
\begin{equation}\label{eq:MSkPx}
 \sum\limits_{w=1}^{t} \sum\limits_{q=1}^{l} 
   \frac{a_{w,q} f_w}{d_q}
 \sum\limits_{j \in B_w:\\ g(j)=q} 
 \left( \frac{s_j}{f_jf'_1}\right)
 \sum\limits_{h: h\ge q} \sum\limits_{i=1}^m x_{ij}^h=
 \sum\limits_{w=1}^{t} \sum\limits_{q=1}^{l} 
  a_{w,q} f_w 
 \sum\limits_{j \in B_w:\\ g(j)=q} 
 \left( \frac{1}{f'_j}\right)
 \sum\limits_{h=q}^l \sum\limits_{i=1}^m x_{ij}^h
 %= \sum\limits_{h: h\ge q} f_w y_{wq}^h/d_q
  %\sum\limits_{j \in B_w: s_j=d_q} \frac{a_{w,q} f_w}{d_q}  \sum\limits_{h=q}^{l}y^h_{w,q} 
  \le g_{t,l}(F_l/f'_1).
\end{equation}

\begin{theorem}\label{thm:final}
The polytope $P^{OO}$ is described by the following system\\
$ {\cal I}(W) \text{for all } W\subseteq N$\\
$x \in (\mathbb{R}^+\cup\{0\})^{n\times m \times l}.$ 
%{\small
%\begin{equation}\label{sysFinal}
%\begin{array}{rclr}
 %{\cal I}(W) &&\text{for all } W\subseteq N\\
%x& \in&(\mathbb{R}^+\cup\{0\})^{n\times m \times l}
%\end{array}
%\end{equation}
%}

\end{theorem}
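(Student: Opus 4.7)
\emph{Plan.} I would prove the two standard inclusions. Validity (every inequality in ${\cal I}(W)$ holds on $P^{OO}$) follows by pulling the validity results for the M-SP polytope back to SMKP through the correspondence of Section \ref{sec:corr}. Completeness (the inequalities together with $x\ge 0$ suffice to describe $P^{OO}$) follows, as in \cite{Pochet98}, by an LP-duality argument showing that for every objective vector $v$ the outer LP is attained by an ordered OPT SMKP solution, hence by a point of $P^{OO}$.

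\emph{Validity.} Fix $x\in P^{OO}$ and $W\subseteq N$ with maximal block partition $B_1,\ldots,B_t$. Only variables $x_{i,j}^h$ with $j\in W$ appear in \eqref{eq:MSkPx}, so the effect of $x$ on any inequality of ${\cal I}(W)$ is entirely captured by the M-SP image $y^h_{w,q}=\sum_{j\in B_w:\,g(j)=q}(d_q/f'_w)\sum_{i}x_{i,j}^h$ built on $W$ as in Section \ref{sec:corr}. The construction of Section \ref{sec:corr} makes $y$ feasible for M-SP on $W$ with part capacities ${\bf F}/f'_1$, and the same arguments used in Proposition \ref{prop:optimal-ord-opt} transfer the ordered and OPT properties from $x$ to $y$, so $y\in MP^{OO,W}_t({\bf F}/f'_1)$. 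Theorem \ref{thm:polyMOPMain} then implies that $y$ satisfies every inequality in $I_W(t,l,{\bf F}/f'_1)$, and substituting the formula defining $y$ into \eqref{eq:M-SPx} yields the corresponding inequality in ${\cal I}(W)$ for $x$.

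\emph{Completeness and main obstacle.} For any cost vector $v$, let $x^*$ be an optimal ordered OPT SMKP solution and set $W^*=S(x^*)$; its M-SP image $y^*$ lies in $MO^{OO,W^*}_t({\bf F}/f'_1)$ by Proposition \ref{prop:optimal-ord-opt}. Condition~$3)$ of Theorem \ref{thm:polyMOPMain} furnishes an inequality in $I_{W^*}(t,l,{\bf F}/f'_1)$ tight at $y^*$, which pulls back to an inequality $I'\in {\cal I}(W^*)$ tight at $x^*$. The main obstacle is then building an explicit LP-duality certificate: nonnegative multipliers on $I'$ and on the sign constraints $x_{i,j}^h\ge 0$ for $(i,j,h)$ outside the support of $x^*$, whose combined coefficient vector matches $v$ and whose weighted right-hand side equals $v\cdot x^*$. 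The alignment between $v$ and the coefficients $a_{w,q}f_w/f'_j$ of $I'$ is enforced, item by item, by the identity $v_j/s_j=p_w/f_w$ for $j\in B_w$, which is a consequence of the maximal-block partition (cf.~\eqref{proper:maxblock}); the correct choice $a_{w,q}\in\{\alpha_{w,q},\beta_{w,q}\}$ from \eqref{alpha}--\eqref{beta1} is forced by Theorem \ref{thm:polyMOPMain} together with the concavity of $g_{k,b}(\cdot)$ from Lemma \ref{lem:concave}, which ensures that $I'$ provides the correct marginal value on each variable in the support of $x^*$. The certificate is propagated across the blocks by induction on the pair $(b,k)$, mirroring the inductive construction of the inequalities.
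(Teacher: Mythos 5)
The validity half of your proposal is essentially the paper's own argument: pull each $x\in P^{OO}$ back to its M-SP image $y$ on the block partition of $W$, invoke Proposition \ref{prop:optimal-ord-opt} and Theorem \ref{thm:polyMOPMain} to get $y\in MP^{OO,W}_t({\bf F}/f'_1)$ satisfying $I_W(t,l,{\bf F}/f'_1)$, and substitute as in \eqref{eq:valid2}. That part is fine.

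The completeness half has a genuine gap. First, you explicitly label the construction of the dual certificate ``the main obstacle'' and then only gesture at how it would be ``enforced'' and ``propagated by induction''; the decisive step is never carried out. Second, the certificate you describe cannot work as stated: you want nonnegative multipliers on a \emph{single} tight inequality $I'\in{\cal I}(W^*)$ and on the nonnegativity constraints outside the support of $x^*$ whose combination reproduces $v$. On the support of $x^*$ this forces $v_j=\lambda\, a_{w,q}f_w/f'_j$ for one common $\lambda$, i.e.\ $v$ restricted to $W^*$ must be proportional to the normal of $I'$. But the coefficients $a_{w,q}\in\{\alpha_{w,q},\beta_{w,q}\}$ are differences of the recursively defined functions $g_{k,b}$, not rescalings of the profits $p_w$ — in the paper's worked example $\alpha_{2,2}=1$ and $\beta_{2,2}=0$ while $p_2=28$ — so the identity $v_j/s_j=p_w/f_w$ that you invoke does not align $v$ with the normal of any one inequality, and the complementary-slackness system you set up is in general infeasible with a single inequality. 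One would need a conic combination of several inequalities of the system (possibly drawn from different sets ${\cal I}(W)$), which your sketch does not address.

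The paper takes a different and shorter route that avoids dual multipliers entirely. For a given objective $v$ it sets $W=\{j\in N: v_j>0\}$ (handling $v_j<0$ via the faces $x_{i,j}^h=0$), and uses condition $3)$ of the inductive construction — established in Theorems \ref{thm:polyMOP} and \ref{thm:polyMOPMain} via Theorem \ref{thm:itembk(hl)} — to conclude that every optimal solution of $\max\{v^Tx: x\in P^{OO}(W)\}$ satisfies some inequality of ${\cal I}(W)$ at equality; the description of $P^{OO}$ then follows from the standard characterization that a system of valid inequalities describes a polytope when, for every objective, the optimal face is contained in a face induced by the system. If you want to salvage your approach, you should either switch to this optimal-face argument or actually exhibit the multi-inequality conic decomposition of $v$, neither of which is present in your write-up.
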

\begin{proof}
By Theorem \ref{thm:polyMOPMain},  the inequalities in $I_W(t,l,{\bf F}/f'_1)$ are valid for all points in  $MP^{OO,W}_t({\bf F}/f'_1)$. Then,  by the correspondance between feasible solutions of SMKP and M-SP (see Section \ref{sec:corr}) and by \eqref{eq:valid2}, we have that the inequalities in  ${\cal I}(W)$
are valid for the polytope

{\small
\begin{equation}
P^{OO}(W)=conv\left(
\begin{array}{rclr}
 \sum\limits_{j\in W: g(j) \le h} s_j/f_1' x_{ij}^h &\leq& r_i^h/f_1' \text{ for } h=1,\ldots, l \text{ and } i=1,\ldots, m\\
 \sum\limits_{i=1}^{m} \sum\limits_{h: h\ge g(j)}x_{ij}^h &\leq& b_j \text{ for } j\in W  \\
% \sum\limits_{j\in N: g(j) \le q} s_j/f_1' x_{ij}^q &\leq& r_i^q \text{ for } q=1,\ldots, h \text{ and } i=1,\ldots, m\\
 %\sum\limits_{i=1}^{m} \sum\limits_{q=g(j)}^h x_{ij}^q &\leq& b_j \text{ for } j\in \{ N: s_j \le d_h\}   \\
% \sum\limits_{j\in N: g(j) \le h} s_j x_{ij}^h &\leq& r_i^h \text{ for } h=1,\ldots, l \text{ and } i=1,\ldots, m\\
 %\sum\limits_{i=1}^{m} \sum\limits_{h: h\ge g(j)}x_{ij}^h &\leq& b_j \text{ for } j=1,\ldots, n  \\
 x &&\text{is an OPT and an ordered solution}\\
x& \in&(\mathbb{Z}^+\cup\{0\})^{n\times m \times l}
\end{array}
\right)
\end{equation}
}
i.e., $P^{OO}$ restricted to the item set $W$.

In the following, an inequality that is satisfied as an equation by all optimal solution of MSKP, formulated as in \eqref{newform}, is built.
As in \cite{Pochet98},  if an objective function coefficient $v_j<0$ exists for some $j \in N$, then $x_{ij}^h=0$ for all $h$ and $i$ in any optimal solution. If  $v_j\ge 0$ for all $j \in N$ we set $W=\{j \in N: v_j>0\}$. Let  $B=\{B_1,\ldots, B_t\}$ be the maximal block partition of $W$, and let us consider the transformation into M-$SP$ when only items in $W$ are considered. By the arguments used in Section \ref{sec:corr}  on the equivalence of optimal solutions and by \eqref{eq:valid2}, inequalities in ${\cal I}(W)$ are satisfied at equality by all the optimal solution of problem $\max \{v^Tx: x \in P^{OO}(W)\}.$
As $v_j=0$ for all $j \in N \setminus W$, a solution $x$ for MSKP defined on the set $N$ can be optimal only if the components of $x$ corresponding  to itsms in $W$ satisfy the inequalities in ${\cal I}(W)$ at equality.
\end{proof}

%\begin{equation}
%\begin{array}
%\sum\limits_{w=1}^{t}\sum\limits_{q=1}^{h} \left\lceil \frac{f_w y_{wq}^h}{d_q}\right\rceil d_q &\leq& F_h \text{ for } h=1,\ldots, l \label{prob:M-SP-cond1}\\
%\sum\limits_{h=q}^{l}\left\lceil \frac{f_w y_{wq}^h}{d_q}\right\rceil&\leq&  \sum\limits_{w \in B_w: f(w)=d_q}  b_{w} = \frac{f_w \tilde b_{wq}}{d_q} \;\;\;\;\;\;\text{ for } w =1,\ldots, t \text{ and } q =1,\ldots, l \label{prob:M-SP-cond2}
%y_{wq}^h& \in& \mathbb{Z} \text{ for } w =1,\ldots, t,  h =1,\ldots, l \text{ and } q =1,\ldots, l
%\end{array}
%\end{equation}

\newpage
 \section{Appendix}
\subsection{Proof of Lemma \ref{lem:opt_aux}}

By contradiction, assume that the lemma does not hold. By the optimality of A-OPT, one has $S(\bar x)=S(x)$, i.e., A-OPT is able to assign all items assigned in $x$. Let $x_{O}$ be the solution found by A-OPT applied to the whole item set $N$, let $q\le h$ be the first index such that $S(\bar x^{q})\not= S(x_{O}^{q})$. \\ 
Let us consider the execution of A-OPT applied to the restricted set of items $S(\bar x)$.
At  iteration $q$, after the grouping procedure,
let $\bar L$ be the  list (ordered in non-increasing order of value) of, eventually grouped, items of sizes $d_q$. Let us consider now,  the execution of A-OPT applied to the whole item set.
At  iteration $q$, after the grouping procedure, let $L_{O}$ be the  list (ordered in non-increasing order of value) of, eventually grouped, items of sizes $d_q$.
 In what follows, by abusing notation, given an item $t$ generated by the grouping procedure of A-OPT,  we denote by $S(t)$ the set of items that have been used to generate it. If $t$ corresponds to a single item, let $S(t)=\{t\}$. To simplify the notation,   $v(t)$ is used in place of $v(S(t))$.
  According to these two lists, let $j$ and $j_{O}$ be the first two assigned (eventually grouped) items by A-OPT in  $\bar L$ and  $L_{O}$, respectively, such that $S(j)\not=S(j_{O})$. 
Observe that, by definition, before the assignment of $j$ and $j_{O}$, the set of items assigned by A-OPT (during the production of $\bar x$ and $x_O$, respectively) is the same. By the above observation and by definition of $\bar L$ and $L_{O}$, it follows that $v(j)\le v(j_{O})$. 
%before the assignment of $j$ are a subset of the items not assigned by A-OPT  before the assignment of $j_{O}$,  and since, at this point of A-OPT, $j$ and $j_{O}$ are the (eventually grouped) not assigned items with the biggest values (according to $\bar L$ and $L_{O}$), it must be $v(j)\le v(j_{O})$. 
In the following, we show that $v(j)= v(j_{O})$. 
By contradiction, suppose that $v(j)< v(j_{O})$. 
Let $\{j,j^1,j^2,\ldots,j^z\}$ and $\{j_{O},j_{O}^1,j_{O}^2,\ldots,j_{O}^y\}$ be
the last parts of the ordered lists $\bar L$ and $L_{O}$, resulting after the assignment of $j$ and $j_{O}$, respectively.
By definition of $j$ and $j_{O}$, observe that $S(j)\cup S(j^1)\cup S(j^2) \cup \ldots S(j^z)$ is a subset of $S(j_{O})\cup S(j_{O}^1)\cup S(j_{O}^2) \cup \ldots S(j_{O}^y)$ (since $L_{O}$ is the list obtained by applying A-OPT on the whole item set $N$).
Moreover, since we are considering the iteration $q$ of A-OPT, $f(S(t))\le d_q$ for all $t \in \{j,j^1,j^2,\ldots,j^z\}\cup \{j_{O},j_{O}^1,j_{O}^2,\ldots,j_{O}^y\}$. Recall that, a size equal to $d_q$ is assigned by A-OPT to each  item in $\{j,j^1,j^2,\ldots,j^z\}\cup \{j_{O},j_{O}^1,j_{O}^2,\ldots,j_{O}^y\}$.
Let $\{j,t^1,t^2,\ldots,t^g\}$ be the set containing all the  items in $\{j,j^1,j^2,\ldots,j^z\}$ that are assigned in $\bar x$. 
Let $A=(S(j)\cup S(t^1)\cup S(t^2) \cup \ldots S(t^g))\cap S(j_{O})$. Two cases are considered:  $A=\emptyset$ and  $A\not=\emptyset$. If $A=\emptyset$ then it is feasible to replace  the item $j$ with $j_{O}$ in $\bar x$, obtaining a  solution $\bar x'$ such that $v(\bar x')>v(\bar x)$. A contradiction of the optimality of $\bar x$.\\
Suppose that $A\not=\emptyset$. Note that, since $f(S(j_{O}))\le d_q$,  then $f(A)\le d_q$, too. In the following, we show that the items in $S(j)\cup S(t^1)\cup S(t^2) \cup \ldots S(t^g)$  
can be rearranged into $g+1$ chunks (i.e., subsets) each of size at most $d_q$, in such a way that $A$ is contained in exactly one chunk. 
First observe that, by definition, the grouping procedure can not arrange  the items in $S(j)\cup S(t^1)\cup S(t^2) \cup \ldots S(t^g)$ in less than $g+1$ chunks of size $d_q$.
By Proposition \ref{prop:chunks}, the items in $B=\{(S(j)\cup S(t^1)\cup S(t^2) \cup \ldots S(t^g))\setminus A\}$ can be rearranged in $\left\lceil \frac {f(B)}{d_q} \right\rceil -1$
chunks of total size $d_q$ and one chunk, say $C$, of total size at most $d_q$. Hence, $f(A\cup C)\le  2d_q$. Now, if $f(A\cup C)>d_q$, then the items in the sets $C$ and $A$ can be arranged into two chunks, while, if  $f(A\cup C)\le d_q$, the items in $A\cup C$ are arranged in a single chunk. By Proposition \ref{prop:chunks} and the optimality of A-OPT, it follows that the number of chunks obtained so far is exactly $g+1$. (Otherwise, the production of more than $g+1$ chunks by the grouping procedure could lead to a not optimal solution, contradicting the optimality of A-OPT.)
Let $\{ \bar j, \bar t^1,\ldots, \bar t^2,\ldots \bar t^g\}$ be the new $g+1$ chunks, obtained so far. Observe that, $v(t) \le v(j)< v(j_{O})$ for each $t$ in $\{\bar j, \bar t^1,\ldots, \bar t^2,\ldots \bar t^g\}$.
 Without loss of generality, let $A \subseteq \bar j$.  Hence, $v(\bar j)< v(j_{O})$. Since     A-OPT has assigned a size $d_q$ to each of the (grouped) items  in $\{j,t^1,t^2,\ldots,t^g\}$ before assigning them  in $\bar x$, it is feasible replace  items   $j,t^1,t^2,\ldots,t^g$ with items 
$\bar j, \bar t^1,\ldots, \bar t^2,\ldots \bar t^g$. Let $\tilde x$ be the new feasible solution obtained after this replacement. Note that, $v(\bar x)=v(\tilde x)$. Moreover, observe that, by replacing in $\tilde x$ the item $\bar j$ with $j_{O}$,  a  new feasible solution is obtained with objective function value strictly greater than  $v(\bar x)=v(\tilde x)$. A contradiction of the optimality of $\bar x$.\\
By the above discussion, it follows that $v(j)= v(j_{O})$. Recall that, the items not yet assigned at the beginning of iteration $q$ by A-OPT, when A-OPT  is applied to $S(\bar x)$, are a subset of the items not yet assigned at the beginning of iteration $q$ by A-OPT, when A-OPT  applied to the whole item set.
As a consequence, when A-OPT is applied 
  to the whole item set, during the grouping procedure of iteration $q$, an ordering of the items not yet assigned exists, such that, after the grouping procedure, $j$ appears in place of $j_{O}$ in the list $L_{O}$. And, in this case, A-OPT will assign $j$ instead of $j_{O}$ in $x_{O}$.\qed

\end{document}